\newcommand\reallywidehat[1]{%
\savestack{\tmpbox}{\stretchto{%
  \scaleto{%
    \scalerel*[\widthof{\ensuremath{#1}}]{\kern-.6pt\bigwedge\kern-.6pt}%
    {\rule[-\textheight/2]{1ex}{\textheight}}%WIDTH-LIMITED BIG WEDGE
  }{\textheight}% 
}{0.5ex}}%
\stackon[1pt]{#1}{\tmpbox}%
}
\theoremstyle{definition}
\newtheorem{definition}{Definition}%Extra square-bracket argument achieves that the numbering is the same as for definition (single uniform counter). 
\theoremstyle{theorem}
\newtheorem{proposition}[definition]{Proposition}
\newtheorem{lemma}[definition]{Lemma}
\newtheorem{theorem}[definition]{Theorem}
\newtheorem{corollary}[definition]{Corollary}
\numberwithin{equation}{section}
\numberwithin{definition}{section}
\theoremstyle{remark}
\newtheorem{remark}[definition]{Remark}
\newtheorem{question}[definition]{Question}
\newtheorem{example}[definition]{Example}
\def\PP{\mathbb{P}}
\def\HH{\mathcal{H}}
\def\lll{\mathfrak{l}}
\def\L2{\mathrm{L}^2}
\def\pr{\mathrm{pr}}
\def\QQ{\mathbb{Q}}
\def\BB{\mathcal{B}}
\def\AA{\mathcal{A}}
\def\UU{\mathcal{U}}
\def\RR{\mathbb{R}}
\def\GG{\mathcal{G}}
\def\FF{\mathcal{F}}
\def\dd{\mathrm{d}}
\def\PPP{\mathsf{P}}
\def\WW{\mathsf{W}}
\begin{document}
\title{Stationary local  random countable sets over the Wiener noise}

\author{Matija Vidmar}
\address{Department of Mathematics, Faculty of Mathematics and Physics, University of Ljubljana}
\email{matija.vidmar@fmf.uni-lj.si}
\thanks{Most of this research was done while MV was in receipt of a Fernandes Fellowship at the University of Warwick. MV also  acknowledges support from the  Slovenian Research Agency  under programme No. P1-0402.}
\author{Jon Warren}
\address{Department of Statistics, University of Warwick}
\email{j.warren@warwick.ac.uk}

\begin{abstract}
The times of Brownian local minima, maxima and their union are three distinct examples of local, stationary,  dense,  random countable sets associated with   classical Wiener noise.  Being local means, roughly, determined by the local behavior of the sample paths of the Brownian motion, and  stationary means invariant relative to the L\'evy shifts of the sample paths. We answer to the affirmative Tsirelson's question, whether or not there are any others, and  develop some  general theory for such sets. An extra ingredient to their structure, that  of an honest indexation, leads to a splitting result that is akin to the Wiener-Hopf factorization of the Brownian motion at the minimum (or maximum) and has the latter as a special case. Sets admitting an honest indexation are moreover shown to have the property that no stopping time belongs to them with positive probability. They  are  also  minimal:  they do not have any non-empty proper local stationary subsets. Random sets, of the kind studied in this paper, honestly indexed or otherwise, give rise to nonclassical one-dimensional noises, generalizing the  noise of splitting. Some properties of these noises  and the inter-relations between them are investigated. In particular, subsets are connected to subnoises. 
\end{abstract}

\keywords{Random countable set; two-sided Brownian motion; locality; stationarity; zero-one law; thick set; splitting; noise; spectral measure}

\subjclass[2020]{Primary: 	60J65. Secondary: 60D05, 60G55, 60G20.} 

\date{\today}

\maketitle

%\tableofcontents

\section{Introduction}

 \subsection{Motivation and focus}\label{subsection:motivation}
 Random, dense, countable  sets, like the set of times at which a two-sided Brownian path attains a local minimum, are usually considered to be pathological objects \cite{kendall_2000,tsirelson-rc}. There is no good measurable structure on the space of such sets, meaning that there are no interesting events to be described concerning such a random set: ``Could
you imagine a function of the set of all Brownian local minimizers that gives a
non-degenerate random variable?'' \cite[p. 189]{tsirelson-nonclassical}. 

One way around the measurability issue  is to use some  enumeration  of the points belonging to the random set in question, by a countable family of random variables.  For the case of (here and always: the times of) the Brownian local minima we may indeed set, for each Brownian path $\omega : \mathbb{R}\to \mathbb{R}$, and for rational $p<q$,
$X_{p,q}(\omega):= \text{argmin}\{ \omega(t): t \in [p,q]\}$. With probability one with respect to Wiener measure,  $X_{p,q}(\omega)$ is well-defined for every pair  $(p,q)$, and a.s. the set $M(\omega):=\{X_{p,q}(\omega): (p,q) \in {\mathbb Q}^2,  p<q\}$  is precisely the  set of times at which $\omega$ attains a local minimum. Note the $X_{p,q}(\omega)$ do not take distinct values, and we do not require this of our enumeration.  The countable dense set $M$  is {\em local} in that for all real $s<t$ we have that $M\cap (s,t)$  may be enumerated by a  sequence of random variables  each of which is measurable with respect to the increments of the Brownian path between times $s$ and $t$. $M$ is also \emph{stationary} in that for any real $h$, for almost every $\omega$ with respect to Wiener measure, the set $M(\omega)-h$ is almost surely equal to  $M(\Delta_h\omega)$,  where  $\Delta_h(\omega):=\omega(h+\cdot)-\omega(h)$ is the L\'evy shifted path. 

Brownian local maxima give  another example of a random, dense, countable set ``of the Wiener noise\footnote{The designation ``Wiener noise'' will be explained in technical terms in Section~\ref{section:noises-out-of-random-sets}. It will not be relevant for us to  do so before then.}'' which is local and stationary. A third example is obtained by  taking the union of these two. Tsirelson asked \cite[Question~2e3]{tsirelson-nonclassical} (and again in \cite[Remark~9.11]{tsirelson-rc}), are there any others. By giving further examples, we  show that the answer to this question is to the affirmative and that, moreover, there are at least a continuum many of them, all pairwise a.s. disjoint. 

In the light of our additional examples, we see that such sets are not as rare as might have been originally thought, and  we wish to explore  their general properties in more depth. 
Random dense countable subsets have been treated in the literature previously,  namely in \cite{kendall_2000,tsirelson-rc}, but in these works the focus is on intrinsic properties of the sets, whereas we wish to study a coupling between a random set and a Brownian motion. Thus, the Brownian maxima, minima and extrema are different objects for us, whereas according to the definition and main result of \cite{tsirelson-rc} they are  all equal in distribution; evidently so for the maxima and minima, more surprisingly for the extrema.

Returning to Tsirelson's question, the initial motivation for the introduction of stationary local random countable sets  was to obtain new one-dimensional noises in the sense of \cite{tsirelson-nonclassical,picard2004lectures}. One such (nonclassical) noise is got by attaching independent equiprobable random signs to each Brownian local minimum, which leads to Warren's noise of splitting \cite{warren-splitting}. ``New examples [of stationary local random countable sets] could lead to new noises'' \cite[just after Question~2e3]{tsirelson-nonclassical}. A similar construction applies generally and we explore these noises in relative detail as well.

\subsection{Article structure, highlights and roadmap to  results}
 Section~\ref{section:notation-preliminaries} contains some  definitions and notation related to two-sided Brownian motion.  Then, in Section~\ref{section:rcs-l-s} we begin by formally  defining  random countable sets with  various properties.  We observe   that 
 a  stationary, local,  random countable set must be either  dense  or empty (Proposition~\ref{proposition:dense-or-empty}) and then
  apply the  main result of Tsirelson, \cite{tsirelson-rc},   to establish that
  \begin{enumerate}
\item\label{initial:1} all dense such sets are equal in law to the realization of the range of an i.i.d. sequence of random variables whose law is equivalent to Lebesgue measure (Proposition~\ref{proposition:equal-in-law});
\item\label{initial:2} any event concerning such a set has probability zero or one (Proposition~\ref{proposition:final-is-trivial}).
\end{enumerate}
This makes plain  the pathological character of stationary local random countable sets alluded to above. In particular it renders useless any approach to   random countable sets based on hitting probabilities and intensity measures \cite{herriger}. 

On the ``positive'' side we establish in Proposition~\ref{proposition:version-perfect-stationary} the important technical fact that any stationary random countable set $M$ admits a version that is perfectly stationary in the sense that $M=h+M(\Delta_h)$ for all $h\in \mathbb{R}$. It allows us to deduce that any stationary local random countable set is a.s. equal to the visiting set $\{t\in \mathbb{R}:\Delta_t\in A\}$  by the process of the L\'evy shifts $(\Delta_t)_{t\in \mathbb{R}}$ of some measurable event $A$ of path space, which belongs to the germ $\sigma$-field around zero (the $A$, naturally, depends on $M$). For instance, in the case of the Brownian local minima $A=\{0\text{ is a (time of) local minimum}\}$. Theorem~\ref{theorem:construction-through-zero-time-section} has the precise statement. However we have not been able to identify any good criteria for determining whether  a given set $A$ belonging to the germ $\sigma$-field generates a non-empty countable  random set; this remains a natural open question.

In Section~\ref{section:new-family-of-examples}, which can be read largely independently of Section~\ref{section:rcs-l-s}, we settle Tsirelson's question, referred to in Subsection~\ref{subsection:motivation}, by providing a family $(M^{(d)})_{d\in (0,2)}$ of stationary local random countable sets over the Wiener noise satisfying $M^{(d_1)}\cap M^{(d_2)}=\emptyset$ a.s. for $d_1\ne d_2$ from $(0,2)$ (Propositions~\ref{proposition:gamma} and~\ref{proposition:a.s.distinct}). Specifically, for each $d\in (0,2)$,  $M^{(d)}$ comes from collecting the last zeros of  squared Bessel processes $Z^{(d)}$ of dimension $d$, started at every rational point $p$ of the real line and driven by the post-$p$ increments of the   underlying Brownian motion. The case $d=1$ corresponds to $Z^{(1)}$ being the square of the Brownian motion reflected in its running infimum  and gives for $M^{(d)}$ the local minima of the Brownian motion.

The examples of Section~\ref{section:new-family-of-examples} suggest the study  of an extra property (which all the $M^{(d)}$, $d\in (0,2)$, share) of a random set $M$  admitting what we call an honest indexation, extending   the notion  of an honest time, see  \cite[Chapter~XX]{dellacherie-sets}.  Namely, an honest indexation for a random set $M$ consists of a family of random variables $(\tau_{s,t})$ indexed by real numbers $s<t$ with the following properties: (i) the $\tau_{s,t}$, $s<t$ rational, precisely exhaust $M$ a.s.; (ii) each $\tau_{s,t}$ takes values in $(s,t)$ and is measurable relative to the increments of the Brownian motion between times $s$ and $t$; (iii) $\tau_{h+s,h+t}=h+\tau_{s,t}(\Delta_h)$ a.s. for all real $h$ and $s<t$; and  (iv) $\tau_{s,t}=\tau_{u,v}$ a.s. on $\{\tau_{s,t}\in (u,v)\}$ for all real $s\leq u<v\leq t$. For random sets admitting an honest indexation we can say much more about their general structure. We explore this in Section~\ref{section:honest-indexations}. Two significant results, described only informally here, are 
\begin{enumerate}[(1)]
\item a splitting into independent pieces at an exponentially sampled honest indexator, generalization of the Wiener-Hopf factorization at the minimum before an independent exponential random time in the case of the local minima (Theorem~\ref{theorem:splitting});
\item the fact that honestly indexed sets a.s. do not meet the graph of any stopping time (Theorem~\ref{theorem:stopping-times-no}). 
\end{enumerate}
 The first of these properties leads to the establishment of the fact that honestly indexed random sets are minimal in that they do not contain any proper dense stationary local random countable subset. Minimality is the subject of Section~\ref{section:minimality} and the preceding result the reader will  find in  Corollary~\ref{corollary:minimal}. A simple consequence of Corollary~\ref{corollary:minimal}  is that the local extrema, unlike the local minima and the local maxima, cannot be honestly indexed.

Finally, in Section~\ref{section:noises-out-of-random-sets} we turn to the theory of noises. Every dense local stationary random countable set $M$ engenders a non-classical one-dimensional noise $N^M$. Loosely speaking, $N^M$ consists of attaching to each point in $M$ an independent equiprobable random sign. The classical (stable) part and the first superchaos of $N^M$ are identified (Proposition~\ref{lemma:identifications-stable-super}). In Theorem~\ref{thm:not-iso-noises}  we characterize when two such noises are isomorphic and are able to conclude that the noises attached to the $M^{(d)}$, $d\in [1,2)$, are pairwise non-isomorphic. Subnoises of an $N^M$ are shown to correspond to stationary local random countable subsets of $M$  (Theorem~\ref{thm:subnoises}). Together with the results on minimality it entails that the noises attached to the $M^{(d)}$, $d\in (0,2)$, have for their only proper non-void subnoise the classical Wiener noise. In particular this is true of Warren's noise of splitting mentioned above.

\subsection{Miscellaneous general notation}
We agree to denote by $\AA/\BB$  the set of $\AA/\BB$-measurable maps. Also, to write $\mathsf{Q}[\ldots]$ for the expectation $\mathbb{E}_\mathsf{Q}[\ldots]$ and $Z_\star\mathsf{Q}$ for  the law of a random element $Z$ under a probability $\mathsf{Q}$ relative to a $\sigma$-field on the codomain of $Z$ that shall be mentioned explicitly or will be understood from context. More generally, $\mu[g]$, resp. $g\cdot\mu=(A\mapsto \mu[g;A])$, will denote the definite (resp. indefinite) integral of a measurable numerical $g$ against a measure $\mu$. $\uparrow$ (resp. $\downarrow$) means nondecreasing (resp. nonincreasing). $(2^X)_{\mathrm{fin}}$ are the finite subsets of a set $X$ and $[n]:=\{1,\ldots,n\}$ ($=\emptyset$ when $n=0$) for $n\in \mathbb{N}_0$. A process $Z$ stopped at a random time $S$ is written $Z^S$, as usual. For a topological space $X$, $\mathcal{B}_X$ is its Borel $\sigma$-field. Lastly, given a map $g$ into a measurable space $(E,\mathcal{E})$ we write $g^{-1}(\mathcal{E}):=\{g^{-1}(F):F\in \mathcal{E}\}$ or just $\sigma(g):=g^{-1}(\mathcal{E})$ ($\mathcal{E}$ being then understood from context) for the pull-back of $\mathcal{E}$ along $g$ (the generated $\sigma$-field). 

\section{Preliminaries}\label{section:notation-preliminaries}

The base sample space shall be $\Omega_0:=\{\omega\in \mathbb{R}^\mathbb{R}:\omega(0)=0\text{ and }\omega\text{ is continuous}\}$ endowed with the completed (two-sided) Wiener measure $\WW$. Thus the domain of $\WW$, which we denote $\GG$, is the completion under Wiener measure of the Borel $\sigma$-field $\mathcal{B}_{\Omega_0}$ on $\Omega_0$ for the locally uniform topology. We may recall that  $\mathcal{B}_{\Omega_0}$ is also generated by the family  $B=(B_t)_{t\in \mathbb{R}}$ of the canonical projections on $\Omega_0$. To dispel any lingering doubts as to the meaning of ``two-sided Wiener measure'': $B\vert_{[0,\infty)}$ and $B_{-\cdot}\vert_{[0,\infty)}$ are independent standard Brownian motions under $\WW$, which specifies $\WW$ uniquely. For extended-real $s<t$, $\FF_{s,t}$ will denote the sigma-field generated by the increments of $B$ between the times $s$ and $t$ \emph{and} by $\WW^{-1}(\{0,1\})$, the $\WW$-trivial sets. The family $\FF=(\FF_{s,t})_{(s,t)\in [-\infty,\infty]^2,s< t}$ is the so-called (noise-)factorization of the Wiener process $B$. Two ``one-sided'' subfamilies thereof, $\FF^{\rightarrow}:=(\FF_{-\infty,t})_{t\in [0,\infty)}$ and $\FF^{0,\rightarrow}:=(\FF_{0,t})_{t\in [0,\infty)}$ will  be used quite often. The symbol $\mathfrak{l}$  shall denote  Lebesgue measure on the Borel sets of $\mathbb{R}$. For extended-real $a\leq 0\leq b$, $a\ne b$, we write $\Omega_0\vert_{(a,b)}:=\{\omega\vert_{(a,b)}:\omega\in  \Omega_0\}$ endowed with the $\sigma$-field of evaluation maps that we shall designate $\mathcal{B}_{\Omega_0\vert_{(a,b)}}$; similarly for $\Omega_0\vert_{[a,b)}$ and $\Omega_0\vert_{(a,b]}$.

 On occasion --- especially when ``taming the continuum'' using stopping times --- it will  serve us well to work with the one-sided usual Wiener space ${\Theta_0}:=\{\omega\in \mathbb{R}^{[0,\infty)}:\omega\text{ continuous and }\omega(0)=0\}$, endowed with the completion $\HH$ of its Borel $\sigma$-field $\mathcal{B}_{\Theta_0}$ under the completed Wiener measure $\PPP$, canonical process $C=(C_t)_{t\in [0,\infty)}$, and $\PPP$-completed natural filtration $\mathcal{U}=(\mathcal{U}_t)_{t\in[0,\infty)}$ of $C$. Then the symbol $\mathscr{L}$ will  be used to denote  Lebesgue measure on the Borel sets of  $[0,\infty)$. However, by and large, the two-sided setting shall be  more natural for us to work in. 
 
 For the reader's convenience of recollection of notation later on, let us display the more significant objects pertaining to one and to the other landscape described above, side by side: 
 $$((\Omega_0,\GG,\WW);B,\FF)\text{ and }\mathfrak{l}\quad \text{vis-\`a-vis}\quad ((\Theta_0,\HH,\PPP);C,\UU)\text{ and }\mathscr{L}.$$

Some further concepts that we shall find useful throughout follow.

\begin{definition}
For $u\in \mathbb{R}$, the map $\Delta_u:\Omega_0\to\Omega_0$ is the L\'evy shift: $\Delta_u(\omega)(t):=\omega(u+t)-\omega(u)$ for $t\in \RR$, $\omega\in \Omega_0$. We write $\Delta:=(\Delta_u)_{u\in \mathbb{R}}$ for short. When $u\in [0,\infty)$ by an abuse of notation we shall use the same symbol $\Delta_u$ for the corresponding map on $\Theta_0$:  $\Delta_u(\omega)(t):=\omega(u+t)-\omega(u)$ for $t\in [0,\infty)$, $\omega\in \Theta_0$.
\end{definition}
Since $B$ is just the identity on $\Omega_0$, $\Delta_u=\Delta_uB$ (similarly, $\Delta_u=\Delta_uC$ when $u\geq 0$) and we will be quite liberal as to which of the two we shall find more convenient to use.
\begin{remark}\label{rmk:shifts-preserving}
The L\'evy shifts are measure-preserving for $\WW$ by stationary  independent increments of $B$ and the fact that $B_0=0$ (the same for $\PPP$, $C$ in lieu of $\WW$, $B$). Note also the identities $\Delta_u\circ \Delta_v=\Delta_{u+v}$ for real $u$ and $v$ and $\Delta_0=\mathrm{id}_{\Omega_0}=B$ (the same with just nonnegative $u$, $v$ and $\Delta_0=\mathrm{id}_{\Theta_0}=C$ for the space $\Theta_0$). Thus we have an indexed group $(\Delta_u)_{u\in\mathbb{R}}$ of $\WW$-preserving bi-measurable bijections (just a semigroup of $\PPP$-preserving measurable maps in the case of $\Theta_0$).
\end{remark}

\begin{definition}\label{def:invariance-closed-shift}
A subset $A\subset \Omega_0$ is shift-invariant if $\Delta_v^{-1}(A)=A$ for all $v\in\mathbb{R}$. A subset $A\subset \Theta_0$ is shift-closed if  $A\subset \Delta_v^{-1}(A)$ for all $v\in [0,\infty)$.
\end{definition}

To get some feeling for the concepts of Definition~\ref{def:invariance-closed-shift} we give a couple of examples and make some elementary observations.
\begin{example}
The event $\{\lim_{t\to \infty} B_t=\infty\}$ is shift-invariant and $\WW$-almost certain.
\end{example}
\begin{example}
The event $E:=\{0\text{ is a local minimum}\}$ is $\WW$-negligible and not shift-invariant, in fact $\cup_{h\in \mathbb{R}}\Delta_h^{-1}(E)$ is equal to the set of paths each of which has at least one local minimum and this set has full $\WW$-measure. For $F:=\{B\vert_{[0,\epsilon)}\leq 0 \text{ or }B\vert_{(-\epsilon,0]}\leq 0\text{ for some $\epsilon>0$}\}$, which is still $\WW$-negligible we even have $\cup_{h\in \mathbb{R}}\Delta_h^{-1}(F)=\Omega_0$.
\end{example}

\begin{enumerate}[(1)]
\item\label{shifts-varia:i} Suppose $A\subset \Omega_0$ is closed for the L\'evy shifts $\Delta_t$, $t\in \mathbb{R}$, i.e. suppose that $A\subset \Delta_t^{-1}(A)$ for all $t\in \mathbb{R}$. Let $t\in \mathbb{R}$. Then $A\subset \Delta_{t}^{-1}(A)$ but also $A\subset \Delta_{-t}^{-1}(A)=\Delta_t(A)$, hence $\Delta_t^{-1}(A)\subset A$ and thus $A=\Delta_t^{-1}(A)$. Therefore $A$ is shift-invariant.
\item If $A\subset \Theta_0$ is shift-closed (resp. and $\PPP$-almost certain), then $(B\vert_{[0,\infty)})^{-1}(A)$ is closed for the L\'evy shifts $\Delta_t$, $t\in [0,\infty)$ (resp. and $\WW$-almost certain). This is because $(\Delta_tB)\vert_{[0,\infty)}=\Delta_t(B\vert_{[0,\infty)})$ for all $t\in [0,\infty)$ (note, the $\Delta_t$ on the left-hand side acts on $\Omega_0$, the one on the right-hand side -- on $\Theta_0$), resp. and because $(B\vert_{[0,\infty)})_\star \WW=\PPP$.
\item Suppose $A\subset \Omega_0$ is closed for the L\'evy shifts $\Delta_t$, $t\in [0,\infty)$ [or just $t$ belonging to $[0,\epsilon)$ for some $\epsilon>0$, leaving the argument in this case to the reader], and $\WW$-almost certain. Then $B:=\cap_{n\in \mathbb{Z}}\Delta_n^{-1}(A)$ is $\WW$-almost certain and is contained in $A$. Furthermore, for $t\in \mathbb{R}$, $\omega\in B$ and then for all $n\in \mathbb{Z}$, $\Delta_n\Delta_t\omega=\Delta_{t-\lfloor t\rfloor}\Delta_{n+\lfloor t\rfloor}\omega\in A$, viz. $\Delta_t\omega\in B$. By \ref{shifts-varia:i} we deduce that $B$ is shift-invariant.
\end{enumerate}

\begin{remark}\label{rmk:zero-one}
By independent increments of $B$, trivially, $\lim_{u\to \pm \infty}\WW(\Delta_u^{-1}(A)\cap A')=\WW(A)\WW(A')$ (the limit may be only over some set of $u\in \mathbb{R}$ that is unbounded) whenever $A$ and $A'$ are two events depending only on the increments of $B$ in a bounded interval. By approximation and the fact that the L\'evy shifts $\Delta_u$, $u\in \mathbb{R}$, are measure-preserving for $\WW$, the limit prevails for all $A$ and $A'$ from $\GG$. If an $A\in \GG$ is shift-invariant and more generally if merely $A=\Delta_v^{-1}(A)$ a.s.-$\WW$ for a non-zero  $v\in \mathbb{R}$ (so that $A=\Delta_{nv}^{-1}(A)$ a.s.-$\WW$ for all $n\in \mathbb{Z}$), then taking $A'=\Omega_0\backslash A$ shows that $\WW(A)\in \{0,1\}$: the zero-one law for shift-invariant events.
\end{remark}

\section{Random countable sets over the Wiener noise: locality and stationarity}\label{section:rcs-l-s}
\subsection{Major concepts}
We will want to measurably enumerate our random sets. Because we do not wish a priori to insist that the sets are non-empty, but still want to list them through a countably infinite sequence of random variables, we introduce for convenience a coffin state $\dagger\notin\mathbb{R}$, whose value is ignored when the range of such a sequence is considered as a subset of $\mathbb{R}$. Such a coffin state also comes in handy when we ``localize'' an enumeration. To wit:

\begin{definition}
For $s<t$ from $[-\infty,\infty]$, $(s,t)^\dagger:=(s,t)\cup \{\dagger\}$ is endowed with the $\sigma$-field $\mathcal{B}_{(s,t)^\dagger}:=\sigma(\mathcal{B}_{(s,t)}\cup \{\{\dagger\}\})$, where $\mathcal{B}_{(s,t)}$ are the Borel sets of the interval $(s,t)$. We understand $u\pm \dagger:=\dagger\pm u:=\dagger$ for $u\in \mathbb{R}$. For a sequence $s=(s_i)_{i\in \mathbb{N}}$ with values in $\mathbb{R}^\dagger$: $[s]:=\{s(i):i\in \mathbb{N}\}\backslash \{\dagger\}$ is the effective range of $s$; for extended-real $a<b$, the sequence $s\vert_{(a,b)}=(s\vert_{(a,b)}(i))_{i\in\mathbb{N}}$ given by $s\vert_{(a,b)}(i):=s(i)$ if $s(i)\in (a,b)$ and $s\vert_{(a,b)}(i):=\dagger$ otherwise, is $s$ localized to the interval $(a,b)$ of $\mathbb{R}$.
\end{definition}
We now define random countable sets over the Wiener noise, their locality and (various concepts of) stationarity, indicating the presence of these notions in (some of the) existent literature en route. % There are various  natural concepts of stationarity, two of which we shall make explicit.  Two more, weaker ones, are mentioned in passing below (in Proposition~\ref{proposition:stationary-in-law-gives-kendall} and in Remark~\ref{remark:quasi-stationary}). 
\begin{definition}\label{definition:varia}
Let $M:\Omega_0\to 2^\mathbb{R}$ --- a random set ---  and let $\AA$ be a sub-$\sigma$-field of $\GG$. A (resp. $\AA$-) measurable enumeration for $M$ is a sequence $S=(S_i)_{i\in \mathbb{N}}$ of (resp. $\AA$-) measurable random variables with values in $\mathbb{R}^\dagger$, which satisfies $M=[S]$ a.s.-$\WW$. Such an enumeration is said to be perfect if the a.s.-$\WW$ qualifier can be dropped.  $M$ is a random countable set if it admits  a measurable enumeration.  If $M$ and $M':\Omega_0\to 2^\mathbb{R}$ are both random countable sets, then we say they have the same law provided there exist an enumeration $S$ for $M$, an enumeration $S'$ for $M'$ and a coupling $R$ of $S_\star \WW$ and $S'_\star \WW$ such that for $R$-a.e. $(s,s')$ one has $[s]=[s']$ (cf. \cite[Definition~2.2]{tsirelson-rc}, given there for $(0,1)$, not $\mathbb{R}$, as the ambient set,  which does not really matter). 
A random countable set $M$ is said to be:
\begin{enumerate}[(a)]
\item\label{definition:varia:a} dense (resp. empty), if $M$ is dense (resp. empty) $\WW$-a.s.;
\item\label{definition:varia:b} local, if for all extended-real $s<t$, $M\cap (s,t)$ admits an $\FF_{s,t}$-measurable enumeration (cf. \cite[Definition~2e2, first display]{tsirelson-nonclassical});
\item\label{definition:varia:f} stationary, if for all $u\in \mathbb{R}$, $\WW$-a.s. $M=u+M(\Delta_u)$ (cf. \cite[Definition~2e2, second line of second display]{tsirelson-nonclassical}, but note the typo: the left-hand side  should be offset by $h$ to the right), perfectly so, if  a.s.-$\WW$ can be dropped;
\item\label{definition:varia:c} stationary-in-law, if for all $u\in \mathbb{R}$ the sets $M$ and $u+M$ have the same law, i.e. if for all $u\in \mathbb{R}$ there exist measurable enumerations $T$ and $T_u$ of $M$ and a coupling $R$ of $T_\star\WW$ and $(T_u)_\star\WW$ such that, for $R$-a.e. $(x,x_u)$ one has $u+[x]=[x_u]$ (we connect it to the ``stationarity'' of \cite[Definition~6.8]{tsirelson-rc} in Proposition~\ref{proposition:reinforce-local+stationary-in-law});
\item \label{definition:varia:d} hit-or-miss stationary, if for all events $A$ from the so-called hit-or-miss $\sigma$-algebra $\mathfrak{h}:=\sigma_{2^\mathbb{R}}(\{2^{\mathbb{R}\backslash E}:E\in \mathcal{B}_\mathbb{R}\})$ the map $(\mathbb{R}\ni x\mapsto \WW(x+M\in A))$ is constant (just ``stationary'' in \cite[Definition~2.5(i)]{kendall_2000});
\item \label{definition:varia:e} hit-or-miss quasi-stationary if $\WW(x+M\in A)$ is either $=0$ or $>0$ simultaneously for all $x\in \mathbb{R}$ for any fixed $A\in \mathfrak{h}$ (just ``quasi-stationary'' in the terminology of \cite[Definition~2.5(ii)]{kendall_2000}).
\end{enumerate}
For $M':\Omega_0\to 2^\mathbb{R}$ we also say that $M'$ is a version of $M$ when $\{M=M'\}\in \GG$ (automatic if $M$ and $M'$ are random countable sets) and $\WW(M=M')=1$.
\end{definition}

Several immediate comments are in order. 

 As indicated in the Introduction, it is locality \emph{together} with stationarity (without further qualification, to be understood always in the sense of  Definition~\ref{definition:varia}\ref{definition:varia:f}) that will be the primary interest of our study. Nevertheless, we shall find it worthwhile, at least at first, to give some results holding the two notions  separate and/or employing, in lieu of stationarity, one of its weaker forms thereof delineated above. This will serve to better emphasize the relevance of the individual properties.
 
 Of considerable importance is 
 
\begin{remark}\label{remark:ranomd-countable-set}
Let $M:\Omega_0 \to 2^\mathbb{R}$. We may see $M$ as a subset of $\Omega_0\times \mathbb{R}$ in the natural way (namely, as the set $\llbracket M\rrbracket:=\{(\omega,t)\in \Omega_0\times \mathbb{R}:t\in M(\omega)\}$); vice versa, a subset of $\Omega_0\times \mathbb{R}$ is viewed canonically as a map $\Omega_0\to 2^\mathbb{R}$ (carrying a point from $\Omega_0$ onto its section of the subset in question). We will usually not make the distinction between the two explicit, leaving it to context to determine which is intended. By \cite[Theorem~3.2]{kendall_2000} \cite[\# 117]{dellacherie1975probabilites}, if $M\in \GG\otimes \mathcal{B}_\mathbb{R}$, then  the property of $M$ being a random countable set (in our sense; ``constructively countable'' in the sense of \cite[Definition~3.2]{kendall_2000}) is equivalent to $M$ being a.s. countable (``weakly countable'' in the sense of \cite[Definition~3.1]{kendall_2000}). Note also that, for any sub-$\sigma$-field $\AA$ of $\GG$, if $M$ admits a perfect $\AA$-measurable enumeration, then $M\in \AA\otimes \mathcal{B}_\mathbb{R}$; dropping ``perfect'' in the antecedent, the consequent may fail, however. A random countable set allows for ``mischievous'' behaviour on a null set. 
\end{remark}
 
 Two sets having the same law clearly are hit-or-miss stationary simultaneously either both, or simultaneously both not so. The same is true for hit-or-miss quasi-stationarity (again it is immediate) and stationarity-in-law, in which latter case it follows from the fact that equality in law is transitive \cite[Remark~2.3]{tsirelson-rc}. We stress that, by contrast,   stationarity itself  (or its absence) is not necessarily shared  by two sets having the same law. See e.g. Example~\ref{example:not-stationary} to follow. %Actually, the fact that stationarity-in-law (or its absence) \emph{is} shared by two sets having the same law is not completely elementary, but follows from the fact that equality in law in the above precise sense is transitive \cite[Remark~2.2]{tsirelson-rc}.

Some observations on the role of exceptional sets in Definition~\ref{definition:varia}. The property of $S$ being an  enumeration for $M$ is not affected if we change $S$ or $M$ on a $\WW$-negligible set. Thus, in the definition of locality, Item~\ref{definition:varia:b} above, we could just as well have asked for $M\cap (s,t)$ to admit an enumeration measurable relative to the increments of $B$ on $(s,t)$ (but it is usually more convenient to work with the completed $\sigma$-field $\FF_{s,t}$). Except for perfect stationarity the properties listed in \ref{definition:varia:a}-\ref{definition:varia:e} above  remain unaffected if $M$ is changed on a $\WW$-negligible set. In particular, a random countable set may, despite its name, be uncountable on a $\WW$-negligible set. By contrast, the value of a perfectly stationary random countable set is determined already by specifying its value on one member of each equivalence class of $\Omega_0$ w.r.t.  the equivalence relation $\sim$  specified according to $\omega^1\sim \omega^2\Leftrightarrow \exists u\in \mathbb{R}(\omega^1=\Delta_u(\omega^2))$ for $\{\omega^1,\omega^2\}\subset \Omega_0$. It is plain that this is sensitive to changes on $\WW$-negligible sets. Nevertheless, we may change a perfectly-shift invariant set on a shift-invariant $\WW$-negligible set to the empty set, say, and it does not affect the perfect stationarity property. If a random countable set admits an $\AA$-measurable enumeration, is countable with certainty (not just $\WW$-a.s.) and $\WW^{-1}(\{0,1\})\subset \AA$, then it admits an $\AA$-measurable perfect enumeration (because $\WW$ is complete). 

In general the reader will come to find, as he/she progresses through this paper, that there is little to no place to hide in this paper when it comes to considerations of exceptional (negligible) sets. The presence or indeed absence of a.s. qualifiers should not be taken lightly.
\begin{remark}\label{rmk:no-stationary}
Prima facie it may seem that a stationary random countable set $M$ should admit an enumeration $S$ that is stationary, i.e. one for which $S=h+S(\Delta_h)$ a.s.-$\WW$ for all $h\in \mathbb{R}$; however, it is impossible, unless $M$ is empty. In fact there can be no map $Q\in \GG/\mathcal{B}_{\mathbb{R}^\dagger}$, real-valued with positive $\WW$-probability, such that $Q=h+Q(\Delta_h)$ a.s.-$\WW$ for all $h\in \mathbb{R}$: if such a map did exist, then, for all $h\in \mathbb{R}$, since $\Delta_h$ preserves $\WW$, $Q_\star \WW\vert_{\{Q\in \mathbb{R}\}}=(h+\cdot)_\star (Q_\star\WW\vert_{\{Q\in \mathbb{R}\}})$, which is clearly absurd.
\end{remark}
All of the notions of Definition~\ref{definition:varia} make sense beyond the setting of $\WW$ being Wiener measure, $\FF$ the (noise-)factorization of $B$ and $\Delta$ the measure-preserving group of L\'evy shifts. Very naturally, the probability $\WW$ could, for instance, be replaced by the law of any two-sided ($=$ indexed by the real line, vanishing at zero) L\'evy process, mutatis mutandis; and still more generally, one could work with a one-dimensional noise in the sense of \cite[Definition~3d1]{tsirelson-nonclassical} (it gives us all the ingredients: the probability, the (noise-)factorization and the group of measure-preserving maps; satisfying certain conditions between them, of course). However, we have restricted, and shall in what follows continue to restrict our attention to Wiener noise, leaving the eventual extensions for future work. %it to the interested reader to make explicit the properties of $\WW$ that are actually used in the proofs. 

\subsection{General properties}
We expose now the salient features of the various types of stationary and of local random countable sets, and  investigate some of the inter-relations between them.

First, three simple  (counter)examples, to get us going.
\begin{example}
The rational numbers are a  local dense random countable set, however they are not stationary. More generally, any deterministic non-empty countable subset of $\mathbb{R}$ (dense or not) is a local random countable set, which is not stationary in any of the senses of Definition~\ref{definition:varia}.
\end{example}

\begin{example}\label{example:stationary-dense-not-local}
A random countable set can be stationary and dense without being local, indeed translating a dense stationary random countable set by any deterministic quantity is again a dense stationary random countable set, while locality will not be preserved by such a translation. 
\end{example}

%\begin{question}
%What is an example of a stationary random countable set that is not local and that is not a deterministic translate of a local random countable set?
%\end{question}

%The preceding properties make sense not only when $\WW$ is Wiener measure. It could, for instance, be replaced by the law of any two-sided L\'evy process, mutatis mutandis. However, we have restricted, and shall in what follows restrict our attention to Wiener noise, leaving it to the interested reader to make explicit the properties of $\WW$ that are actually used in the proofs. In the definition of locality, Item~\ref{definition:varia:b} above, it is enough, for all extended-real $s<t$  to find a sequence $S=(S_i)_{i\in \mathbb{N}}$ of $\FF_{s,t}$-measurable random variables with values in $\mathbb{R}^\dagger$, which satisfies $M=[S]$ a.s.-$\WW$ (since the $\sigma$-field $\FF_{s,t}$ is complete). Likewise, to argue that $M$ is a random 

%\begin{remark}
%Stationarity, locality and stationarity  are properties that remain unaffected if $M$ is changed on a $\WW$-negligible set, this  not being true of perfect stationarity. %Still, perfect stationarity is preserved if one changes the countable random set $M$  on a $\WW$-shift-negligible set only. % (which is why we are interested in s.a.s. sets to begin with). 
%\end{remark}
%\begin{remark}
%Trivially, if the random countable set $M$ is perfectly stationary, then it is stationary. The converse fails in general in a very strong sense, as the next example shows.
\begin{example}
Let $M$ be the local minima of $B$ on the set $\Omega':=\{\omega\in \Omega_0:\omega\text{ hits }(0,\infty)\text{ immediately after time $0$}\}$ and let $M$ be the empty set off $\Omega'$. Then $M$ is a local dense stationary random countable set, which is not perfectly stationary.  In fact, the property that $M=h+M(\Delta_h)$ for all $h\in \mathbb{R}$ fails a.s.-$\WW$.
\end{example}

%\end{remark}
%\begin{remark}
%Suppose the random countable set $M$ is stationary-in-law and let $S$, $S_u$ and $P$ be as in the definition. Then for all real $s<t$, $S\vert_{(s,t)}$ a measurable enumeration $S$ of $M\cap (s,t)$
%  (cf. \cite[Definition~6.8]{tsirelson-rc}), if for all real $s<t$ and $u$, there exist a measurable enumeration $S$ of $M\cap (s,t)$, a measurable enumeration $S_u$ of $M\cap (s+u,t+u)$, and a law $P$ on $((s,t)^\dagger)^\mathbb{N}\times ((s+u,t+u)^\dagger)^\mathbb{N}$, whose first marginal is the $\WW$-law of $S$, whose second marginal is the $\WW$-law of $S_u$ (in other words, we have  a coupling of $S_\star\WW$ and $(S_u)_\star\WW$) and whose $P$-a.e. $(x,x_u)$ satisfies $u+[x]=[x_u]$;
%\end{remark}

Here is the connection between stationarity and stationarity-in-law.
\begin{proposition}
Let $M$ be a random countable set with a measurable enumeration $T$. The following are equivalent. 
\begin{enumerate}[(A)]
\item  $M$ is stationary.
\item $M$ is stationary-in-law, moreover,  in Definition~\ref{definition:varia}\ref{definition:varia:c} one can take $T_u=T$ and $R=(T,T(\Delta_{-u}))_\star \WW$ for all $u\in \mathbb{R}$.
\end{enumerate}
\end{proposition}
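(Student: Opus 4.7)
The plan is to unwind both (A) and (B) into concrete $\WW$-a.s. identities between the random sets $M$ and $M\circ\Delta_{\pm u}$, and observe they are the same statement up to a change of variable via a $\WW$-preserving L\'evy shift.

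First, for (A)$\Rightarrow$(B), I would fix an arbitrary measurable enumeration $T$ of $M$ and set $T_u:=T$, $R:=(T,T(\Delta_{-u}))_\star \WW$. That $R$ is a coupling of $T_\star\WW$ with itself (so with $(T_u)_\star\WW$) is immediate from $(\Delta_{-u})_\star\WW=\WW$ (Remark~\ref{rmk:shifts-preserving}). The required condition $u+[x]=[x_u]$ for $R$-a.e.\ $(x,x_u)$ translates, via the push-forward, to
\[
 u+[T(\omega)] = [T(\Delta_{-u}(\omega))] \quad \text{for $\WW$-a.e.\ $\omega$.}
\]
Since $[T]=M$ a.s.-$\WW$ and since $\{[T]=M\}\circ\Delta_{-u}$ is also of full $\WW$-measure, the right-hand side a.s.\ equals $M(\Delta_{-u}(\omega))$, while the left-hand side a.s.\ equals $u+M(\omega)$. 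So the condition reduces to $u+M=M(\Delta_{-u})$ a.s.-$\WW$, which is exactly the instance of (A) at the parameter $-u$.

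For (B)$\Rightarrow$(A), I would reverse the above: unpacking the $R$-a.e.\ condition gives $u+M=M(\Delta_{-u})$ a.s.-$\WW$. Applying the $\WW$-preserving map $\Delta_u$, i.e.\ substituting $\omega\leftarrow \Delta_u\omega$ and using $\Delta_{-u}\circ\Delta_u=\mathrm{id}$, yields $u+M(\Delta_u)=M$ a.s.-$\WW$, which is precisely (A) at parameter $u$. Since $u\in\mathbb{R}$ is arbitrary, we are done.

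There is no real obstacle; the only subtlety is bookkeeping of null sets, namely composing the $\WW$-a.s.\ identity $[T]=M$ with the $\WW$-preserving shift $\Delta_{-u}$ to ensure $[T(\Delta_{-u})]=M(\Delta_{-u})$ a.s.-$\WW$. Everything else is a direct translation between push-forwards and a.s.\ identities.
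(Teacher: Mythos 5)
Your proof is correct and follows essentially the same route as the paper's: fix $u$, observe that $R=(T,T(\Delta_{-u}))_\star\WW$ is a self-coupling of $T_\star\WW$ because $\Delta_{-u}$ preserves $\WW$, and then unwind the $R$-a.e.\ condition into the $\WW$-a.s.\ identity $u+M=M(\Delta_{-u})$, which is Definition~\ref{definition:varia}\ref{definition:varia:f} at the parameter $-u$. Your extra step of composing with $\Delta_u$ for (B)$\Rightarrow$(A) is just a slightly more explicit way of saying that quantifying over all $u$ is the same as quantifying over all $-u$.
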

\begin{proof}
Fix $u\in \mathbb{R}$. %Let $T$ be a measurable enumeration of $M$. Put $R:=(T,T(\Delta_{-u}))_\star \WW$. 
By Remark~\ref{rmk:shifts-preserving}, $(T,T(\Delta_{-u}))_\star \WW$ is a coupling of $T_\star \WW$ with itself. Thus the property of Definition~\ref{definition:varia}\ref{definition:varia:c} with $T_u=T$ and $R=(T,T(\Delta_{-u}))_\star \WW$ is equivalent to $u+[T]=[T(\Delta_{-u})]$ a.s.-$\WW$, i.e. to $M=-u+M(\Delta_{-u})$ a.s.-$\WW$. 
% (in the context of Definition~\ref{definition:varia}\ref{definition:varia:c}, $T_u=T$), while by stationarity $\WW$-a.s. $u+[T]=u+M=M(\Delta_{-u})=[T(\Delta_{-u})]$, i.e.  for $R$-a.e. $(x,x_u)$, one has $u+[x]=[x_u]$. 
%
%[T\vert_{(s,t)}]=M\cap (s,t)=(u+M)\cap (s+u,t+u)=M(\Delta_{-u})\cap (s+u,t+u)=(M\cap (s+u,t+u))(\Delta_{-u})=[T\vert_{(s+u,t+u)}(\Delta_{-u})]=[S_u(\Delta_{-u})]$.
%
%Then in the context of the definition of the stationarity of $M$: $S:=T\vert_{(s,t)}$ is a measurable enumeration of $M\cap (s,t)$, $S_u:=T\vert_{(s+u,t+u)}$ is  a measurable enumeration of $M\cap (s+u,t+u)$, and one can take (recall Remark~\ref{rmk:shifts-preserving}) $P=(S,S_u(\Delta_{-u}))_\star \WW$ , because by stationarity-in-law $\WW$-a.s. $u+[S]=u+[T\vert_{(s,t)}]=M\cap (s,t)=(u+M)\cap (s+u,t+u)=M(\Delta_{-u})\cap (s+u,t+u)=(M\cap (s+u,t+u))(\Delta_{-u})=[T\vert_{(s+u,t+u)}(\Delta_{-u})]=[S_u(\Delta_{-u})]$.
\end{proof}
%\begin{remark}
%Glancing at the preceding proof we see that stationarity of $M$ is equivalent to being able to take, in the definition of stationarity, $T_u=T$ and $R=(T,T(\Delta_{-u}))_\star \WW$ for all $u\in \mathbb{R}$. 
%\end{remark}
Stationarity is not implied by stationarity-in-law, even when coupled with locality; it is a (much) stronger condition.
\begin{example}\label{example:not-stationary}
Let $M$ be the local minima of $B$ on the positive half-line and the local maxima of $B$ on the negative half-line. Then $M$ is equal in law to the local minima of $B$ (and hence also equal in law to the local maxima of $B$), is local and stationary-in-law, but  is not stationary. The locality and non-stationarity of $M$ are evident. Since the local minima are stationary and therefore stationary-in-law the same must be true of $M$, provided the stipulated equality in law does indeed hold true. It does, because we can enumerate the local minima of $B$ on the positive half-line with  the $S_k$, $k\in \mathbb{N}$ odd, and the local maxima of $B$ on the negative half-line with the $S_k$, $k\in \mathbb{N}$ even; then take the coupling $(S,\tilde S)_\star \WW$, where $\tilde S$ is equal to $S$ on the odd natural numbers, but equal to $S\circ (-B)$ on the even natural numbers (so that $\tilde S$ is an enumeration of the local minima).
%For stationarity-in-law and (without loss of generality, by symmetry) the shift $u\in (0,\infty)$  we may separate $\mathbb{N}$ into [for emphasis: deterministic] disjoint countably infinite subsets I, II, III and take for $T$ an enumeration of the local maxima on $(-\infty,0)$ in group I, of the local minima on $(0,u)$ in group II and of the local minima on $(u,\infty)$ in group III; then $T_u=T$ and $R=(T,T')_\star \WW$ do the job in Definition~\ref{definition:varia}\ref{definition:varia:c} with $T'$ equal to $T\circ \Delta_{-u}$ in groups I \& III, but equal to $T\circ (-\Delta_{-u})$ in group II.
\end{example}

We bring our definition of stationarity-in-law in line with  \cite[Definition~6.8]{tsirelson-rc}, modulo of course the natural changes that need to be made to go from $(0,1)$ of \cite{tsirelson-rc} to the real line $\mathbb{R}$ as the ambient set.  Note that our locality trivially implies the ``independence'' of \cite[Definition~4.2]{tsirelson-rc}. Thus it is not surprising that assuming the set to be also local makes it possible to strengthen the property of \cite[Definition~6.8]{tsirelson-rc} to one which works with ``local enumerations'' (the ``moreover'' part below).

\begin{proposition}\label{proposition:reinforce-local+stationary-in-law}
Suppose the random countable set $M$ is  stationary-in-law. For all extended-real $s<t$ [not just $s=-\infty$ and $t=\infty$] and all real $u$, $u+M\cap (s,t)$ and $M\cap (s+u,t+u)$ have the same law; if $M$ is local, then, moreover, there exist an $\FF_{s,t}$-measurable enumeration $S'$ of $M\cap (s,t)$, an $\FF_{s+u,t+u}$-measurable enumeration $S_u'$ of $M\cap (s+u,t+u)$, and a coupling $P'$ %on $((s,t)^\dagger)^\mathbb{N}\times ((s+u,t+u)^\dagger)^\mathbb{N}$ 
of $S'_\star\WW$ and $(S_u')_\star \WW$ such that for $P'$-a.e. $(x',x_u')$ one has $u+[x']=[x_u']$.% (cf.). % (in  a manner of speaking, the law of $S$ is the same as the law of $S_u$ up to a random permutation).
 %in the context of the definition of the stationarity of $M$ one may insist that $S$ is $\FF_{s,t}$-measurable and that $S_u$ is $\FF_{s+u,t+u}$-measurable.
\end{proposition}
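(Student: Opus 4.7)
I would obtain the claimed coupling by pushing the coupling supplied by stationarity-in-law through a measurable restriction-and-translation map; for the ``moreover'' part I would combine this with the locally measurable enumerations furnished by locality, using the gluing lemma for couplings on standard Borel spaces.

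For the first assertion, fix measurable enumerations $T,T_u$ of $M$ and a coupling $R$ of $T_\star\WW$ with $(T_u)_\star\WW$ such that $u+[x]=[x_u]$ for $R$-a.e.\ $(x,x_u)$, as stationarity-in-law provides. Consider the measurable map $\phi(x,x_u):=(u+x\vert_{(s,t)},\,x_u\vert_{(s+u,t+u)})$ (with the convention $u+\dagger:=\dagger$) on the enumeration space, and set $\widetilde P:=\phi_\star R$. The first marginal of $\widetilde P$ is the law of the enumeration $u+T\vert_{(s,t)}$ of $u+(M\cap(s,t))$, while the second marginal is the law of $T_u\vert_{(s+u,t+u)}$, which enumerates $M\cap(s+u,t+u)$ because $[T_u]=M$ $\WW$-a.s. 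A direct set-theoretic calculation, using $u+[x]=[x_u]$, yields $[u+x\vert_{(s,t)}]=u+([x]\cap(s,t))=(u+[x])\cap(s+u,t+u)=[x_u]\cap(s+u,t+u)=[x_u\vert_{(s+u,t+u)}]$ for $R$-a.e.\ $(x,x_u)$, so $\widetilde P$ witnesses the equality in law of $u+(M\cap(s,t))$ and $M\cap(s+u,t+u)$.

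Now assume $M$ is local; by locality pick an $\FF_{s,t}$-measurable enumeration $S'$ of $M\cap(s,t)$ and an $\FF_{s+u,t+u}$-measurable enumeration $S_u'$ of $M\cap(s+u,t+u)$. Assemble three couplings: (a) $\alpha_1:=(u+S',\,u+T\vert_{(s,t)})_\star\WW$, whose two coordinates both enumerate $u+(M\cap(s,t))$ $\WW$-a.s., hence have equal ranges a.s.; (b) the coupling $\widetilde P$ from Part~1; (c) $\alpha_2:=(T_u\vert_{(s+u,t+u)},\,S_u')_\star\WW$, whose two coordinates both enumerate $M\cap(s+u,t+u)$ $\WW$-a.s. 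Because the marginals match along the obvious seams and $(\mathbb{R}^\dagger)^{\mathbb{N}}$ is standard Borel, iterated application of the gluing lemma produces a four-fold coupling $\mu$ whose consecutive two-coordinate projections are $\alpha_1,\widetilde P,\alpha_2$ and under which the ranges in all four coordinates coincide $\mu$-a.s. Projecting $\mu$ onto its first and fourth coordinates and pushing forward by $(z_1,z_2)\mapsto(z_1-u,z_2)$ (coordinate-wise, $\dagger-u:=\dagger$) delivers the required coupling $P'$ of $S'_\star\WW$ with $(S_u')_\star\WW$ satisfying $u+[x']=[x_u']$ $P'$-a.s.

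The principal obstacle is essentially bookkeeping: at every step one must track which random sequence enumerates which random subset of $\mathbb{R}$ and verify that the restriction and translation operations on sequences intertwine properly with range-formation, specifically $[x\vert_{(a,b)}]=[x]\cap(a,b)$ and $[u+x]=u+[x]$ (modulo the $\dagger$-conventions). Once these set-theoretic identities are at hand, both the push-forward in Part~1 and the iterated gluing in Part~2 are routine applications of standard measure-theoretic machinery.
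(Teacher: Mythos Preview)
Your argument is correct and follows the same overall strategy as the paper. For the first assertion both you and the paper push the global coupling $R$ through the restriction-to-$(s,t)$ / restriction-to-$(s+u,t+u)$ maps (the paper leaves the $u$-shift outside the first coordinate, you fold it in; this is immaterial). For the ``moreover'' part, both arguments bridge from the restricted enumerations $T\vert_{(s,t)}$, $T_u\vert_{(s+u,t+u)}$ to the locally measurable ones $S'$, $S_u'$ via the tautological ``same-range'' couplings $(S',T\vert_{(s,t)})_\star\WW$ and $(T_u\vert_{(s+u,t+u)},S_u')_\star\WW$.

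The only substantive difference is in how that bridging is carried out. The paper writes down explicit disintegration kernels $q(\dd x',x)$, $q_u(x_u,\dd x_u')$ of these tautological couplings against their shared marginals, defines $P'$ by integrating them against $P$, and then verifies $P'(A)=1$ through a chain of Tonelli manipulations that successively replace $x'$ by $x$ and $x_u'$ by $x_u$. You instead invoke the gluing lemma on standard Borel spaces twice and read off the conclusion by transitivity of range-equality along the four-coordinate chain. Your packaging is shorter and more conceptual; the paper's is self-contained (no appeal to an external lemma) and makes all the measure-theoretic steps explicit. Since the gluing lemma is itself proved by disintegration, the two arguments are the same underneath.
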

\begin{proof}
Let $T$, $T_u$ and $R$ be as in Definition~\ref{definition:varia}\ref{definition:varia:c}. Then $S:=T\vert_{(s,t)}$ is a measurable enumeration of $M\cap (s,t)$, while $S_u:=T_u\vert_{(s+u,t+u)}$ is  a measurable enumeration of $M\cap (s+u,t+u)$. Furthermore $P:=(\mathrm{pr}_1\vert_{(s,t)},\mathrm{pr}_2\vert_{(s+u,t+u)})_\star R$ is a coupling of $S_\star\WW$ and $(S_u)_\star\WW$ such that for $P$-a.e. $(x,x_u)$ one has $u+[x]=[x_u]$. This gives the first statement. To get its strengthening, we assume $M$ is local and remedy the coupling as follows.

Using locality, let $S'$ be an $\FF_{s,t}$-measurable enumeration of $M\cap (s,t)$ and let $S_u'$ be an $\FF_{s+u,t+u}$-measurable enumeration of $M\cap (s+u,t+u)$. Put $Q:=(S',S)_\star \WW$ and $Q_u:=(S_u,S'_u)_\star \WW$. Let 
\begin{equation*}
Q(\dd(x',x))=:q(\dd x',x)\WW(S\in \dd x),\quad (x',x)\in ((s,t)^\dagger)^\mathbb{N}\times ((s,t)^\dagger)^\mathbb{N},
\end{equation*}
 be a disintegration of $Q$ against the second marginal; likewise let  
 \begin{equation*}Q_u(\dd (x_u,x_u'))=:\WW(S_u\in \dd x_u)q_u(x_u,\dd x_u'),\quad (x_u,x'_u)\in ((s+u,t+u)^\dagger)^\mathbb{N}\times ((s+u,t+u)^\dagger)^\mathbb{N},
 \end{equation*}
 be a disintegration of $Q_u$ against the first marginal. Then define $$P'(\dd(x',x_u')):=\int q(\dd x',x)q(x_u,\dd x_u')P(\dd(x,x_u)),\quad (x',x'_u)\in ((s,t)^\dagger)^\mathbb{N}\times ((s+u,t+u)^\dagger)^\mathbb{N},$$ which is a coupling of $(S')_\star\WW$ and of $(S'_u)_\star\WW$; for instance, the computation
\begin{align*}
\WW(S'_u\in \dd x_u')&= Q_u(( (s+u,t+u)^\dagger)^\mathbb{N}\times  \dd x_u')=\int q_u(x_u,\dd x_u')\WW(S_u\in \dd x_u)=\int q_u(x_u,\dd x_u')P(\dd(x,x_u))\\
&=\int \int q(\dd x',x)q_u(x_u,\dd x_u')P(\dd(x,x_u))= P'(((s,t)^\dagger)^\mathbb{N}\times \dd x_u'),\quad x'_u\in ((s+u,t+u)^\dagger)^\mathbb{N},
 \end{align*} shows that $P'$ has the correct second marginal. 
 Furthermore, note that because $[S']=M\cap (s,t)=[S]$ a.s.-$\WW$, we have $[x']=[x]$ for $Q$-a.e. $(x',x)$, so  $[x']=[x]$ a.e.-$q(\dd x',x)$ for $S_\star \WW$-a.e. $x$. Likewise $[x_u]=[x_u']$ a.e.-$q_u(x_u,\dd x_u')$ for $(S_u)_\star \WW$-a.e. $x_u$.
 
 Letting now $p_1$, resp. $p_2$, be a disintegration of $P$ against the first, resp. second marginal, we see, putting $A:=\{(x'',x_u'')\in ((s,t)^\dagger)^\mathbb{N}\times ((s+u,t+u)^\dagger)^\mathbb{N}:u+[x'']=[x''_u]\}$, that 
\begin{align*}
P'(A) &=\int P(\dd(x,x_u))\int q(\dd x',x)\int q_u(x_u,\dd x_u')\mathbbm{1}_A(x',x_u')\\ 
&=\int \WW(S\in \dd x)\int p_1(x,\dd x_u)\int q(\dd x',x)\int q_u(x_u,\dd x_u')\mathbbm{1}_A(x',x_u')\\ 
&=\int \WW(S\in \dd x)\int q(\dd x',x)\int p_1(x,\dd x_u)\int q_u(x_u,\dd x_u')\mathbbm{1}_A(x',x_u')\\ 
&=\int \WW(S\in \dd x)\int q(\dd x',x)\int p_1(x,\dd x_u)\int q_u(x_u,\dd x_u')\mathbbm{1}_A(x,x_u')\\ 
&=\int \WW(S\in \dd x)\int p_1(x,\dd x_u)\int q(\dd x',x)\int q_u(x_u,\dd x_u')\mathbbm{1}_A(x,x_u')\\ 
&=\int P(\dd(x,x_u))\int q(\dd x',x)\int q_u(x_u,\dd x_u')\mathbbm{1}_A(x,x_u')\\ 
&=\int \WW(S_u\in \dd x_u)\int p_2(\dd x,x_u)\int q(\dd x',x)\int q_u(x_u,\dd x_u')\mathbbm{1}_A(x,x_u')\\ 
&=\int \WW(S_u\in \dd x_u)\int q_u(x_u,\dd x_u')\int p_2(\dd x,x_u)\int q(\dd x',x)\mathbbm{1}_A(x,x_u')\\ 
&=\int \WW(S_u\in \dd x_u)\int q_u(x_u,\dd x_u')\int p_2(\dd x,x_u)\int q(\dd x',x)\mathbbm{1}_A(x,x_u)\\ 
&=\int \WW(S_u\in \dd x_u)\int p_2(\dd x,x_u)\int q(\dd x',x)\int q_u(x_u,\dd x_u')\mathbbm{1}_A(x,x_u)\\ 
&=\int P(\dd(x,x_u))\int q(\dd x',x)\int q_u(x_u,\dd x_u')\mathbbm{1}_A(x,x_u)\\ 
&=\int P(\dd(x,x_u))\mathbbm{1}_A(x,x_u)=P(A)=1,
\end{align*}
the last equality being true due to the very nature of $P$. With these $S'$, $S_u'$ and $P'$ the proof is complete.
\end{proof}

A hit-or-miss stationary set is trivially hit-or-miss quasi-stationary.  Less obvious is the implication of
\begin{proposition}\label{proposition:stationary-in-law-gives-kendall}
If the  random countable set $M$ is stationary-in-law, then $M$ is hit-or-miss stationary. %for all $u\in \mathbb{R}$ and all $E\in \mathcal{B}_\mathbb{R}$ one has  $\WW(M\cap E=\emptyset)=\WW(M\cap (E+u)=\emptyset)$; moreover, for all events $A$ from the ``hit-or-miss $\sigma$-algebra'' $\mathfrak{h}:=\sigma_{2^\mathbb{R}}(\{\cdot \cap E=\emptyset\}:E\in \mathcal{B}_\mathbb{R})$ the map $(\mathbb{R}\ni x\mapsto \WW(x+M\in A))$ is constant (this property what is called stationary-in-law in \cite[Definition~2.5(i)]{kendall_2000}; we give it no name here).
\end{proposition}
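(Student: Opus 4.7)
The plan is to reduce hit-or-miss stationarity to the statement that for every $u \in \mathbb{R}$ and every $A \in \mathfrak{h}$ one has $\WW(u + M \in A) = \WW(M \in A)$; specializing $u = x$ then delivers the constancy of the map $x \mapsto \WW(x + M \in A)$.

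The preparatory step consists of two measurability observations. First, the range map $\Phi : (\mathbb{R}^\dagger)^\mathbb{N} \to 2^\mathbb{R}$ defined by $\Phi(s) := [s]$ is $\mathfrak{h}$-measurable: since $\mathfrak{h}$ is generated by the avoidance events $2^{\mathbb{R}\backslash E}$, $E \in \mathcal{B}_\mathbb{R}$, this reduces to noting that
\begin{equation*}
\Phi^{-1}(2^{\mathbb{R}\backslash E}) = \bigcap_{i \in \mathbb{N}} \{s : s(i) \in \mathbb{R}^\dagger \backslash E\},
\end{equation*}
which lies in the product Borel $\sigma$-field because $\mathbb{R}^\dagger \backslash E = (\mathbb{R}\backslash E) \cup \{\dagger\} \in \mathcal{B}_{\mathbb{R}^\dagger}$. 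A similar check on generators shows that the translation map $K \mapsto u + K$ is $\mathfrak{h}/\mathfrak{h}$-measurable, so $A - u := \{K \subset \mathbb{R} : u + K \in A\}$ belongs to $\mathfrak{h}$ whenever $A$ does.

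The main step is to invoke the coupling supplied by stationarity-in-law. Fix $u \in \mathbb{R}$ and $A \in \mathfrak{h}$, and pick enumerations $T$, $T_u$ of $M$ together with a coupling $R$ of $T_\star \WW$ and $(T_u)_\star \WW$ as in Definition~\ref{definition:varia}\ref{definition:varia:c}. Let $(X, X_u)$ denote the coordinate maps on the product space carrying $R$. Both $\Phi(X)$ and $\Phi(X_u)$ have law $M_\star \WW$ under $R$ (since $[T] = M = [T_u]$ a.s.-$\WW$), while the defining property of $R$ forces $\Phi(X_u) = u + \Phi(X)$ $R$-a.s. Combining these and using the measurability from the preparatory step,
\begin{equation*}
\WW(M \in A) = R(\Phi(X_u) \in A) = R(u + \Phi(X) \in A) = R(\Phi(X) \in A - u) = \WW(u + M \in A),
\end{equation*}
which is what is required.

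The only real obstacle is the measurability bookkeeping at the level of $\mathfrak{h}$; once $\Phi$ is shown to be $\mathfrak{h}$-measurable and $\mathfrak{h}$ is seen to be stable under translation, the $R$-a.s. set-theoretic identity $\Phi(X_u) = u + \Phi(X)$ can legitimately be fed into the event $A$, and everything else is just unwinding the coupling.
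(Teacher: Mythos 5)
Your proof is correct, and the underlying idea — invoke the coupling $R$ from Definition~\ref{definition:varia}\ref{definition:varia:c} and transfer the a.s. identity $u+[x]=[x_u]$ through to $\mathfrak{h}$-events — is the same as the paper's. The difference is purely in how you propagate from generators to all of $\mathfrak{h}$: the paper verifies the equality $\WW(M\cap E=\emptyset)=\WW(M\cap(E+u)=\emptyset)$ directly on the $\pi$-system of miss-events and then appeals to Dynkin's lemma, whereas you front-load the work by establishing that the range map $\Phi(s)=[s]$ is $\mathfrak{h}$-measurable and that $K\mapsto u+K$ is $\mathfrak{h}/\mathfrak{h}$-measurable, after which the a.s. identity $\Phi(X_u)=u+\Phi(X)$ can be fed into an arbitrary $A\in\mathfrak{h}$ in one stroke, with no Dynkin step. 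Your measurability check of $\Phi^{-1}(2^{\mathbb{R}\backslash E})$ is essentially the same computation the paper performs on miss-events, just carried out at the level of the map rather than at the level of the event, so the two proofs are really organizational variants of the same argument. The one place to be slightly careful — and you handle it correctly, if implicitly — is that $\{M\in A\}$ lies in $\GG$ for $A\in\mathfrak{h}$ only because $\GG$ is $\WW$-complete and $M=[T]$ a.s.; this is what licenses writing $R(\Phi(X)\in A-u)=\WW(M\in A-u)$.
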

\begin{proof}
Take $u\in \mathbb{R}$ and  $E\in \mathcal{B}_\mathbb{R}$ and let $T$, $T_u$  and $R$ be as in Definition~\ref{definition:varia}\ref{definition:varia:c}. Then $\WW(M\cap E=\emptyset)=\WW([T]\cap E=\emptyset)=\WW(([T]+u)\cap (E+u)=\emptyset)=R(([\pr_1]+u)\cap (E+u)=\emptyset)=R([\pr_2]\cap (E+u)=\emptyset)=\WW([T_u]\cap (E+u)=\emptyset)=\WW(M\cap (E+u)=\emptyset)$. It remains to note that the constancy property of Definition~\ref{definition:varia}\ref{definition:varia:d} is a  Dynkin system in $A\in 2^{2^\mathbb{R}}$ containing the  $\mathfrak{h}$-generating $\pi$-system consisting of the ``miss-events'' $\{\cdot \cap E=\emptyset\}=2^{\mathbb{R}\backslash E}$, $E\in \mathcal{B}_\mathbb{R}$.
\end{proof}
Let us explore how locality and stationarity affect the probability of inclusion of a deterministic real number into the random countable set under inspection.
\begin{proposition}\label{SIL:probability-zero}
A local random countable set $M$ satisfies $\WW(x\in M)\in \{0,1\}$ for all except at most denumerably many $x\in \mathbb{R}$. If $M$ is hit-or-miss quasi-stationary, then $\WW(x\in M)=0$ for all $x\in \mathbb{R}$. %The latter conclusion prevails also if $M$ is just stationary.
\end{proposition}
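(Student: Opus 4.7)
My plan is to prove the two assertions essentially independently, using the first to supply the ``seed'' of zeros needed for the second.

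For the first assertion, observe that locality is not actually required. Let $S=(S_i)_{i\in\mathbb{N}}$ be any measurable enumeration of $M$. Each pushforward $(S_i)_\star\WW$ is a probability measure on $\mathbb{R}^\dagger$, so it has at most countably many atoms. Hence the set
\[
E:=\bigcup_{i\in\mathbb{N}}\{x\in\mathbb{R}:\WW(S_i=x)>0\}
\]
is countable. For $x\in\mathbb{R}\setminus E$ one has $\WW(x\in[S])\leq \sum_{i\in\mathbb{N}}\WW(S_i=x)=0$, and since $M=[S]$ a.s.-$\WW$ this gives $\WW(x\in M)=0\in\{0,1\}$. This proves the first assertion (indeed in the slightly stronger form ``$\WW(x\in M)=0$'' off a countable set). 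If one wishes to invoke locality to get the sharper conclusion ``$\WW(x\in M)\in\{0,1\}$ for \emph{every} $x$'', one would observe that for any $s<x<t$ locality gives an $\FF_{s,t}$-measurable enumeration of $M\cap(s,t)$, forcing $\{x\in M\}\in\FF_{s,t}$ for every such $s,t$; the event then lies in the germ $\bigcap_{\epsilon>0}\FF_{x-\epsilon,x+\epsilon}$, which is $\WW$-trivial by the independent increments argument of Remark~\ref{rmk:zero-one} (as $\FF_{-\infty,x}\vee\FF_{x,\infty}=\GG$ modulo null sets, so the germ is independent of itself). But for the statement as written the atom argument suffices.

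For the second assertion, assume in addition that $M$ is hit-or-miss quasi-stationary. Fix an arbitrary $z\in\mathbb{R}$ and set
\[
A_z:=\{K\in 2^{\mathbb{R}}:z\in K\}=2^{\mathbb{R}}\setminus 2^{\mathbb{R}\setminus\{z\}}.
\]
Since $\{z\}\in\mathcal{B}_{\mathbb{R}}$, the miss-event $2^{\mathbb{R}\setminus\{z\}}$ lies in $\mathfrak{h}$, and hence so does its complement $A_z$. For every $y\in\mathbb{R}$,
\[
\{y+M\in A_z\}=\{z\in y+M\}=\{z-y\in M\},
\]
so $\WW(y+M\in A_z)=\WW(z-y\in M)$. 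By hit-or-miss quasi-stationarity this quantity is either $=0$ for all $y\in\mathbb{R}$ or $>0$ for all $y\in\mathbb{R}$. By the first assertion we may pick $x_0\in\mathbb{R}\setminus E$ with $\WW(x_0\in M)=0$; choosing $y:=z-x_0$ makes $\WW(y+M\in A_z)=\WW(x_0\in M)=0$, so the first alternative prevails. Taking $y=0$ yields $\WW(z\in M)=0$, and $z$ was arbitrary.

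There is no serious obstacle here: the only mildly subtle points are recognising that the first assertion is simply about the atoms of the marginal laws of an enumeration (so needs no structural hypothesis on $M$ beyond being a random countable set), and spotting that one should feed to the hit-or-miss quasi-stationarity assumption the \emph{complement} of the obvious singleton miss-event, since $\mathfrak{h}$ is a $\sigma$-algebra rather than merely the collection of miss-events.
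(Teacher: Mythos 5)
Your proof is correct. For the second assertion you follow essentially the same route as the paper: feed the hit-event $\{K : z \in K\} \in \mathfrak{h}$ to the hit-or-miss quasi-stationarity hypothesis and rule out the ``$>0$ for all shifts'' alternative by producing one point of probability zero (the paper phrases this slightly more directly via $\WW(\{0\}\cap(M-x)\ne\emptyset)$, but the two arguments are the same).

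For the first assertion, however, your route is genuinely different from the paper's. The paper invokes locality to make the events $(\{x\in M\})_{x\in\mathbb{R}}$ an independency, and then argues that only countably many of them can be non-trivial, since an uncountable family of non-trivial independent events would give an uncountable orthogonal family of centered indicators in the separable Hilbert space $\mathrm{L}^2(\WW)$. You bypass all of this by simply bounding $\WW(x\in M)\leq\sum_i\WW(S_i=x)$ and counting atoms of the marginal laws $(S_i)_\star\WW$; this requires no locality at all and produces the stronger conclusion $\WW(x\in M)=0$ off a countable set. Your observation is right, and in fact the paper itself uses exactly this atom-counting device inside its proof of the \emph{second} assertion, so your proof exposes a redundancy in the way the paper organizes the argument. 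Your auxiliary sketch of the germ $\sigma$-field argument (to upgrade to $\WW(x\in M)\in\{0,1\}$ for \emph{every} $x$) is also sound, resting on the triviality of $\bigcap_{\epsilon>0}\FF_{x-\epsilon,x+\epsilon}$, which follows from independence of $\FF_{-\infty,x}$ and $\FF_{x,\infty}$ together with right/left continuity of the Brownian filtration; this is not needed for the statement as written but is a tidy additional remark.
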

\begin{proof}
Consider the first statement. By locality the family of events $(\{x\in M\})_{x\in \mathbb{R}}$ is an independency. Since (the domain of) $\WW$ is essentially separable, the first claim follows (otherwise one could construct, by centering the indicators of those events $\{x\in M\}$, $x\in \mathbb{R}$, which are non-trivial, an uncountable orthogonal family of non-zero vectors in the separable Hilbert space $\mathrm{L}^2(\WW)$, which cannot be). Suppose now $M$ is hit-or-miss quasi-stationary. Then by definition $\WW(x\in M)=\WW(\{x\}\cap M\ne \emptyset)=\WW(\{0\}\cap (M-x)\ne \emptyset)$ is $=0$ or $>0$, one or the other simultaneously for all $x\in \mathbb{R}$. It cannot be the latter, because if $S=(S_k)_{k\in \mathbb{N}}$ is a measurable enumeration of $M$, then in order for $\WW(x\in M)=\WW(x\in [S])$ to be $>0$, $x$ must be an atom of one of the $(S_k)_\star(\WW\vert_{\{S_k\in\mathbb{R}\}})$, $k\in \mathbb{N}$, and there can be only countably many such atoms (of course, in consequence, there are in fact none).% The proof of the second statement is essentially the same: if $M$ is stationary, then $\WW(x\in M)$ is constant in $x\in \mathbb{R}$, and we continue as before.
\end{proof}
%
%\begin{remark}\label{remark:quasi-stationary}
%In fact, instead of stationarity-in-law, it would have been enough in the preceding proposition to assume that $\WW(x\in M)$ is either $=0$ or $>0$ simultaneously for all $x\in \mathbb{R}$, the proof goes through essentially verbatim. By Proposition~\ref{proposition:stationary-in-law-gives-kendall} any stationary-in-law random countable set $M$ meets this property, in fact $\WW(x+M\in A)$ is either $=0$ or $>0$ simultaneously for all $x\in \mathbb{R}$ for any fixed $A$ from the ``hit-or-miss $\sigma$-algebra'' $\mathfrak{h}$ ($M$ is quasi-stationary in the terminology of \cite[Definition~2.5(ii)]{kendall_2000}; we retain this naming).
%\end{remark}

\begin{example}
Recall Example~\ref{example:stationary-dense-not-local}. For an instance of a dense stationary random countable set that is not a deterministic translate of a  local set, we may proceed as follows. Take two stationary dense local random countable sets $M^1$ and $M^2$, two distinct real numbers, $h_1$ and $h_2$, notice that by enumerability, locality, and the conclusion of Proposition~\ref{SIL:probability-zero} in the stationary part, $(h_1+M_1)\cap (h_2+ M_2)$ is empty, and form $\tilde M:=(h_1+M_1)\cup (h_2+M_2)$. $\tilde M$ is evidently a stationary dense random countable set, but it is not a deterministic translate of a local set. Suppose indeed there were an $h\in \mathbb{R}$ and a local random countable set $M$ such that $h+M=\tilde M$, i.e. $M=(h_1-h+M_1)\cup (h_2-h+M_2)$. Inspecting the latter on $(-\epsilon,\epsilon)$ for $\epsilon\in (0,(\vert h_1-h\vert\lor\vert h_2-h\vert)/2)$ yields an immediate contradiction with enumerability, locality,  density and the conclusion of Proposition~\ref{SIL:probability-zero} in the stationary part (again).
\end{example}

Combining locality with hit-or-miss stationarity we get the property of the conclusion of
\begin{proposition}\label{proposition:dense-or-empty}
Let $M$ be local and hit-or-miss stationary. Then $M$ is dense or empty.
\end{proposition}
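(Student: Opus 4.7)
Set $g(L):=\WW(M\cap (0,L)=\emptyset)$ for $L>0$. Hit-or-miss stationarity applied to the miss event $\{S\in 2^{\mathbb{R}}:S\cap (0,L)=\emptyset\}\in \mathfrak{h}$ makes the probability $\WW(M\cap (a,a+L)=\emptyset)$ independent of $a$, so $g$ is a well-defined function of the length $L$ alone. I would then derive the Cauchy-type relation $g(L_1+L_2)=g(L_1)g(L_2)$ by combining two inputs: (i) locality, which yields the independence of $\{M\cap (0,L_1)=\emptyset\}\in \FF_{0,L_1}$ and $\{M\cap (L_1,L_1+L_2)=\emptyset\}\in \FF_{L_1,L_1+L_2}$ (disjoint Brownian-increment sigma-fields), and (ii) Proposition~\ref{SIL:probability-zero}, whose second clause applies because $M$ is local and hit-or-miss quasi-stationary, giving $\WW(L_1\in M)=0$; this lets us identify the intersection of the two events above with $\{M\cap (0,L_1+L_2)=\emptyset\}$ modulo a $\WW$-null set, and the right-hand factor is $g(L_2)$ by the translation invariance in (i).

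Since $g$ is also nonincreasing and satisfies $g(L)\to 1$ as $L\downarrow 0$, the standard functional-equation argument for multiplicative nonincreasing functions on $(0,\infty)$ delivers $g(L)=e^{-\lambda L}$ for a unique $\lambda \in [0,\infty]$. The two extreme values dispatch the stated dichotomy. If $\lambda=0$, then $g\equiv 1$, so $\WW(M\cap (q_1,q_2)=\emptyset)=1$ for every pair of rationals $q_1<q_2$; by countability of $\mathbb{Q}^2$ this forces $M=\emptyset$ a.s. If $\lambda=\infty$, then $g\equiv 0$ on $(0,\infty)$, so $\WW(M\cap (q_1,q_2)\neq\emptyset)=1$ for every such rational pair, and again countability gives that $M$ is dense a.s.

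The main obstacle, as one would anticipate, is the elimination of the intermediate regime $\lambda\in (0,\infty)$. In this regime, locality plus the independence of disjoint-interval sigma-fields upgrades the one-point formula $g(L)=e^{-\lambda L}$ to the statement that $\#(M\cap I)$ is distributed as $\mathrm{Poisson}(\lambda\lll(I))$ for every bounded interval $I$, so that $M$ would have the finite-dimensional law of a rate-$\lambda$ Poisson point process on $\mathbb{R}$. In particular $\tau:=\inf M\cap (0,\infty)$ would be an $\FF^{0,\rightarrow}$-stopping time of $\mathrm{Exp}(\lambda)$-distribution, independent of its mirror image on the negative half-line by locality, with analogous ``next point'' $\FF_{x,\infty}$-stopping times $\tau(x)$ attached to every starting position, and $\tau(x)-x$ distributionally $\mathrm{Exp}(\lambda)$ by hit-or-miss stationarity. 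I expect the contradiction to come from this global compatibility requirement: a naive strong-Markov construction of such a family of stopping times is patently not local in the strict two-sided sense demanded of $M$, and a finer argument — leveraging the specific structure of Wiener noise — should show that no adjustment of that construction can achieve locality across \emph{all} subintervals of $\mathbb{R}$ simultaneously while still being hit-or-miss stationary, thereby forcing $\lambda\in \{0,\infty\}$ and so completing the proof.
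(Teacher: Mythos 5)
Your preliminary reductions match the paper's proof exactly: hit-or-miss stationarity makes $g(L):=\WW(M\cap(0,L)=\emptyset)$ a function of the length alone, locality plus the second clause of Proposition~\ref{SIL:probability-zero} gives the multiplicativity $g(L_1+L_2)=g(L_1)g(L_2)$, and monotonicity then yields $g(L)=e^{-\lambda L}$ with $\lambda\in[0,\infty]$, the extreme cases giving empty and dense. So far so good, and the paper agrees.

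The gap is the step you yourself flag as the main obstacle: ruling out $\lambda\in(0,\infty)$. You give no proof here, only a wish — ``I expect the contradiction to come from this global compatibility requirement... a finer argument... should show that no adjustment of that construction can achieve locality.'' That is not an argument; nothing concrete is derived, and it is not clear what ``global compatibility'' means operationally. Worse, the claim you do make rigorously along the way — that the avoidance function $g(L)=e^{-\lambda L}$ together with interval-independence already forces $\#(M\cap I)$ to be $\mathrm{Poisson}(\lambda\lll(I))$ for every interval $I$ — is \emph{not} immediately available and the paper explicitly cautions against reasoning this way (see the parenthetical ``are you careful enough?'' remark in the proof), noting that establishing $M$ to be a Poisson process directly would require first producing a suitably nice version of $M$ allowing strong-Markov-type arguments, which is not available at this point in the development. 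Also, the mere existence of an $\mathrm{Exp}(\lambda)$-distributed $\FF^{0,\rightarrow}$-stopping time $\tau$ is not by itself contradictory in a Brownian filtration.

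The paper's actual argument for eliminating $\lambda\in(0,\infty)$ is short and concrete and you should compare it to your intuition. With $S:=\inf\{h>0:M\cap(0,h)\ne\emptyset\}$ — your $\tau$, exponentially distributed with rate $\lambda$ and an $\FF^{0,\rightarrow}$-stopping time by locality — the paper shows by direct computation (using locality for the conditional independence and the memoryless property from $g(L)=e^{-\lambda L}$) that $(\mathbbm{1}_{\{S\leq t\}}-\lambda(S\land t))_{t\ge 0}$ is a martingale in the Brownian filtration $\FF^{0,\rightarrow}$. This process has a jump of size $1$ at $S$, but every martingale in a Brownian filtration has a continuous version; contradiction. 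This uses only the one-point formula $g(L)=e^{-\lambda L}$, not any upgrading to a full Poisson law, so it sidesteps the difficulty you would otherwise run into.
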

%\begin{remark}\label{remark:empty-or-not-empty}
A related but trivial fact is that a stationary random countable set is either empty or $\WW$-a.s. not empty. This is just because for $v\in \mathbb{R}$, $\Delta_v^{-1}(\{M=\emptyset\})=\{M(\Delta_v)=\emptyset\}=\{v+M(\Delta_v)=\emptyset\}=\{M=\emptyset\}$ a.s.-$\WW$, so one can apply Remark~\ref{rmk:zero-one}.
%\end{remark}
\begin{proof}
Since $M$ is hit-or-miss stationary the quantity $f(a):=\WW(M\cap (h,h+a)=\emptyset)$ depends on the real parameters $h$ and $a\geq 0$ through $a$ only; by locality and because of Proposition~\ref{SIL:probability-zero}, $f(a+b)=f(a)f(b)$ for $\{a,b\}\subset [0,\infty)$. By monotonicity of $f:[0,\infty)\to [0,1]$ it follows that $f(a)=e^{-\lambda a}$ for all $a\in (0,\infty)$ for some $\lambda\in [0,\infty]$. If $\lambda=0$ it means that a.s. $M$ is empty; suppose $\lambda>0$. Then $M$ being dense a.s. will  follow if we can show that $\lambda=\infty$. Suppose per absurdum that $\lambda<\infty$. Let $S:=\inf\{h\in (0,\infty): M\cap (0,h)\ne\emptyset\}$, which is exponentially distributed with parameter $\lambda$ by the preceding and an $\FF^{0,\rightarrow}$-stopping time by locality of $M$. Moreover, the process $(\mathbbm{1}_{\{S\leq t\}}-\lambda(S\land t))_{t\in [0,\infty)}$ is a discontinuous real martingale  in the Brownian filtration $\FF^{0,\rightarrow}$: for all real $s\leq t$ a.s.-$\WW$,
\begin{align*}
\WW[\mathbbm{1}_{\{S\leq t\}}-\lambda(S\land t)\vert \FF_s]&=\mathbbm{1}_{\{S\leq s\}}(1-\lambda S)+\mathbbm{1}_{\{s<S\}}\WW[\mathbbm{1}_{\{S\leq t\}}-\lambda(S\land t)\vert \FF_s]\\
&=\mathbbm{1}_{\{S\leq s\}}(1-\lambda S)+\mathbbm{1}_{\{s<S\}}\WW[\mathbbm{1}_{\{M\cap (s,t)\ne \emptyset\}}\\
&\qquad \qquad \qquad \qquad \qquad \qquad -\lambda((s+\inf\{h\in (0,\infty):M\cap (s,s+h)\ne \emptyset\})\land t)\vert \FF_s]\\
&=\mathbbm{1}_{\{S\leq s\}}(1-\lambda S)+\mathbbm{1}_{\{s<S\}}\WW[\mathbbm{1}_{\{M\cap (s,t)\ne \emptyset\}}\\
&\qquad \qquad \qquad \qquad \qquad \qquad -\lambda((s+\inf\{h\in (0,\infty):M\cap (s,s+h)\ne \emptyset\})\land t)]\\
&=\mathbbm{1}_{\{S\leq s\}}(1-\lambda S)+\mathbbm{1}_{\{s<S\}}\left(1-e^{-\lambda (t-s)}-\lambda\left(s+\int_0^\infty \lambda e^{-\lambda u}(u\land (t-s))\dd u\right)\right)\\
&=\mathbbm{1}_{\{S\leq s\}}(1-\lambda S)+\mathbbm{1}_{\{s<S\}}\left(1-e^{-\lambda (t-s)}-\lambda\left(s+\frac{1-\mathrm{e}^{-\lambda (t-s)}}{\lambda}\right)\right)\\
&=\mathbbm{1}_{\{S\leq s\}}(1-\lambda S)-\mathbbm{1}_{\{s<S\}}\lambda s\\
&=\mathbbm{1}_{\{S\leq s\}}-\lambda(S\land s).
\end{align*}
But this cannot be.  (Incidentally, here is one of a legion of instances in this paper when one can ask oneself, ``are you careful enough?'' Indeed, one is tempted to assert that $(\vert M\cap (0,t]\vert -\lambda t)_{t\in [0,\infty)}$ in the preceding should be a compensated homogeneous Poisson process. Is it true? For sure, we had arrived at a contradiction, so anything in principle could have went. Could it have been argued ``directly''?  We feel that no, not even if we had assumed  stationarity in lieu of  hit-or-miss stationarity, at least not until one has armed oneself with nice versions of such $M$, which allow for arguments involving stopping times and the strong Markov property, cf. Proposition~\ref{proposition:two-sided-viz-one-sided} to follow.)
\end{proof}

%\begin{remark}
%\end{remark}
So, a hit-or-miss stationary, in particular a stationary, local random countable set over the Wiener noise that is not trivial (empty) is automatically dense. %In fact, instead of stationarity-in-law, it would have been enough in the preceding proposition to assume that the conclusion of Proposition~\ref{proposition:stationary-in-law-gives-kendall} holds true, the proof being the same.
As it happens, in the non-trivial (dense) case we cannot tell two local stationary random countable sets apart based solely on their laws: they are all the same as far as couplings of enumerations can see.

\begin{proposition}\label{proposition:equal-in-law}
Suppose $M_1$ and $M_2$ are two dense random countable sets, both of them local, both of them stationary-in-law. Then $M_1$ and $M_2$ have the same law. Moreover, for any probability law $\mathcal{L}$ on  the Borel sets of $\mathbb{R}$ equivalent to $\mathfrak{l}$ and for any measurable enumeration $S$ of $M_1$  there exists a coupling $R$ of $ \mathcal{L}^{\times \mathbb{N}}$ and $S_\star \WW$ such that $[x]=[y]$ for $R$-a.e. $(x,y)$.
\end{proposition}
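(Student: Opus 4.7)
The plan is to reduce the statement to the main classification theorem of \cite{tsirelson-rc} and then extract the enhanced coupling statement by a disintegration argument. For a dense local stationary-in-law set $M$, I first verify that the Tsirelson framework is in force. Locality yields the independence-under-disjoint-intervals required by \cite[Definition~4.2]{tsirelson-rc}: a measurable enumeration of $M \cap (s,t)$ can be taken $\FF_{s,t}$-measurable, which for disjoint intervals $(s,t)$ and $(u,v)$ forces $M\cap(s,t)$ and $M\cap(u,v)$ to be independent under $\WW$. Stationarity-in-law, combined with locality, strengthens to the localized translation-covariance of Proposition~\ref{proposition:reinforce-local+stationary-in-law}; this is exactly the stationarity concept of \cite[Definition~6.8]{tsirelson-rc} read across the natural dictionary from the $(0,1)$ framework of \cite{tsirelson-rc} to the $\mathbb{R}$ setting here.

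Next, invoking Tsirelson's classification, any non-empty dense random countable set satisfying these properties has the same law as the range of an i.i.d. sequence with common distribution $\mathcal{L}$ equivalent to $\mathfrak{l}$. Moreover, the law so obtained does not depend on the particular choice of $\mathcal{L}$ within the equivalence class of $\mathfrak{l}$, for two such i.i.d. ranges are trivially coupled through the quantile transform. Equality in law being transitive \cite[Remark~2.3]{tsirelson-rc}, both $M_1$ and $M_2$ are forced into this single law, yielding the first assertion.

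For the \emph{moreover} clause, fix $\mathcal{L}\sim\mathfrak{l}$ and let $S$ be any measurable enumeration of $M_1$. Tsirelson's theorem produces some enumeration $T$ of $M_1$ together with a coupling $R_0$ of $T_\star\WW$ and $\mathcal{L}^{\times\mathbb{N}}$ under which $[t]=[y]$ holds a.s. The pushforward $(S,T)_\star\WW$ is a coupling of $S_\star\WW$ and $T_\star\WW$ with $[S]=[T]=M_1$ a.s., hence $[s]=[t]$ a.s. under it. Disintegrating $(S,T)_\star\WW$ and $R_0$ against their common $T_\star\WW$-marginal and gluing them along this marginal produces the desired coupling $R$ of $S_\star\WW$ and $\mathcal{L}^{\times\mathbb{N}}$ with the range-preservation property holding $R$-a.e., along the same chain of rearrangements as in the proof of Proposition~\ref{proposition:reinforce-local+stationary-in-law}.

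The main obstacle I expect is the bookkeeping required to match our two-sided Brownian setup on $\mathbb{R}$ to the framework of \cite{tsirelson-rc} on $(0,1)$: the translation group here has to stand in for Tsirelson's group of Borel equivalence transformations of $(0,1)$, and our factorization $\FF$ plays the role of his tensor decomposition of $\L2$ along intervals. Once this translation is carried out, the classification of \cite{tsirelson-rc} does the genuine work, and the coupling enhancement is a routine disintegration.
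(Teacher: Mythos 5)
Your proposal follows essentially the same strategy as the paper: verify the hypotheses of Tsirelson's classification via Proposition~\ref{proposition:reinforce-local+stationary-in-law}, invoke \cite[Theorem~6.9]{tsirelson-rc}, reduce from the $\mathbb{R}$-setting to the $(0,1)$-setting of \cite{tsirelson-rc}, and handle the \emph{moreover} clause by composing couplings via disintegration. Two remarks. First, the ``translation'' you flag as a bookkeeping obstacle is precisely the diagonal argument in the paper: one applies \cite[Theorem~6.9]{tsirelson-rc} to $M_i\cap(k,k+1)-k$ for each $k\in\mathbb{Z}$ and patches the resulting couplings together, using the stationary part of Proposition~\ref{SIL:probability-zero} to make sure the integer endpoints carry no mass; you acknowledge this step without carrying it out, which is acceptable but worth noting. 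Second, and this is a genuine error of detail: the quantile transform does \emph{not} show that the i.i.d.\ ranges for two different laws $\mathcal{L}_1\sim\mathcal{L}_2\sim\mathfrak{l}$ have the same law in the coupling sense of the paper. If $\phi$ is the increasing bijection carrying $\mathcal{L}_1$ to $\mathcal{L}_2$ and $(X_i)$ is i.i.d.\ $\mathcal{L}_1$, then the coupling $(X,\phi(X))$ yields $[\phi(X)]=\phi([X])\neq[X]$ unless $\phi=\mathrm{id}$; the realized ranges are genuinely different sets, not the same set, so this coupling fails the range-preservation requirement. The independence of the classifying law from the choice of $\mathcal{L}$ within the equivalence class of $\mathfrak{l}$ is a real theorem and is precisely what the paper's citation of \cite[Lemma~1.2, Definition~2.4]{tsirelson-rc} supplies; it is not a triviality and your quantile-transform justification for it is incorrect. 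This does not sink the overall argument, since you can simply lean on Tsirelson for this fact as the paper does, but the stated reasoning for that step is wrong.
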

\begin{proof}
From Proposition~\ref{proposition:reinforce-local+stationary-in-law} and the comments preceding it, we know that for each $k\in \mathbb{Z}$, the sets $M_1\cap (k,k+1)-k$ and $M_2\cap (k,k+1)-k$  satisfy the ``independence condition'' and are ``stationary'' in the sense of \cite{tsirelson-rc}. Therefore, being also dense by assumption, according to \cite[Theorem~6.9]{tsirelson-rc}  they have the same law. By a routine diagonal argument, taking into account also the stationary part of Proposition~\ref{SIL:probability-zero}, we deduce that $M_1$ has the same law as $M_2$. Furthermore, from the part of \cite[Theorem~6.9]{tsirelson-rc} having to do with the ``uniform distribution'' and from \cite[Lemma~1.2, Definition~2.4]{tsirelson-rc} the second claim follows.
\end{proof}

We establish next that any stationary random countable set admits a version that is perfectly stationary. 

\begin{lemma}\label{lemma:steinhaus}
Let $A_1\subset \mathbb{R}$ and $A_2\subset \mathbb{R}$ be two co-negligible sets w.r.t. $\mathfrak{l}$. Then $A_2-A_1=\mathbb{R}$.
\end{lemma}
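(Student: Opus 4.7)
The plan is to show, for an arbitrary $x \in \mathbb{R}$, that the intersection $A_2 \cap (A_1 + x)$ is non-empty; picking $a_2$ therein and setting $a_1 := a_2 - x$ then exhibits $x = a_2 - a_1$ as an element of $A_2 - A_1$.

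The key (and only non-trivial) ingredient is translation invariance of Lebesgue measure: since $A_1^c$ is $\mathfrak{l}$-negligible, so is $(A_1+x)^c = A_1^c + x$, meaning $A_1 + x$ is co-negligible. Then $(A_2 \cap (A_1 + x))^c = A_2^c \cup (A_1 + x)^c$ is a union of two $\mathfrak{l}$-null sets, hence itself $\mathfrak{l}$-null. In particular $A_2 \cap (A_1+x)$ has full (infinite) Lebesgue measure and is therefore far from empty.

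There is no real obstacle here; the statement is a one-line consequence of translation invariance plus the fact that a null set cannot exhaust $\mathbb{R}$. The only thing to flag is that we use no topology or regularity of $A_1,A_2$ at all — merely co-negligibility — so the standard Steinhaus argument (which produces an interval around $0$ in the difference set of a set of positive measure) is actually stronger than what is needed; a purely measure-theoretic two-line argument suffices.
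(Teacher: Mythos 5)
Your proof is correct and is essentially the same as the paper's: both fix a target $m$ (your $x$), use translation invariance of $\mathfrak{l}$ plus the fact that a union of two null sets is null to conclude that for $\mathfrak{l}$-a.e.\ $a$ one has simultaneously $a\in A_1$ and $a+m\in A_2$, and then read off $m\in A_2-A_1$. Your remark that the full Steinhaus theorem is overkill here matches the spirit of the paper's elementary one-line argument.
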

%\begin{remark}
%Actually all we will need is that $A_2-A_1$ contains $0$, which is clear because $A_1\cap A_2$ is co-negligible, and hence non-empty.
%\end{remark}
\begin{proof}
Let $m\in \mathbb{R}$. By the translation invariance of $\mathfrak{l}$ and because the union of two null sets is null, for $\mathfrak{l}$-a.e., and a fortiori for some $a\in \mathbb{R}$, we have $a\in A_1$ and also $a+m\in A_2$, therefore $m\in A_2-A_1$.
\end{proof}

\begin{proposition}\label{proposition:version-perfect-stationary}
Every stationary random countable set admits a version that is perfectly stationary, countable with certainty and that belongs to $\mathcal{B}_{\Omega_0}\otimes \mathcal{B}_\mathbb{R}$.
\end{proposition}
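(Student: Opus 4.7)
I intend to produce a Borel, perfectly stationary subset of $\Omega_0\times\mathbb{R}$ whose $\omega$-sections agree with $M$ for $\WW$-a.e.\ $\omega$, and then restrict it to a Borel shift-invariant $\WW$-conull set to ensure every section is countable. Replacing $M$ at the outset with the effective range $[S(\cdot)]$ of any $\mathcal{B}_{\Omega_0}$-measurable enumeration $S$ of $M$ (extracting a Borel representative of the $\GG$-measurable original using that $\GG$ is the $\WW$-completion of $\mathcal{B}_{\Omega_0}$), I may and do assume that $\llbracket M\rrbracket\in\mathcal{B}_{\Omega_0}\otimes\mathcal{B}_\mathbb{R}$ and that $M(\omega)$ is countable for every $\omega$.

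Fix a probability measure $\nu$ on $\mathcal{B}_\mathbb{R}$ equivalent to $\mathfrak{l}$ and declare $t\in M^\star(\omega)$ precisely when $\nu(\{h:t-h\notin M(\Delta_h\omega)\})=0$, equivalently, $t-h\in M(\Delta_h\omega)$ for $\mathfrak{l}$-a.e.\ $h$. Joint Borel measurability of $(h,\omega,t)\mapsto\mathbbm{1}_M(\Delta_h\omega,t-h)$ together with Tonelli put $\llbracket M^\star\rrbracket$ into $\mathcal{B}_{\Omega_0}\otimes\mathcal{B}_\mathbb{R}$; perfect stationarity of $M^\star$ follows by the substitution $h\mapsto h+h_0$ in its defining condition, using translation invariance of $\mathfrak{l}$. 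For $M^\star=M$ a.s., consider $H(\omega):=\{h:M(\omega)=h+M(\Delta_h\omega)\}$, which is jointly Borel (equality of two Borel-parametrised countable sets), so that stationarity of $M$ and Fubini make $\Omega^\sharp:=\{\omega:\mathfrak{l}(\mathbb{R}\setminus H(\omega))=0\}$ Borel and $\WW$-conull. For $\omega\in\Omega^\sharp$ and $t\in\mathbb{R}$: if $t\in M(\omega)$ then $H(\omega)\subseteq H^\star(\omega,t):=\{h:t-h\in M(\Delta_h\omega)\}$, making $H^\star(\omega,t)$ $\mathfrak{l}$-conull and hence $t\in M^\star(\omega)$; conversely, if $t\in M^\star(\omega)$ then both $H(\omega)$ and $H^\star(\omega,t)$ are $\mathfrak{l}$-conull and hence intersect by Lemma~\ref{lemma:steinhaus}, and any common $h$ forces $t\in h+M(\Delta_h\omega)=M(\omega)$.

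The chief difficulty is upgrading $M^\star$ to a version that is countable with certainty without sacrificing Borel measurability or perfect stationarity: countability of the sections of $M^\star$ is a priori only a coanalytic condition on $\omega$, and $\Omega^\sharp$ itself is generically not shift-invariant. I bypass this by sandwiching in the set
\[
\tilde\Omega:=\{\omega\in\Omega_0:\Delta_h\omega\in\Omega^\sharp\text{ for $\mathfrak{l}$-a.e.\ }h\in\mathbb{R}\},
\]
which is Borel (Tonelli), $\WW$-conull (Fubini, since $\Delta_h$ preserves $\WW$ and $\Omega^\sharp$ is $\WW$-conull), and shift-invariant (translation invariance of $\mathfrak{l}$). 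For $\omega\in\tilde\Omega$ any $h_0$ with $\Delta_{h_0}\omega\in\Omega^\sharp$ yields, via perfect stationarity of $M^\star$, $M^\star(\omega)=h_0+M^\star(\Delta_{h_0}\omega)=h_0+M(\Delta_{h_0}\omega)$, a countable set. Setting $M'(\omega):=M^\star(\omega)$ for $\omega\in\tilde\Omega$ and $M'(\omega):=\emptyset$ otherwise, the shift-invariance of $\tilde\Omega$ combines with perfect stationarity of $M^\star$ to give perfect stationarity of $M'$; $M'$ is Borel, pointwise countable, and $M'=M^\star=M$ on the $\WW$-conull set $\Omega^\sharp\cap\tilde\Omega$.
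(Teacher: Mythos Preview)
Your proof is correct and follows essentially the same architecture as the paper's: both pass to a Borel version of $M$, introduce the conull set $\Omega^\sharp=\Omega_1=\{\omega:\mathfrak{l}(\{h:M(\omega)\ne h+M(\Delta_h\omega)\})=0\}$, then the shift-invariant conull set $\tilde\Omega=\Omega_2=\{\omega:\Delta_h\omega\in\Omega_1\text{ for a.e.\ }h\}$, and invoke Lemma~\ref{lemma:steinhaus} at the crux. The only cosmetic difference is that the paper defines $\tilde M(\omega):=s+M(\Delta_s\omega)$ by \emph{picking} an $s$ with $\Delta_s\omega\in\Omega_1$ and uses Steinhaus to check independence of that choice, whereas you define $M^\star$ intrinsically by the a.e.\ criterion (automatically well-defined and perfectly stationary) and use Steinhaus instead to verify $M^\star=M$ on $\Omega^\sharp$; on $\tilde\Omega$ the two constructions coincide.
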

Not even in the dense case can we ask the version to be countably infinite with certainty, since on the zero path any perfectly stationary version must be empty or the whole of the real line.

\begin{proof}
The proof is inspired by the technique of perfecting crude cocycles in the theory of random dynamical systems, see e.g. \cite[Section~1.3]{arnold}.

Let $M$ be a stationary random countable set. Changing $M$ on a $\WW$-negligible set we may and do ask that there is a $\mathcal{B}_{\Omega_0}$-measurable enumeration $S$ thereof such that $M=[S]$ with certainty (not just a.s.-$\WW$), a perfect  $\mathcal{B}_{\Omega_0}$-measurable enumeration. Define:
$$R:=\{(s,\omega)\in \mathbb{R}\times \Omega_0:M(\omega)=s+M(\Delta_s\omega)\},$$
$$\Omega_1:=\{\omega\in \Omega_0:(s,\omega)\in R\text{ for $\lll$-a.e. }s\in \mathbb{R}\}\text{ and }$$
$$\Omega_2:=\{\omega\in \Omega_0:\Delta_u\omega\in \Omega_1\text{ for $\lll$-a.e. }u\in \mathbb{R}\}.$$
By measurably enumerating  $M$ through $S$  we see that $R\in  \mathcal{B}_\mathbb{R}\otimes\mathcal{B}_{\Omega_0}$. By the stationarity of $M$ and Tonelli we have that $\Omega_1\in \mathcal{B}_{\Omega_0}$ and $\WW(\Omega_1)=1$. Noting that for $\omega\in \Omega_0$, 
\begin{equation*}\omega\in \Omega_2\Leftrightarrow \int\mathbbm{1}_{\Omega_0\backslash\Omega_1}(\Delta_u\omega)\lll(\dd u)=0,\end{equation*} by Tonelli again, and because the L\'evy shifts preserve the probability $\WW$, we obtain that also $\Omega_2\in \mathcal{B}_{\Omega_0}$ and $\WW(\Omega_2)=1$. Besides, $\Omega_2$ is shift-invariant because $\lll$ is translation invariant.

Define now, for $\omega\in \Omega_0$, $$\tilde{M}(\omega):=
\begin{cases}
s+M(\Delta_s\omega)& \text{for any $s\in \mathbb{R}$ for which $\Delta_s\omega\in \Omega_1$, if $\omega\in \Omega_2$;}\\
\emptyset & \text{if $\omega\notin\Omega_2$.}
\end{cases}
$$ We must check at once that $\tilde{M}$ is well-defined. Let then $\omega\in\Omega_2$ and  $\{s,u\}\subset \mathbb{R}$ be such that $\{\Delta_s\omega,\Delta_u\omega\}\subset \Omega_1$. Then for $\lll$-a.e. $v\in \mathbb{R}$ we have $s+M(\Delta_s\omega)=s+v+M(\Delta_{s+v}\omega)$ and at the same time for $\lll$-a.e. $w\in \mathbb{R}$ we have $u+M(\Delta_u\omega)=u+w+M(\Delta_{u+w}\omega)$. By Lemma~\ref{lemma:steinhaus} we obtain  $s+M(\Delta_s\omega)=u+M(\Delta_u\omega)$. On the other hand, if $\omega\in \Omega_2$ then  $\Delta_s\omega\in \Omega_1$ even for $\lll$-a.e. $s\in \mathbb{R}$, in particular for some $s\in \mathbb{R}$. Thus $\tilde{M}$ is well-defined.

Since $M$ is countable with certainty, so too is $\tilde M$.

We now check that $\tilde M\in \mathcal{B}_{\Omega_0}\otimes \mathcal{B}_\mathbb{R}$. Indeed, the set $$Q:=\{(s,t,\omega)\in \mathbb{R}\times \mathbb{R}\times \Omega_0: t\in s+M(\Delta_s(\omega))\}$$ belongs to $\mathcal{B}_\mathbb{R}\otimes \mathcal{B}_\mathbb{R}\otimes \mathcal{B}_{\Omega_0}$. By the preceding, for $\omega\in \Omega_2$,  $\int_0^1 \mathbbm{1}_Q(s,t,\omega)\lll(\dd s)$ is equal to $1$  if $t\in \tilde M(\omega)$ and is equal to $0$ for $t\in \mathbb{R}\backslash \tilde M(\omega)$. Therefore, by Tonelli, the asserted measurability of $\tilde M$ in the tensor product follows.

Next, turn to proving perfect stationarity. If $\omega\in\Omega_0\backslash \Omega_2$, then because $\Omega_2$ is shift-invariant we have $\tilde{M}(\omega)=\emptyset=u+\tilde{M}(\Delta_u\omega)$ for all $u\in \mathbb{R}$. On the other hand, let $\omega\in \Omega_2$ and $u\in \mathbb{R}$. By shift-invariance of $\Omega_2$ again,  $\Delta_u\omega\in \Omega_2$ also. Therefore for some $s\in \mathbb{R}$ such that $\Delta_s\omega\in \Omega_1$ and for some $v\in \mathbb{R}$ such that $\Delta_{u+v}\omega\in \Omega_1$ we have
$$\tilde{M}(\omega)=s+M(\Delta_s\omega)\text{ and }u+\tilde{M}(\Delta_u\omega)=u+v+M(\Delta_{u+v}\omega).$$ In turn, for $\lll$-a.e. $z\in \mathbb{R}$ and for $\lll$-a.e. $z'\in \mathbb{R}$, $$s+M(\Delta_s\omega)=s+z+M(\Delta_{s+z}\omega)\text{ and }u+v+M(\Delta_{u+v}\omega)=u+v+z'+M(\Delta_{u+v+z'}\omega),$$ which by Lemma~\ref{lemma:steinhaus} entails that $\tilde{M}(\omega)=u+\tilde{M}(\Delta_u\omega)$. We see thus that $\tilde{M}$ has perfect stationarity.

Finally, let us check that $\tilde{M}=M$ on $\Omega_1\cap\Omega_2$ rendering $\tilde{M}$ a version of $M$. Take $\omega\in \Omega_1\cap\Omega_2$. Then because $\omega\in \Omega_2$ certainly, by the very definition of $\tilde{M}$,  $\tilde{M}(\omega)=s+M(\Delta_s\omega)$ for $\lll$-a.e. $s$, while $s+M(\Delta_s\omega)=M(\omega)$ for $\lll$-a.e. $s\in \mathbb{R}$ because $\omega\in \Omega_1$; for some (indeed $\lll$-a.e.) $s\in \mathbb{R}$ both of the preceding equalities prevail and so $\tilde{M}(\omega)=s+M(\Delta_s\omega)=M(\omega)$.
\end{proof}
Thus, at least in some sense, perfect stationarity is merely an added technicality over and above stationarity, which is fundamental. Nevertheless, the fact that a set can be perfected in this way is non-trivial and we will find occasion to apply it before long.
%
%\begin{question}
%Does every (local) stationary random countable set admit a version that is perfectly stationary?
%\end{question}

We continue our collage of properties of stationary and local sets by discussing in technical terms the somewhat lax assertion from the Introduction that ``there are no interesting events to be described concerning such a [dense] random [countable] set.''

The following simple technical truth will be used.
\begin{lemma}\label{lemma:commute-decreasing-intersection-trace}
Let $\Gamma$ be any set and $(\AA_n)_{n\in \mathbb{N}}$ a nonincreasing sequence of $\sigma$-fields on $\Gamma$. Let also $X:\Theta\to \Gamma$ be any map. Then $$X^{-1}(\cap_{n\in \mathbb{N}}\AA_n)=\cap_{n\in \mathbb{N}}X^{-1}(\AA_n).$$ In particular, for every $\Gamma'\in 2^\Gamma$, $$(\cap_{n\in \mathbb{N}}\AA_n)\vert_{\Gamma'}=\cap_{n\in \mathbb{N}}(\AA_n\vert_{\Gamma'}).$$ 
\end{lemma}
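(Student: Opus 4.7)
The containment $X^{-1}(\cap_{n}\AA_n)\subset \cap_{n}X^{-1}(\AA_n)$ needs no monotonicity: if $A$ lies in every $\AA_n$, then $X^{-1}(A)$ lies in every $X^{-1}(\AA_n)$. The content of the lemma is therefore the reverse inclusion, and this is where the nonincreasing assumption must be exploited: in general, an element $B$ of $\cap_n X^{-1}(\AA_n)$ arises as $B=X^{-1}(A_n)$ for different $A_n\in \AA_n$, and one must manufacture from the $(A_n)_{n\in \mathbb{N}}$ a single set $A\in \cap_n \AA_n$ whose preimage is $B$.

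My plan is to take $A:=\limsup_{n} A_n=\cap_{n}\cup_{k\geq n}A_k$. Since the sequence $(\AA_n)_{n\in \mathbb{N}}$ is nonincreasing, for every $n\in \mathbb{N}$ and every $k\geq n$ we have $A_k\in \AA_k\subset \AA_n$, so $\cup_{k\geq n}A_k\in \AA_n$; fixing $m\in \mathbb{N}$, the tail $(\cup_{k\geq n}A_k)_{n\geq m}$ lies entirely in $\AA_m$, so its intersection is in $\AA_m$, and equals $A$ since the union sequence is nonincreasing in $n$. This shows $A\in \cap_n \AA_n$.

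Since preimages commute with arbitrary unions and intersections, $X^{-1}(A)=\limsup_n X^{-1}(A_n)=\limsup_n B=B$, which completes the reverse inclusion and establishes the first equality. For the ``in particular'' clause, I would simply apply the displayed identity to the canonical inclusion $\iota:\Gamma'\hookrightarrow \Gamma$: the pull-back $\iota^{-1}(\mathcal{C})$ coincides with the trace $\mathcal{C}\vert_{\Gamma'}$ for any $\sigma$-field $\mathcal{C}$ on $\Gamma$, so the general statement specializes to $(\cap_{n}\AA_n)\vert_{\Gamma'}=\cap_{n}(\AA_n\vert_{\Gamma'})$.

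The only real obstacle is noticing that the naive choice ``let $A$ be $A_n$ for some $n$'' fails and that one needs the $\limsup$ trick (or an equivalent device like $\liminf$); everything else is a routine verification.
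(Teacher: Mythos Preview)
Your proof is correct and essentially identical to the paper's own argument: the paper also treats the inclusion $\subset$ as trivial, then for the reverse inclusion picks $A_n\in\AA_n$ with $X^{-1}(A_n)=B$, sets $A:=\limsup_n A_n$, and uses the nonincreasing hypothesis to conclude $A\in\cap_n\AA_n$ together with $X^{-1}(A)=\limsup_n X^{-1}(A_n)=B$. The special case is likewise handled in the paper by taking $X=\mathrm{id}_{\Gamma'}:\Gamma'\to\Gamma$, which is your inclusion $\iota$.
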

In plain(er) terms: nonincreasing intersections of $\sigma$-fields and pull-backs, the trace operation in particular, commute.
\begin{proof}
The inclusion $\subset$ is trivial. For the reverse inclusion, take an $A$ from the r.h.s. of the stipulated equality. For each $n\in \mathbb{N}$ there is $A_n\in \AA_n$ such that $A=X^{-1}(A_n)$. Then $\limsup_{n\to\infty}A_n\in \cap_{n\in \mathbb{N}}\AA_n$ ($\because$ $\AA_n$ is nonincreasing in $n\in \mathbb{N}$) and $X^{-1}(\limsup_{n\to\infty}A_n)=\limsup_{n\to\infty}X^{-1}(A_n)=A$. For the special case take $X=\mathrm{id}_{\Gamma'}:\Gamma'\to\Gamma$.
\end{proof}

\begin{proposition}\label{proposition:final-is-trivial}
Assume that $M$ is a local random countable set that is stationary-in-law.  %Then $M_\star \WW$ is trivial on $\mathfrak{s}$.  %We claim that the law of $M$ on  $\{A\in D:\{[S]\in A\}\in \BB_{\Omega_0}\}$ is trivial. 
Let $A\subset 2^\mathbb{R}$ be such that $\{M\in A\}\in \GG$. Then $\WW(M\in A)\in \{0,1\}$. 
\end{proposition}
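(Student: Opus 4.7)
The plan is to reduce the problem, via Proposition~\ref{proposition:equal-in-law}, to a zero-one statement for an i.i.d.\ sequence on the real line, and then invoke the Hewitt--Savage zero-one law.

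First I would dispose of the trivial case. By Proposition~\ref{proposition:stationary-in-law-gives-kendall}, $M$ is hit-or-miss stationary, so Proposition~\ref{proposition:dense-or-empty} gives that $M$ is either dense a.s.\ or empty a.s. If $M$ is empty a.s., then up to a $\WW$-null set $\{M\in A\}$ is either $\Omega_0$ or $\emptyset$, so its probability lies in $\{0,1\}$ and we are done. Henceforth assume $M$ is dense.

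Next, fix any measurable enumeration $S$ of $M$ and any probability $\mathcal{L}$ on $\mathcal{B}_\mathbb{R}$ equivalent to $\mathfrak{l}$ (for concreteness, a standard Gaussian). Proposition~\ref{proposition:equal-in-law} supplies a coupling $R$ on $\mathbb{R}^{\mathbb{N}}\times (\mathbb{R}^\dagger)^{\mathbb{N}}$ with marginals $\mathcal{L}^{\times \mathbb{N}}$ and $S_\star\WW$ (under the projections $p_1,p_2$) such that $[p_1]=[p_2]$ holds $R$-a.s. Now set $\tilde A:=\{y\in (\mathbb{R}^\dagger)^{\mathbb{N}}:[y]\in A\}$. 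Since $M=[S]$ a.s., the events $\{M\in A\}$ and $\{[S]\in A\}=S^{-1}(\tilde A)$ differ by a $\WW$-null set, so $S^{-1}(\tilde A)\in\GG$; equivalently, $\tilde A$ is measurable relative to the $S_\star\WW$-completion. Consequently $p_2^{-1}(\tilde A)$ is $R$-measurable. Because membership in $\tilde A$ depends on $y$ only through $[y]$, the $R$-a.s.\ identity $[p_1]=[p_2]$ yields $\{p_1\in\tilde A\}=\{p_2\in\tilde A\}$ $R$-a.s.; hence $p_1^{-1}(\tilde A)$ is also $R$-measurable and
\[
\mathcal{L}^{\times\mathbb{N}}(\tilde A)=R(p_1\in\tilde A)=R(p_2\in\tilde A)=\WW(M\in A).
\]

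Finally I would exploit exchangeability. For any finite permutation $\sigma$ of $\mathbb{N}$ one has $[\sigma y]=[y]$, so $\tilde A$ is invariant under finite coordinate permutations on $(\mathbb{R}^\dagger)^{\mathbb{N}}$, and the same applies to its restriction to $\mathbb{R}^{\mathbb{N}}$. By the transfer above, $\tilde A$ is thus an exchangeable event in the $\mathcal{L}^{\times\mathbb{N}}$-completion, and the Hewitt--Savage zero-one law delivers $\mathcal{L}^{\times\mathbb{N}}(\tilde A)\in\{0,1\}$. Combined with the displayed equality, this gives $\WW(M\in A)\in\{0,1\}$.

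The main obstacle is bookkeeping of measurability: $\tilde A$ is a priori just a subset of $(\mathbb{R}^\dagger)^{\mathbb{N}}$, not a Borel set, so one must carefully verify that it lives in each of the relevant completed $\sigma$-fields, and, to legitimately apply the standard form of Hewitt--Savage, symmetrize a Borel $\mathcal{L}^{\times\mathbb{N}}$-approximant of $\tilde A$ through the countably many finite permutations (which preserve $\mathcal{L}^{\times\mathbb{N}}$) to produce a Borel \emph{and} exchangeable event of the same measure.
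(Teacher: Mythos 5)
Your approach is correct in outline and genuinely different from the paper's, but the word ``equivalently'' in the step from ``$S^{-1}(\tilde A)\in\GG$'' to ``$\tilde A$ is measurable relative to the $S_\star\WW$-completion'' hides the crux of the problem. For a non-injective Borel map $S$ between Polish spaces and a set $\tilde A$ whose preimage lies in the $\WW$-completion of $\mathcal{B}_{\Omega_0}$, the conclusion that $\tilde A$ lies in the $S_\star\WW$-completion of the Borel $\sigma$-field on $(\mathbb{R}^\dagger)^\mathbb{N}$ is true, but it is not a tautology: one must sandwich $\tilde A$ between the analytic set $S(B_1)$ and the coanalytic set $(\mathbb{R}^\dagger)^{\mathbb{N}}\setminus S(\Omega_0\setminus B_2)$ for suitable Borel $B_1\subset S^{-1}(\tilde A)\subset B_2$ with $\WW(B_2\setminus B_1)=0$, and invoke universal measurability of analytic sets. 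The identical issue recurs when you pass from $R$-measurability of $p_1^{-1}(\tilde A)$ to $\mathcal{L}^{\times\mathbb{N}}$-measurability of $\tilde A\cap\mathbb{R}^{\mathbb{N}}$. You flag this as ``bookkeeping,'' but it is the technical heart of the argument and should be carried out or explicitly cited, not waved at.

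Once those transfers are in place, the remainder — permutation invariance of $\tilde A$, symmetrizing a Borel approximant through the countable group of finite permutations, and applying Hewitt--Savage — is sound and completes the proof. Compared to the paper, which reaches the same Hewitt--Savage conclusion, the routes differ materially. The paper avoids any transfer of measurability along $S$ or $p_1$ altogether: it disintegrates $\WW$ against $S_\star\WW$ and thereby constructs a coupling $P$ of $\mathcal{L}^{\times\mathbb{N}}$ with the \emph{full} Wiener measure $\WW$ on $\mathbb{R}^\mathbb{N}\times\Omega_0$; the event $\{[X']\in A\}$ is then Borel-measurable on the nose, and Blackwell's theorem (applied to the separable sub-$\sigma$-fields ${X'}^{-1}(\mathcal{E}_n)$, combined with Lemma~\ref{lemma:commute-decreasing-intersection-trace} to pass to the intersection) places it directly in the exchangeable $\sigma$-field. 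Your route is shorter on the surface and would appeal to someone comfortable with the analytic-set calculus; the paper's route trades the disintegration-plus-Blackwell machinery for never having to leave a cleanly measurable space. Both buy the same theorem; the paper's version is more self-contained given the Blackwell toolkit it already has in play.
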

In other words, the $\WW$-law of $M$ on its final $\sigma$-field $\{A\in 2^{2^\mathbb{R}}:\{M\in A\}\in \GG\}$ is trivial. 
%We may mention that Kendall  \cite[Theorem~4.3]{kendall_2000} proves (when applied to our setting) that the $\WW$-law of $M$ on $M^{-1}(\mathfrak{h})$ is trivial assuming only that $M$ is hit-or-miss quasi-stationary and dense. There is an attempt to extend   from  $M^{-1}(\mathfrak{h})$ to (in Kendall's notation  \cite[Eq.~(10)]{kendall_2000}) $\mathfrak{G}$ \cite[Theorem~5.1]{kendall_2000}. Unfortunately, the proof seems to contain gaps [in addition to the fact that the statement of the theorem itself is a bit vague /and strictly speaking wrong/ for $\mathfrak{h}$ lives on $2^\mathbb{R}$ while $\mathfrak{G}$ is a $\sigma$-field on the underlying sample space $ \Omega_0$, when specialized to our case). Specifically, in the proof in question, it is not clear what projection on which product space, or equivalent, is being looked at and where the fact that $\mathfrak{h}$ is (sic) sufficiently rich intervenes (i.e. what makes $\mathfrak{h}$ but not, say, $\{\emptyset,2^{2^\mathbb{R}}\}$ in lieu thereof work); and it is even less clear how one can go from an element of $\mathfrak{G}$ to a measurable subset of sequence space saturated for the equivalence relation of equality of ranges (which is apparently the line of the proof). In any event, even the  $\mathfrak{G}$ of \cite{kendall_2000} is not completely definitive, whereas the final $\sigma$-field of $M$ absolutely is.
\begin{proof}
From Propositions~\ref{proposition:stationary-in-law-gives-kendall} and~\ref{proposition:dense-or-empty} we get that $M$ is empty or dense. The case when $M$ is empty is trivial, so we focus on the case when $M$ is dense. Then we may and do further assume that $M$ admits a $\mathcal{B}_{\Omega_0}$-measurable perfect injective real-valued enumeration $S$  (just by changing $M$ on a $\WW$-negligible set if necessary, using completeness of $\WW$). Fix $\Omega_1\in\mathcal{B}_{\Omega_0}$ of  $\WW$-probability one  such that $\{M\in A\}\cap\Omega_1=\{[S\vert_{\Omega_1}]\in A\}\in \mathcal{B}_{\Omega_0}\vert_{\Omega_1}$ (it exists because $\{M\in A\}$ is from the $\WW$-completion of $\mathcal{B}_{\Omega_0}$). 
 
Apply Proposition~\ref{proposition:equal-in-law}. Fixing a probability law $\mathcal{L}$ on $\mathcal{B}_\mathbb{R}$ equivalent to $\mathfrak{l}$ there exists a coupling $R$ of $ \mathcal{L}^{\times \mathbb{N}}$ and of $S_\star \WW$ such that $[x]=[y]$ for $R$-a.e. $(x,y)$.  Let $$ \WW(\dd\omega)=:\int (S_\star \WW)(\dd y)\mathfrak{w}(y,\dd \omega),\quad \omega\in \Omega_0,$$ be a disintegration of $\WW$ against $S_\star\WW$; then put $$P(\dd(x,\omega)):=\int R(\dd (x,y))\mathfrak{w}(y,\dd \omega),\quad (x,\omega)\in\mathbb{R}^\mathbb{N}\times \Omega_0,$$ which is a coupling of $ \mathcal{L}^{\times \mathbb{N}}$  and of $\WW$ satisfying $[x]=[S(\omega)]$ for $P$-a.e. $(x,\omega)$. The space $\mathbb{R}^\mathbb{N}\times \Omega_0$ endowed with the $\sigma$-field $(\mathcal{B}_\mathbb{R})^{\otimes \mathbb{N}} \otimes \mathcal{B}_{\Omega_0}$, on which lives $P$, is Blackwell in the sense of Meyer \cite[D~III.15]{meyer}, indeed it is the measurable space of a Polish space (a Polish space endowed with its Borel $\sigma$-field is Blackwell \cite[T~III.16]{meyer}). Restricting $P$ to to the $P$-almost certain set $$\Omega':=\{(x,\omega)\in \mathbb{R}^\mathbb{N}\times \Omega_1:x\text{ injective and }[x]=[S(\omega)]\}\in (\mathcal{B}_\mathbb{R})^{\otimes \mathbb{N}} \otimes \mathcal{B}_{\Omega_0}$$ we get $P'$ whose underlying $\sigma$-field $\mathcal{P}$ is still Blackwell (the trace [on a measurable set] preserves the Blackwell property which is immediate from the definition \cite[D~III.15]{meyer}).

Denoting by $(X',W')$ the coordinate projections on $\Omega'$ we see that $$ \{[X']\in A\}=\{[S(W')]\in A\}=(\mathbb{R}^\mathbb{N}\times \{[S\vert_{\Omega_1}]\in A\})\cap\Omega'\in \mathcal{P}$$ and  
\begin{align*}
\WW(M\in A)&=\WW(\{M\in A\}\cap\Omega_1)= \WW([S\vert_{\Omega_1}]\in A)=P(\mathbb{R}^\mathbb{N}\times \{[S\vert_{\Omega_1}]\in A\})=P'((\mathbb{R}^\mathbb{N}\times \{[S\vert_{\Omega_1}]\in A\})\cap\Omega')\\
&=P'([X']\in A).
\end{align*} %in fact  $\{X'\in A\}=\{[S(W')]\in A\}$ a.s.-$P'$. Since $\{[S(W')]\in A\}=(\mathbb{R}^\mathbb{N})_\ne\times \{[S\vert_{\Omega_1}]\in A\}$ 
It remains to establish that $P'([X']\in A)\in \{0,1\}$. 
 
Then set, for $n\in \mathbb{N}_0\cup\{\infty\}$, $$\mathcal{E}_n:=\{E\in (\mathcal{B}_{\mathbb{R}})^{\otimes \mathbb{N}}\vert_{(\mathbb{R}^\mathbb{N})_\ne}:\omega\in E\Rightarrow \omega\circ p\in E\text{ for all permutations $p$ of $\mathbb{N}$ that restrict to the identity on $\mathbb{N}_{>n}$}\}.$$ Thus $\mathcal{E}_\infty=\cap_{n\in \mathbb{N}_0}\mathcal{E}_n$ is the exchangeable $\sigma$-field (traced on $(\mathbb{R}^\mathbb{N})_\ne:=$ the space of injective real sequences). It is elementary to verify that, for all $n\in \mathbb{N}_0$, $\mathcal{E}_n$ is a countably generated $\sigma$-field (its elements are indeed in a bijective correspondence with the Borel subsets of $\{s\in (\mathbb{R}^\mathbb{N})_\ne:s_1<\cdots<s_n\}$) whose atoms are given by 
$$\{s'\in (\mathbb{R}^\mathbb{N})_\ne:s'\vert_{\mathbb{N}_{>n}}=s\vert_{\mathbb{N}_{>n}}\text{ and }[s']=[s]\},\quad s\in  (\mathbb{R}^\mathbb{N})_\ne.$$ (On the other hand, it would not be straightforward to see and is perhaps [probably] not true that $\mathcal{E}_\infty$ is separable.)
 Still holding the $n\in \mathbb{N}_0$ fixed we see that $\{[X']\in A\}$ is a union of the atoms of the separable sub-$\sigma$-field ${X'}^{-1}(\mathcal{E}_n)$ of $\mathcal{P}$. It follows by Blackwell's theorem \cite[T~III.17]{meyer} that $\{[X']\in A\}\in {X'}^{-1}(\mathcal{E}_n)$. This being true for all $n\in \mathbb{N}_0$ we deduce from Lemma~\ref{lemma:commute-decreasing-intersection-trace} that $\{[X']\in A\}\in{ X'}^{-1}(\mathcal{E}_\infty)$. 
 
We conclude by applying  the Hewitt-Savage zero-one law (which delivers the $P'$-triviality of ${ X'}^{-1}(\mathcal{E}_\infty)$).
\end{proof}
%Proposition~\ref{proposition:final-is-trivial} is the reason why, while being able to speak of equality in law of two dense stationary random countable sets (via the existence of couplings), nevertheless we cannot really meaningfully speak of the law of such a set individually.
It may be mentioned that Kendall  \cite[Theorem~4.3]{kendall_2000} proves (when applied to our setting) that the $\WW$-law of $M$ on $\mathfrak{h}$ is trivial assuming only that $M$ is hit-or-miss quasi-stationary and dense. In  \cite[Eq.~(10), Theorem~5.1]{kendall_2000}, the $\WW$-triviality of $M^{-1}(\mathfrak{h})$ is extended to the $\WW$-triviality of $\cap\sigma(S)$, where $S$ runs over all measurable enumerations of $M$, albeit we have not managed to fully understand the proof sketched there. In any event, even the $\sigma$-field $\cap\sigma(S)$ is not completely definitive (vis-\'a-vis establishing that all events \emph{of} $M$ are trivial), whereas the final $\sigma$-field of $M$ (as rendered trivial by Proposition~\ref{proposition:final-is-trivial})  absolutely is so.

Local stationary random countable sets (we no longer bother to separate the two properties or intervene with weaker forms of  stationarity in lieu of stationarity)  are actually determined by their restrictions to $(0,\infty)$ as we proceed to demonstrate.

\begin{definition}\label{definition:one-sided}
 For a random set $N:{\Theta_0}\to 2^{(0,\infty)}$  and $\BB$ a sub-$\sigma$-field of $\mathcal{H}$ say that $N$ admits the $\BB$-measurable  [just measurable when $\BB$ is $\mathcal{H}$] enumeration $S$ (resp. is a random countable set, is local, is stationary, perfectly stationary) when $S$ is a sequence of $\BB$-measurable random variables with values in $(0,\infty)^\dagger$, which satisfies $N=[S]$ a.s.-$\PPP$ (resp. $N$ admits a measurable enumeration, for all $s<t$ from $[0,\infty]$ the random set $N\cap (s,t)$ admits a $\overline{\sigma}(C_u-C_s:u\in (s,t))$-measurable enumeration (the bar over $\sigma$ indicating completion w.r.t. $\PPP$),  $N\cap (h,\infty)=h+N(\Delta_h)$ a.s.-$\PPP$ for all $h\in [0,\infty)$, $N\cap (h,\infty)=h+N(\Delta_h)$ for all $h\in [0,\infty)$). 
\end{definition}
These are just the natural analogues of our notions in the one-sided setting. For reasons which will become clear in due course it is somewhat more natural to work with subsets of $(0,\infty)$ rather than $[0,\infty)$. The perfect stationarity property is reminiscent of the ``homogeneity'' property for $N$ in the Markovian setting, whereby $N\cap (h,\infty)=h+N(\theta_h)$ for all $h\in [0,\infty)$ with $\theta=(\theta_h)_{h\in [0,\infty)}$ being the usual Markov shifts on $\Theta_0$, see e.g. \cite[(2.2)]{Maisonneuve}. As in the two-sided landscape we shall not care much about distinguishing between $N:\Theta_0\to 2^{(0,\infty)}$ and its associated subset $\llbracket N\rrbracket:=\{(\omega,t)\in \Theta_0\times (0,\infty):t\in N(\omega)\}$ of $\Theta_0\times (0,\infty)$. Since there is some potential conflict as to what we intend when we say ``random (countable) set'' --- it may be a map from $\mathbb{R}^{\Omega_0}$ or indeed from $(0,\infty)^{\Theta_0}$ --- we agree that in absence of further qualification the meaning of it being  a random set living on $\Omega_0$ (not $\Theta_0$) shall prevail.
%\begin{remark}
% If $N$ is perfectly stationary then the set on which the stationarity holds is shift-closed.
%\end{remark}

\begin{proposition}\label{proposition:two-sided-viz-one-sided}
%Let % ${\Theta_0}:=\{\omega\in \mathbb{R}^{[0,\infty)}:\omega\text{ continuous and }\omega(0)=0\}$ be the usual Wiener space endowed with its completed Borel $\sigma$-field $\mathcal{H}$ and Wiener measure $\PP$, canonical process $W=(W_t)_{t\in [0,\infty)}$, its completed natural filtration $\mathcal{U}$, L\'evy shifts 
%$(\Delta_u)_{u\in [0,\infty)}$ be the L\'evy shifts n $\Theta_0$ (formally they are different than on the two-sided space but we abuse notation and retain the same symbols).

The following are equivalent for a random set $M:\Omega_0\to 2^\mathbb{R}$.
\begin{enumerate}[(A)]
\item\label{proposition:two-sided-viz-one-sided:A} $M$ is a stationary local random countable set.
\item\label{proposition:two-sided-viz-one-sided:B} $M\cap (u,\infty)=u+N(\Delta_u\vert_{[0,\infty)})$ a.s.-$\WW$ for all $u\in \mathbb{R}$ for some  local stationary random countable set $N:\Theta_0\to 2^{(0,\infty)}$.
\end{enumerate}
For a given $M$ the set $N$ is $\PPP$-a.s. unique and may be chosen to be perfectly stationary, from $ \mathcal{B}_{\Theta_0}\otimes \mathcal{B}_{(0,\infty)}$ and countable with certainty. In particular, always, $N\cap (U,\infty)=U+N(\Delta_U)$ a.s.-$\PPP$ on $\{U<\infty\}$ for any $\mathcal{U}$-stopping time $U$. 
\end{proposition}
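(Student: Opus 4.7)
The plan is as follows. For $(B)\Rightarrow(A)$, I would show directly that the pieces $\{u+N(\Delta_u|_{[0,\infty)}):u\in\mathbb{Q}\}$ are consistent a.s.-$\WW$: applying the one-sided stationarity of $N$ to the shifted path $\Delta_{u_1}\omega|_{[0,\infty)}$ with shift $u_2-u_1\geq 0$, and using the semigroup identity $\Delta_{u_2-u_1}\circ\Delta_{u_1}=\Delta_{u_2}$ from Remark~\ref{rmk:shifts-preserving}, one gets $(u_1+N(\Delta_{u_1}|_{[0,\infty)}))\cap(u_2,\infty)=u_2+N(\Delta_{u_2}|_{[0,\infty)})$ a.s. A version of $M$ is then recovered modulo $\WW$-null as the union over $u\in\mathbb{Q}$. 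Locality on $(s,t)$ pulls back from locality of $N$ on $(s-u,t-u)$ via $\Delta_u$ for any rational $u\leq s$, and stationarity $M=h+M(\Delta_h)$ a.s. follows piece-by-piece using $\Delta_{u-h}\circ\Delta_h=\Delta_u$.

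For the converse $(A)\Rightarrow(B)$, I would first replace $M$ by a perfectly stationary version $\tilde M$ via Proposition~\ref{proposition:version-perfect-stationary}, so that $\tilde M\in\mathcal{B}_{\Omega_0}\otimes\mathcal{B}_\mathbb{R}$ is countable with certainty and, after a modification on a $\WW$-null set, local. Since $\tilde M\cap(0,\infty)$ admits an $\FF_{0,\infty}$-measurable enumeration and $\FF_{0,\infty}$ is the $\WW$-completion of $\sigma(B_t:t\geq 0)$, I can factor through the restriction map to extract a measurable $N_0:\Theta_0\to 2^{(0,\infty)}$ with $\tilde M\cap(0,\infty)=N_0(B|_{[0,\infty)})$ a.s.-$\WW$. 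The defining relation at any level $u\in\mathbb{R}$ drops out from perfect stationarity of $\tilde M$ combined with $B|_{[0,\infty)}\circ\Delta_u=\Delta_u(\cdot)|_{[0,\infty)}$. One-sided locality of $N_0$, one-sided stationarity for each $h$ a.s.-$\PPP$, and $\PPP$-a.s. uniqueness all transfer routinely from the corresponding properties of $\tilde M$ via the pushdown.

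The hard part will be upgrading $N_0$ to a version $N\in\mathcal{B}_{\Theta_0}\otimes\mathcal{B}_{(0,\infty)}$ that is perfectly stationary and countable with certainty: the Steinhaus-cancellation argument behind Proposition~\ref{proposition:version-perfect-stationary} relies on bidirectional shifts, unavailable here. My plan is to set, after first trimming $N_0$ to be countable with certainty (by declaring it $\emptyset$ on its $\PPP$-null uncountability locus),
\[
N(\omega):=\bigl\{t>0:\exists\,\epsilon\in(0,t)\text{ with }t-s\in N_0(\Delta_s\omega)\text{ for }\mathscr{L}\text{-a.e. }s\in(\epsilon,t)\bigr\}.
\]
Tonelli delivers Borel measurability in $\mathcal{B}_{\Theta_0}\otimes\mathcal{B}_{(0,\infty)}$, and $N=N_0$ $\PPP$-a.s. on the certain event where the stationarity of $N_0$ holds for $\mathscr{L}$-a.e. shift. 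The key perfect identity $N(\omega)\cap(h,\infty)=h+N(\Delta_h\omega)$, for all $\omega\in\Theta_0$ and $h\geq 0$, will follow from the substitution $s'=s-h$ in the formula: the translated condition reads ``$\exists\,\epsilon\in(h,t)$ with the same $\mathscr{L}$-a.e. property'', and since $t>h$ any witnessing $\epsilon\in(0,t)$ on the left can be enlarged past $h$ without losing the $\mathscr{L}$-a.e. property on the shrunken interval. A final, slightly technical, shrinkage on a suitably shift-closed $\PPP$-null set will ensure countability of $N(\omega)$ for every $\omega$ without breaking the perfect identity.

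The ``in particular'' clause about $\mathcal{U}$-stopping times $U$ is immediate: with $N$ perfectly stationary, the identity $N(\omega)\cap(h,\infty)=h+N(\Delta_h\omega)$ holds identically in $(\omega,h)\in\Theta_0\times[0,\infty)$, so substituting $h=U(\omega)$ on $\{U<\infty\}$ yields the displayed identity identically and a fortiori $\PPP$-a.s., with no measurability hypothesis on $U$ even needed. Any other admissible version of $N$ agrees with the perfect one off a $\PPP$-null set, so the $\PPP$-a.s. stopping-time identity persists.
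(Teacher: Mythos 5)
Your proposal takes a genuinely different route from the paper for the key $(A)\Rightarrow(B)$ perfection step, and the new idea is sound, but one step as stated is a gap that you happen not to need.

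The paper adapts the Steinhaus-type ``perfection of crude cocycles'' argument of Proposition~\ref{proposition:version-perfect-stationary} to one side: it defines $\tilde N(\omega)$ on $\Theta_2$ by selecting any small $s$ with $\Delta_s\omega\in\Theta_1$ and declaring $\tilde N(\omega)\cap(s,\infty):=s+N(\Delta_s\omega)$, then handles $\omega\notin\Theta_2$ via a recursive union whose countability rests on $\Theta_2$ being shift-closed. Your ``$\mathscr{L}$-a.e.\ consistency'' formula
\[
N(\omega):=\bigl\{t>0:\exists\,\epsilon\in(0,t)\text{ s.t. }t-s\in N_0(\Delta_s\omega)\text{ for $\mathscr{L}$-a.e.\ }s\in(\epsilon,t)\bigr\}
\]
is a single, case-free definition. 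Your checks that it is Borel (restrict $\epsilon$ to rationals, by monotonicity), that $N=N_0$ on the $\PPP$-full set where $N_0$ has the stationarity property for $\mathscr{L}$-a.e.\ shift, and that the identity $N(\omega)\cap(h,\infty)=h+N(\Delta_h\omega)$ holds for every $\omega$ and every $h\geq 0$ (the $\exists\epsilon\in(0,t)$ vs.\ $\exists\epsilon\in(h,t)$ equivalence, by monotonicity in $\epsilon$), are all correct. This is a cleaner route to the perfect identity than the two-case definition.

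The gap is the proposed ``slightly technical shrinkage on a suitably shift-closed $\PPP$-null set''. The uncountability locus $U:=\{\omega:N(\omega)\text{ uncountable}\}$ need not be shift-closed: from $N(\omega)\cap(h,\infty)=h+N(\Delta_h\omega)$ one only learns that $N(\Delta_h\omega)$ is countable whenever $N(\omega)$ is, i.e.\ $\Theta_0\setminus U$ is shift-closed, not $U$. And if you enlarge $U$ to a shift-closed null set $A$ and set $N:=\emptyset$ on $A$, you break the perfect identity at any $\omega\notin A$ with $\Delta_h\omega\in A$: the left side $N(\omega)\cap(h,\infty)$ retains its old (possibly nonempty) value while the right side becomes $\emptyset$. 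So the shrinkage as described does not work.

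Fortunately you do not need it: the formula already yields $N(\omega)$ countable for \emph{every} $\omega$, provided $N_0$ is taken Borel (i.e.\ in $\mathcal{B}_{\Theta_0}\otimes\mathcal{B}_{(0,\infty)}$) with all sections countable --- which your extraction-through-enumeration gives directly, with no trimming. Fix $\omega$ and let $E:=\{(s,t)\in(0,\infty)^2:t-s\in N_0(\Delta_s\omega)\}$. This is Borel with each $E_s=s+N_0(\Delta_s\omega)$ countable, so by the Lusin--Novikov uniformization theorem $E=\bigcup_{n}\mathrm{graph}(g_n)$ for countably many Borel partial functions $g_n$. If $t\in N(\omega)$ then some interval of positive length is $\mathscr{L}$-a.e.\ contained in $E^t=\bigcup_n g_n^{-1}(\{t\})$, so $\mathscr{L}(g_n^{-1}(\{t\}))>0$ for some $n$; and for each $n$, $\{t:\mathscr{L}(g_n^{-1}(\{t\}))>0\}$ is countable (the level sets with positive measure are disjoint, so at most countably many inside any bounded interval). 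Hence $N(\omega)\subset\bigcup_n\{t:\mathscr{L}(g_n^{-1}(\{t\}))>0\}$ is countable. Substituting this argument for the shrinkage claim closes the gap. The paper's two-level construction achieves the same without descriptive set theory, at the price of the recursive case analysis off $\Theta_2$; your route trades that case analysis for Lusin--Novikov.
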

\begin{definition}\label{definition:hat-M}
We denote (any version of) $N$ from the preceding proposition by $\widehat M$.
\end{definition}

\begin{proof}
Remark that $B\vert_{[0,\infty)}=\Delta_0\vert_{[0,\infty)}$ and that the push-forward of $\WW$ under the latter map, and moreover under any of the $\GG/\HH$-measurable maps $\Delta_u\vert_{[0,\infty)}$, $u\in \mathbb{R}$, is $\PPP$.% By Proposition~\ref{proposition:version-perfect-stationary} we may and do assume that $M$ is perfectly stationary to begin with.

Condition \ref{proposition:two-sided-viz-one-sided:B} is necessary for \ref{proposition:two-sided-viz-one-sided:A}. For, by locality of $M$, $M\cap (0,\infty)$ admits an $\FF_{0,\infty}$-measurable enumeration.  It means that there is a sequence  $S=(S_i)_{i\in \mathbb{N}}$ of $\mathcal{H}$-measurable random variables with values in $(0,\infty)^\dagger$, which satisfies $M\cap (0,\infty)=[S(B\vert_{[0,\infty)})]$ a.s.-$\WW$. Setting $N:=[S]$  we get a random countable set $N:\Theta_0\to 2^{(0,\infty)}$ satisfying $M\cap (0,\infty)=N(\Delta_0\vert_{[0,\infty)})$ a.s.-$\WW$. Then, by stationarity of $M$,  for all $u\in \mathbb{R}$,  a.s.-$\WW$, $M\cap (u,\infty)=u+((M-u)\cap (0,\infty))=u+(M(\Delta_u)\cap (0,\infty))=u+N(\Delta_u\vert_{[0,\infty)})$; in particular $N(\Delta_0\vert_{[0,\infty)})\cap (h,\infty)=M\cap (h,\infty)=h+N(\Delta_h\vert_{[0,\infty)})=h+N(\Delta_h(\Delta_0\vert_{[0,\infty)}))$  a.s.-$\WW$ for all $h\in [0,\infty)$, i.e. $N\cap (h,\infty)=h+N(\Delta_h)$  a.s.-$\PPP$ for all $h\in [0,\infty)$, so  that $N$ is  stationary. Also, for $s<t$ from $[0,\infty]$, $N(\Delta_0\vert_{[0,\infty)})\cap (s,t)=M\cap (s,t)$ a.s.-$\WW$, which means, by locality of $M$, that there is a a sequence  $T=(T_i)_{i\in \mathbb{N}}$ of $\FF_{s,t}$-measurable random variables such that $N(\Delta_0\vert_{[0,\infty)})\cap (s,t)=[T]$ a.s.-$\WW$, in other words $N\cap (s,t)$ admits a $\overline{\sigma}(C_u-C_s:u\in (s,t))$-measurable enumeration. Therefore $N$ is local.

Condition \ref{proposition:two-sided-viz-one-sided:B} is sufficient for \ref{proposition:two-sided-viz-one-sided:A}. Indeed, for extended-real $ t>s>-\infty$, $M\cap (s,t)=(M\cap (s,\infty))\cap (s,t)=s+[N(\Delta_s\vert_{[0,\infty)})\cap (0,t-s)]$ a.s.-$\WW$. But $N$ is local. Therefore there is a $\overline{\sigma}(C_v:v\in (0,t-s))$-measurable sequence $S=(S_k)_{k\in \mathbb{N}}$ with values in $(0,\infty)^\dagger$ such that $N\cap (0,t-s)=[S]$ a.s.-$\PPP$. In other words, $N(\Delta_s\vert_{[0,\infty)})\cap (0,t-s)$ and hence $M\cap (s,t)$ admits an $\FF_{s,t}$-measurable enumeration. It means (letting  $s\downarrow -\infty$ over the integers, say) that $M$ is a local random countable set. As for stationarity, let $u\in \mathbb{R}$. For all $v\in \mathbb{R}$ we have, a.s.-$\WW$, $(u+M(\Delta_u))\cap (v,\infty)=u+(M(\Delta_u)\cap (v-u,\infty))=u+(v-u+N(\Delta_{v}\vert_{[0,\infty)}))=v+N(\Delta_{v}\vert_{[0,\infty)})=M\cap (v,\infty)$. Letting $v\downarrow -\infty$ over the integers, say, shows that $u+M(\Delta_u)=M$ a.s.-$\WW$, as required.

For the uniqueness of $N$ just note that $M\cap (0,\infty)=N(\Delta_0\vert_{[0,\infty)})$ a.s.-$\WW$. That $N$ may be chosen perfectly stationary, countable with certainty and belonging to $\mathcal{B}_{\Theta_0}\otimes \mathcal{B}_{(0,\infty)}$ follows by the following  adaptation of the proof of Proposition~\ref{proposition:version-perfect-stationary}, which we will keep more brief in the parts which are essentially verbatim the same to the two-sided case. The argument is valid for any  stationary random countable set $N:\Theta_0\to 2^{(0,\infty)}$.

Changing $N$ on a $\PPP$-negligible set we may and do ask that there is a $\mathcal{B}_{\Theta_0}$-measurable enumeration $S$ thereof such that $N=[S]$ with certainty (not just $\PPP$-a.s.), a perfect enumeration. Define:
$$R:=\{(s,\omega)\in [0,\infty)\times \Theta_0:N(\omega)\cap (s,\infty)=s+N(\Delta_s\omega)\},$$
$$\Theta_1:=\{\omega\in \Theta_0:(s,\omega)\in R\text{ for $\mathscr{L}$-a.e. }s\in [0,\infty)\}\text{ and }$$
$$\Theta_2:=\{\omega\in \Theta_0:\Delta_u\omega\in \Theta_1\text{ for $\mathscr{L}$-a.e. }u\in  [0,\infty)\}.$$
Then $R\in  \mathcal{B}_{[0,\infty)}\otimes\mathcal{B}_{\Theta_0}$,  $\Theta_1\in \mathcal{B}_{\Theta_0}$, $\PPP(\Theta_1)=1$, $\Theta_2\in \mathcal{B}_{\Theta_0}$ is shift-closed and $\PPP(\Theta_2)=1$. 

Define, for $\omega\in \Theta_0$, $$(0,\infty)\supset \tilde{N}(\omega):=
\begin{cases}
s+N(\Delta_s\omega)& \text{on $(s,\infty)$ for any $s\in [0,\infty)$ for which $\Delta_s\omega\in \Theta_1$, if $\omega\in \Theta_2$;}\\
\cup_{v\in (0,\infty),\Delta_v\omega\in \Theta_2}(v+\tilde N(\Delta_v\omega)) & \text{if $\omega\notin\Theta_2$}
\end{cases}
$$ (the empty union is understood as being equal to $\emptyset$, of course). Since for $\omega\in \Theta_2$, $\Delta_s\omega\in \Theta_1$ for $\mathscr{L}$-a.e. $s\in [0,\infty)$, hence for arbitrarily small $s\in (0,\infty)$, thus there is at most one $\tilde N(\omega)\subset (0,\infty)$ satisfying the above. To see that there is at least one, let  $\{s,u\}\subset [0,\infty)$ be such that $\{\Delta_s\omega,\Delta_u\omega\}\subset \Theta_1$. Then (*) for $\mathscr{L}$-a.e. $v\in[0,\infty)$ we have $s+N(\Delta_s\omega)=s+v+N(\Delta_{s+v}\omega)$ on $(s+v,\infty)$; at the same time for $\mathscr{L}$-a.e. $w\in [0,\infty)$ we have $u+N(\Delta_u\omega)=u+w+N(\Delta_{u+w}\omega)$ on $(u+w,\infty)$. Without loss of generality $u\leq s$; then (**) for $\mathscr{L}$-a.e. $v\in [0,\infty)$ we have $u+N(\Delta_u\omega)=u+(s-u)+v+N(\Delta_{u+(s-u)+v}\omega)$ on $(u+(s-u)+v,\infty)$. Combining (*) and (**) we get that  for $\mathscr{L}$-a.e. $v\in[0,\infty)$ and thus for arbitrarily small $v\in (0,\infty)$ we have $s+N(\Delta_s\omega)=s+v+N(\Delta_{s+v}\omega)=u+N(\Delta_u\omega)$ on $(s+v,\infty)$, whence  $s+N(\Delta_s\omega)=u+N(\Delta_u\omega)$ on $(s,\infty)$.
 Thus $\tilde{N}$ is well-defined.

Let $\omega\in \Theta_2$ and $u\in [0,\infty)$; of course then $\Delta_u\omega\in \Theta_2$ also. For arbitrarily small $s\in (0,\infty)$  and  $v\in (0,\infty)$ we have $\Delta_s\omega\in \Theta_1$, $\Delta_{u+v}\omega\in \Theta_1$ and
$$\tilde{N}(\omega)=s+N(\Delta_s\omega)\text{  on $(s,\infty)$ and }u+\tilde{N}(\Delta_u\omega)=u+v+N(\Delta_{u+v}\omega)\text{ on $(u+v,\infty)$}.$$ In turn, fixing such $s$ and $v$, for $\mathscr{L}$-a.e. $z\in [0,\infty)$ and for $\mathscr{L}$-a.e. $z'\in [0,\infty)$, 
\begin{align*}
s+N(\Delta_s\omega)&=s+z+N(\Delta_{s+z}\omega)\text{ on $(s+z,\infty)$ and }\\
u+v+N(\Delta_{u+v}\omega)&=u+v+z'+N(\Delta_{u+v+z'}\omega)\text{ on $(u+v+z',\infty)$}.
\end{align*}
From the latter it follows  that $s+N(\Delta_s\omega)=u+v+N(\Delta_{u+v}\omega)$ on $(s\lor (u+v),\infty)$. 
Therefore $\tilde{N}(\omega)=u+\tilde{N}(\Delta_u\omega)$ on $(s\lor (u+v),\infty)$. Letting $s$ and $v$ descend to zero we conclude that $\tilde{N}(\omega)=u+\tilde{N}(\Delta_u\omega)$ on $(u,\infty)$. This is the stationarity of $\tilde N$ on $\Theta_2$.  On the other hand, let $\omega\in\Theta_0\backslash  \Theta_2$ and still $u\in [0,\infty)$. If $\Delta_u\omega\in \Theta_2$ then $u>0$ and trivially by definition and by the stationarity of $\tilde N$ on $\Theta_2$, $\tilde N(\omega)\cap (u,\infty)=u+\tilde N(\Delta_u\omega)$; if $\Delta_u\omega\notin \Theta_2$, then $\Delta_v\omega\notin \Theta_2$ for all $v\in [0,u]$ and hence $u+\tilde N(\Delta_u\omega)=u+\cup_{v\in (0,\infty),\Delta_{v+u}\omega\in \Theta_2}(v+\tilde N(\Delta_{v+u}\omega))=\cup_{v\in (u,\infty),\Delta_{v} \omega\in \Theta_2}(v+\tilde N(\Delta_v\omega))=\cup_{v\in (0,\infty),\Delta_v\omega\in\Theta_2}(v+\tilde N(\Delta_v\omega))=\tilde N(\omega)=\tilde N(\omega)\cap (u,\infty)$. We see thus that $\tilde{N}$ has perfect stationarity.

$N$ being countable with certainty, so is $\tilde N$. 

Next, set \begin{equation*}Q:=\{(s,t,\omega)\in [0,\infty)\times (0,\infty)\times \Theta_0: t\in s+N(\Delta_s(\omega))\},
\end{equation*}which belongs to $\mathcal{B}_{[0,\infty)}\otimes \mathcal{B}_{(0,\infty)}\otimes \mathcal{B}_{\Theta_0}$. Let $h\in (0,\infty)$. Then for  $\omega\in \Theta_2$,  $h^{-1}\int_0^h \mathbbm{1}_Q(s,t,\omega)\mathscr{L}(\dd s)$ is equal to $1$  if $t\in \tilde N(\omega)\cap (h,\infty)$ and is equal to $0$ for $t\in (h,\infty)\backslash \tilde N(\omega)$. Therefore, by Tonelli, we deduce that $\tilde N\cap (\Theta_2\cap (h,\infty))\in \mathcal{B}_{\Theta_2}\otimes \mathcal{B}_{(h,\infty)}$. Letting $h\downarrow 0$ over a sequence it follows that $\tilde N\cap (\Theta_2\cap (0,\infty))\in \mathcal{B}_{\Theta_2}\otimes \mathcal{B}_{(0,\infty)}$. Since $\tilde N\subset (0,\infty)$ on $\Theta_2$ and since $\Theta_2$ is closed for the L\'evy shifts we have $$\tilde N(\omega)=\cup_{v\in \mathbb{Q}\cap (0,\infty),\Delta_v\omega\in \Theta_2}(v+\tilde N(\Delta_v\omega)),\quad \omega\in \Theta_0\backslash \Theta_2,$$ i.e., setting $\overline{N}$ equal to $\tilde N$ on $\Theta_2$ and to $\emptyset$ on $\Theta_0\backslash \Theta_2$, $$\tilde N=\cup_{v\in \mathbb{Q}\cap (0,\infty)}(v+\overline{N}(\Delta_v))=\cup_{v\in \mathbb{Q}\cap (0,\infty)}\{(\Delta_v,\mathrm{id}_{(0,\infty)}-v)\in \overline{N}\}\text{ on }\Theta_0\backslash \Theta_2\equiv (\Theta_0\backslash \Theta_2)\times (0,\infty),$$
where we have been very relaxed indeed about not distinguishing between maps $\Theta_0\to 2^{(0,\infty)}$ and their associated subsets of $\Theta_0\times (0,\infty)$. It now follows easily that $\tilde N\in \mathcal{B}_{\Theta_0}\otimes \mathcal{B}_{(0,\infty)}$.

It remains to check that $\tilde{N}=N$ on $\Theta_1\cap\Theta_2$ rendering $\tilde{N}$ a version of $N$. Take $\omega\in \Theta_1\cap\Theta_2$. Then because $\omega\in \Theta_2$ certainly, by the very definition of $\tilde{N}$,  $\tilde{N}(\omega)=s+N(\Delta_s\omega)$ on $(s,\infty)$ for $\mathscr{L}$-a.e. $s\in [0,\infty)$, while $s+N(\Delta_s\omega)=N(\omega)$ on $(s,\infty)$ for $\mathscr{L}$-a.e. $s\in[0,\infty)$ because $\omega\in \Theta_1$; for arbitrarily small (indeed $\mathscr{L}$-a.e.) $s\in (0,\infty)$ both of the preceding equalities prevail, concluding the argument.

Lastly, to see that $N\cap (U,\infty)=U+N(\Delta_U)$ a.s.-$\PPP$ on $\{U<\infty\}$ pass to a perfectly stationary version of $N$ (for which it holds with certainty) and then back, recalling that $(\Delta_U)_\star \PPP(\cdot\vert U<\infty)=\PPP$ by the strong Markov property of $C$ under $\PPP $ at the time $U$ (unless $\PPP(U<\infty)=0$, but the latter case is trivial). 
\end{proof}
It appears the statement of Proposition~\ref{proposition:two-sided-viz-one-sided} cannot be improved to a version in which, ceteris paribus, we would allow $0$ to belong to $N$, replace $(u,\infty)$ with $[u,\infty)$ and  in the perfect stationarity of Definition~\ref{definition:one-sided} ask for $N\cap [h,\infty)=h+N(\Delta_h)$, at least not with only trivial modifications to the proof. The reason being that such trivial modifications would presumably still not use properties of $\WW$ beyond stationary independent increments, and such conclusion is clearly false in the L\'evy setting, since the jump times of a two-sided (resp. and infinite activity) L\'evy process give an example of a stationary local (resp. and dense) random countable set $M$, whose corresponding one-sided set $\hat{M}$ is exhausted by the graphs of stopping times, a property that would be precluded under the above stipulated modifications (for, given those, we would also have $\{U\in \hat{M}\}=\{0\in \hat{M}(\Delta_U)\}\cap \{U<\infty\}$ a.s.-$\PPP$, which has probability zero by the strong Markov property for any $\UU$-stopping time $U$). This begs, however,
\begin{question}\label{question:stopping-times-belonging}
For a local stationary random countable set $N:\Theta_0\to 2^{(0,\infty)}$, can we have $\PPP(U\in N)>0$ for some $\UU$-stopping time $U$ or even that $N$ is enumerated by a sequence of stopping times? 
\end{question}
A partial answer to this (to the negative!) is given in Theorem~\ref{theorem:stopping-times-no}. Another important observation we make into
\begin{remark}\label{rmk:one-sided-perfection}
The proof   of Proposition~\ref{proposition:two-sided-viz-one-sided} has shown that  any   stationary random countable set $N:\Theta_0\to 2^{(0,\infty)}$ admits a version that is perfectly stationary, countable everywhere and belongs to $\mathcal{B}_{\Theta_0}\otimes \mathcal{B}_{(0,\infty)}$. When  $N$ has the first two of the preceding properties we can construct $O:=\cup_{h\in \mathbb{R}}(h+N(\Delta_h\vert_{[0,\infty)})):\Omega_0\to2^\mathbb{R}$, which manifestly is a perfectly stationary  random countable set having the property that  for each $a\in \mathbb{R}$, $O\cap (a,\infty)=a+N(\Delta_a\vert_{[0,\infty)})$  admits a $(\Delta_a \vert_{[0,\infty)})^{-1}(\HH)$-measurable perfect enumeration. If $N$ has even all three of the preceding properties then we get in addition that $O\cap (a,\infty)\in (\Delta_a\vert_{[0,\infty)})^{-1}(\mathcal{B}_{\Theta_0})\otimes \mathcal{B}_{\mathbb{R}}$ for all $a\in \mathbb{R}$ and in particular $O\in \mathcal{B}_{\Omega_0}\otimes \mathcal{B}_{\mathbb{R}}$.
%. Taking a sequence $(h_n)_{n\in\mathbb{N}}$ in $\mathbb{R}$ that is $\downarrow -\infty$ and letting $\Theta:=\{N\cap(h,\infty)=h+N(\Delta_h)\text{ for all }h\in [0,\infty)\}$, we see that $\Omega:=\cap_{h\in \mathbb{R}}\{\Delta_h\vert_{[0,\infty)}\in \Theta\}=\cap_{n\in \mathbb{N}}\{\Delta_{h_n}\vert_{[0,\infty)}\in \Theta\}$, since $\Theta$ is shift-closed. Therefore $\WW(\Omega)=1$ and on $\Omega$, $O\cap (h_n,\infty)=h_n+N(\Delta_{h_n}\vert_{[0,\infty)})$ for each $n\in \mathbb{N}$. Thus $O=\cup_{n\in \mathbb{N}}(h_n+N(\Delta_{h_n}\vert_{[0,\infty)}))$ hence $O$ is a random countable set, moreover $O$ is  local  and for each $a\in \mathbb{R}$, $O\cap (a,\infty)$ admits a $(\Delta_a \vert_{[0,\infty)})^{-1}(\HH)$-measurable enumeration. 
\end{remark}
\begin{definition}
We denote $O$ from the preceding remark by $\widecheck{N}$.
\end{definition}
\begin{corollary}\label{corollary:nice-version}
Any local stationary random countable set $M:\Omega_0\to 2^\mathbb{R}$ has a version which is perfectly stationary, countable with certainty  and for which each $M\cap (a,\infty)$ admits a $(\Delta_a\vert_{ [0,\infty)})^{-1}(\HH)$-measurable perfect enumeration  (a priori it has only an $\FF_{a,\infty}$-measurable enumeration), this for all $a\in \mathbb{R}$.
\end{corollary}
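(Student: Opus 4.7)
The plan is to obtain the desired version of $M$ as $\widecheck{\widehat{M}}$, feeding in a suitably ``perfected'' version of the one-sided shadow $\widehat{M}$ of $M$ produced by Proposition~\ref{proposition:two-sided-viz-one-sided}, and then invoking Remark~\ref{rmk:one-sided-perfection}.

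First I would apply Proposition~\ref{proposition:two-sided-viz-one-sided} to our given $M$, obtaining a local stationary random countable set $N:\Theta_0\to 2^{(0,\infty)}$ satisfying $M\cap(u,\infty)=u+N(\Delta_u\vert_{[0,\infty)})$ a.s.-$\WW$ for all $u\in\mathbb{R}$. By the same proposition (the ``may be chosen'' clause), we may and do assume that $N$ is already perfectly stationary, countable with certainty, and from $\mathcal{B}_{\Theta_0}\otimes\mathcal{B}_{(0,\infty)}$.

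Next, I would form $\widecheck N:=\cup_{h\in\mathbb{R}}(h+N(\Delta_h\vert_{[0,\infty)}))$ and appeal to Remark~\ref{rmk:one-sided-perfection}, which tells us directly that $\widecheck N$ is perfectly stationary, countable with certainty, belongs to $\mathcal{B}_{\Omega_0}\otimes\mathcal{B}_\mathbb{R}$, and that for every $a\in\mathbb{R}$ the section $\widecheck N\cap(a,\infty)=a+N(\Delta_a\vert_{[0,\infty)})$ admits a $(\Delta_a\vert_{[0,\infty)})^{-1}(\HH)$-measurable perfect enumeration (obtained by pushing forward any $\mathcal{B}_{\Theta_0}$-measurable perfect enumeration of $N$ through $\Delta_a\vert_{[0,\infty)}$ and shifting by $a$).

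Finally, I would verify that $\widecheck N$ is a version of $M$. For each integer $n$, by construction we have $\widecheck N\cap(-n,\infty)=-n+N(\Delta_{-n}\vert_{[0,\infty)})=M\cap(-n,\infty)$ a.s.-$\WW$; intersecting the almost-sure events over $n\in\mathbb{Z}$ (a countable collection) and letting $n\to\infty$ yields $\widecheck N=M$ a.s.-$\WW$. All the nontrivial work has already been done in Proposition~\ref{proposition:two-sided-viz-one-sided} and Remark~\ref{rmk:one-sided-perfection}, so there is no real obstacle here; the only thing to watch is that the a.s. identifications $M\cap(u,\infty)=u+N(\Delta_u\vert_{[0,\infty)})$ are only stipulated for each fixed $u$, so one must pass to a countable cofinal family of $u$'s to obtain full identification of $M$ with $\widecheck N$.
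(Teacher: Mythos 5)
Your proof is correct and takes essentially the same approach as the paper, which simply says to choose a perfectly stationary, everywhere-countable version of $\widehat{M}$ and take $\widecheck{\widehat{M}}$. You have merely spelled out the (straightforward) verification that $\widecheck{\widehat{M}}$ is a version of $M$, which the paper leaves implicit.
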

\begin{proof}
Choose a perfectly stationary everywhere countable version of $\widehat{M}$ and take $\widecheck{\widehat{M}}$.
\end{proof}
It would not have escaped the reader that had we not proved the ``two-sided'' Proposition~\ref{proposition:version-perfect-stationary}, earlier, then it would follow at once by the preceding technique from the ``one-sided'' version, which was proved independently (recall Remark~\ref{rmk:one-sided-perfection}). But, the two-sided variant is somewhat easier to digest, and so we felt it justifiable to  give that one first. We will have occasion to apply Corollary~\ref{corollary:nice-version} in due course. The  added value thereof, versus Proposition~\ref{proposition:version-perfect-stationary}, is that one has the perfect stationarity property \emph{combined with} the fact that $M\cap (0,\infty)$ admits a $(B\vert_{ [0,\infty)})^{-1}(\HH)$-measurable perfect enumeration.

We close this section by establishing that local stationary random countable sets are precisely the visiting times of a measurable set $A\subset  \Omega_0$ belonging to the germ $\sigma$-field around zero by the path-valued process $\Delta$, all of this in a sense that we make precise at once.

\begin{definition}\label{definition:MA-set}
For $A\subset \Omega_0$ put $$M^A:=\{t\in \mathbb{R}:\Delta_t\in A\}=\{\Delta\in A\}$$ for the visiting set to $A$ of the process $\Delta$.
\end{definition}

\begin{example}
Taking $A:=\{0\text{ a local minimum}\}$ (resp. $A:=\{0\text{ a local maximum}\}$, $A:=\{0\text{ a local extremum}\}$) yields for $M^A$ the local minima (resp. maxima, extrema).
\end{example}

\begin{remark}\label{remark:MA-equal}
$M^A$ is perfectly stationary. If $\{A_1,A_2\}\subset 2^{\Omega_0}$ and $A_1=A_2$ off a shift-invariant $\WW$-negligible set $R$, then $M^{A_1}=M^{A_2}$ off $R$, hence a.s.-$\WW$. More generally,  for $\{A_1,A_2\}\subset 2^{\Omega_0}$,  $M^{A_1}=M^{A_2}$ a.s.-$\WW$ iff $A_1\triangle A_2$ is ``polar'' for (is a.s.-$\WW$ not visited by) $\Delta$.
\end{remark}

\begin{definition}
We let $\mathfrak{G}:=\cap_{\epsilon\in (0,\infty)}\sigma(B\vert_{(-\epsilon,\epsilon)})\subset \mathcal{B}_{\Omega_0}$ (no completions!) denote the germ $\sigma$-field around zero. 
\end{definition}

\begin{theorem}\label{theorem:construction-through-zero-time-section}
Let $M:\Omega_0\to 2^\mathbb{R}$. The following are equivalent. 
\begin{enumerate}[(A)]
\item\label{cross-section:A} $M$ is a local stationary random countable set.
\item\label{cross-section:B} There exists an  $A\in \mathfrak{G}$ such that $M^A$ is countable a.s.-$\WW$ and such that $M=M^A$ a.s.-$\WW$.
\end{enumerate}
\end{theorem}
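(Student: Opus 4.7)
The plan is to prove the two implications separately.

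\emph{Direction \ref{cross-section:B}$\Rightarrow$\ref{cross-section:A}.} This is formal. Perfect stationarity of $M^A$ follows from the group law $\Delta_u\circ\Delta_t=\Delta_{u+t}$: with certainty, $u+M^A(\Delta_u\omega)=\{u+t:\Delta_{u+t}\omega\in A\}=M^A(\omega)$. For locality on a bounded interval $(s,t)$, the hypothesis $A\in\mathfrak{G}$ gives $A\in\sigma(B\vert_{(-\delta,\delta)})$ for every $\delta>0$, so for $u\in(s,t)$, picking $\delta>0$ with $(u-\delta,u+\delta)\subset(s,t)$, the event $\{\Delta_u\in A\}$ is $\FF_{s,t}$-measurable. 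Thus $M^A\cap(s,t)\in\FF_{s,t}\otimes\mathcal{B}_{(s,t)}$, and combined with the assumed a.s. countability, Remark~\ref{remark:ranomd-countable-set} yields the required $\FF_{s,t}$-measurable enumeration.

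\emph{Direction \ref{cross-section:A}$\Rightarrow$\ref{cross-section:B}.} First replace $M$ with a perfectly stationary, countable-everywhere version $\tilde M$ via Corollary~\ref{corollary:nice-version}. For each $n\in\mathbb{N}$, locality of $\tilde M$ supplies an $\FF_{-1/n,1/n}$-measurable enumeration $T^{(n)}$ of $\tilde M\cap(-1/n,1/n)$; since $\FF_{-1/n,1/n}$ is the $\WW$-completion of $\sigma(B\vert_{(-1/n,1/n)})$, pick a $\sigma(B\vert_{(-1/n,1/n)})$-measurable $U^{(n)}$ with $U^{(n)}=T^{(n)}$ a.s. Set $A_n:=\{0\in[U^{(n)}]\}\in\sigma(B\vert_{(-1/n,1/n)})$, so $A_n=\{0\in\tilde M\}$ a.s. Take $A:=\liminf_n A_n$: since $A_n\in\sigma(B\vert_{(-1/N,1/N)})$ for every $n\geq N$, we obtain $A\in\sigma(B\vert_{(-1/N,1/N)})$, whence $A\in\mathfrak{G}$, and still $A=\{0\in\tilde M\}$ a.s.

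It remains to check $M^A=M$ a.s., which simultaneously gives $M^A$ countable a.s. Perfect stationarity of $\tilde M$ gives $\tilde M=\{t\in\mathbb{R}:\Delta_t\in\{0\in\tilde M\}\}$ with certainty, so $M^A\triangle\tilde M=\{t:\Delta_t\in N\}$ for the $\WW$-null set $N:=A\triangle\{0\in\tilde M\}$. A Fubini computation already yields $\mathfrak{l}(M^A\triangle\tilde M)=0$ a.s., and the event $\{M^A=\tilde M\}$ is shift-invariant (as both $M^A$ and $\tilde M$ are perfectly stationary), so by Remark~\ref{rmk:zero-one} it has $\WW$-probability $0$ or $1$. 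The main obstacle I anticipate is excluding the value $0$: equivalently, $N$ must be shown to be a.s. not visited by the process $\Delta$. I expect this to be the technical heart and would handle it by refining the choice of the $U^{(n)}$ so that $[U^{(n)}]\subset\tilde M\cap(-1/n,1/n)$ pointwise (forcing $M^A\subset\tilde M$ pointwise, so that $M^A$ is automatically countable and $\tilde M\setminus M^A$ becomes itself a local stationary random countable set to which Proposition~\ref{proposition:dense-or-empty} and Proposition~\ref{SIL:probability-zero} apply), or alternatively by a perfection argument in the vein of Proposition~\ref{proposition:version-perfect-stationary} that covers $N$ by a shift-invariant $\WW$-null set (which is automatically polar for $\Delta$).
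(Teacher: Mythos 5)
The direction \ref{cross-section:B}$\Rightarrow$\ref{cross-section:A} is essentially the paper's argument, modulo a small gap: section-by-section measurability of $(\omega,u)\mapsto\mathbbm{1}_A(\Delta_u\omega)$ on $(s,t)$, each with its own $\delta$, does not by itself give $M^A\cap(s,t)\in\FF_{s,t}\otimes\mathcal{B}_{(s,t)}$. The paper first works on $(s+h,t-h)$ so that a single $A'\in\mathcal{B}_{\Omega_0\vert_{(-h,h)}}$ serves uniformly (making $M^A\cap(s+h,t-h)$ an honest preimage under a jointly measurable map), and then exhausts over $h\downarrow 0$. Your version would want this intermediate step spelled out.

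For \ref{cross-section:A}$\Rightarrow$\ref{cross-section:B}, you have correctly identified the crux --- producing $A\in\mathfrak{G}$ whose symmetric difference with $\{0\in\tilde M\}$ is \emph{polar} for $\Delta$, not merely $\WW$-null --- but neither of your proposed remedies closes the gap, and the gap is substantive. Route (a): even with $[U^{(n)}]\subset\tilde M\cap(-1/n,1/n)$ pointwise, so that $M^A\subset\tilde M$ with certainty, having $\tilde M\setminus M^A$ be a dense local stationary random countable set contradicts nothing: Propositions~\ref{proposition:dense-or-empty} and~\ref{SIL:probability-zero} only say it is dense-or-empty and that no deterministic point is in it a.s., and density is entirely consistent with $N:=\{0\in\tilde M\}\setminus A$ being $\WW$-null --- after all, $\{0\in\tilde M\}$ itself is $\WW$-null. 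The whole phenomenon under study is that $\WW$-null sets can be visited densely by $\Delta$, so null-set arguments alone cannot rule out the dense alternative. Route (b) is the right instinct but is precisely the missing ingredient, not a remedy for it. Your $\liminf$ construction gives an $A\in\mathfrak{G}$ with $A=\{0\in\tilde M\}$ a.s., but there is no control relating the defect $A\triangle\{0\in\tilde M\}$ to any shift-invariant null set; nothing in the construction of the $A_n$ forces this. The paper goes the other way around: it does not first fix a candidate $A$ and then try to check $M^A=M$, but rather takes $A^0:=\{0\in M\}$ for a specially perfected version of $M$ (with perfect enumerations of $M\cap(-h_n,h_n)$ measurable in the \emph{uncompleted} $\sigma$-fields $\sigma(B\vert_{(-h_n,h_n)})$, which Corollary~\ref{corollary:nice-version} does not secure), gets $M=M^{A^0}$ with certainty from perfect stationarity, and then proves $A^0\cap\Omega_2\in\mathfrak{G}\vert_{\Omega_2}$ for a shift-invariant co-null $\Omega_2$ by Blackwell's theorem: $A^0\cap\Omega_2$ is a union of atoms of the separable $\sigma(B\vert_{(-h_n,h_n)})\vert_{\Omega_2}$ for every $n$, hence lies in each, hence in the intersection, which is $\mathfrak{G}\vert_{\Omega_2}$ by Lemma~\ref{lemma:commute-decreasing-intersection-trace}. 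Only then is $A\in\mathfrak{G}$ chosen to extend $A^0\cap\Omega_2$, so that $A\triangle A^0\subset\Omega_0\setminus\Omega_2$ is contained in a \emph{shift-invariant} null set, hence polar. This Blackwell step is the engine driving the hard direction; without it (or a substitute of comparable strength) your approach does not reach the conclusion.
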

\begin{remark}\label{remark:onlyAzero}
In Item~\ref{cross-section:B}, since $A\in \mathfrak{G}$, by Kolmogorov's zero-one law, $\WW(A)\in \{0,1\}$. Only the case when $\WW(A)=0$ is actually possible. For, if $\WW(A)=1$, then by Tonelli and the fact that the L\'evy shifts are measure-preserving for $\WW$, a.s.-$\WW$ for $\mathfrak{l}$-almost every (therefore, for uncountably many) $t\in \mathbb{R}$, $\Delta_t\in A$ (which contradicts the stipulation of \ref{cross-section:B} that $M^A$ be countable $\WW$-a.s.).
\end{remark}
\begin{proof}
First, the easy direction: \ref{cross-section:B} $\Rightarrow$ \ref{cross-section:A}. We may just as well assume that $M=M^A$ with certainty (since the properties of \ref{cross-section:A} are not affected by changing $M$ on a $\WW$-negligible set). Then, since $M^A$ may be viewed as the preimage of $A$ under the $( \mathcal{B}_{\Omega_0}\otimes \mathcal{B}_\mathbb{R})/\mathcal{B}_{\Omega_0}$-measurable map $(\omega,t)\mapsto \Delta_t(\omega)$ we get that $M^A$ is $ \mathcal{B}_{\Omega_0}\otimes \mathcal{B}_\mathbb{R}$-measurable. By Remark~\ref{remark:ranomd-countable-set} $M^A$ is a random countable set. As already noted in Remark~\ref{remark:MA-equal}, directly by construction $M^A$ is (even perfectly) stationary. Further, for extended real $s<t$, and then any $h\in (0,\infty)$ for which $s+h<t-h$, we may pick an $A'\in \mathcal{B}_{\Omega_0\vert_{(-h,h)}}$ such that $A=(B\vert_{(-h,h)})^{-1}(A')$; viewing $M^A\cap (s+h,t-h)$ as the pre-image of $A'$ under the $(\sigma(\Delta_s\vert_{(0,t-s)})\otimes \mathcal{B}_{(s+h,t-h)})/\mathcal{B}_{\Omega_0\vert_{(-h,h)}}$-measurable map $(\omega,u)\mapsto \Delta_u(\omega)\vert_{(-h,h)}$ we get by the very same token of  \cite[Theorem~3.2]{kendall_2000} employed in Remark~\ref{remark:ranomd-countable-set} that $M^A\cap (s+h,t-h)$ admits an $\FF_{s,t}$-measurable enumeration. Taking union over a sequence of $h$ descending to zero allows to conclude that $M^A$  is also local. So, all in all, $M^A$ is a stationary local random countable set. 

Now for the difficult part: \ref{cross-section:A} $\Rightarrow$ \ref{cross-section:B}. Return to the construction of the perfectly stationary version of $M$ of the proof of Proposition~\ref{proposition:version-perfect-stationary}. Fix a a sequence $(h_n)_{n\in \mathbb{N}}$ in $(0,\infty)$ descending to $0$.  Just before introducing $R$ we ask in addition (as we may, by further changes to $M$ on  a $\WW$-negligible set) that $M\cap (\{h_n,-h_n:n\in \mathbb{N}\}\cup \{0\})=\emptyset$ and that for all $n\in \mathbb{N}$, $M\cap [(-h_n,-h_{n+1})\cup (h_{n+1},h_n)]$ admits a perfect enumeration measurable relative to the $\sigma$-field generated by the increments of $B$ on $(-h_n,-h_{n+1})$ and by its increments on $(h_{n+1},h_n)$ (no completions!). Note that as a consequence, for each $n\in \mathbb{N}$, $M\cap (-h_n,h_n)$ admits a perfect enumeration measurable relative the increments of $B$ on $(-h_n,h_n)$. With this extra requirement having been made, continue to introduce $R$, $\Omega_1$, $\Omega_2$ and $\tilde M$ exactly as in the proof of Proposition~\ref{proposition:version-perfect-stationary}. Then recall that $\tilde M$ is perfectly stationary and belongs to $\mathcal{B}_{\Omega_0}\otimes \mathcal{B}_{\mathbb{R}}$, besides, $\Omega_2$ is shift-invariant and $\WW$-almost certain. Furthermore, for each $n\in \mathbb{N}$ and then for every $\epsilon\in (0,h_n/2)$, 
 $$Q^\epsilon:=\{(s,t,\omega)\in (-\epsilon,\epsilon)\times (-h_n+2\epsilon,h_n-2\epsilon)\times \Omega_0: t\in s+M(\Delta_s(\omega))\}$$
 belongs to $\mathcal{B}_{(-\epsilon,\epsilon)}\otimes \mathcal{B}_{(-h_n+2\epsilon,h_n-2\epsilon)}\otimes \sigma(B\vert_{(-h_n,h_n)})$; and, for $\omega\in \Omega_2$,  $\frac{1}{2\epsilon}\int_{-\epsilon}^\epsilon \mathbbm{1}_{Q^\epsilon}(s,t,\omega)\lll(\dd s)$ is equal to $1$  if $t\in \tilde M(\omega)\cap (-h_n+2\epsilon,h_n-2\epsilon)$ and is equal to $0$ for $t\in (-h_n+2\epsilon,h_n-2\epsilon)\backslash \tilde M(\omega)$. Accordingly, by Tonelli, $M$ restricted to $\Omega_2\times (-h_n+2\epsilon,h_n-2\epsilon)$ belongs to $\sigma(B\vert_{(-h_n,h_n)})\vert_{\Omega_2}\otimes \mathcal{B}_{(-h_n+2\epsilon,h_n-2\epsilon)}$. Letting $\epsilon\downarrow 0$ over a sequence we deduce that $M$ restricted to $\Omega_2\times (-h_n,h_n)$ belongs to $\sigma(B\vert_{(-h_n,h_n)})\vert_{\Omega_2}\otimes \mathcal{B}_{(-h_n,h_n)}$. 
 
 With this nice version of $\tilde{M}$ in hand, assume without loss of generality that $M=\tilde{M}$ to begin with, and put 
 $${A^0}:=\{\Delta_t\omega:(\omega,t)\in \Omega_0\times \mathbb{R}\backepsilon t\in M(\omega)\}.$$ 
 
 Then $M=M^{A^0}$ even with certainty. For, let $\omega\in \Omega_0$. Clearly $M^{A^0}(\omega)\supset M(\omega)$. For the reverse inclusion take $t\in M^{A^0}(\omega)$. Since $\Delta_t\omega\in {A^0}$, there are $\omega'\in\Omega_0$ and a $t'\in M(\omega')$ such that $\Delta_t\omega=\Delta_{t'}\omega'$, i.e. $\omega=\Delta_{t'-t}\omega'$. But by perfect stationarity $M( \omega')=t'-t+M(\Delta_{t'-t}\omega')=t'-t+M(\omega)$. Thus $t+t'\in t+M(\omega')=t'+M(\omega)$. Therefore  $t\in M(\omega)$.

 Next, since  $M\in \mathcal{B}_{\Omega_0}\otimes \mathcal{B}_\mathbb{R}$ we get that ${A^0}=\{0\in M\}$, the zero-time section of $M$, belongs to $\mathcal{B}_{\Omega_0}$. Hence $E:={A^0}\cap \Omega_2\in \mathcal{B}_{\Omega_2}$.  Fix $n\in \mathbb{N}$. For $\{\omega_1,\omega_2\}\subset\Omega_2$ belonging to the same atom of $\sigma(B\vert_{(-h_n,h_n)})\vert_{\Omega_2}$, for all $t\in \mathbb{R}$, $\omega_1$ is from the $t$-time section of $M\cap (-h_n,h_n)$ iff $\omega_2$ is from the $t$-time section of $M\cap (-h_n,h_n)$ ($\because$  $M$ restricted to $\Omega_2\times (-h_n,h_n)$ belongs to $\sigma(B\vert_{(-h_n,h_n)})\vert_{\Omega_2}\otimes \mathcal{B}_{(-h_n,h_n)}$). Thus $[M\cap (-h_n,h_n)](\omega_1)=[M\cap (-h_n,h_n)](\omega_2)$. Therefore $\omega_1\in {A^0}$ iff $\omega_2\in {A^0}$. So, $E$ is a union of the atoms of the separable sub-$\sigma$-field $\sigma(B\vert_{(-h_n,h_n)})\vert_{\Omega_2}$ of $\mathcal{B}_{\Omega_2}$. By Blackwell's theorem  \cite[T~III.17]{meyer} we infer that $E\in \sigma(B\vert_{(-h_n,h_n)})\vert_{\Omega_2}$ (the definition of a Blackwell space  \cite[D~III.15]{meyer} entails that not only is $(\Omega_0,\mathcal{B}_{\Omega_0})$ Blackwell [as a Polish space \cite[T~III.16]{meyer}], but so is its trace $(\Omega_2,\mathcal{B}_{\Omega_2})$ on $\Omega_2\in \mathcal{B}_{\Omega_0}$). This being true for all $n\in \mathbb{N}$, we get $E\in \cap_{n\in \mathbb{N}} [\sigma(B\vert_{(-h_n,h_n)})\vert_{\Omega_2}]$. But $\cap_{n\in \mathbb{N}} [\sigma(B\vert_{(-h_n,h_n)})\vert_{\Omega_2}] =\mathfrak{G}\vert_{\Omega_2}$ by Lemma~\ref{lemma:commute-decreasing-intersection-trace}. Hence there is $A\in \mathfrak{G}$ such that $A\cap \Omega_2=E$. It remains to note that $M^{A^0}=M^E=M^{A}$ a.s.-$\WW$ (recall Remark~\ref{remark:MA-equal}).
\end{proof}

%\begin{remark}
By way of a  check (of the sort: does it imply something, which is obviously false), we may notice that the proof of Theorem~\ref{theorem:construction-through-zero-time-section}, namely the part concerning the perfection of $M$,  has --- essentially, modulo elementary extra considerations, which we leave to the reader --- shown, that a local stationary random countable set $M:\Omega_0\to 2^\mathbb{R}$ admits a version $\tilde M\in\mathcal{B}_{\Omega_0}\otimes \mathcal{B}_\mathbb{R}$ that is perfectly stationary, countable with certainty and for which there is a shift-invariant $\Omega_2\in \mathcal{B}_{\Omega_0}$ of full $\WW$-measure such that for all extended-real $s<t$, $$\tilde M\cap (\Omega_2\times (s,t))\in\sigma (\Delta_s\vert_{(0,t-s)})\vert_{\Omega_2}\otimes \mathcal{B}_{(s,t)}.$$ It is perfect stationarity, combined with a kind of almost perfect locality (``almost'', since  $\Omega_2$ is not necessarily $\Omega_0$; ``kind of'', since it is to do with measurability in the tensor product, not enumerations). Can we ``believe it''? If we trust the proof, we had better. Still, being prudent is the mother of all virtues. So, is it true for $M=\{\text{local minima of }B\}$? Indeed it is, for one can take for $\Omega_2$ the set of  paths from $\Omega_0$, which satisfy the property that at no two distinct local minima does $B$ take the same value (which belongs to $\mathcal{B}_{\Omega_0}$ by ``rational exhaustion'' and is known to be $\WW$-almost certain). On this shift-invariant  $\Omega_2$ all of the local minima are strict, hence one may further take $\tilde M$ equal to the local minima on $\Omega_2$ and to $\emptyset$ off $\Omega_2$. Such $\tilde M$ is evidently countable with certainty and perfectly stationary, but also satisfies the locality property of the above display.
%\end{remark}

\section{New examples of stationary local random countable sets over the Wiener noise}\label{section:new-family-of-examples}
We procure in this section a positive answer to Tsirelson's question as announced in the Introduction. Before doing so let us first mention some would-be ``obvious candidates'' of exceptional times of $B$, which  would --- in view  of Theorem~\ref{theorem:construction-through-zero-time-section},  \ref{cross-section:B} $\Rightarrow$ \ref{cross-section:A} --- give new examples of dense stationary local random countable sets, were it not for the fact that they fail to be $\WW$-a.s. countably infinite.

\begin{itemize}
\item (Points of increase.) For $A=\{\exists \epsilon>0\backepsilon \inf_{s\in [0,\epsilon]}B_s\land (-B_{-s})\geq 0\}$ it was shown in \cite{nonincrease} that $M^A$ is empty.
\item (Fast points.) Let $\alpha\in [0,\infty)$. For $A=\{\limsup_{h\downarrow  0}\frac{B(h)}{\sqrt{2 h \log  h^{-1}}}\geq \alpha\}$ it was shown in \cite{fast-points} that $M^A$ is a.s.-$\WW$ of full Lebesgue measure ($\therefore$ of cardinality continuum, $\therefore$ uncountable), empty or of positive Hausdorff measure ($\therefore$ uncountable) according as $\alpha=1$, $\alpha>1$ or $\alpha<1$. (The case $\alpha\leq 1$ is anyway automatically precluded from being interesting for us by the fact that $\WW(A)=1$ in such case, by the law of the iterated logarithm (recall Remark~\ref{remark:onlyAzero}).)
\item (Slow points.) Let $\alpha\in [0,\infty)$. For $A=\{\exists \epsilon>0\backepsilon \vert B(h)\vert\leq \alpha\sqrt{h}\text{ for }h\in [0,\epsilon]\}$ the set $M^A$ is empty when $\alpha\leq 1$ \cite{slow-points} and of positive Hausdorff measure ($\therefore$ uncountable) a.s.-$\WW$ when $\alpha>1$ \cite[Corollary~3, Proposition~1(a-b)]{slow-perkins}.
\end{itemize}

We feel that the odds are stacked against us. Nevertheless, we shall be able to construct new examples of stationary local random countable sets from the zero sets of solutions to some stochastic differential equations (SDEs). The argument leading from the second to the first can be made general and we record this first; $\Gamma$ below corresponds to such a zero set. 

In the formulation of the next couple of results we do something unusual in that we indicate in part what is used in the proof already in the statements themselves. We do so to stress the relevance of the individual properties.
\begin{lemma}\label{lemma:gamma}
Let $\Gamma\subset \Omega_0\times [0,\infty)$ satisfy the following: 
\begin{enumerate}[(a)]
\item\label{lemma:gamma:a} it $\WW$-a.s. contains $0$; 
\item \label{lemma:gamma:c}it is $\WW$-a.s. closed in the upper limit topology; 
\item\label{lemma:gamma:b} it is progressive in $\FF^{0,\rightarrow}$; 
\item \label{lemma:gamma:d} it is  ``coalescent'' in the sense that for all real $u\geq 0$   a.s.-$\WW$ for all $s$, if $s\in \Gamma\cap [u,\infty)$, then  $\Gamma=u+\Gamma(\Delta_u)$  on $[s,\infty)$.
\end{enumerate} %and (ii)   $s-u\in \Gamma(\Delta_u\omega)$ when $u\leq s$. 

For real $s\leq t$ put $$g_{s,t}:=s+\sup (\Gamma(\Delta_s)\cap [0,t-s])\in [s,t]\quad (\sup\emptyset:=0).$$ 

We have the following assertions.
\begin{enumerate}[(i)]
\item\label{lemma:gamma:i} For all real $s\leq t$ a.s.-$\WW$ the supremum in the definition of $g_{s,t}$  is attained (because of \ref{lemma:gamma:a}-\ref{lemma:gamma:c}). 
\item\label{lemma:gamma:ii} For all real $s\leq t$ the random variable $g_{s,t}$ is $\FF_{s,t}$-measurable (because of \ref{lemma:gamma:b}).
\item\label{lemma:gamma:iii} For  all real $ s_1\leq s_2\leq t_2\leq t_1$ a.s.-$\WW$, if $g_{s_1,t_1}\in [s_2,t_2]$, then $g_{s_2,t_2}=g_{s_1,t_1}$ (because of \ref{lemma:gamma:d}).
\end{enumerate}
\end{lemma}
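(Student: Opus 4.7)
The plan is to address the three items separately, each activating one of the hypotheses (a)--(d). For (i), by (a) the set $\Gamma(\Delta_s)\cap[0,t-s]$ contains $0$ a.s.-$\WW$ (since $\Delta_s$ preserves $\WW$), so it is non-empty; picking a sequence $(x_n)_{n\in\mathbb N}$ in it with $x_n\uparrow\sigma:=\sup(\Gamma(\Delta_s)\cap[0,t-s])$, the bound $x_n\leq t-s$ is preserved in the limit and closedness of $\Gamma(\Delta_s)$ in the upper limit topology — which is precisely closedness under suprema of increasing sequences drawn from the set — places $\sigma$ back in $\Gamma(\Delta_s)\cap[0,t-s]$. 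That gives (i).

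For (ii), by (b) the set $\Gamma\cap(\Omega_0\times[0,t-s])$ belongs to $\FF_{0,t-s}\otimes\mathcal{B}_{[0,t-s]}$; since $\FF_{0,t-s}$ is $\WW$-complete, the measurable projection / debut theorem gives that the right end $\omega\mapsto\sup(\Gamma(\omega)\cap[0,t-s])$ of the progressive set (whose sections we know to be closed on a $\WW$-almost-certain set by (c), and which can be redefined arbitrarily off that set without affecting measurability) is $\FF_{0,t-s}$-measurable. Pre-composing with the $\FF_{s,t}/\FF_{0,t-s}$-measurable map $\Delta_s\colon\Omega_0\to\Omega_0$ (whose pull-back of $\sigma(B_u-B_0:u\in(0,t-s))\vee\WW^{-1}(\{0,1\})$ is contained in $\FF_{s,t}$) and adding the constant $s$ yields (ii).

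For (iii), fix $s_1\leq s_2\leq t_2\leq t_1$ and write $g_1:=g_{s_1,t_1}$ for brevity. Apply (d) to the path $\Delta_{s_1}\omega$ (legitimate because $(\Delta_{s_1})_\star\WW=\WW$ by Remark~\ref{rmk:shifts-preserving}) with $u:=s_2-s_1\geq 0$ and using $\Delta_u\circ\Delta_{s_1}=\Delta_{s_2}$: a.s.-$\WW$ for every $s\in\Gamma(\Delta_{s_1})\cap[s_2-s_1,\infty)$,
\[
\Gamma(\Delta_{s_1})=(s_2-s_1)+\Gamma(\Delta_{s_2})\quad\text{on }[s,\infty),
\]
equivalently $s_1+\Gamma(\Delta_{s_1})=s_2+\Gamma(\Delta_{s_2})$ on $[s_1+s,\infty)$. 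By (i) and the hypothesis $g_1\in[s_2,t_2]$, the choice $s:=g_1-s_1\in\Gamma(\Delta_{s_1})\cap[s_2-s_1,\infty)$ is permissible, giving the identity on $[g_1,\infty)$. Since $g_1$ is the largest element of $s_1+\Gamma(\Delta_{s_1})$ in $[s_1,t_1]\supset[s_1,t_2]$, the set $s_1+\Gamma(\Delta_{s_1})$ meets $(g_1,t_2]$ in $\emptyset$, and by the transported identity the same holds for $s_2+\Gamma(\Delta_{s_2})\cap(g_1,t_2]$. On the other hand $g_1\in s_1+\Gamma(\Delta_{s_1})$ is carried to $s_2+\Gamma(\Delta_{s_2})$, and $g_1\in[s_2,t_2]$, whence $g_{s_2,t_2}=g_1$.

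The substantive step is (iii); (i) is immediate from the closedness convention, and (ii) is a routine measurable-end computation modulo the completeness of $\FF_{0,t-s}$. The one thing to watch in (iii) is that the a.s.-qualifier in (d) has been invoked for the single (non-random) value $u=s_2-s_1$ and then evaluated at the random $s=g_1-s_1$ which by (i) lies in the relevant set with probability one, so no uncountable union of exceptional sets is incurred.
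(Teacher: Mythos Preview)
Your proof is correct and follows essentially the same route as the paper: for (i) and (ii) you invoke the measure-preserving property of $\Delta_s$ together with upper-limit closedness and the D\'ebut theorem respectively, and for (iii) you apply the coalescence hypothesis (d) at the single deterministic value $u=s_2-s_1$ on the shifted path $\Delta_{s_1}\omega$, then read off both $g_{s_1,t_1}\in s_2+\Gamma(\Delta_{s_2})$ and the emptiness of $(s_2+\Gamma(\Delta_{s_2}))\cap(g_{s_1,t_1},t_2]$ from the transported identity on $[g_{s_1,t_1},\infty)$. Your closing remark about avoiding an uncountable union of null sets is a nice clarification the paper leaves implicit.
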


\begin{remark}
If $\Gamma$ is $\WW$-a.s. closed also in the lower limit topology, then ``for all $s$, if $s\in \Gamma\cap [u,\infty)$, then  $\Gamma=u+\Gamma(\Delta_u)$  on $[s,\infty)$'' of Item~\ref{lemma:gamma:d} can  be replaced by the more succinct ``$\Gamma=u+\Gamma(\Delta_u)$ on $[\inf(\Gamma\cap [u,\infty)),\infty)$'' ($\inf\emptyset:=\infty$).
\end{remark}

%\begin{definition}
%For $\omega\in\Omega_0$  put: $$g_{s,t}(\omega):=s+\sup (\Gamma(\Delta_s\omega)\cap [0,t-s]),\quad 0\leq s\leq t, $$ (a.s. for all $0\leq s\leq t$, the set of which the sup in the definition of $g_{s,t}$ is being taken is non-empty (it contains $0$ because of (a)) and the sup is attained (because of (c)); $$M(\omega):=\{g_{s,t}(\omega):(s,t)\in [0,\infty)^2, s\leq t,g_{s,t}(\omega)\in (s,t)\}.$$
%\end{definition}

%\begin{lemma}
%For a.a. $\omega\in \Omega_0$ and then for all $0\leq s_1\leq s_2\leq t_2\leq t_1$, if $g_{s_1,t_1}(\omega)\in [s_2,t_2]$, then $g_{s_2,t_2}(\omega)=g_{s_1,t_1}(\omega)$.
%\end{lemma}
\begin{proof}
\ref{lemma:gamma:i}. With $\WW$-probability one, $0\in \Gamma$ and $\Gamma$ is closed in the upper limit topology. Since $\Delta_s$ is measure-preserving for $\WW$, the same is true with $\Gamma(\Delta_s)$  replacing $\Gamma$. \ref{lemma:gamma:ii}. $\Delta_s^{-1}( \FF_{0,t-s})=\FF_{s,t}$ so progressive measurability of $\Gamma$ yields $\Gamma(\Delta_s)\cap  (\Omega_0\cap [0,t-s])\in \FF_{s,t}\otimes \mathcal{B}_{[0,t-s]}$. We may apply the D\'ebut theorem (recall that $\WW$ is complete). \ref{lemma:gamma:iii}.
 We use  \ref{lemma:gamma:d} with $u=s_2-s_1$ together with the fact that $\Delta_{s_1}$ is measure-preserving for $\WW$ to obtain that $\WW$-a.s. the following holds on $\{g_{s_1,t_1}\in [s_2,t_2]\}$. %There is a sequence $(u_k)_{k\in \mathbb{N}}$ in $\Gamma(\Delta_{s_1})\cap [s_2-s_1,\infty)$ that is $\uparrow g_{s_1,t_1}-s_1$. Consequently $(u_k-(s_2-s_1))_{k\in \mathbb{N}}$ is a sequence in $\Gamma(\Delta_{s_2-s_1}\Delta_{s_1})=\Gamma(\Delta_{s_2})$ that is $\uparrow g_{s_1,t_1}-s_2$.
Because, by \ref{lemma:gamma:i}, $g_{s_1,t_1}-s_1\in \Gamma(\Delta_{s_1})\cap [s_2-s_1,\infty)$, then also $g_{s_1,t_1}-s_1-(s_2-s_1)\in \Gamma(\Delta_{s_2-s_1}\Delta_{s_1})$, i.e. $g_{s_1,t_1}-s_2
\in \Gamma(\Delta_{s_2})$. Because further $(g_{s_1,t_1}-s_1,t_1-s_1]\cap \Gamma(\Delta_{s_1})=\emptyset$, then also $(g_{s_1,t_1}-s_1,t_1-s_1]\cap ((s_2-s_1)+\Gamma(\Delta_{s_2-s_1}\Delta_{s_1}))=\emptyset$, i.e. $(g_{s_1,t_1}-s_2,t_1-s_2]\cap \Gamma(\Delta_{s_2})=\emptyset$ and a fortiori $(g_{s_1,t_1}-s_2,t_2-s_2]\cap \Gamma(\Delta_{s_2})=\emptyset$. Therefore  $g_{s_2,t_2}=g_{s_1,t_1}$.
\end{proof}

\begin{proposition}\label{proposition:gamma}
Retain the setting of Lemma~\ref{lemma:gamma} and put
$$M:=\{g_{s,t}:(s,t)\in \mathbb{Q}^2, s< t,g_{s,t}\in (s,t)\}.$$ 
We have the following assertions.
\begin{enumerate}[(i)]
\item For all extended-real $s<t$ one has that $\WW$-a.s. $M\cap (s,t)=\{g_{p,q}:(p,q)\in \QQ^2,s\leq p<q\leq t, g_{p,q}\in (p,q)\}$ (because of Lemma~\ref{lemma:gamma}\ref{lemma:gamma:iii}); therefore the random countable set $M$ is local (because of Lemma~\ref{lemma:gamma}\ref{lemma:gamma:ii}). 
\item Because of Lemma~\ref{lemma:gamma}\ref{lemma:gamma:iii}, $M$ is  stationary. 
\end{enumerate}
\end{proposition}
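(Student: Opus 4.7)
\textbf{Plan for Proposition~\ref{proposition:gamma}.}

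\textbf{Part (i).} I begin with the identification of $M \cap (s,t)$ for each extended-real $s < t$. The inclusion ``$\supset$'' is immediate. For ``$\subset$'', any $x \in M \cap (s,t)$ is of the form $x = g_{p_0,q_0}$ for some $(p_0,q_0) \in \mathbb{Q}^2$ with $p_0 < q_0$ and $g_{p_0,q_0} \in (p_0,q_0) \cap (s,t)$. Since $\max(s,p_0) < x < \min(t,q_0)$, I can choose rationals $p \in [\max(s,p_0), x)$ and $q \in (x, \min(t,q_0)]$; both intervals have positive length, so such rationals exist. Applying Lemma~\ref{lemma:gamma}\ref{lemma:gamma:iii} with $s_1 = p_0$, $t_1 = q_0$, $s_2 = p$, $t_2 = q$, we get, almost surely, that since $g_{p_0,q_0} = x \in [p,q]$, we have $g_{p,q} = g_{p_0,q_0} = x \in (p,q)$, so $x$ belongs to the right-hand side. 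Taking the intersection of null sets over countably many rational pairs $(p_0,q_0)$ yields the claimed equality a.s. Locality then follows: each $g_{p,q}$ with $s \leq p < q \leq t$ is $\FF_{p,q}$-measurable by Lemma~\ref{lemma:gamma}\ref{lemma:gamma:ii}, hence $\FF_{s,t}$-measurable; using $g_{p,q}$ when $g_{p,q} \in (p,q)$ and $\dagger$ otherwise (across all such rational pairs) furnishes an $\FF_{s,t}$-measurable enumeration of $M \cap (s,t)$. Specialising to $s = -\infty$, $t = \infty$ shows $M$ is a random countable set.

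\textbf{Part (ii).} Fix $u \in \mathbb{R}$. The key identity is
\[
u + g_{p,q}(\Delta_u) = g_{p+u,\, q+u}, \quad p \leq q,
\]
which holds everywhere and follows from $\Delta_p \circ \Delta_u = \Delta_{p+u}$ together with the definition of $g_{\cdot,\cdot}$. Consequently,
\[
u + M(\Delta_u) = \{ g_{p+u,\, q+u} : (p,q) \in \mathbb{Q}^2,\ p < q,\ g_{p+u,\, q+u} \in (p+u, q+u) \}.
\]
To show $M \subset u + M(\Delta_u)$ a.s., fix $(r,r') \in \mathbb{Q}^2$ with $r < r'$ and work on $\{g_{r,r'} \in (r,r')\}$. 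Since $g_{r,r'} - u \in (r - u, r' - u)$, the intervals $[r-u,\, g_{r,r'}-u)$ and $(g_{r,r'}-u,\, r'-u]$ each have positive length, so I can pick rationals $p, q$ in them respectively; then $r \leq p+u < g_{r,r'} < q+u \leq r'$. Lemma~\ref{lemma:gamma}\ref{lemma:gamma:iii}, applied with $s_1 = r$, $t_1 = r'$, $s_2 = p+u$, $t_2 = q+u$, gives $g_{p+u,\, q+u} = g_{r,r'}$ a.s., and in particular $g_{p+u, q+u} \in (p+u, q+u)$, so $g_{r,r'} \in u + M(\Delta_u)$. Intersecting over countably many rational pairs $(r,r')$ yields the inclusion. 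The reverse inclusion $u + M(\Delta_u) \subset M$ is perfectly symmetric: for each $(p,q) \in \mathbb{Q}^2$ with $p < q$, on $\{g_{p+u, q+u} \in (p+u, q+u)\}$, pick rationals $r \in (p+u,\, g_{p+u,q+u})$ and $r' \in (g_{p+u,q+u},\, q+u)$ and apply Lemma~\ref{lemma:gamma}\ref{lemma:gamma:iii} the other way round.

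\textbf{Main obstacle.} The principal delicacy is that $M$ is enumerated by the rationals, while the L\'evy shift $\Delta_u$ for irrational $u$ effectively re-indexes by $\mathbb{Q} + u$. Reconciling the two countable dense grids is precisely the role of the ``coalescence'' property encoded in Lemma~\ref{lemma:gamma}\ref{lemma:gamma:iii}: it lets $g$-values indexed on any pair of nested intervals agree whenever one sits inside the other in the appropriate sense. Everything else amounts to selecting rationals in intervals of positive length and handling exceptional sets countably, which is routine.
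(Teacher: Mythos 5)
Your proof is correct and follows the paper's approach in both parts, resting on the same identity $u + g_{p,q}(\Delta_u) = g_{p+u,\,q+u}$ and the coalescence property Lemma~\ref{lemma:gamma}\ref{lemma:gamma:iii}. The only small difference is in part (ii): you verify both inclusions $M \subset u+M(\Delta_u)$ and $u+M(\Delta_u) \subset M$ directly by two symmetric sandwiching arguments, whereas the paper proves only $M \supset u+M(\Delta_u)$ and then obtains the other by replacing $u$ with $-u$ and invoking the $\WW$-preservation of $\Delta_u$ --- a slight economy, but the mechanics are the same.
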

\begin{proof}
Only the second assertion requires further explanation. Let $u\in \mathbb{R}$. It suffices to show that $M\supset u+M(\Delta_u)$ a.s.-$\WW$ (because then, by this very token with $u\rightsquigarrow -u$, also $M\supset -u+M(\Delta_{-u})$ a.s.-$\WW$, i.e. we get the reverse inclusion /upon recalling that $\Delta_u$ is measure-preserving for $\WW$/). Fix $(p,q)\in \mathbb{Q}^2$, $p<q$. We are to show that $\WW$-a.s., if $g_{p,q}(\Delta_u)\in (p,q)$, then $u+g_{p,q}(\Delta_u)\in M$. Now, if $g_{p,q}(\Delta_u)\in (p,q)$, then for some rational pair $(s,t)$ satisfying $p+u<s<t<q+u$, we have $s<u+g_{p,q}(\Delta_u)<t$. On the other hand,  for any rational pair $(s,t)$ satisfying $p+u<s<t<q+u$,  by Lemma~\ref{lemma:gamma}\ref{lemma:gamma:iii}, $(s,t)\ni u+g_{p,q}(\Delta_u)=u+g_{s-u,t-u}(\Delta_u)$ a.s.-$\WW$ on $\{s<u+g_{p,q}(\Delta_u)<t\}$, while from the very definitions $u+g_{s-u,t-u}(\Delta_u)=g_{s,t}$. Hence $u+g_{p,q}(\Delta_u)\in M$ a.s.-$\WW$ on $\{g_{p,q}(\Delta_u)\in (p,q)\}$.
\end{proof}

\begin{example}
The random set $\Gamma=\{B\vert_{[0,\infty)}-\underline{B\vert_{[0,\infty)}}=0\}$ (the underline signifies the running infimum) satisfies the conditions of Lemma~\ref{lemma:gamma} and $\WW$-a.s. the associated $M$ of Proposition~\ref{proposition:gamma} is the set of  the local minima of $B$. Similarly we  get the local maxima (taking $\Gamma=\{\overline{B\vert_{[0,\infty)}}-B\vert_{[0,\infty)}=0\}$, the overline signifying the running supremum). % and even their union (by taking the union of the two preceding $\Gamma$ sets). %Can I get the union?
Finally, for  $\Gamma=\{(B\vert_{[0,\infty)}-\underline{B\vert_{[0,\infty)}})(\overline{B\vert_{[0,\infty)}}-B\vert_{[0,\infty)})=0\}$ the corresponding set $M$ are  the local extrema. Though, the latter $\Gamma$ is not coalescent.
\end{example}
\begin{example}
The random sets $\Gamma=\Omega_0\times \{0\}$ and $\Gamma=\Omega_0\times [0,\infty)$ trivially satisfy  the conditions of Lemma~\ref{lemma:gamma},  however the associated $M$ of Proposition~\ref{proposition:gamma} is also trivial, empty.
\end{example}
%\begin{question}
%Can one simply ``cook up'' any interesting $\Gamma$ (satisfying the conditions of Lemma~\ref{lemma:gamma}) outright for which one gets a non-empty $M$ other than the local minima/maxima of $B$? 
%\end{question}
%\begin{question} 
%Is there a $\Gamma$ satisfying the conditions of Lemma~\ref{lemma:gamma} that gives for $M$ the local extrema of $B$ and more generally is there a corresponding $\Gamma$ for each local stationary random countable set? (Almost surely the answer is no.) 
%\end{question}

We proceed now to the promised construction of new  examples of stationary local random countable sets. To this end fix a $d \in [0,\infty)$, and recall \cite[Section~XI.1]{revuz} that the squared Bessel SDE (for the unknown $Z=(Z(t))_{t\in [0,\infty)}$)
\begin{equation}
\label{sde}
Z(t)=  2\int_0^t \sqrt{Z(s)}\dd B(s)+ dt,\quad t\in [0,\infty),
\end{equation}
admits an a.s.-$\WW$ unique continuous, $[0,\infty)$-valued, $\FF^{0,\rightarrow}$-adapted (i.e. pathwise unique, strong) solution. The solution $Z$ is said to have the law of the squared Bessel process of dimension $d$.

%\begin{remark}\label{remark:scaling}
%There is the following scaling property. For any $T\in (0,\infty)$: $(\frac{1}{T}Z_{T\cdot})_\star\WW=Z_\star W$.
%\end{remark}
\begin{example}\label{example:reflected-loc-min}
For $d=1$ we get $Z=(B\vert_{[0,\infty)}-\underline{B\vert_{[0,\infty)}})^2$ a.s.-$\WW$ (by It\^o for the continuous semimartingale $B\vert_{[0,\infty)}-\underline{B\vert_{[0,\infty)}}$). The case $d=0$ is trivial: $Z=0$.
\end{example}

Consider next  real times  $s_1\leq s_2$, and the a.s.-$\WW$ unique continuous, $[0,\infty)$-valued, resp.  $(\FF_{s_1,t})_{t\in [s_1,\infty)}$- and $(\FF_{s_2,t})_{t\in [s_2,\infty)}$-adapted processes $Z_1=(Z_1(t))_{t\in [s_1,\infty)}$ and $Z_2=(Z_2(t))_{t\in [s_2,\infty)}$ solving $$Z_i(t)=2\int_{s_i}^t \sqrt{Z_i(s)}\dd B(s)+ d(t-s_i),\quad t \in [s_i,\infty),\, i\in \{1,2\}.$$ We identify $Z_1(s_1+\cdot)=Z(\Delta_{s_1})$ and $Z_2(s_2+\cdot)=Z(\Delta_{s_2})$ a.s.-$\WW$. Notice that $S:=\inf \{t\in [s_2,\infty):Z_1(t)=Z_2(t)\}$ is a stopping time of $(\mathcal{F}_{s_1,t})_{t\in [s_2,\infty)}$ so by the strong Markov property $Z_1=Z_2$ on $[S,\infty)$ a.s.-$\WW$. Because the paths of $Z_2$ and $Z_1$ are nonnegative and continuous and since $\WW(Z_2(s_2)=0)=1$ we see that also $\WW(Z_2\leq Z_1\text{ on }[s_2,\infty))=1$. Hence, $S\leq \inf \{t\in [s_2,\infty):Z_1(t)=0\}$ a.s.-$\WW$. It follows, on taking $s_1=0$ and $s_2=u$, that the random set $$\Gamma:=\{Z=0\}$$ --- viz. the zero set of the solution to \eqref{sde} ---  satisfies \ref{lemma:gamma:d} of Lemma~\ref{lemma:gamma}, while the remaining conditions of this lemma are also clearly satisfied. (Incidentally, the preceding should make it clear why we felt it appropriate to call the property of Lemma~\ref{lemma:gamma}\ref{lemma:gamma:d} ``coalescence''.) As a consequence, Proposition~\ref{proposition:gamma} implies that the set $M$ thereof is a stationary local random countable set, empty when $d\geq 2$ since in that case $0$ is polar for $Z$ \cite[p.~442, Item~(ii)]{revuz}, empty for $d=0$ also. We retain in what follows the notation of Lemma~\ref{lemma:gamma} and Proposition~\ref{proposition:gamma} for $\Gamma=\{Z=0\}$; in particular, for real $s\leq t$, $$g_{s,t}=s+\sup (\Gamma(\Delta_s)\cap [0,t-s])$$ and
$$M=\{g_{s,t}:(s,t)\in \mathbb{Q}^2, s< t,g_{s,t}\in (s,t)\}.$$ 

It appears that Proposition~\ref{proposition:gamma} is quite widely applicable despite its facile nature:
\begin{remark}\label{remark:generalize-sde}
In arguing the properties of the set $\Gamma$ (hence that $M$ is a stationary local random countable set) we only used the fact that the SDE \eqref{sde} of the type
\begin{equation}\label{general-sde}
\dd Z_t=\mu(Z_t)\dd t+\sigma(Z_t)\dd B_t,\quad Z_0=0
\end{equation}
($\mu$ and $\sigma$ Borel, $Z$ continuous adapted) has a nonnegative pathwise unique strong solution. More precisely, what was used (for instance) is that the process $Z$ is:  a.s.-$\WW$ nonnegative, continuous and vanishing-at-zero; adapted to $\FF^{0,\rightarrow}$; and coalescent in the sense that, for real $u\geq 0$, setting  $S:=\inf \{t\in [u,\infty):Z(t)=Z(\Delta_{u})(t-u)\}$, then $\WW$-a.s. $Z=Z(\Delta_{u})(\cdot-u)$ on $[S,\infty)$. More generally, it would have been enough to have that $Z$ is: $\WW$-a.s. vanishing at zero and left-continuous; adapted to $\FF^{0,\rightarrow}$; coalescent  at zero in the sense that for all real $u\geq 0$ $\WW$-a.s. for all $s$, if $s\in \{Z=0\}\cap [u,\infty)$, then $Z=Z(\Delta_u)(\cdot-u)$ on $[s,\infty)$.
\end{remark}

Suppose henceforth that  $d \in (0,2)$. It is known 
that in this case  the distribution of $g_{s,t}$ is diffuse and carried by the open interval $(s,t)$ for all real $s<t$. One way to see the latter is by noting that (i) $0$ is regular for $Z$, which follows for instance from the construction of a weak solution to \eqref{sde} via a spatio-temporal transformation of reflecting Brownian motion \cite[Paragraph~V.48.6]{rogers2000diffusions} and that (ii) the laws of $Z_t$, $t\in (0,\infty)$, have no atoms at zero \cite[Corollary~XI.1.4]{revuz}. So $M$ is not empty, and is in fact dense. Besides, we may now write more succinctly
$$M=\{g_{s,t}:(s,t)\in \mathbb{Q}^2, s< t\}\text{ a.s.-$\WW$}.$$  
\begin{remark}\label{rmK.dense-general}
More generally, if for the pathwise unique  strong solution of the SDE \eqref{general-sde}  $0$ is regular for $Z$ and $Z$ has no atoms at zero at positive times, then the $M$ associated to $\Gamma=\{Z=0\}$ is dense.
\end{remark}
In the following we write $M^{(d)}$ instead of $M$ to express the dependence on $d$ and similarly $g^{(d)}_{s,t}$ etc. Note that $M ^{(1)}$ are the local minima (recall Example~\ref{example:reflected-loc-min}).
 
At this point, for all we know, the $M^{(d)}$, $d\in (0,2)$, could still all be but versions of $M^{(1)}$, say.  The following proposition finally settles the question of Tsirelson mentioned in the Introduction. Local minima and maxima (and their union) are not the only examples of stationary local dense random countable sets over the Wiener noise. In fact there are at least continuum many.

\begin{proposition}\label{proposition:a.s.distinct}
For any distinct $d_1,d_2 \in (0,2)$ we have that $M^{(d_1)}\cap M^{(d_2)}$ is empty.
\end{proposition}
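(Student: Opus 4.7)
The plan is to reduce the problem to showing, for each rational $v>0$, that $\WW(g^{(d_1)}_{0,v}=g^{(d_2)}_{0,v})=0$, and then to combine the Yamada--Watanabe comparison theorem for the squared Bessel SDE with a last-exit decomposition of $Z^{(d_2)}$.

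First I carry out the reduction. Since $M^{(d)}=\{g^{(d)}_{s,t}:(s,t)\in \mathbb{Q}^2,s<t\}$ a.s.\ by Proposition~\ref{proposition:gamma}, any common point $\tau\in M^{(d_1)}\cap M^{(d_2)}$ equals $g^{(d_1)}_{s_1,t_1}=g^{(d_2)}_{s_2,t_2}$ for some rational pairs $(s_i,t_i)$, with $\tau\in (u,v):=(s_1\vee s_2,t_1\wedge t_2)\in \QQ^2$. Applying Lemma~\ref{lemma:gamma}\ref{lemma:gamma:iii} to each $\Gamma^{(d_i)}$ (passing from $(s_i,t_i)$ down to the nested sub-interval $(u,v)$) forces $\tau=g^{(d_i)}_{u,v}$, hence $g^{(d_1)}_{u,v}=g^{(d_2)}_{u,v}$. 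By countable additivity and the $\WW$-preserving L\'evy shift $\Delta_u$ (which satisfies $g^{(d)}_{u,v}(\omega)-u=g^{(d)}_{0,v-u}(\Delta_u\omega)$ directly from the definition), it suffices to prove $\WW(g^{(d_1)}_{0,v}=g^{(d_2)}_{0,v})=0$ for each rational $v>0$. Take WLOG $d_1<d_2$.

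For this, the Yamada--Watanabe comparison theorem (applicable since $z\mapsto 2\sqrt z$ is H\"older $1/2$ and the drifts $d_1<d_2$ are constant) yields $Z^{(d_1)}\leq Z^{(d_2)}$ a.s.-$\WW$, whence the crucial \emph{nesting} $\{Z^{(d_2)}=0\}\subseteq \{Z^{(d_1)}=0\}$ a.s.; in particular $\tau_2:=g^{(d_2)}_{0,v}$ is already a zero of $Z^{(d_1)}$, so $g^{(d_1)}_{0,v}\geq \tau_2$ a.s. Moreover, since $Z^{(d_2)}(v)$ has a diffuse law, $\tau_2<v$ a.s. To upgrade to strict inequality $g^{(d_1)}_{0,v}>\tau_2$ (which is exactly what is needed), one argues that $Z^{(d_1)}$ a.s.\ has a zero inside $(\tau_2,v]$. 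The idea is to use the classical Pitman--Yor last-exit decomposition of $Z^{(d_2)}$ at $\tau_2$: conditional on $\tau_2$, the post-$\tau_2$ fragment of $Z^{(d_2)}$ is a Bessel meander of dimension $d_2$ and length $v-\tau_2$, which through the SDE determines the post-$\tau_2$ Brownian increments; under this conditional law $Z^{(d_1)}(\tau_2+\cdot)$ starts at $0$ (by the nesting) and evolves by its own Bessel SDE of dimension $d_1$ driven by the meander-associated BM. On any initial sub-window $(0,s)$ with $s<v-\tau_2$, the conditional law of the driving Brownian motion is absolutely continuous with respect to Wiener measure (the meander conditioning restricts only the ``future'' non-hitting of $0$ by $Z^{(d_2)}$ on $(s,v-\tau_2]$), and the $\WW$-a.s.\ property that the squared Bessel of dimension $d_1<2$ returns to $0$ in arbitrarily small time (regularity of $0$) then transfers under the conditioning, placing a zero of $Z^{(d_1)}$ inside $(\tau_2,\tau_2+s]\subset(\tau_2,v]$.

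The main obstacle lies in making this absolute-continuity transfer rigorous, since $\tau_2$ is a last-exit time rather than a stopping time and the strong Markov property is not directly applicable; one needs to handle the conditioning through the explicit meander description and a Radon--Nikodym comparison on initial windows $(0,s)$, carefully exploiting that the meander law agrees with the Bessel law up to a density that is $\WW$-a.s.\ positive and finite on short initial intervals.
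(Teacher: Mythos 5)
Your reduction (down to $\WW(g^{(d_1)}_{0,v}=g^{(d_2)}_{0,v})=0$ via Lemma~\ref{lemma:gamma}\ref{lemma:gamma:iii} and the $\WW$-preserving L\'evy shifts) and your application of the comparison theorem (giving $Z^{(d_1)}\leq Z^{(d_2)}$, hence $g^{(d_1)}_{0,v}\geq g^{(d_2)}_{0,v}$) agree with the paper. The final step, however, contains a genuine and fatal gap.

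Your argument hinges on the assertion that, on an initial sub-window $(0,s)$ after $\tau_2$, ``the conditional law of the driving Brownian motion is absolutely continuous with respect to Wiener measure'', the parenthetical justification being that the meander conditioning restricts only the \emph{future} non-hitting of $0$ on $(s,v-\tau_2]$. This is wrong: the meander conditioning also prohibits $Z^{(d_2)}$ from hitting $0$ on $(0,s]$, and under the unconditioned law the squared Bessel process of dimension $d_2\in(0,2)$ hits $0$ on \emph{every} initial interval $(0,s]$ almost surely. In fact the very Pitman--Yor result you invoke says (after scaling) that the meander law of a $\mathrm{BESQ}^{d_2}$ is equivalent to $\mathrm{BESQ}^{4-d_2}$, and since $4-d_2>2$ that process never returns to $0$; consequently the meander law and $\mathrm{BESQ}^{d_2}$ are mutually singular on every $[0,s]$, $s>0$ (the 0--1 event ``returns to $0$ immediately'' separates them). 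There is therefore no Radon--Nikodym comparison available on short initial windows, and the regularity-of-zero property of $Z^{(d_1)}$ cannot be transferred to the conditional law by absolute continuity. What you identify as a technical ``obstacle'' is thus an impassable one, not a detail to be cleaned up.

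The paper sidesteps this entirely. Instead of trying to show $Z^{(d_1)}$ has a zero strictly after $g^{(d_2)}$ by transferring regularity under a conditioning, it uses the comparison inequality together with Lemma~\ref{lemma:slln}: after the last zero $g$ in $[0,T]$, the scaled occupation functional satisfies
\[
\lim_{\epsilon\downarrow 0}\frac{1}{\log(1/\epsilon)}\int_{g+\epsilon}^T\frac{\dd s}{Z(s)}=\frac{1}{2-d}\quad\text{a.s.,}
\]
and since $Z^{(d_1)}\leq Z^{(d_2)}$ forces the $Z^{(d_1)}$-integral to dominate the $Z^{(d_2)}$-integral on $(g_1,T)$, equality $g_1=g_2$ would yield $1/(2-d_1)\geq 1/(2-d_2)$, a contradiction with $d_1<d_2$. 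This converts the Pitman--Yor absolute-continuity input (used there only to compute a deterministic limit) into a quantitative fingerprint of the dimension $d$ in the post-$g$ path, which is exactly what your approach lacked.
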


In order to establish this result we combine the comparison principle for SDEs with the following property of the path of the solution of \eqref{sde} after its last zero in an interval.
\begin{lemma}\label{lemma:slln}
Let $(Z(t))_{t \in [0,\infty)}$ be a solution to \eqref{sde} with $d\in (0,2)$, let $T\in (0,\infty)$ and set $g:= \sup\{u \in [0,T]: Z(u)=0\}$. Then
\[
\lim_{\epsilon \downarrow 0} \frac{1}{\log(1/\epsilon)}\int_{g+\epsilon}^T \frac{\dd s}{Z(s)}=\frac{1}{2-d}  \quad \text{ a.s.-$\WW$.}
\]
\end{lemma}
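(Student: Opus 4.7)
The plan is to reduce the question to the near-$0$ behaviour of a standalone squared Bessel process of dimension $\delta:=4-d\in(2,4)$ started at $0$, and then to analyse that transient process via a Lamperti-type time change that turns it into a Brownian motion with positive linear drift on the logarithmic scale. A naive first attempt based on Itô's formula for $\log Z$ plus a DDS/LIL control of the resulting stochastic integral fails, because $g+\epsilon$ is not a stopping time and so $\int_{g+\epsilon}^T 2\,\dd B/\sqrt{Z}$ is not a centred martingale in any natural forward filtration.

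\medskip

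For the reduction, I would invoke Pitman--Yor's last-exit decomposition: conditionally on $(T-g,Z(T))$, the piece $(Z(g+s))_{s\in[0,T-g]}$ is a squared Bessel bridge of dimension $\delta$ from $0$ to $Z(T)$ of length $T-g$, and on any initial segment $[0,r]$ with $0<r<T-g$ this bridge law is mutually absolutely continuous with respect to the law of a squared Bessel process $\tilde Z$ of dimension $\delta$ started at $0$. Since $\int_{g+r}^T\dd s/Z(s)$ is a.s.\ finite ($Z$ being bounded away from $0$ on $[g+r,T]$), the lemma reduces to
\begin{equation*}
\frac{1}{\log(1/\epsilon)}\int_\epsilon^{r}\frac{\dd s}{\tilde Z(s)}\xrightarrow[\epsilon\downarrow 0]{}\frac{1}{\delta-2}=\frac{1}{2-d}\quad\WW\text{-a.s.}
\end{equation*}

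\medskip

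For the Lamperti step, put $R:=\sqrt{\tilde Z}$, a BES$(\delta)$ from $0$, and fix $t_0\in(0,r]$. Set $\sigma(t):=\int_{t_0}^t R(u)^{-2}\,\dd u$: this is an increasing bijection from $(0,\infty)$ onto $(-\infty,\sigma(\infty))$, with $\sigma(t)\downarrow-\infty$ as $t\downarrow 0$ and $\sigma(\infty)<\infty$ a.s.\ by transience ($\delta>2$). Itô's formula on $\log R$ combined with this time change produces, for some standard two-sided Brownian motion $W$ with $W(0)=0$,
\begin{equation*}
\tilde X(s):=\log R(\sigma^{-1}(s))=\log R(t_0)+W(s)+\frac{\delta-2}{2}\,s,\qquad s\in(-\infty,\sigma(\infty)).
\end{equation*}
I would then read off the asymptotic of $\sigma$ from its inverse: the change-of-variables identity $\sigma^{-1}(s)=\int_{-\infty}^s e^{2\tilde X(u)}\,\dd u$ for $s\leq 0$, together with $W(u)/u\to 0$ as $u\to-\infty$ by the Brownian strong law, yields $e^{2\tilde X(u)}=e^{(\delta-2)u+o(|u|)}$ for $u$ very negative, and a routine Laplace-type squeeze gives $\log\sigma^{-1}(s)=(\delta-2)s+o(s)$, equivalently $\sigma(t)/\log t\to 1/(\delta-2)$ as $t\downarrow 0$. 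Consequently
\begin{equation*}
\int_\epsilon^{r}\frac{\dd s}{\tilde Z(s)}=-\sigma(\epsilon)+O(1)\sim\frac{\log(1/\epsilon)}{\delta-2}\quad\WW\text{-a.s.},
\end{equation*}
and transferring back via the absolute continuity of the preceding paragraph will complete the argument.

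\medskip

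I expect the chief obstacle to be the reduction step: invoking (or, if one prefers to stay self-contained, reproving via the Itô excursion measure of $Z$, under which each excursion of duration $\ell$ is a squared Bessel bridge of dimension $4-d$ from $0$ to $0$ of length $\ell$) the Pitman--Yor last-exit description, and then verifying that the initial-segment absolute continuity transports the almost-sure asymptotic from the unconditioned squared Bessel to the bridge. The Lamperti time change and the Laplace estimate are standard calculations once the BM-with-drift representation is in hand.
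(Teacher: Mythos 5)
Your proof is correct, up to one innocuous slip: for $\delta=4-d>2$ the quantity $\sigma(\infty)=\int_{t_0}^\infty R(u)^{-2}\,\dd u$ is in fact $+\infty$, not finite. One can see this from your own representation: $\sigma^{-1}(s)=\int_{-\infty}^s e^{2\tilde X(u)}\,\dd u\asymp e^{(\delta-2)s}$ as $s\to+\infty$ (same squeeze you use at $-\infty$), so $\sigma$ maps $(0,\infty)$ onto all of $\mathbb{R}$. This does not affect the argument, which only uses the behaviour as $t\downarrow 0$.

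With that corrected, your route and the paper's agree on the reduction but differ genuinely on the core asymptotic. Both reduce, via absolute continuity of the post-$g$ segment against a $\mathrm{BESQ}(4-d)$ from $0$, to proving $\lim_{\epsilon\downarrow 0}\frac{1}{\log(1/\epsilon)}\int_\epsilon^r \dd s/\tilde Z(s)=\frac{1}{\delta-2}$ for a $\mathrm{BESQ}(\delta)$ from $0$, and a.s.\ statements transfer under absolute continuity; you reach this via the Pitman--Yor last-exit bridge while the paper rescales the post-$g$ segment to $[0,1]$ and cites the absolute-continuity lemma directly, but the content is the same. The paper then obtains the core asymptotic by citing a \emph{large-time} ergodic theorem for $\int_1^t \dd s/\hat Z(s)$ and converting it to a small-time statement through the Shiga--Watanabe time inversion $(t^2\hat Z(1/t))_t$ together with the Markov property at time $1$. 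You instead rederive it directly at small time: the additive time change $\sigma(t)=\int_{t_0}^t R(u)^{-2}\dd u$ turns $\log R$ into Brownian motion with drift $(\delta-2)/2$, the Brownian strong law kills the fluctuation as $u\to-\infty$, and the Laplace squeeze on $\sigma^{-1}(s)=\int_{-\infty}^s e^{2\tilde X(u)}\dd u$ gives $\sigma(t)/\log t\to 1/(\delta-2)$. Your approach is more self-contained (no appeal to the large-time theorem or to time inversion), and it makes the constant $1/(\delta-2)$ transparent as the reciprocal of the log-scale drift, which is arguably the cleaner way to see why the answer is what it is. It also sidesteps the mild bookkeeping in the paper's proof of justifying that the $\infty$-horizon limit survives the inversion and the conditioning at time $1$.
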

\begin{proof}
Set $\tilde{Z}$ to be the scaled post-$g$ path:
\begin{equation}\label{eq:meander}
\tilde{Z}(t):= \frac{1}{{T-g}}Z(g+(T-g)t)   \text{ for  }t \in [0,1].
\end{equation}
It is known, see e.g. \cite[Lemma~2.2(2)]{yor1} coupled with \cite[Proposition~XI.1.6]{revuz}, that $\tilde{Z}$ has a distribution that is absolutely continuous with respect to the  law of a squared Bessel process of dimension $4-d$ starting from zero. But if $\hat{Z}$ denotes the latter process, then it is also known --- see e.g. \cite[Theorem 1.1]{yor2}, coupled with time inversion  (namely, with the fact that $(t^2\hat{Z}(1/t))_{t\in (0,\infty)}$ is again a squared Bessel process of dimension $4-d$ \cite[Theorem~3.1]{shiga}), and  by applying the Markov property of $\tilde{Z}$ at time $1$ --- that 
\[
\lim_{\epsilon \downarrow 0} \frac{1}{\log(1/\epsilon)}\int_{\epsilon}^1 \frac{\dd s}{\hat{Z}(s)}=\frac{1}{2-d},
\] a.s., which concludes the proof after a trivial transposition back to  $Z$.
\end{proof}
\begin{proof}[Proof of Proposition~\ref{proposition:a.s.distinct}]
Assume without loss of generality that $d_1<d_2$.
It suffices to show that for  arbitrary pairs of rational times $s_1<t_1$ and $s_2<t_2$ we have $\WW(g_{s_1,t_1}^{(d_1)}\neq g_{s_2,t_2}^{(d_2)})=1$. Using Lemma~\ref{lemma:gamma}\ref{lemma:gamma:iii} we can assume that $s_1=s_2$ and $t_1=t_2$. By temporal homogeneity; that is to say, by the fact that $\Delta_{s_1}$ is measure-preserving for $\WW$, we may  further assume $s_1=0$, and we write $t_1=:T$.
Consider the a.s.-$\WW$ unique continuous, $[0,\infty)$-valued, $\FF^{0,\rightarrow}$-adapted solutions $Z_i$ to $$Z_i(t)=2\int_{0}^t \sqrt{Z_i(s)}\dd B(s)+ d_it,\quad t\in [0,\infty),\, i\in \{1,2\}.$$
 By the comparison theorem \cite[Theorem~IX.3.7]{revuz} $Z_1(t)  \leq Z_2(t)$ for all $t \geq 0$ a.s.-$\WW$, and consequently  $g_1:=g_{0,T}^{(d_1)}\geq g_{0,T}^{(d_2)}=:g_2$, but also
\[
 \frac{1}{\log(1/\epsilon)}\int_{{g_1}+\epsilon}^T \frac{\dd s}{Z_1(s)} \geq \frac{1}{\log(1/\epsilon)}\int_{{g_1}+\epsilon}^T \frac{\dd s}{Z_2(s)} \text{ for } \epsilon\in (0,T-g_1)\text{ a.s.-$\WW$}.
\]
By Lemma~\ref{lemma:slln} the left-hand side tends to $1/(2-d_1)$ as $\epsilon \downarrow 0$ a.s.-$\WW$. But  the same lemma also implies that on the event  $\{g_1=g_2\}$ the right-hand side tends to $1/(2-d_2)$ as $\epsilon\downarrow 0$ a.s.-$\WW$. Since $1/(2-d_1)<1/(2-d_2)$ it must be that $\WW(g_1=g_2)=0$.
\end{proof}

We make some final remarks concerning possible generalizations of Proposition~\ref{proposition:a.s.distinct}. Let $Z^i$, $i\in \{1,2\}$, be two solutions to the SDE \eqref{general-sde} each with its own drift and volatility coefficient. Assume they are both nonnegative strong pathwise unique solutions with zero regular and not an atom at positive times. For $i\in \{1,2\}$ let $M^i$ be associated to $Z^i$ as $M$ is to $Z$. As discussed above (Remarks~\ref{remark:generalize-sde} and~\ref{rmK.dense-general}) $M^1$ and $M^2$ are then both stationary dense local random countable sets.
\begin{enumerate}[(1)]
\item It is clear from the Bessel examples that, in order for $\WW(M^1\cap M^2=\emptyset)=1$, it is \emph{not} necessary that the two-dimensional process $(Z^1,Z^2)$ never hit the origin (after time zero) with probability one. On the other hand, it is also clear, from the proof of  Proposition~\ref{proposition:a.s.distinct}, that this condition is sufficient. A neccesary and sufficient condition is  that $(Z^1,Z^2)$ on last exit from the coordinate axes (before any given deterministic time $T>0$) does so a.s. from a point other than the origin.% (More generally, for the two sets $M^1$ and $M^2$ associated to $\Gamma^1$ and $\Gamma^2$ in the sense of Proposition~\ref{proposition:gamma} to be a.s. disjoint (and dense), it is certainly sufficient (again, not necessary) that $\Gamma^1\cap \Gamma^2\cap (0,\infty)=\emptyset$ a.s. and that $0$ is an accumulation point of $\Gamma^i\cap (0,\infty)$ a.s., $\WW(t\in \Gamma^i)=0$ for $t\in (0,\infty)$, $i\in \{1,2\}$.) 

\item In order that $\WW(M^1=M^2)=1$ it is sufficient that the drift and volatility functions of $Z^1$ and $Z^2$ agree on a neighborhood of the origin. This is just because, in the obvious notation, $$M^i=\{g_{s,t}^i:(s,t)\in \mathbb{Q}^2, 0< t-s<\epsilon\}$$ a.s.-$\WW$ for all $\epsilon>0$, $i\in \{1,2\}$.
 %It is  \emph{not} necessary that $(Z^1,Z^2)$ should  on leaving the coordinate axes  (before a given deterministic time $T>0$) do so a.s. from the origin, as can indeed be seen by considering the case when $Z^1$ is a squared Bessel process of dimension $d_1$, whilst $Z^2$ has the dynamics of the squared Bessel process of dimension $d_1$ when it is $\leq 1$ and the dynamics of the squared Bessel process of dimension $d_2$ when it is $>1$. 

\item If $Z^1$ is a  solution to  \eqref{general-sde} for the drift-volatility pair $(\mu^1,\sigma)$ and $Z^2$ is the same but for the pair  $(\mu^2,\sigma)$, with $\mu^1\leq d_1<d_2\leq \mu^2$ and with $\sigma$ the volatility function for the Bessel SDE \eqref{sde}, then by the comparison theorem and the findings of the proof of  Proposition~\ref{proposition:a.s.distinct} the  sets $M^1$ and $M^2$ (associated to $\Gamma^1:=\{Z^1=0\}$ and $\Gamma^2:=\{Z^2=0\}$, respectively) are a.s. disjoint. It is a (slightest) generalization outside the confines of the Bessel class of Proposition~\ref{proposition:a.s.distinct}.
\end{enumerate}

\section{Honest indexations}\label{section:honest-indexations}

The following definition is modeled on the construction of the random sets $M^{(d)}$, $d\in (0,2)$, of Section~\ref{section:new-family-of-examples}.

\begin{definition}
Let $M:\Omega_0\to 2^\mathbb{R}$ be a random  set. An honest indexation for $M$ is a family $\tau=(\tau_{s,t})_{(s,t)\in \mathbb{R}^2,s< t}$ from  $\FF_{-\infty,\infty}/\mathcal{B}_{\mathbb{R}^\dagger}$, such that: (inclusion) $\tau_{s,t}\in M$ a.s.-$\WW$ for all real $s<t$; (exhaustion) $M=\{\tau_{s,t}:(s,t)\in \mathbb{Q}^2,s< t\}$ a.s.-$\WW$; (locality) $\tau_{s,t}$ is a.s.-$\WW$ valued in $(s,t)$ and $\FF_{s,t}$-measurable for all real $s<t$; (stationarity) $\tau_{s+h,t+h}=h+\tau_{s,t}(\Delta_h)$ a.s.-$\WW$ for all real $h$ and $s<t$; (nestedness) $\tau_{s,t}=\tau_{u,v}$ a.s.-$\WW$ on $\{\tau_{s,t}\in (u,v)\}$ for all real $s\leq u<v\leq t$.
\end{definition}
The combination of nestedness and locality reminds us of the so-called honest times in a filtration $(\GG_t)_{t\in [0,\infty)}$, being those times $L$ for which for each $t\in (0,\infty)$ there exists a $\GG_t$-measurable $L_t$ satisfying $L=L_t$ on $\{L<t\}$ \cite[Chapter~XX, Section~1, \#~18]{dellacherie-sets}; whence the naming  ``honest indexation''. 

A  random  set admitting an honest indexation is automatically a dense stationary local random countable set. Changing each member of an honest indexation on a $\WW$-negligible set retains the honest indexation property. For real $s<t$, $u$, and for an honest indexation $\tau$ of $M$, $\WW(\tau_{s,t}=u)\leq \WW(u\in M)=0$ by inclusion and Proposition~\ref{SIL:probability-zero}. If $A\subset \mathbb{R}$ is dense, then each member of an honest indexation $\tau$ is determined $\WW$-a.s. by the restriction of the family $\tau$ to those members both of whose indexing endpoints lie in $A$; besides,  $M=\{\tau_{s,t}:(s,t)\in A^2,s< t\}$ a.s.-$\WW$. Due to the first of these two properties one could, in principle, work instead with the concept of an honest indexation  indexed only by the times belonging to $A$, e.g. $A=\mathbb{Q}$. However, the definition would then become less clear-cut (e.g. the stationarity does not restrict naturally to $h\in A$, but we need it for all $h\in \mathbb{R}$, not just $h\in A$). Besides, it will be advantageous to work with a version of $ \tau_{0,t}$ indexed by, and possesing some nice properties as a function of the continuous parameter $t\in (0,\infty)$. For these two reasons we take  already in the definition an indexation over the (pairs of) real times.

\begin{example}\label{example:Md-honest}
For each $d\in (0,2)$, $(g^{(d)}_{s,t})_{(s,t)\in \mathbb{R}^2,s<t}$ is an honest indexation of $M^{(d)}$.
\end{example}
Honest indexations are not unique when they exist.
\begin{example}\label{example:index-min}
For each $\kappa\in \mathbb{R}$ the $\tau=(\tau_{s,t})_{(s,t)\in \mathbb{R}^2,s<t}$, which has,  for real $s<t$, $\tau_{s,t}$ equal a.s.-$\WW$ to a  minimum of  $(B_t+\kappa t)_{t\in \mathbb{R}}$ on $(s,t)$, is an honest indexation of the local minima of $B$ (apply e.g. the Paley, Wiener and Zygmund result on the failure of differentiability of Brownian paths \cite[Theorem~1.30]{morters}). The indexation with $\kappa=0$ we will call standard for the local minima (analogously for the local maxima). 
\end{example}
\subsection{Regularization and splitting at an honest indexator}
The main goal of the present subsection is, for an honest indexation $\tau$, to deliver a kind of Wiener-Hopf splitting statement at $\tau_{0,\mathsf{e}}$, where $\mathsf{e}$ is an independent exponential random time (Theorem~\ref{theorem:splitting}). In order to make proper sense of this, and also for the eventual proof of the splitting, we require a sufficiently regular version of the process $(\tau_{0,t})_{t\in(0,\infty)}$, which is the subject of

\begin{lemma}\label{lemma:perfect-good}
Let $M$ be a  random countable set admitting an honest indexation $\tau$. Then we may change each $\tau_{0,t}$, $t\in (0,\infty)$, on a set of $\WW$-measure zero (therefore, without affecting the ``honest indexation'' property of $\tau$) in such a way that  $(\tau_{0,t})_{t\in (0,\infty)}$ becomes $[0,\infty)$-valued, $(B\vert_{[0,\infty)})^{-1}(\HH)$-measurable, right-continuous, nondecreasing, majorized by $\mathrm{id}_{(0,\infty)}$ %pure-jump and satisfies $\tau_{0,t}\leq t$ for all $t\in[0,\infty)$, as well as $\tau_{0,t+h}=h+\tau_{0,t}(\Delta_h)$ on $\{\tau_{0,t+h}\geq h\}$ for $\{t,h\}\subset [0,\infty)$, all of this with certainty. 
and has the ``perfect honest indexation property'': on a $\WW$-a.s. set closed for $(\Delta_u)_{u\in [0,\infty)}$ one has for all $t\in [0,\infty)$, $T\in (0,\infty)$ that $\tau_{0,T+t}= t+\tau_{0,T}(\Delta_t)$ on $\{\tau_{0,T+t}\in (t,\infty)\}$. Furthermore, any such version of the process $(\tau_{0,t})_{t\in (0,\infty)}$ is $\WW$-a.s.  constant on its excursions away from the diagonal (in particular, when it jumps, it jumps to the diagonal), i.e. $\tau_{0,\tau_{0,t}}=\tau_{0,t}$ (implicitly, $\tau_{0,t}>0$) for all $t\in (0,\infty)$ a.s.-$\WW$.
\end{lemma}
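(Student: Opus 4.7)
The strategy is to define $\tilde\tau_{0,t}$ as the right-continuous regularization along rationals of $(\tau_{0,q})_{q\in\mathbb{Q}_{>0}}$ and then to verify each listed property. Using locality, for each rational $q>0$ I fix a $(B\vert_{[0,\infty)})^{-1}(\UU_q)$-measurable version $\tau_{0,q}^{(0)}$ of $\tau_{0,q}$; intersecting the countably many a.s.-events coming from nestedness, I extract a $\WW$-a.s.\ event $\Omega^\star$ on which, for all rational $0<p\leq q$, one has $\tau_{0,q}^{(0)}\in(0,q)$ and (via the nestedness dichotomy: $\tau_{0,q}^{(0)}=\tau_{0,p}^{(0)}$ on $\{\tau_{0,q}^{(0)}<p\}$, else $\tau_{0,q}^{(0)}\geq p>\tau_{0,p}^{(0)}$) the inequality $\tau_{0,p}^{(0)}\leq\tau_{0,q}^{(0)}$. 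Setting $\tilde\tau_{0,t}(\omega):=\inf_{q\in\mathbb{Q},\ q>t}\tau_{0,q}^{(0)}(\omega)$ on $\Omega^\star$ and $0$ off $\Omega^\star$ yields a right-continuous, nondecreasing, $[0,t]$-valued process, with each $\tilde\tau_{0,t}$ measurable relative to $\cap_{q>t,\,q\in\mathbb{Q}}\FF_{0,q}=\FF_{0,t}$ (by right-continuity of the completed Brownian filtration), equivalently relative to $(B\vert_{[0,\infty)})^{-1}(\UU_t)$.

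The central point is $\tilde\tau_{0,t}=\tau_{0,t}$ a.s.-$\WW$ for each $t>0$. On $\Omega^\star$ nestedness forces a dichotomy: either (b) some rational $q>t$ has $\tau_{0,q}^{(0)}<t$, giving $\tau_{0,q}^{(0)}=\tau_{0,t}^{(0)}$ and $\tilde\tau_{0,t}=\tau_{0,t}$; or (a) $\tau_{0,q}^{(0)}\geq t$ for all rational $q>t$, giving $\tilde\tau_{0,t}=t$ (as $\tilde\tau_{0,t}\leq t$). So it suffices to show $\WW(\tilde\tau_{0,t}=t)=0$ for every $t>0$. The nondecreasing right-continuous function $\tilde\tau_{0,\cdot}(\omega)$ has at most countably many jumps, and on each maximal constancy interval it takes a value strictly below the right-endpoint; hence the diagonal set $\{t>0:\tilde\tau_{0,t}(\omega)=t\}$ sits inside the countable jump set and carries zero Lebesgue measure. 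Tonelli then gives $\WW(\tilde\tau_{0,t}=t)=0$ for $\mathfrak{l}$-a.e.\ $t$. To upgrade this to \emph{every} $t$, I prove that $t\mapsto\WW(\tilde\tau_{0,t}=t)$ is nonincreasing on $(0,\infty)$, combining the stationarity identity $\tau_{0,q}=h+\tau_{-h,q-h}(\Delta_h)$ (a.s.\ for each $q$), the $\WW$-invariance of $\Delta_h$, and the nestedness inclusion $\{\tau_{-h,q'}\geq t\}\subseteq\{\tau_{0,q'}\geq t\}$ (valid for $t>0$, $h>0$) with the a.s.\ insensitivity of the right-continuous regularization at $t$ to swapping $\mathbb{Q}$ for $\mathbb{Q}-h$ as indexing set. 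Given this monotonicity, for any $t>0$ I pick $t'\in(0,t)$ with $\WW(\tilde\tau_{0,t'}=t')=0$ and infer $\WW(\tilde\tau_{0,t}=t)\leq\WW(\tilde\tau_{0,t'}=t')=0$.

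For the perfect honest indexation property on a shift-closed $\WW$-a.s.\ set, I adapt the perfection technique of Proposition~\ref{proposition:version-perfect-stationary} (or its one-sided analogue from the proof of Proposition~\ref{proposition:two-sided-viz-one-sided}): the a.s.\ identity $\tilde\tau_{0,T+t}=t+\tilde\tau_{0,T}(\Delta_t)$ on $\{\tilde\tau_{0,T+t}>t\}$ (which holds for each fixed pair by the preceding paragraph) is perfected via a Tonelli-$\mathscr{L}$-a.e.\ reasoning followed by a further modification of $\tilde\tau_{0,\cdot}$ off a shift-closed $\WW$-negligible set, yielding the "for all $(t,T)$" statement. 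The "constant-on-excursions" assertion $\tau_{0,\tau_{0,t}}=\tau_{0,t}$ is then a direct consequence of the plateau structure from the first paragraph: with $m:=\tau_{0,t}(\omega)<t$, any rational $q\in(m,t)$ satisfies $\tau_{0,q}^{(0)}(\omega)=m$ (by nestedness with the original $\tau_{0,t}$) and monotonicity in $q$ gives $\tau_{0,q}^{(0)}(\omega)\geq m$ for all rational $q>m$, whence $\tilde\tau_{0,m}(\omega)=\inf_{q>m,\,q\in\mathbb{Q}}\tau_{0,q}^{(0)}(\omega)=m$. The principal obstacle is the monotonicity-via-stationarity step of the second paragraph: the Fubini half is routine, but the stationarity bookkeeping---in particular, justifying the invariance of the right-continuous regularization under change of dense indexing subset---requires careful, if routine, handling.
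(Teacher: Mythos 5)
The central difficulty is the Tonelli step. You claim that, for a nondecreasing right-continuous function $\tilde\tau_{0,\cdot}(\omega)$ majorized by the identity, the diagonal set $D_\omega:=\{t>0:\tilde\tau_{0,t}(\omega)=t\}$ "sits inside the countable jump set and carries zero Lebesgue measure." Neither half of this claim is a consequence of the stated regularity properties. A point of $D_\omega$ at which $\tilde\tau_{0,\cdot}$ is continuous can perfectly well exist: if $D_\omega=\{1-1/n:n\geq 1\}\cup\{1\}$ and $\tilde\tau_{0,\cdot}$ is constant on the complementary intervals (equal to each interval's left endpoint), then $\tilde\tau_{0,1-}=1=\tilde\tau_{0,1}$, so $1\in D_\omega$ is not a jump point. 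Worse, $\lll(D_\omega)=0$ can genuinely fail in this generality: take a fat Cantor set $C\subset(0,1)$ (closed, nowhere dense, $\lll(C)>0$) and set $F(t):=\sup\bigl(C\cap[0,t]\bigr)$; then $F$ is nondecreasing, right-continuous, $\leq\mathrm{id}$, and $\{t>0:F(t)=t\}\cap(0,1]=C\cap(0,1]$ has positive Lebesgue measure. So the Tonelli reduction to $\mathfrak{l}$-a.e.\ $t$ is unavailable from soft arguments alone, and the subsequent monotonicity upgrade (which you already flag as delicate) never gets off the ground without it.

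The paper's proof sidesteps this entirely and proves the version property directly for \emph{every} $t\in(0,\infty)$, using the full force of nestedness \emph{and} stationarity rather than just the monotonicity and range of the regularization. On the bad event $A^c:=\{\hat\tau_{0,p}\geq t\text{ for all rational }p>t\}$, for rational $p\in(t,2t)$ nestedness gives $\hat\tau_{0,p}=\hat\tau_{p-t,p}$ (since $\hat\tau_{0,p}\in(p-t,p)$ thereon), stationarity gives $\hat\tau_{p-t,p}=(p-t)+\hat\tau_{0,t}(\Delta_{p-t})$, so $\hat\tau_{0,t}(\Delta_{p-t})\geq 2t-p$ on $A^c$; measure-preservation of $\Delta_{p-t}$ then yields $\PPP(A^c)\leq\PPP\bigl(\hat\tau_{0,t}\geq 2t-p\bigr)\to\PPP(\hat\tau_{0,t}\geq t)=0$ as $p\downarrow t$. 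This is shorter, needs no Fubini, and crucially exploits the probabilistic/indexation structure in exactly the place where regularity-of-paths arguments fail. You should replace your second paragraph with this argument. (Your first, third, and fourth paragraphs match the paper's plan, though the perfection step — which the paper implements via an infimum $\overline{\hat\tau}_{0,t}(\omega)=\inf_{s\in[0,t),\,\Delta_s\omega\in\Theta_1}\bigl(s+\tilde{\hat\tau}_{0,t-s}(\Delta_s\omega)\bigr)$ over a full-measure set of shifts — is considerably more than a routine adaptation of Proposition~\ref{proposition:version-perfect-stationary} and deserves to be spelled out.)
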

\begin{proof}
% Recalling Proposition~\ref{proposition:two-sided-viz-one-sided} and Definition~\ref{definition:hat-M} choose a version of $\widehat{M}$ such that $\widehat{M}\cap[h,\infty)=h+\widehat{M}(\Delta_h)$ for all $h\in [0,\infty)$. 
  For each $t\in (0,\infty)$ choose an $\HH$-measurable, $(0,t)$-valued $\hat{\tau}_{0,t}$ such that $\hat{\tau}_{0,t}(B\vert_{[0,\infty)})=\tau_{0,t}$ a.s.-$\WW$. We improve $(\hat{\tau}_{0,t})_{t\in (0,\infty)}$ in three steps. 

First, pass to a version $(\hat{\tau}'_{0,t})_{t\in (0,\infty)}$ of $(\hat{\tau}_{0,t})_{t\in (0,\infty)}$ that is right-continuous, nondecreasing, $[0,\infty)$-valued and majorized by $\mathrm{id}_{(0,\infty)}$, by putting  
$$\hat\tau'_{0,t}:=\inf\{\hat{\tau}_{0,p}:p\in \mathbb{Q}\cap (t,\infty)\},\quad t\in (0,\infty).$$
The right-continuity and nondecreasingness are evident as is the fact that $\hat \tau'_{0,\cdot}$ is majorized by $\mathrm{id}_{(0,\infty)}$ and $[0,\infty)$-valued. We argue that we have a version, namely, that for all $t\in (0,\infty)$, $\hat\tau'_{0,t}=\hat\tau_{0,t}$ a.s.-$\PPP$. By nestedness this is certainly true on the event $A$ that $\hat\tau_{0,p}\in (0,t)$ for  some [sufficiently small] rational $p\in (t,\infty)$, since thereon $\hat\tau_{0,q}=\hat\tau_{0,p}$ a.s.-$\PPP$ for $q\in [t,p)$, while $\hat\tau_{0,q}\geq \hat\tau_{0,p}$ a.s.-$\PPP$ for all $q\in [p,\infty)$. Off $A$ we have a.s.-$\PPP$ for rational $p\in (t,2t)$, that $t\leq \hat\tau_{0,p}=\hat\tau_{p-t,p}=p-t+\hat\tau_{0,t}(\Delta_{t-p})$, hence also a.s.-$\PPP$, for rational $p\in (t,2t)$, that $2t-p\leq \hat\tau_{0,t}$,  which forces $\hat\tau_{0,t}\geq t$ a.s.-$\PPP$. But that can only mean that $\Omega_0\backslash A$ is actually $\PPP$-negligible.

 Second, %we upgrade to a $(0,\infty)$-valued  right-continuous and nondecreasing modification $(\tilde{ \hat{\tau}}_{0,t})_{t\in (0,\infty)}$ of the process $(\hat{\tau}'_{0,t})_{t\in (0,\infty)}$  for which $\tilde{\hat\tau}_{0,h+t}=h+\tilde{\hat\tau}_{0,t}(\Delta_h)$ on $\{\tilde{\hat\tau}_{0,h+t}\in (h,\infty)\}$ for all $h\in [0,\infty)$ and $t\in (0,\infty)$ on a $\PPP$-a.s. set shift-closed (``perfect honest indexation'' property). We do this with the following, yet another, adaptation of the proof of Proposition~\ref{proposition:two-sided-viz-one-sided} (further to Proposition~\ref{proposition:version-perfect-stationary}), which again we will keep more brief in the parts which require only trivial modification.
 change $(\hat{\tau}'_{0,t})_{t\in (0,\infty)}$  on a $\PPP$-negligible set to the identity map on $(0,\infty)$ to obtain $(\tilde{\hat{\tau}}_{0,t})_{t\in (0,\infty)}$,   which is $(0,\infty)$-valued everywhere.

  Third, define:
$$R:=\{(s,\omega)\in [0,\infty)\times \Theta_0:\text{  for all }t\in (0,\infty)\text{ if }\tilde{\hat{\tau}}_{0,s+t}(\omega)\in (s,\infty)\text{ then }\tilde{\hat{\tau}}_{0,s+t}(\omega)=s+\tilde{\hat{\tau}}_{0,t}(\Delta_s\omega)\},$$
$$\Theta_1:=\{\omega\in \Theta_0:(s,\omega)\in R\text{ for $\mathscr{L}$-a.e. }s\in [0,\infty)\}\text{ and }$$
$$\Theta_2:=\{\omega\in \Theta_0:\Delta_u\omega\in \Theta_1\text{ for $\mathscr{L}$-a.e. }u\in  [0,\infty)\}.$$
By right-continuity and nondecreasingness of $(\tilde{\hat{\tau}}_{0,t})_{t\in (0,\infty)}$ we get $$R=\cap_{t\in \mathbb{Q}\cap (0,\infty)}\{(s,\omega)\in [0,\infty)\times \Theta_0:\tilde{\hat{\tau}}_{0,s+t}(\omega)\in (s,\infty)\Rightarrow\tilde{\hat\tau}_{0,s+t}(\omega)=s+\tilde{\hat{\tau}}_{0,t}(\Delta_s\omega)\}\in  \mathcal{B}_{[0,\infty)}\otimes\HH;$$ and then in consecutive order that  $\Theta_1\in \HH$, $\PPP(\Theta_1)=1$, $\Theta_2\in \HH$ is shift-closed and $\PPP(\Theta_2)=1$.

Before proceeding further we make the following observation. Let $\omega\in \Theta_0$, $t\in (0,\infty)$, $\{s_1,s_2\}\subset [0,t)$,  $\{\Delta_{s_1}\omega,\Delta_{s_2}\omega\}\subset \Theta_1$, $s_1\leq s_2$. Since $\Delta_{s_1}\omega\in \Theta_1$ then for $\mathscr{L}$-a.e. $u\in [0,\infty)$,  $$\tilde{\hat{\tau}}_{0,(u+s_2-s_1)+(t-s_2)}(\Delta_{s_1}\omega)>u+s_2-s_1\Rightarrow \tilde{\hat{\tau}}_{0,(u+s_2-s_1)+(t-s_2)}(\Delta_{s_1}\omega)=u+s_2-s_1+\tilde{\hat{\tau}}_{0,t-s_2}(\Delta_{(u+s_2-s_1)+s_1}\omega),$$ i.e. 
$$\tilde{\hat{\tau}}_{0,u+t-s_1}(\Delta_{s_1}\omega)>u+s_2-s_1\Rightarrow \tilde{\hat{\tau}}_{0,u+t-s_1}(\Delta_{s_1}\omega)=u+s_2-s_1+\tilde{\hat{\tau}}_{0,t-s_2}(\Delta_{u+s_2}\omega).$$ 
If further for arbitrarily small $u\in [0,\infty)$, $\tilde{\hat{\tau}}_{0,u+t-s_1}(\Delta_{s_1}\omega)\leq u+s_2-s_1$, then by right-continuity of $\tilde{\hat{\tau}}_{0,\cdot}(\Delta_{s_1}\omega)$ we get $\tilde{\hat{\tau}}_{0,t-s_1}(\Delta_{s_1}\omega)\leq s_2-s_1\leq s_2+\tilde{\hat{\tau}}_{0,t-s_2}(\Delta_{s_1}\omega)-s_1$, which renders 
$$s_1+\tilde{\hat{\tau}}_{0,t-s_1}(\Delta_{s_1}\omega)\leq s_2+\tilde{\hat{\tau}}_{0,t-s_2}(\Delta_{s_2}\omega).$$ In the opposite case we have for $\mathscr{L}$-a.e. small enough $u\in [0,\infty)$, 
$$s_1+\tilde{\hat{\tau}}_{0,u+t-s_1}(\Delta_{s_1}\omega)=u+s_2+\tilde{\hat{\tau}}_{0,t-s_2}(\Delta_{u+s_2}\omega);$$ and assume this now. Since $\Delta_{s_2}\omega\in \Theta_1$ and $\tilde{\hat{\tau}}_{0,t-s_2}(\Delta_{s_2}\omega)>0$,  then for all small enough $u\in [0,\infty)$, 
$\tilde{\hat{\tau}}_{0,u+t-s_2}(\Delta_{s_2}\omega)>u$, hence for $\mathscr{L}$-a.e. small enough $u\in [0,\infty)$, $$s_2+\tilde{\hat{\tau}}_{0,u+t-s_2}(\Delta_{s_2}\omega)=s_2+u+\tilde{\hat{\tau}}_{0,t-s_2}(\Delta_{s_2+u}\omega).$$ 
Combining the preceding two displayed conclusions we infer that
$$s_2+\tilde{\hat{\tau}}_{0,u+t-s_2}(\Delta_{s_2}\omega)=s_1+\tilde{\hat{\tau}}_{0,u+t-s_1}(\Delta_{s_1}\omega)$$ for $\mathscr{L}$-a.e. small enough $u\in [0,\infty)$; and further by right continuity of $\tilde{\hat{\tau}}_{0,\cdot}$ that $$s_2+\tilde{\hat{\tau}}_{0,t-s_2}(\Delta_{s_2}\omega)=s_1+\tilde{\hat{\tau}}_{0,t-s_1}(\Delta_{s_1}\omega).$$
Altogether we have deduced that in any case 
$$s_1+\tilde{\hat{\tau}}_{0,t-s_1}(\Delta_{s_1}\omega)\leq s_2+\tilde{\hat{\tau}}_{0,t-s_2}(\Delta_{s_2}\omega)$$
[even with equality if $s_2<\inf_{s\in [0,t),\Delta_s\omega\in \Theta_1}(s+\tilde{\hat{\tau}}_{0,t-s}(\Delta_s\omega))$, since the latter gives $\tilde{\hat{\tau}}_{0,t-s_1}(\Delta_{s_1}\omega)>s_2-s_1$ and hence $\tilde{\hat{\tau}}_{0,u+t-s_1}(\Delta_{s_1}\omega)> u+s_2-s_1$ for all small enough $u\in [0,\infty)$ in the above (the ``opposite'' case), but we shall not need this].

Armed with the preceding observation we put, for $t\in (0,\infty)$, $$\overline{\hat{\tau}}_{0,t}(\omega):=
\begin{cases}
\inf_{s\in [0,t),\Delta_s\omega\in \Theta_1}(s+\tilde{\hat{\tau}}_{0,t-s}(\Delta_s\omega))=\text{$\downarrow$-$\lim$}_{0\leq s\downarrow 0,\Delta_s\omega\in \Theta_1}(s+\tilde{\hat{\tau}}_{0,t-s}(\Delta_s\omega))& \text{if $\omega\in \Theta_2$,}\\
t & \text{if $\omega\in\Theta_0\backslash \Theta_2$,}
\end{cases}$$
stressing that, contrary to the default standard interpretation, the limit in the first line may include $0$: it includes it iff $\omega\in \Theta_1$.
Directly from the definition and the apposite properties of $\tilde{\hat{\tau}}$ it is plain that $\overline{\hat{\tau}}$ is $[0,\infty)$-valued,  majorized by $\mathrm{id}_{(0,\infty)}$, nondecreasing and right-continuous. Also direct from the definition is the fact that $\overline{\hat{\tau}}=\tilde{\hat\tau}$ on $\Theta_1\cap\Theta_2$, which is $\PPP$-almost sure, so we have a version.

To check the perfect honest indexation property of $\overline{\hat\tau}_{0,\cdot}$ on $\Theta_2$ let $\omega\in \Theta_2$, $t\in  [0,\infty)$ (so, $\Delta_t\omega\in \Theta_2$ also), $T\in (0,\infty)$ and suppose that $\overline{\hat\tau}_{0,T+t}(\omega)\in  (t,\infty)$. We compute
\begin{align*}
\overline{\hat{\tau}}_{0,T+t}(\omega)&=\inf_{s\in[ 0,T+t),\Delta_s\omega\in \Theta_1}(s+\tilde{\hat\tau}_{0,T+t-s}(\Delta_s\omega))\\
&=\left(\inf_{s\in[ 0,t),\Delta_s\omega\in \Theta_1}(s+\tilde{\hat\tau}_{0,T+t-s}(\Delta_s\omega))\right)\land \left(\inf_{s\in[ t,T+t),\Delta_s\omega\in \Theta_1}(s+\tilde{\hat\tau}_{0,T+t-s}(\Delta_s\omega)) \right)\\
&=\left(\inf_{s\in[ 0,t),\Delta_s\omega\in \Theta_1}(s+\tilde{\hat\tau}_{0,T+t-s}(\Delta_s\omega))\right)\land \left(t+\inf_{s\in[ 0,T),\Delta_s\Delta_t\omega\in \Theta_1}(s+\tilde{\hat\tau}_{0,T-s}(\Delta_s\Delta_t\omega)) \right)\\
&=\left(\inf_{s\in[ 0,t),\Delta_s\omega\in \Theta_1}(s+\tilde{\hat\tau}_{0,T+t-s}(\Delta_s\omega))\right)\land \left(t+\overline{\hat{\tau}}_{0,T}(\Delta_t\omega)\right).
\end{align*}
Let $s\in[ 0,t)$, $\Delta_s\omega\in \Theta_1$; it remains to check that $s+\tilde{\hat\tau}_{0,T+t-s}(\Delta_s\omega)\geq t+\overline{\hat{\tau}}_{0,T}(\Delta_t\omega)$. Since $\overline{\hat\tau}_{0,T+t}(\omega)>t$ we know that $s+\tilde{\hat\tau}_{0,T+t-s}(\Delta_s\omega)>t$. Therefore, as $\Delta_s\omega\in\Theta_1$, for $\mathscr{L}$-a.e. small enough $u\in[0,\infty)$, 
$$s+\tilde{\hat\tau}_{0,T+t-s+u}(\Delta_s\omega)=s+(t-s+u)+\tilde{\hat\tau}_{0,T}(\Delta_{t+u}\omega)=t+u+\tilde{\hat\tau}_{0,T}(\Delta_{t+u}\omega)\geq t+\left(u+\tilde{\hat\tau}_{0,T-u}(\Delta_{u}\Delta_t\omega)\right).$$
Since $\Delta_t\omega\in \Theta_2$, by definition of $\overline{\hat{\tau}}_{0,T}(\Delta_t\omega)$, taking limit $u\downarrow 0$ over $\mathscr{L}$-a.e. small enough $u$ we get the sought for inequality.

The version of $(\tau_{0,t})_{t\in (0,\infty)}$ stipulated by the lemma is got by taking finally $\overline{\hat{\tau}}_{0,\cdot}(B\vert_{[0,\infty)})$. 

The final assertion of the lemma concerning the relation of $\tau_{0,\cdot}$ to the diagonal $\mathrm{id}_{(0,\infty)}$ follows easily from nestedness by ``rational exhaustion''.
% Now let $t\in [0,\infty)$ and put $$\tilde \tau_{0,t}:=\inf\{\tilde{\hat{\tau}}_{0,p}(B\vert_{[0,\infty)}):p\in \mathbb{Q}\cap (t,\infty)\}.$$ Clearly the  $(B\vert_{[0,\infty)})^{-1}(\HH)$-measurability property of $(\tilde\tau_{0,t})_{t\in [0,\infty)}$ is then met and this process is right-continuous and nondecreasing. The process $(\tilde\tau_{0,t})_{t\in [0,\infty)}$ is a version of $(\tau_{0,t})_{t\in [0,\infty)}$, which is seen in the same manner as above. %We argue next that $\tilde\tau_{0,t}=\tau_{0,t}$ a.s.-$\WW$. By nestedness this is certainly true on the event $A$ that $\tau_{0,p}\in (0,t)$ for  some [sufficiently small] $p\in (t,\infty)$, since thereon $\tau_{0,q}=\tau_{0,p}$ a.s.-$\WW$ for $q\in [t,p)$. Off this event we have a.s.-$\WW$ for rational $p\in (t,2t)$, that $t\leq \tau_{0,p}=\tau_{p-t,p}=p-t+\tau_{0,t}(\Delta_{t-p})$, hence also a.s.-$\WW$, for rational $p\in (t,2t)$, that $2t-p\leq \tau_{0,t}$ , which forces $\tau_{0,t}\geq t$ a.s.-$\WW$. But that can only mean that $\Omega_0\backslash A$ is actually $\WW$-negligible. 
% It is further clear that for $l\in [0,\infty)$, setting $$\tilde T_l:=\inf\{t\in (0,\infty):\tilde\tau_{0,t}\geq l\},$$ $\tilde T_l$ is an $[l,\infty)$-valued  $(\FF_{0,t})_{t\in [0,\infty)}$-stopping time, that $\{\tilde T_l<t\}=\{\tilde \tau_{0,t}>l\}$   a.s.-$\WW$ and that on $\{\tilde T_l<t\}$ a.s.-$\WW$ one has $ \tilde \tau_{0,t}> \tilde T_l$, hence also  $\tilde \tau_{0,t}=\tilde T_l+\tilde \tau_{0,t-\tilde T_l}(\Delta_{ \tilde T_l})$ due to the perfect honest indexation property.
\end{proof}

%\begin{definition}
%An honest indexation $\tau$ for $M$ is said to be: separated by a family $T=(T_l)_{l\in[0,\infty)}$ of  $(\FF_{0,t})_{t\in [0,\infty)}$-stopping times, when for each $l\in [0,\infty)$, $T_l$ is $\WW$-a.s. $[l,\infty]$-valued, $\{T_l<t\}=\{\tau_{0,t}>l\}$   a.s.-$\WW$ and $\tau_{0,t}=T_l+\tau_{0,t-T_l}(\Delta_{T_l})$  on $\{T_l<t\}$  a.s.-$\WW$ for all $t\in[0,\infty)$; well-measurable when $(\omega,t)\mapsto \tau_{0,t}(\omega)$ is measurable w.r.t. $\sigma(B\vert_{(0,\infty)})\otimes \mathcal{B}_{[0,\infty)}$; dual to a good enumeraton $\hat\tau$, when for all $t\in[0,\infty)$, $\WW$-a.s. $t-\tau_{0,t}=\hat{\tau}_{0,t}(\mathsf{r}\circ \Delta_t)$ with $\mathsf{r}:=(\omega\mapsto (t\mapsto \omega(-t))$ being  reflection of time. 
%\end{definition}

The reader will have noticed that, like with Proposition~\ref{proposition:two-sided-viz-one-sided}, there is the subtle point of the perfect honest indexation property of Lemma~\ref{lemma:perfect-good} holding with $\{\tau_{0,T+t}\in (t,\infty)\}$ rather than $\{\tau_{0,T+t}\in [t,\infty)\}$. Again it is our impression that one cannot improve to the latter from the former in a trivial way, because the proof is good in the L\'evy setting, and such an improvement cannot prevail there. Consider indeed a two-sided infinite activity subordinator whose measure has for its support the set $\{\frac{1}{n}:n\in \mathbb{N}\}$. For $s<t$ from $\mathbb{R}$ let $\tau_{s,t}$ be: the first jump of size $1$ on the interval $(s,t)$, if any; otherwise, the first jump of size $\frac{1}{2}$ on the interval $(s,t)$, if any, etc.; $\dagger$ if none occur (which happens only with probability zero). The family $(\tau_{s,t})_{(s,t)\in \mathbb{R}^2}$ is an honest indexation of the jump times. But clearly the indicated would-be ``improvement'' fails by considering e.g. the first jump time of size $1$, call it $J$: we would have, for any $T\in (0,\infty)$, a.s. $\tau_{0,T+J}=J+\tau_{0,T}(\Delta_J)>J$ but also  $\tau_{0,T+J}=J$, which is absurd.
 %then we could argue as follows. For any $(\FF_{0,t})_{t\in (0,\infty)}$-stopping time $J$ we would have that a.s. on $\{J\in M\}$,  $J=\tau_{0,J}$ and $\tau_{0,J+\cdot}=J+\tau_{0,\cdot}(\Delta_J)$ is locally constant, which is in contradiction with the strong Markov property and the fact that $\tau_{0,\cdot}$ increases immediately a.s., unless $\WW(J\in M)=0$ (which is then a contradiction with the fact that the jump times are exhausted by stopping times on $[0,\infty)$).

A second ingredient in the proof of Theorem~\ref{theorem:splitting} that we prepare beforehand is the technical

\begin{lemma}\label{lemma:stopping-time-conditioning}
Let $T$ be an $\FF^\rightarrow$-stopping time, $\WW(T<\infty)>0$. Then, on $\{T<\infty\}$, $\FF^\rightarrow_T$ is $\WW$-independent of $(\Delta_T\vert_{[0,\infty)})^{-1}(\HH)$, and ${\Delta_T}_\star \WW(\cdot\vert T<\infty)= \WW$ on $(B\vert_{[0,\infty)})^{-1}(\HH)$. Consequently, if $A\in \FF^\rightarrow_T$ and $g\in (\mathcal{B}_{[0,\infty)}\otimes (B\vert_{[0,\infty)})^{-1}(\HH))/\mathcal{B}_{[0,\infty]}$, then 
$$\WW[g(T,\Delta_T);A,T<\infty]=\int_{[0,\infty)}\WW[g(t,B)]\WW(A,T\in \dd t).$$
\end{lemma}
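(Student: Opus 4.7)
The plan is to recognize this lemma as the strong Markov property of Brownian motion translated into our two-sided setting via the $\Delta_T$ notation, and to prove it by the classical approximation-by-discrete-stopping-times argument. It is natural to establish the integral formula first and then harvest the independence and distributional statements as corollaries.

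First, I would treat the case when $T$ is valued in a countable set $\{t_k\}_{k\in \mathbb{N}}\cup\{\infty\}\subset [0,\infty]$. For each $k$ one has $A\cap \{T=t_k\}\in \FF_{-\infty,t_k}$ by the very definition of $\FF^{\rightarrow}_T$, while by independent stationary increments of $B$ and the fact that $B_0=0$, the post-$t_k$ shift $\Delta_{t_k}\vert_{[0,\infty)}$ generates a sub-$\sigma$-field that is $\WW$-independent of $\FF_{-\infty,t_k}$ and has $\PPP$ as its push-forward under $\WW$. Hence for $g$ of the elementary product form $g(t,\omega)=g_1(t)g_2(\omega)$ with $g_1$ bounded Borel and $g_2$ bounded and $(B\vert_{[0,\infty)})^{-1}(\HH)$-measurable,
$$\WW[g(t_k,\Delta_{t_k});A\cap \{T=t_k\}]=g_1(t_k)\WW[g_2(B)]\WW(A\cap \{T=t_k\}).$$
Summing over $k$ yields $\WW[g(T,\Delta_T);A,T<\infty]=\int \WW[g(t,B)]\WW(A,T\in\dd t)$ for such $g$, and a standard monotone class (Dynkin) argument applied to the generating $\pi$-system of product sets of $\mathcal{B}_{[0,\infty)}\otimes (B\vert_{[0,\infty)})^{-1}(\HH)$ extends the identity to all non-negative measurable $g$.

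Second, I would reduce the general case to the discrete one via the dyadic approximation $T_n:=2^{-n}(\lfloor 2^n T\rfloor+1)$ on $\{T<\infty\}$ and $T_n:=\infty$ on $\{T=\infty\}$. Each $T_n$ is an $\FF^\rightarrow$-stopping time with range inside $\{k2^{-n}:k\in \mathbb{N}\}\cup\{\infty\}$, $T_n\downarrow T$, $\{T_n<\infty\}=\{T<\infty\}$, and $\FF^{\rightarrow}_T\subset \FF^\rightarrow_{T_n}$, so the discrete case applied to $T_n$ with any $A\in \FF^\rightarrow_T$ gives the integral formula for $T_n$. To pass to the limit $n\to\infty$ I would exploit the key pathwise-continuity observation that for every $\omega\in \Omega_0$, the map $t\mapsto \Delta_t(\omega)$ from $\mathbb{R}$ into $\Omega_0$ is continuous for the locally uniform topology (because $\omega$ itself is continuous). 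Restricting first to $g$ that is bounded and jointly continuous in $(t,\omega\vert_{[0,\infty)})$, dominated convergence handles the left-hand side via $g(T_n,\Delta_{T_n})\to g(T,\Delta_T)$ on $\{T<\infty\}$; the right-hand side $\int g_1^\sharp(t)\WW(A,T_n\in \dd t)$, where $g_1^\sharp(t):=\WW[g(t,B)]$ is continuous and bounded, converges to $\int g_1^\sharp(t)\WW(A,T\in \dd t)$ by the same argument applied to the finite measure $\WW(A\cap \cdot)$. A monotone class argument then extends the identity to arbitrary non-negative measurable $g$.

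Third, the independence and distributional statements are immediate consequences. Taking $g(t,\omega)=g_2(\omega)$ for an arbitrary bounded $(B\vert_{[0,\infty)})^{-1}(\HH)$-measurable $g_2$ and arbitrary $A\in \FF^\rightarrow_T$, the integral formula reads $\WW[g_2(\Delta_T);A,T<\infty]=\WW(A\cap \{T<\infty\})\WW[g_2(B)]$; specialising $A=\{T<\infty\}$ yields ${\Delta_T}_\star \WW(\cdot\vert T<\infty)=\WW$ on $(B\vert_{[0,\infty)})^{-1}(\HH)$, and then the full identity expresses the $\WW$-independence of $\FF^\rightarrow_T$ from $(\Delta_T\vert_{[0,\infty)})^{-1}(\HH)$ on $\{T<\infty\}$.

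The substantive technical step is clearly Step 2: passing from the formula for discrete $T_n$ to the formula for general $T$ under no regularity of $g$ beyond measurability. The monotone class reduction to product-form $g$ with bounded continuous factors, combined with the continuity of $t\mapsto \Delta_t$ in $\omega$-paths, is the standard device that makes this limit legitimate.
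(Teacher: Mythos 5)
Your proof is correct in substance but takes a genuinely different route from the paper. The paper does not reprove the strong Markov property: it simply cites the classical strong Markov property of Brownian motion to obtain the independence of $\FF^\rightarrow_T$ and $(\Delta_T\vert_{[0,\infty)})^{-1}(\mathcal{B}_{\Theta_0})$ and the identity of the push-forward law with $\PPP$ on $\mathcal{B}_{\Theta_0}$, then extends from $\mathcal{B}_{\Theta_0}$ to its $\PPP$-completion $\HH$, and \emph{finally} derives the integral formula as a mere rewriting of that independence via Tonelli and the product structure of the joint law. You go in the opposite direction: you prove the integral formula first, from scratch, by the classical discretization-and-limit argument (discrete stopping times, dyadic approximation $T_n\downarrow T$, pathwise continuity of $t\mapsto \Delta_t(\omega)$), and then read off independence and the law of $\Delta_T$ as special cases. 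Both orders are perfectly valid; yours is self-contained at the cost of being longer, while the paper's is shorter but leans on the reader's familiarity with the strong Markov property. What the paper's route makes explicit, and yours glosses over, is the passage from $\mathcal{B}_{\Theta_0}$ to $\HH$.

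This is the one place where I'd ask you to add a sentence. In Step~2 you restrict to jointly continuous bounded $g$ to pass to the limit, and then say that a monotone class argument extends the identity to all $g$ measurable with respect to $\mathcal{B}_{[0,\infty)}\otimes(B\vert_{[0,\infty)})^{-1}(\HH)$. But continuous functions on $\Theta_0$ only generate $\mathcal{B}_{\Theta_0}$, not its $\PPP$-completion $\HH$; the monotone class theorem alone only yields the identity for $g$ measurable in $\mathcal{B}_{[0,\infty)}\otimes(B\vert_{[0,\infty)})^{-1}(\mathcal{B}_{\Theta_0})$. To finish, observe that once you have the formula with $A=\{T<\infty\}$ and $g_2$ Borel, you already know $(\Delta_T)_\star\WW(\cdot\vert T<\infty)=\PPP$ on $(B\vert_{[0,\infty)})^{-1}(\mathcal{B}_{\Theta_0})$; hence preimages under $\Delta_T$ of $\PPP$-negligible subsets of $\Theta_0$ are $\WW$-negligible within $\{T<\infty\}$, and the identity extends to $g$ with a factor measurable w.r.t. $\HH$ by approximating from within/without by Borel sets. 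This is exactly the subtlety the paper flags in its remark following the lemma (one \emph{must} view $\Delta_T$ as valued in $(\Theta_0,\HH)$ and use the push-forward law to handle the $\PPP$-null sets; one could not have used $\WW$-null sets of $\Omega_0$, whose preimages under $\Delta_T$ can be large). You handle this correctly in your Step~1 (discrete $T$), where the $\HH$-measurability poses no problem because you condition on $\{T=t_k\}$; it is only the limiting step that needs the extra remark.
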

It is basically the strong Markov property  with some careful book-keeping of the null sets. We can couch it in a slightly different, but equivalent form: on $\{T<\infty\}$, under $\WW$, $\FF^\rightarrow_T$  is  independent of $(\Delta_T B)\vert_{[0,\infty)}$, which has the distribution $\PPP$ thereon. Here we could, and did view $(\Delta_T B)\vert_{[0,\infty)}$ as a random element in $(\Theta_0,\HH)$; we did not work, and  could not have worked with $\Delta_T B$, viewing it as an element of $(\Omega_0,\FF_{0,\infty})$ (indeed, the preimage of a $\WW$-negligible set under $\Delta_T$ need not be $\WW$-negligible at all: for instance, consider the first hitting time of $1$ by $B\vert_{[0,\infty)}$ as $T$ and take for the set the collection of paths which are nonpositive on a left neighborhood of $0$)! 
\begin{proof}
That  $\FF^\rightarrow_T$ is independent of $(\Delta_T\vert_{[0,\infty)})^{-1}(\mathcal{B}_{\Theta_0})$ on $\{T<\infty\}$ is just the usual strong Markov property for the Brownian motion $B\vert_{[0,\infty)}$ in the filtration $\FF^\rightarrow$. Since (also by the usual strong Markov property) the law of $\Delta_T\vert_{[0,\infty)}$ (relative to $\mathcal{B}_{\Theta_0}$) under $\WW(\cdot\vert T<\infty)$ is $\PPP\vert_{\BB_{\Theta_0}}$ we see that, on $\{T<\infty\}$, $(\Delta_T\vert_{[0,\infty)})^{-1}(\HH)=(\Delta_T\vert_{[0,\infty)})^{-1}(\mathcal{B}_{\Theta_0})\lor (\Delta_T\vert_{[0,\infty)})^{-1}(\PPP^{-1}(\{0,1\}))\subset (\Delta_T\vert_{[0,\infty)})^{-1}(\mathcal{B}_{\Theta_0})\lor \WW^{-1}(\{0,1\})$, hence  $\FF^\rightarrow_T$ is independent even of $(\Delta_T\vert_{[0,\infty)})^{-1}(\HH)$ (by an application of Dynkin's lemma, say). Similarly we convince ourselves that ${\Delta_T}_\star \WW(\cdot\vert T<\infty)= \WW$ on $(B\vert_{[0,\infty)})^{-1}(\HH)$.

As for the second statement, whenever we have a probability $\QQ$, $\QQ$-independent random elements $Y$ and $Z$ and a nonnegative measurable map $h$ of this pair, then $\WW[h(Y,Z)]=\int \WW[h(y,Z)] (Y_\star\WW)(\dd y)$ (brief argument: combine Tonneli, the image measure theorem and the fact that the law of an independent pair of random elements is the product of their laws). Applying the latter with $\QQ=\WW(\cdot \vert T<\infty)$, $Y=(\mathbbm{1}_A,T)$, $Z=\Delta_T$ (viewed as taking values in $(\Omega_0,(B\vert_{[0,\infty)})^{-1}(\HH))$) and $$h((i,t),z):=\mathbbm{1}_{\{1\}}(i)
g(t,z),\quad ((i,t),z)\in (\{0,1\}\times [0,\infty))\times \Omega_0,$$
we obtain 
\begin{align*}
\WW[g(T,\Delta_T);A,T<\infty]&=\WW[h((\mathbbm{1}_A,T),\Delta_T)\vert T<\infty]\WW(T<\infty)\\
&=\int_{[0,\infty)}\WW[g(t,\Delta_T)\vert T<\infty]\WW(A,T\in \dd t)=\int_{[0,\infty)}\WW[g(t,B)]\WW(A,T\in \dd t),
\end{align*}
 which concludes the argument.
\end{proof}
As final preparation for  the forthcoming ``Wiener-Hopf'' splitting result  we specify some extra notation.%, which will be useful in the proof. 

\begin{definition}
Let $\mathsf{r}:=(\Omega_0\ni \omega\mapsto (\mathbb{R}\ni t\mapsto \omega(-t)))$ be the  reflection of time. For an honest indexation $\tau$ for $M$ define its dual  $\hat{\tau}=(\hat{\tau}_{s,t})_{(s,t)\in \mathbb{R}^2,s<t}$ by setting $\hat \tau_{s,t}:=-\tau_{-t,-s}\circ \mathsf{r}$ for real $s<t$ ($-\dagger:=\dagger$). 
\end{definition}

The reflection of time $\mathsf{r}$ is measure-preserving for $\WW$. For $s\in \mathbb{R}$, $\Delta_s\circ \mathsf{r}=\mathsf{r}\circ \Delta_{-s}$, in particular $ \mathsf{r}\circ \Delta_s$ is its own inverse -- the reflection about, subsequent to centering at $s$. $\hat{\tau}$ is an honest indexation of $-M\circ \mathsf{r}$ and $\hat{\tau}_{0,t}=t-\tau_{0,t}\circ \mathsf{r}\circ \Delta_t$ for real $t$. Also, $\widehat{\hat\tau}=\tau$.

\begin{theorem}\label{theorem:splitting}
Fix $\lambda\in (0,\infty)$. Suppose the  stationary local random countable set $M$ admits an honest indexation $\tau$. Let  $M'$ be another stationary local random countable set. Set $\tilde \WW:=\WW\times \mathrm{Exp}(\lambda)$ ($\mathrm{Exp}(\lambda)$ being the exponential law of mean $\lambda^{-1}$ on $\mathcal{B}_{(0,\infty)}$), letting $\mathsf{e}$ be the second coordinate of $\tilde\WW$, while for the copies of the random elements/objects supported by $\WW$ we do not introduce new notation, retaining the same by an abuse. Write  $\tau_t:=\tau_{0,t}$, $t\in (0,\infty)$; as well as     $\hat{\tau}_t:=\hat{\tau}_{0,t}$, $t\in (0,\infty)$, for the dual indexation. We insist that $((0,\infty)\times\Omega_0\ni (t,\omega)\mapsto \tau_t(\omega)\in [0,t])\in (\BB_{(0,\infty)}\otimes \GG)/\mathcal{B}_{[0,\infty)}$ and $((0,\infty)\times\Omega_0\ni (t,\omega)\mapsto \hat\tau_t(\omega)\in [0,t])\in (\BB_{(0,\infty)}\otimes \GG)/\mathcal{B}_{[0,\infty)}$ (note: according to Lemma~\ref{lemma:perfect-good} a version of $\tau$ exists for which this is true). Put $\FF_{-\infty,\tau_e}:=\sigma(Z_{\tau_{\mathsf{e}}}:Z\text{ a  bounded real left-continuous $\FF^\rightarrow$-adapted process})$. 

\begin{enumerate}[(i)]
\item\label{splitting:i} $\FF_{-\infty,\tau_{\mathsf{e}}}$, in particular the pair $(\tau_\mathsf{e},B^{\tau_{\mathsf{e}}})$, is independent of $(\mathsf{e}-\tau_{\mathsf{e}},\Delta_{\tau_\mathsf{e}}\vert_{[0,\infty)})$ under $\tilde\WW$.
\item\label{splitting:ii} The event $\{\tau_\mathsf{e}\in M'\}$ is $\tilde \WW$-trivial.
\item\label{splitting:three} $(\mathsf{e}-\tau_\mathsf{e},\Delta_{\tau_\mathsf{e}}\vert_{[0,\infty)})_\star \tilde\WW=(\hat\tau_\mathsf{e},(\mathsf{r}\circ \Delta_{\hat\tau_\mathsf{e}})\vert_{[0,\infty)})_\star \tilde\WW$.%t on $\mathcal{B}_{[0,\infty)}\otimes \mathcal{B}_{\Theta_0}$.
\end{enumerate}
%Suppose also there is another such honest indexation $\hat{\tau}$ ``dual'' to $\tau$ in the sense that for all $t\in[0,\infty)$, $\WW$-a.s. $t-\tau_{0,t}=\hat{\tau}_{0,t}(\mathsf{r}\circ \Delta_t)$, where $\mathsf{r}:=(\omega\mapsto (t\mapsto \omega(-t))$ is the time-reflection. For each $l\in [0,\infty)$ assume existence of an $[l,\infty)$-valued $(\FF_{0,t})_{t\in [0,\infty)}$-stopping time such that  $\{T_l<t\}=\{\tau_{0,t}>l\}$ and $\tau_{0,t}=T_l+\tau_{0,t-T_l}(\Delta_{T_l})$  thereon a.s.-$\WW$ for all $t\in[0,\infty)$.
\end{theorem}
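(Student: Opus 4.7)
My plan is to prove (iii) first, via a time-reversal involution that preserves $\tilde\WW$; then to establish (i) by a strong-Markov and memorylessness computation at deterministic times, followed by a monotone-class upgrade; and finally to deduce (ii) by combining (i) and (iii) with the germ characterization of $M'$ afforded by Theorem~\ref{theorem:construction-through-zero-time-section}. For (iii), the key object is the map $I\colon(\omega,u)\mapsto(\mathsf{r}\circ\Delta_u\omega,u)$ on $\Omega_0\times(0,\infty)$. Because $\mathsf{r}$ and every $\Delta_u$ preserve $\WW$ while $\mathsf{e}$ is independent of $B$, the map $I$ preserves $\tilde\WW$; and as the text already records, $\mathsf{r}\circ\Delta_s$ is an involution, so $I^2=\mathrm{id}$. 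The identities $\Delta_s\circ\mathsf{r}=\mathsf{r}\circ\Delta_{-s}$, the defining relation $\hat\tau_{s,t}=-\tau_{-t,-s}\circ\mathsf{r}$, and the stationarities of $\tau$ and $\hat\tau$ combine to yield
\[
\tau_u(\mathsf{r}\circ\Delta_u\omega)=u-\hat\tau_u(\omega),\qquad \Delta_{\tau_u(\mathsf{r}\circ\Delta_u\omega)}(\mathsf{r}\circ\Delta_u\omega)=\mathsf{r}\circ\Delta_{\hat\tau_u(\omega)}\omega;
\]
specialised to $u=\mathsf{e}$, the involution $I$ carries $(\mathsf{e}-\tau_\mathsf{e},\Delta_{\tau_\mathsf{e}}\vert_{[0,\infty)})$ onto $(\hat\tau_\mathsf{e},(\mathsf{r}\circ\Delta_{\hat\tau_\mathsf{e}})\vert_{[0,\infty)})$, establishing (iii).

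For (i) I would work with the perfected $\tau$ of Lemma~\ref{lemma:perfect-good}. For each fixed $s>0$, density of $M$ gives $\{\tau_\mathsf{e}>s\}=\{\mathsf{e}>s\}$ up to a $\tilde\WW$-null set, and on this event the perfect honest indexation property yields $\tau_\mathsf{e}=s+\tau_{\mathsf{e}-s}(\Delta_s)$. Consequently, on $\{\mathsf{e}>s\}$, the pair $(\mathsf{e}-\tau_\mathsf{e},\Delta_{\tau_\mathsf{e}}\vert_{[0,\infty)})$ is a measurable function of $(\mathsf{e}-s,\Delta_s\vert_{[0,\infty)})$ alone; and by Lemma~\ref{lemma:stopping-time-conditioning} together with the memorylessness of $\mathsf{e}$, under $\tilde\WW(\cdot\mid\mathsf{e}>s)$ this latter pair is distributed exactly as $(\mathsf{e},B\vert_{[0,\infty)})$ and is independent of $\FF^\rightarrow_s$. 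It follows that, for all $A\in\FF^\rightarrow_s$ and every Borel $\Lambda$,
\[
\tilde\WW\bigl(A\cap\{\mathsf{e}>s\}\cap\{(\mathsf{e}-\tau_\mathsf{e},\Delta_{\tau_\mathsf{e}}\vert_{[0,\infty)})\in\Lambda\}\bigr)=\tilde\WW\bigl(A\cap\{\mathsf{e}>s\}\bigr)\cdot\tilde\WW\bigl((\mathsf{e}-\tau_\mathsf{e},\Delta_{\tau_\mathsf{e}}\vert_{[0,\infty)})\in\Lambda\bigr).
\]
For a bounded left-continuous $\FF^\rightarrow$-adapted $Z$ I would then approximate $Z_{\tau_\mathsf{e}}$ by $Z_{s_n}$ along a dyadic discretisation $s_n=\lfloor 2^n\tau_\mathsf{e}\rfloor/2^n\uparrow\tau_\mathsf{e}$, use the displayed identity on each piece $\{\tau_\mathsf{e}\in[k/2^n,(k+1)/2^n)\}\subset\{\mathsf{e}>k/2^n\}$ with the $\FF^\rightarrow_{k/2^n}$-measurable $Z_{k/2^n}$, pass to the limit by left-continuity of $Z$, and conclude by monotone class that $\FF_{-\infty,\tau_\mathsf{e}}$ is independent of $(\mathsf{e}-\tau_\mathsf{e},\Delta_{\tau_\mathsf{e}}\vert_{[0,\infty)})$ under $\tilde\WW$.

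For (ii), Theorem~\ref{theorem:construction-through-zero-time-section} furnishes $A'\in\mathfrak{G}$ with $M'=M^{A'}$ $\WW$-a.s.; after passing to perfect versions, $\{\tau_\mathsf{e}\in M'\}=\{\Delta_{\tau_\mathsf{e}}\in A'\}$ $\tilde\WW$-a.s., and this is an event in the two-sided germ of $B$ at $\tau_\mathsf{e}$. By (i) that germ splits into $\tilde\WW$-independent left and right pieces (the left sitting inside $\FF_{-\infty,\tau_\mathsf{e}}$, the right inside $\sigma(\Delta_{\tau_\mathsf{e}}\vert_{[0,\infty)})$). For the right piece: by (i) its conditional law given $\FF_{-\infty,\tau_\mathsf{e}}$ equals its unconditional law, which by (iii) is the law of the germ at $0$ of $(\mathsf{r}\circ\Delta_{\hat\tau_\mathsf{e}})\vert_{[0,\infty)}$, i.e.\ of $B$ from the left at $\hat\tau_\mathsf{e}$; applying the analogues of (i) and (iii) to the dual honest indexation $\hat\tau$ of $-M\circ\mathsf{r}$ reduces this in turn to a one-sided Brownian germ where Blumenthal's zero-one law supplies triviality. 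The left piece is handled symmetrically via (iii). Hence $\{\tau_\mathsf{e}\in M'\}$ lies in a join of two independent $\tilde\WW$-trivial $\sigma$-fields, giving $\tilde\WW(\tau_\mathsf{e}\in M')\in\{0,1\}$. The main obstacle in the whole proof is precisely this germ-triviality step in (ii): (i) and (iii) together are what allow one to shuttle back and forth between the germ under $\Delta_{\tau_\mathsf{e}}$ and a standard Brownian germ at a dual random time, and making this reduction rigorous, together with the delicate monotone-class upgrade in (i), is where the care has to be taken.
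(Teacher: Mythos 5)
Your argument for item (iii) is correct and is a genuine simplification of the paper's route. The paper obtains (iii) only as a corollary of one long integral computation, whereas you exhibit directly the involution $I(\omega,u)=(\mathsf{r}\circ\Delta_u\omega,u)$ of $(\Omega_0\times(0,\infty),\tilde\WW)$ and verify that it carries $(\mathsf{e}-\tau_\mathsf{e},\Delta_{\tau_\mathsf{e}}\vert_{[0,\infty)})$ to $(\hat\tau_\mathsf{e},(\mathsf{r}\circ\Delta_{\hat\tau_\mathsf{e}})\vert_{[0,\infty)})$ a.s.; the algebra checks out (using $\Delta_s\circ\mathsf{r}=\mathsf{r}\circ\Delta_{-s}$ and the a.s.\ identity $\hat\tau_u=u-\tau_u\circ\mathsf{r}\circ\Delta_u$), and one only needs to be careful that the identities hold a.s.\ rather than pointwise. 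This is a clean and worthwhile observation.

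Your argument for item (i), however, contains a false step. The claim that ``density of $M$ gives $\{\tau_\mathsf{e}>s\}=\{\mathsf{e}>s\}$ up to a $\tilde\WW$-null set'' is not true: $\tau_\mathsf{e}$ is a point of $M$ in $(0,\mathsf{e})$ and is typically far smaller than $\mathsf{e}$ (for the local minima, $\tau_\mathsf{e}$ is the argmin of $B$ on $[0,\mathsf{e}]$). Even if one replaces $\{\mathsf{e}>s\}$ by the correct event $\{\tau_\mathsf{e}>s\}$, the displayed factorization fails because $\{\tau_\mathsf{e}>s\}$ is \emph{not} $\FF^\rightarrow_s$-measurable and is not a function of $(\mathsf{e}-s,\Delta_s\vert_{[0,\infty)})$ alone: it genuinely couples $B\vert_{[0,s]}$ with $\Delta_s$ (again in the local-minima example, $\{\tau_\mathsf{e}>s\}=\{\min_{[0,s]}B-B_s>\min_{[0,\mathsf{e}-s]}\Delta_s\}$). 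This is exactly the obstruction the paper's proof is built to circumvent: one takes $L\in\FF_{-\infty,l}$, introduces the $\FF^{0,\rightarrow}$-stopping time $T_l=\inf\{t:\tau_t>l\}$ (which satisfies $T_l>l$ and $\{T_l\le t\}=\{\tau_t>l\}$ a.s.), approximates $T_l$ strictly from below by stopping times $T^n$ using predictability of the Brownian filtration, and then applies the strong Markov property (Lemma~\ref{lemma:stopping-time-conditioning}) at $T^n$ together with the memorylessness of $\mathsf{e}$. None of this machinery appears in your plan, and without it the decoupling does not go through.

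Item (ii) is where the gap is most serious, and it is structural. You propose to \emph{deduce} (ii) from (i), (iii) and a germ zero-one law. But (i) only states that $\FF_{-\infty,\tau_\mathsf{e}}$ is independent of $(\mathsf{e}-\tau_\mathsf{e},\Delta_{\tau_\mathsf{e}}\vert_{[0,\infty)})$; it does not say that the event $\{\tau_\mathsf{e}\in M'\}$ --- which lives in the \emph{two-sided} germ at $\tau_\mathsf{e}$ and hence in neither of the two factors individually --- is independent of either one. The paper's computation establishes the strictly stronger fact that $\FF_{-\infty,\tau_\mathsf{e}}$, $\{\tau_\mathsf{e}\in M'\}$ and $(\mathsf{e}-\tau_\mathsf{e},\Delta_{\tau_\mathsf{e}}\vert_{[0,\infty)})$ are \emph{jointly} independent; triviality of $\{\tau_\mathsf{e}\in M'\}$ then follows because the first and third factors already generate the whole $\sigma$-field of $\tilde\WW$. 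You cannot recover this triple independence from the pairwise statements (i) and (iii). Moreover, the Blumenthal step you invoke does not apply: $\Delta_{\tau_\mathsf{e}}\vert_{[0,\infty)}$ is not a Brownian motion ($\tau_\mathsf{e}$ is an honest time, not a stopping time), and the reduction you propose --- right germ at $\tau_\mathsf{e}$ $\to$ left germ at $\hat\tau_\mathsf{e}$ $\to$ apply (i)/(iii) to the dual indexation --- is circular, since it brings you back to the same type of germ at a dual honest time. Finally, even granting triviality of each one-sided germ, the two-sided germ $\cap_\epsilon\overline{\sigma}(\Delta_{\tau_\mathsf{e}}\vert_{(-\epsilon,\epsilon)})$ is not automatically the join of the one-sided germs, so the last inference would need an additional exchange-of-intersection-and-join argument that you do not supply.
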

In the case of $M$ being  the local minima (resp. maxima) of $B$ with $\tau_{s,t}$ a minimizer (resp. maximizer) of $B$ on $(s,t)$ for real $s<t$, Items~\ref{splitting:i} and~\ref{splitting:three} give the usual splitting at the minimum (resp. maximum) before an independent exponential random time, which underlies the Wiener-Hopf factorization \cite{greenwood-pitman}. Via Example~\ref{example:index-min} we extend this observation to drifting Brownian motion.
%Thus Theorem~\ref{theorem:splitting} may be viewed as a splitting at an honest indexator.

Clearly the statement of the theorem does not go through in its entirety with, ceteris paribus, $\mathsf{e}$ a deterministic random time from $(0,\infty)$. For instance, if $M$ are the local minima of $B$ and, for real $s<t$, $\tau_{s,t}$ is a minimizer of $B$ on $(s,t)$, then clearly $\tau_{0,t}$ is  independent of $t-\tau_{0,t}$ for no $t\in (0,\infty)$ (vis-\'a-vis Item~\ref{splitting:i}). This is not to say that perhaps one could not prove Item~\ref{splitting:ii} in such case, namely that $\{\tau_t\in M'\}$ is $\WW$-trivial for $t\in (0,\infty)$, however at least the structure of the proof which follows does not seem to allow for this. On the other hand, Item~\ref{splitting:three} --- the equality in law of the pre-$\hat \tau_\mathsf{e}$ increments looked backwards together with $\hat\tau_\mathsf{e}$ and of the post-$\tau_\mathsf{e}$ increments together with $\mathsf{e}-\tau_\mathsf{e}$ --- certainly implies (taken for all $\lambda\in (0,\infty)$) its ``$\mathsf{e}$ is deterministic'' version, as is readily verified.

\begin{proof}
Changing the process $\tau_\cdot:=(\tau_t)_{t\in (0,\infty)}$ to any (jointly) measurable version thereof, satisfying $\tau_t\in [0,t]$ for all $t\in (0,\infty)$, changes $\tau_\mathsf{e}$ only on a $\tilde\WW$-negligible set. The same for $\hat\tau_\cdot:=(\hat\tau_t)_{t\in (0,\infty)}$. Therefore we may and do just as well assume that $\tau_\cdot$ has all of properties stipulated by Lemma~\ref{lemma:perfect-good}, the perfect honest indexation property in particular. Likewise for $\hat\tau_\cdot$.

 Pass also to the nice version of $M'$ guaranteed to exist by Corollary~\ref{corollary:nice-version}. The main gain from this is that we have  perfect stationarity combined with  $M'\cap (0,\infty)$ being measurable w.r.t. $B\vert_{[0,\infty)}$ and possibly its null sets, i.e. $M'\cap (0,\infty)$ having a $(B\vert_{[0,\infty)})^{-1}(\HH)$-measurable perfect enumeration; the intervention of the null sets of the whole of $B$ is not needed. This, together with the $(B\vert_{[0,\infty)})^{-1}(\HH)$-measurability of $\tau_\cdot$, will be instrumental in applying Lemma~\ref{lemma:stopping-time-conditioning} below.

Next, set, for $l\in [0,\infty)$, $$T_l:=\inf\{t\in (0,\infty):\tau_{t}> l\}.$$ We have that $T_l$ is an  $\FF^{0,\rightarrow}$-stopping time ($\because$  by locality the process $\tau_\cdot$ is $(\FF_{0,t})_{t\in (0,\infty)}$-adapted), $\WW$-a.s. $(l,\infty)$-valued ($\because$ $\tau_l\leq l$ (even with certainty), $\WW(\tau_l=l)=0$, $\tau_\cdot$ is $\uparrow$ and right-continuous), having $\tau_{T_l}=T_l$ a.s.-$\WW$ ($\because$ with $\WW$-probability one $\tau_\cdot$ only increases on the diagonal) and satisfying, for all $t\in (0,\infty)$, $\{T_l\leq t\}=\{\tau_{t}>l\}$   a.s.-$\WW$ ($\because$ $\WW(\tau_t=l)=0$) and $\tau_t\geq T_l$ a.s.-$\WW$ on $\{T_l\leq t\}$ ($\because$ $\tau_\cdot$ is $\uparrow$ so that $\tau_t\geq \tau_{T_l}=T_l$ a.s.-$\WW$). % and  $\tau_{T_l}=T_l$ a.s.-$\WW$ on $\{\tau_t>l\}$, while $\WW(\tau_t=t)=0$). %, also $\WW$-a.s. on $ \{T_l< t\}$ we have $\tau_t>T_l$ (since $\tau_t=\tau_{T_l}+\tau_{t-T_l}(\Delta_{T_l})$ thereon).
%Besides, since $\tau_\cdot$ increases only by visiting the diagonal, $\tau_
 
Since the filtration $\FF^{0,\rightarrow}$ is Brownian, hence predictable, we may also, and do prepare a sequence $(T^n)_{n\in \mathbb{N}}$ of $\FF^{0,\rightarrow}$-stopping times satisfying $T^n\uparrow T_l$ as $n\to \infty$ and $T^n<T_l$ for all $n\in \mathbb{N}$ a.s.-$\WW$. Since $\WW(T_l>l )=1$ we may and do ask that $T^n\geq l$ for all $n\in \mathbb{N}$ (by passing to $(T^n\lor l)_{n\in \mathbb{N}}$ in lieu of $(T^n)_{n\in \mathbb{N}}$ if necessary).
 
%From Lemma~\ref{lemma:stopping-time-conditioning} we retain the notation $\FF^\rightarrow:=(\FF_{-\infty,t})_{t\in [0,\infty)}$.
  %We may and do ask that the $g_t$, $t\in [0,\infty)$, are measurable relative to $\Delta_0\vert_{[0,\infty)}$. Write also $\hat{g}_t:=\hat{g}_{0,t}$ for $t\in [0,\infty)$. Again we may and do ask that the
%and $\hat{g}_t$,
%Goal: We show that $ \{\tau_{\mathsf{e}}\in M'\}$ is $\tilde\WW$-trivial; by Tonelli and Theorem~\ref{theorem:minimal} it is enough.

With these preparations in hand, take $\{l,r\}\subset [0,\infty)$, $L\in \FF_{-\infty,l}$, $H\in \mathcal{B}_{\Theta_0}$ and compute as follows:
\begin{align}
\nonumber&\tilde \WW(L,l<\tau_\mathsf{e},\tau_\mathsf{e}\in M',\tau_\mathsf{e}<\mathsf{e}-r,\Delta_{\mathsf{e}-r}\vert_{[0,\infty)}\in H)\\\nonumber
&=\int_0^\infty \dd t \lambda e^{-\lambda t}\WW(L,l<\tau_t,\tau_t\in M',\tau_t<t-r,\Delta_{t-r}\vert_{[0,\infty)}\in H)\quad \text{(just the independence of $\mathsf{e}$ \& $B$ and Tonelli)}\\\nonumber
&=\int_0^\infty \dd t \lambda e^{-\lambda t}\WW(L,T_l\leq t,\tau_t\in M',\tau_t<t-r,\Delta_{t-r}\vert_{[0,\infty)}\in H)\quad \text{(since $\{T_l\leq t\}=\{l<\tau_t\}$ a.s.-$\WW$)}\\\nonumber
%&=\int_0^\infty \dd t e^{-t}\WW(L,T_l\leq t, T^m+ \tau_{t-T^m}(\Delta_{T_m})\in M', T^m+ \tau_{t-T^m}(\Delta_{T_m})<t-r,\Delta_{t-r}\vert_{[0,\infty)}\in H)\quad \text{(using $\tau_t\geq T_l$ a.s.-$\WW$ on $\{T_l\leq t\}$)}\\\nonumber
%&=\lim_{n\to\infty}\int_0^\infty \dd t e^{-t}\WW(L,T^n\leq t,\tau_t\in M',\tau_t<t-r,\Delta_{t-r}\vert_{[0,\infty)}\in H)\\\nonumber
&=\lim_{n\to\infty}\int_0^\infty\dd t \lambda e^{-\lambda t}\WW\big(L,T^n\leq   t,T^n+\tau_{t-T^n}(\Delta_{T^n})\in T^n+(M'\cap (0,\infty))(\Delta_{T^n}),T^n+\tau_{t-T^n}(\Delta_{T^n})<t-r,\\\nonumber
&\qquad\qquad\qquad\qquad\qquad\qquad \qquad\qquad\qquad\qquad\qquad\qquad \qquad\qquad\qquad\qquad\qquad (\Delta_{(t-T^n-r)\lor 0}\vert_{[0,\infty)})(\Delta_{T^n})\in H\big)\\\nonumber
&\qquad \qquad \text{(by dominated convergence, perfect stationarity of $M'$ and by perfect honest indexation of $\tau_\cdot$,}\\\nonumber
&\qquad\qquad\qquad\qquad\qquad\qquad\qquad\qquad\qquad \quad \quad \text{using $\tau_t\geq T_l>T^n$ for all $n\in \mathbb{N}$ a.s.-$\WW$ on $\{T_l\leq t\}$)}\\\nonumber
&=\lim_{n\to\infty}\int_0^\infty\dd t \lambda e^{-\lambda t}\int_{[0,t]} \WW(L,T^n\in \dd h)\WW(\tau_{t-h}\in M',\tau_{t-h}<t-h-r,\Delta_{t-h-r}\vert_{[0,\infty)}\in H)\\\nonumber
&\qquad \qquad \text{(by Lemma~\ref{lemma:stopping-time-conditioning} for the $\FF^\rightarrow$-stopping time $T^n$, noting that $L\in \FF_{-\infty,l}$, $l\leq T^n$ and exploiting $\tau_\cdot$}\\\nonumber
& \qquad \qquad\text{being (resp. $M'\cap(0,\infty)$ having a) $(B\vert_{[0,\infty)})^{-1}(\HH)$-measurable (resp. perfect enumeration))}\\\nonumber
&=\lim_{n\to\infty}\int_{[0,\infty)} \WW(L,T^n\in \dd h) e^{-\lambda h}\int_h^\infty\dd t \lambda e^{-\lambda (t-h)}\WW(\tau_{t-h}\in M',\tau_{t-h}<t-h-r,\Delta_{t-h-r}\vert_{[0,\infty)}\in H)\quad \text{(Tonelli)}\\\nonumber
&=\lim_{n\to\infty} \tilde\WW(L,T^n< \mathsf{e})\int_0^\infty\dd t \lambda e^{-\lambda t}\WW(\tau_{t}\in M',\tau_{t}<t-r,\Delta_{t-r}\vert_{[0,\infty)}\in H)\quad \text{(elementary substitution of}\\\nonumber
&\qquad\qquad\text{$t-h\rightsquigarrow h$: it is here where the memoryless property of the exponential distribution intervenes}\\\nonumber
&\qquad\qquad\text{crucially, yielding the factorization)}\\\nonumber
&= \tilde\WW(L,T_l\leq \mathsf{e})\int_0^\infty\dd t \lambda e^{-\lambda t}\WW(\tau_{t}\in M',\tau_{t}<t-r,\Delta_{t-r}\vert_{[0,\infty)}\in H)\quad \text{(continuity of $\tilde\WW$)}\\\nonumber
&=\tilde\WW(L,l<\tau_\mathsf{e})\int_0^\infty\dd t \lambda e^{-\lambda t}\WW(\tau_{t}\in t+M'(\Delta_t),\tau_{t}<t-r,\Delta_{t-r}\vert_{[0,\infty)}\in H)\quad \text{(using stationarity of $M'$}\\\nonumber
&\qquad\qquad \text{and the fact that $\{T_l\leq \mathsf{e}\}=\{l<\tau_\mathsf{e}\}$ a.s.-$\tilde\WW$ /by independence of $\mathsf{e}$ and $B$/)}\\\nonumber
&=\tilde\WW(L,l<\tau_\mathsf{e})\int_0^\infty\dd t \lambda e^{-\lambda t}\WW(t-\tau_{t}\in -M'(\Delta_t),r<t-\tau_{t},\Delta_{t-r}\vert_{[0,\infty)}\in H)\quad \text{(just a slight rearrangement)}\\\nonumber
&=\tilde\WW(L,l<\tau_\mathsf{e})\int_0^\infty\dd t \lambda e^{-\lambda t}\WW(\hat{\tau}_{t}(\mathsf{r}\circ \Delta_t)\in (-M'\circ \mathsf{r})(\mathsf{r}\circ \Delta_t),r<\hat{\tau}_{t}(\mathsf{r}\circ \Delta_t),((\mathsf{r}\circ \Delta_{r})\vert_{[0,\infty)})(\mathsf{r}\circ \Delta_t)\in H))\\\nonumber
&\qquad \qquad \text{(since $t-\tau_{t}=\hat{\tau}_t(\mathsf{r}\circ \Delta_t)$, $\mathsf{r}\circ \mathsf{r}=\mathrm{id}_{\Omega_0}$ and $\mathsf{r}\circ \Delta_r=\Delta_{-r}\circ\mathsf{r}$)}\\\nonumber
&=\tilde\WW(L,l<\tau_\mathsf{e})\int_0^\infty\dd t \lambda e^{-\lambda t}\WW(\hat{\tau}_{t}\in -M'\circ \mathsf{r},r<\hat{\tau}_{t},(\mathsf{r}\circ \Delta_{r})\vert_{[0,\infty)}\in H)\quad\text{($\mathsf{r}\circ \Delta_t$ is measure-preserving for $\WW$)}\\\label{eq:big-display-intermediate}
&=\tilde\WW(L,l<\tau_\mathsf{e})\tilde\WW((\mathsf{r}\circ \Delta_{r})\vert_{[0,\infty)}\in H,r<\hat\tau_\mathsf{e},\hat{\tau}_\mathsf{e}\in -M'\circ \mathsf{r})\quad \text{(independence of $\mathsf{e}$ \& $B$ and Tonelli /bis/)}
\\\label{eq:big-display}
&=\tilde\WW(L,l<\tau_\mathsf{e})\tilde\WW(r<\hat\tau_\mathsf{e},(\mathsf{r}\circ \Delta_{r})\vert_{[0,\infty)}\in H)\tilde \WW({\tau}_\mathsf{e}\in M'),
\end{align}
where in the last line we have used the fact that $-M'\circ \mathsf{r}$ (resp. $-M\circ \mathsf{r}$) is also a shift-invariant local random countable set (resp. of which $\hat{\tau}$ is an honest indexation, nice in the sense of Lemma~\ref{lemma:perfect-good}), while $\{(\mathsf{r}\circ \Delta_{r})\vert_{[0,\infty)}\in H\}\in \FF_{-\infty,r}$, so that \eqref{eq:big-display-intermediate}  can be recycled according to the following substitutions: $$\text{$r\rightsquigarrow0$, $H\rightsquigarrow \Theta_0$, $M\rightsquigarrow -M\circ \mathsf{r}$, $M'\rightsquigarrow -M'\circ \mathsf{r}$, $l\rightsquigarrow r$, $L\rightsquigarrow \{(\mathsf{r}\circ \Delta_{r})\vert_{[0,\infty)}\in H\}$, $\tau_\cdot\rightsquigarrow \hat\tau_\cdot$,}$$  noting finally that $\widehat{\hat\tau}=\tau$, $\tilde \WW(0<\tau_\mathsf{e})=1$ and $-(-M'\circ\mathsf{r})\circ\mathsf{r}=M'$ (this ``trick'' just saves us from having to redo a computation that really we have already done, albeit subject to the preceding substitutions). Taking 
%$l=r=0$, $L=\Omega_0$, $H=\Theta_0$ in  \eqref{eq:big-display} yields (since also $\tilde \WW(0<\tau_\mathsf{e}<\mathsf{e})=1$)
%$$\tilde \WW(\tau_\mathsf{e}\in M')=\tilde \WW(\hat{\tau}_\mathsf{e}\in -M'\circ \mathsf{r});$$ taking 
$l=0$, $L=\Omega_0$, $M'=M$ in \eqref{eq:big-display} we get (since  /by inclusion and independence/ $\tilde\WW(\tau_\mathsf{e}\in M)=1$) 
\begin{equation}\label{wiener-hopf:equal-in-law}
\tilde \WW(\tau_\mathsf{e}<\mathsf{e}-r,\Delta_{\mathsf{e}-r}\vert_{[0,\infty)}\in H)=\tilde\WW(r<\hat\tau_\mathsf{e},(\mathsf{r}\circ \Delta_{r})\vert_{[0,\infty)}\in H);
\end{equation} plugging \eqref{wiener-hopf:equal-in-law} back into \eqref{eq:big-display} we conclude that 
$$\tilde \WW(L,l<\tau_\mathsf{e},\tau_\mathsf{e}\in M',r<\mathsf{e}-\tau_\mathsf{e},\Delta_{\mathsf{e}-r}\vert_{[0,\infty)}\in H)=\tilde \WW(L,l<\tau_\mathsf{e})\tilde \WW(\tau_\mathsf{e}\in M')\tilde \WW(r<\mathsf{e}-\tau_\mathsf{e},\Delta_{\mathsf{e}-r}\vert_{[0,\infty)}\in H).$$

By approximation, linearity and bounded convergence %Since a bounded (left-)continuous $\FF^\rightarrow$-adapted real process can be approximated pointwise boundedly  by linear combinations of the indicators from $\{L\times (l,\infty):L\in \FF_{-\infty,l},l\in [0,\infty)\}$ 
it follows that $$\tilde \WW[Z_{\tau_{\mathsf{e}}};\tau_\mathsf{e}\in M',r<\mathsf{e}-\tau_\mathsf{e},\Delta_{\mathsf{e}-r}\vert_{[0,\infty)}\in H]=\tilde \WW[Z_{\tau_\mathsf{e}}]\tilde \WW(\tau_\mathsf{e}\in M')\tilde \WW(r<\mathsf{e}-\tau_\mathsf{e},\Delta_{\mathsf{e}-r}\vert_{[0,\infty)}\in H)$$ for all  bounded real left-continuous $\FF^\rightarrow$-adapted processes $Z$. By functional monotone class we deduce that $$\tilde \WW(\mathsf{L},\tau_\mathsf{e}\in M',r<\mathsf{e}-\tau_\mathsf{e},\Delta_{\mathsf{e}-r}\vert_{[0,\infty)}\in H)=\tilde \WW(\mathsf{L})\tilde \WW(\tau_\mathsf{e}\in M')\tilde \WW(r<\mathsf{e}-\tau_\mathsf{e},\Delta_{\mathsf{e}-r}\vert_{[0,\infty)}\in H),$$ 
which is to say
$$ \tilde \WW(\mathsf{L},\tau_\mathsf{e}\in M',r<\mathsf{e}-\tau_\mathsf{e},\Delta_{\mathsf{e}-\tau_\mathsf{e}-r}(\Delta_{\tau_\mathsf{e}}\vert_{[0,\infty)})\in H)=\tilde \WW(\mathsf{L})\tilde \WW(\tau_\mathsf{e}\in M')\tilde \WW(r<\mathsf{e}-\tau_\mathsf{e},\Delta_{\mathsf{e}-\tau_\mathsf{e}-r}(\Delta_{\tau_\mathsf{e}}\vert_{[0,\infty)})\in H),$$ 
this for all $\mathsf{L}\in \FF_{-\infty,\tau_\mathsf{e}}$. Besides, the class of sets 
$$\left\{\{(t,\omega)\in [0,\infty)\times \Theta_0:r<t,\, \Delta_{t-r}(\omega)\in H\}:(r,H)\in [0,\infty)\times \mathcal{B}_{\Theta_0}\right\}\subset 2^{[0,\infty)\times \Theta_0}$$
is a $\pi$-system that generates $\mathcal{B}_{[0,\infty)}\otimes \mathcal{B}_{\Theta_0}$ on $[0,\infty)\times \Theta_0$. 
%$\{\{r<\mathsf{e}-\tau_\mathsf{e},\Delta_{\mathsf{e}-r}\vert_{[0,\infty)}\in H\}:(r,H)\in [0,\infty)\times \mathcal{B}_{\Theta_0}\}$ is  a $\pi$-system, which together with the null sets of $\tilde\WW$ generates a $\sigma$-field w.r.t. which the pair $(\mathsf{e}-\tau_\mathsf{e},\Delta_{\tau_\mathsf{e}}\vert_{[0,\infty)})$ is measurable. 
By an application of Dynkin's lemma (``it is enough to check independence on generating $\pi$-systems'') we get \ref{splitting:i} and in fact the joint independence  of the following triplet: $$\FF_{-\infty,\tau_{\mathsf{e}}};\quad \{\tau_\mathsf{e}\in M'\}; \quad (\mathsf{e}-\tau_\mathsf{e},\Delta_{\tau_\mathsf{e}}\vert_{[0,\infty)}).$$ Since $\FF_{-\infty,\tau_{\mathsf{e}}}$ and $(\mathsf{e}-\tau_\mathsf{e},\Delta_{\tau_\mathsf{e}}\vert_{[0,\infty)})$ generate the whole of the $\sigma$-field of $\tilde\WW$ we infer at once \ref{splitting:ii}. Finally, as probability laws are uniquely determined by their values on a generating $\pi$-system (which, incidentally, follows again from Dynkin's lemma) we deduce  from a rewriting of \eqref{wiener-hopf:equal-in-law}, namely
$$\tilde \WW(r<\mathsf{e}-\tau_\mathsf{e},\Delta_{\mathsf{e}-\tau_\mathsf{e}-r}(\Delta_{\tau_\mathsf{e}}\vert_{[0,\infty)})\in H)=\tilde\WW(r<\hat\tau_\mathsf{e},\Delta_{\hat\tau_\mathsf{e}-r}((\mathsf{r}\circ \Delta_{\hat\tau_\mathsf{e}})\vert_{[0,\infty)})\in H),$$
 also the validity of \ref{splitting:three}.
\end{proof}
%In Item~\ref{splitting:i}  the independence holds true even with $(\mathsf{e}-\tau_{\mathsf{e}},\Delta_{\tau_\mathsf{e}}\vert_{[0,\infty)})$ viewed as taking values in $([0,\infty)\times \Theta_0,\mathcal{B}_{[0,\infty)}\otimes \HH)$ (rather than just $([0,\infty)\times \Theta_0,\mathcal{B}_{[0,\infty)}\otimes \mathcal{B}_{\Theta_0})$) \emph{provided} we complete $\tilde \WW$. The proof is verbatim the same, replacing $H\in \mathcal{B}_{\Theta_0}$ with $H\in \HH$.%, the completeness of $\tilde \WW$ entering the picture in the first equality of \eqref{eq:big-display} (which is an application of Tonelli's theorem, hence requires the relevant joint measurability).
Call an honest indexation $\tau$ of a  random countable set $M$  symmetric if $\hat{\tau}_{0,t}=\tau_{0,t}$ a.s.-$\WW$ for all $t\in (0,\infty)$.  For such an indexation Theorem~\ref{theorem:splitting}\ref{splitting:three} becomes $(\mathsf{e}-\tau_\mathsf{e},\Delta_{\tau_\mathsf{e}}\vert_{[0,\infty)})_\star \tilde\WW=(\tau_\mathsf{e},(\mathsf{r}\circ \Delta_{\tau_\mathsf{e}})\vert_{[0,\infty)})_\star \tilde\WW$. 
\begin{question}
 Examples of symmetric honest indexations include the standard indexation of the local minima (that of Example~\ref{example:index-min} with $\kappa=0$) and the corresponding standard indexation of  the local maxima. Are there any others?
\end{question}
\subsection{Thickness, local times and excursions}
Return to Question~\ref{question:stopping-times-belonging}. In our second substantial result of this section, let us show that for random sets admitting an honest indexation, no stopping time can belong to them with positive probability. In the terminology of \cite[Definition~5.1]{monique} such sets are thick.
\begin{theorem}\label{theorem:stopping-times-no}
Let $M$ be a random set admitting an honest indexation $\tau$ and let $S$ be an $(\FF_{-\infty,t})_{t\in \mathbb{R}}$-stopping time (meaning: $S$ is $[-\infty,\infty]$-valued and  $\{S\leq a\}\in \FF_{-\infty,a}$ for all $a\in \mathbb{R}$). Then $\WW(S\in M)=0$.
\end{theorem}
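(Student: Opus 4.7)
The plan is to prove the theorem by countable decomposition followed by a reduction to an event depending only on the post-$S$ Brownian increments, whose probability is seen to vanish via the strong Markov property (Lemma~\ref{lemma:stopping-time-conditioning}) and the diffuseness of the honest indexator.

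First I would invoke the exhaustion property of the honest indexation to obtain $\{S\in M\}\subseteq\bigcup_{(p,q)\in\mathbb{Q}^2,\,p<q}\{S=\tau_{p,q}\}$ up to $\WW$-null sets; by countable subadditivity it then suffices to show $\WW(S=\tau_{p,q})=0$ for each rational pair $p<q$. Fix such $p<q$: by stationarity of $\tau$ and translation-invariance of the filtration $(\FF_{-\infty,t})_{t\in\mathbb{R}}$ we may and will assume $p=0$ (shifting $S$ by $-p$ yields another $(\FF_{-\infty,t})$-stopping time), and I set $\sigma:=\tau_{0,q}$. Note that $\sigma\in(0,q)$ a.s., so on $\{S=\sigma\}$ we have $0<S<q$, and in particular $S$ restricted to this event is a finite $\FF^\rightarrow$-stopping time.

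The heart of the argument is to translate the event $\{S=\sigma\}$ into a ``post-$S$'' event, to which strong Markov can be applied. To this end I would pass to the right-continuous version $g=(g_t)_{t>0}$ of $\tau_{0,\cdot}$ from Lemma~\ref{lemma:perfect-good}, which is nondecreasing, $[0,t]$-valued, adapted to $(\FF_{0,t})$, constant on its excursions away from the diagonal, and satisfies the perfect honest indexation inequality $g_{T+t}(\omega)\leq t+g_T(\Delta_t\omega)$ on a shift-closed $\WW$-a.s.\ set, with equality on the strict-inequality event $\{g_{T+t}(\omega)>t\}$. On $\{g_q=S\}$ the monotonicity of $g$ (so $g_S\leq g_q=S$ and $g_t\geq g_S$ for $t\geq S$) together with the jump-to-diagonal property forces $g$ to be constant at $S$ on $[S,q]$; in particular $g_S=S$. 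The crucial structural identity I would then establish is that
\begin{equation*}
\{g_q=S\}\ =\ \{g_S=S\}\cap\{g_{q-S}(\Delta_S)=0\}\quad\text{a.s.-}\WW,
\end{equation*}
where $\Delta_S$ is the L\'evy shift at $S$; the inclusion $\supseteq$ follows immediately from the inequality $g_q\leq S+g_{q-S}(\Delta_S)$ combined with $g_q\geq g_S=S$, while the reverse inclusion requires exploiting the infimum characterization $\overline{\hat\tau}_{0,t}(\omega)=\inf_s(s+\tilde{\hat\tau}_{0,t-s}(\Delta_s\omega))$ in the construction of Lemma~\ref{lemma:perfect-good} together with a limiting argument using right-continuity of $g$ past $q$.

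Granting this identity, the proof concludes cleanly: $S$ on $\{0<S<q\}$ is a finite $\FF^\rightarrow$-stopping time, so by Lemma~\ref{lemma:stopping-time-conditioning}, $\Delta_S|_{[0,\infty)}$ has law $\PPP$ independent of $\FF^\rightarrow_S$, and therefore
\begin{equation*}
\WW\!\left(\{g_S=S\}\cap\{g_{q-S}(\Delta_S)=0\}\right)\leq\int_0^q\WW(\tau_{0,q-s}=0)\,\WW(S\in\dd s)=0,
\end{equation*}
the integrand vanishing because by locality $\tau_{0,q-s}\in(0,q-s)$ strictly a.s.\ for each $s\in(0,q)$. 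This yields $\WW(S=\sigma)=0$, completing the reduction.

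The main obstacle is the reverse inclusion in the structural identity above: the perfect honest indexation of Lemma~\ref{lemma:perfect-good} furnishes the equality $g_q=S+g_{q-S}(\Delta_S)$ only on the strict event $\{g_q>S\}$, not on the boundary $\{g_q=S\}$, whereas the desired conclusion concerns precisely this boundary. Handling it requires a careful interplay between the infimum representation of the perfected version, the monotonicity and right-continuity of $g$, and the ``jump-to-diagonal'' property; essentially one must rule out the pathology that $g_{q-S}(\Delta_S)>0$ while $g$ is nonetheless stuck at $S$ throughout $[S,q]$, which can be shown to contradict the behavior of $g_{q+\epsilon}$ for $\epsilon\downarrow 0$ by a rational exhaustion and the right-continuity at $q$.
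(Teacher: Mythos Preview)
Your strategy---reduce to $\WW(S=\tau_{0,q})=0$, rewrite via a structural identity involving the post-$S$ process, and kill it by strong Markov---is different from the paper's, and the gap you yourself flag is real and not closed by your sketch. The reverse inclusion $\{g_q=S\}\subset\{g_{q-S}(\Delta_S)=0\}$ fails to follow from right-continuity at $q$: your argument via $g_{q+\epsilon}$ only works on $\{g_{q+\epsilon}>S\text{ for some }\epsilon>0\}$, because it is precisely there that the perfect honest indexation of Lemma~\ref{lemma:perfect-good} furnishes the \emph{equality} $g_{q+\epsilon}=S+g_{q+\epsilon-S}(\Delta_S)$. On the residual event $\{g_q=S,\ g_{q+\epsilon}=S\text{ for all }\epsilon>0\}=\{g_q=S=g_\infty\}$, nothing in Lemma~\ref{lemma:perfect-good} (neither the infimum representation, which yields only the inequality $g_q\le S+g_{q-S}(\Delta_S)$, nor the jump-to-diagonal property) forces $g_{q-S}(\Delta_S)=0$. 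And $\{g_\infty<\infty\}$ is \emph{not} in general negligible: see Example~\ref{example:index-min} with $\kappa>0$, where $g_\infty$ is a.s.\ finite. Attempting to bound $\WW(g_\infty=S)$ by $\WW(\tau_{0,r}=S)$ for rational $r>g_\infty$ just reproduces the quantity you are trying to show is zero, so the argument is circular on that event.

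The paper avoids this boundary trap by working from the \emph{left} rather than the right. It builds a jointly \emph{left}-continuous nondecreasing version of the two-parameter family $(\tau_{u,v})$, then exploits predictability of the Brownian filtration to approximate $S$ strictly from below by a sequence $(S_n)$ of $\FF^\rightarrow$-stopping times taking dyadic values. The simple Markov property at each $S_n$ gives $\tau_{S_n,S_n+\cdot}-S_n\overset{d}{=}\tau_{0,\cdot}$; left-continuity passes this to $\tau_{S,S+\cdot}-S\overset{d}{=}\tau_{0,\cdot}$, whence $\tau_{S,S+t}>S$ for all $t>0$ a.s. On the other hand, on $\{S\in M\}$ one has $S=\tau_{p,q}$ for some rational $p<S<q$, and nestedness plus left-continuity forces $\tau_{S,q}=S$---a contradiction. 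The key difference is that left-continuity lets one land \emph{on} the random time $S$ from strictly earlier times, which sidesteps precisely the boundary case where your right-continuous argument stalls.
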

By reflection of time $\WW(S\in M)=0$ for any $(\FF_{t,\infty})_{t\in \mathbb{R}}$-reverse stopping time $S$ (meaning: $S$ is $[-\infty,\infty]$-valued and  $\{S\geq a\}\in \FF_{a,  \infty}$ for all $a\in \mathbb{R}$), just the same. Theorem~\ref{theorem:stopping-times-no} and the preceding statement also generalize trivially to any $M$ which is merely a countable union of random sets, each admitting its own honest indexation.
\begin{proof}
Suppose per absurdum $\WW(S\in M)>0$. By stationarity we reduce at once to the case when $S$ is $(0,\infty)$-valued and we assume this henceforth.

First we pass to a nice version of $\tau$ that will be ideally suited to the problem at hand. Let $Q:=\{\frac{k}{2^n}:(k,n)\in \mathbb{Z}\times \mathbb{N}_0\}$ be the dyadic numbers. We may and do insist that $\tau_{p,q}$ is $[p,q]$-valued for each pair $p<q$ from $Q$. The family $\tau\vert_{Q\times Q}$ is a.s.-$\WW$ nondecreasing in both its ``coordinates''. We may and do ask further then that it is nondecreasing in both its ``coordinates'' with certainty (by setting e.g. $\tau_{p,q}=\frac{p+q}{2}$ for $p<q$ from $Q$ on the exceptional set on which it fails). Put next $$\hat\tau_{u,v}:=\sup\{\tau_{p,q}:(p,q)\in Q^2, p<q,p< u, q< v\},\quad (u,v)\in \mathbb{R}^2,\, u<v.$$ Then $\hat\tau:=(\hat\tau_{u,v})_{(u,v)\in \mathbb{R}^2,u<v}$ is jointly left-continuous, nondecreasing and $\tau_{u,v}\in [u,v]$ for all real $u<v$ by construction. Furthermore, $\hat\tau_{u,v}=\tau_{u,v}$ a.s.-$\WW$ for all real $u<v$: certainly $\hat\tau_{u,v}\leq \tau_{u,v}$ a.s.-$\WW$; for the reverse inequality note that a.s.-$\WW$, $\tau_{u,v}=\tau_{u,q}$ for some $q\in Q\cap (u,v)$ and then up to a L\'evy shift and a reflection of time $\mathsf{r}$, both of which are measure-preserving for $\WW$, the argument reduces to the observation that $\hat\tau'_{0,\cdot}$ is a version of $\hat\tau_{0,\cdot}$ in the proof of Lemma~\ref{lemma:perfect-good} (the $\cdot$ here and in the continuation of the proof below runs over values in $(0,\infty)$).  We may and do assume then that $\tau$ was jointly left-continuous, nondecreasing and that $\tau_{u,v}\in [u,v]$ for all real $u<v$ to begin with. 

Second, notice that for a deterministic random time $T$, $(\Delta_T)^{-1}(\FF_{0,\infty})$ is $\WW$-independent of $\FF_{-\infty,T}$ and $(\Delta_T)_\star\WW=\WW$ on $\FF_{0,\infty}$, which extends, on $\{T\in \mathbb{R}\}$, to an arbitrary  $(\FF_{-\infty,t})_{t\in \mathbb{R}}$-stopping time $T$ that assumes at most countably many values with $\WW$-probability one (notation: $\FF_{-\infty,T}:=\{A\in \FF_{-\infty,\infty}:A\cap \{T\leq t\}\in \FF_{-\infty,t}\text{ for all $t\in \mathbb{R}$}\}$). This --- let us call it the simple Markov property --- will be used below in conjunction with the fact that $\tau_{0,\cdot}$ is $\FF_{0,\infty}$-measurable.

Since the filtration $\FF^\rightarrow$ is Brownian, therefore predictable, there is a sequence of $\FF^\rightarrow$-stopping times $(T_n)_{n\in \mathbb{N}}$ that is nondecreasing to $S$ and such that  $T_n<S$ for all $n\in \mathbb{N}$. For each $n \in\mathbb{N}$ there is then a $\FF^\rightarrow$-stopping time $S_n$ valued in $Q$ such that $T_n\leq S_n\leq T_n+2^{-n}$ and such that $\WW(S_n> S)\leq 2^{-n}$. By Borel-Cantelli it follows that $S_n> S$ for at most finitely many $n\in \mathbb{N}$ a.s.-$\WW$, hence by left-continuity of $\tau$ we have that $\lim_{n\to\infty}\tau_{S_n,S_n+t}=\tau_{S,S+t}$ for all $t\in (0,\infty)$ a.s.-$\WW$. At the same time, for each $n\in \mathbb{N}$, since $S_n$ assumes only countably many values, we have that $\tau_{S_n,S_n+\cdot}-S_n=\tau_{0,\cdot}(\Delta_{S_n})$ a.s.-$\WW$ and hence by the simple Markov property  $\tau_{S_n,S_n+\cdot}-S_n$ has the same law as $\tau_{0,\cdot}$ under $\WW$. So, the $\WW$-law of $\tau_{S,S+\cdot}-S$ is that of $\tau_{0,\cdot}$ too. In particular, $\tau_{S,S+t}>S$ for all $t\in (0,\infty)$ a.s.-$\WW$.

Now, on the event that $\{S\in M\}$, a.s.-$\WW$, for some pair $p<q$ from $Q$, $p<S=\tau_{p,q}<q$ and hence $S=\tau_{r,q}$ for all $r\in (p,S)\cap Q$, which in turn by left-continuity yields $S=\tau_{S,q}$, contradicting the conclusion of the preceding paragraph.
\end{proof}

The preceding result allows to develop a theory of ``excursions of $B$ relative to a random set admitting an honest indexation $\tau$'' (more precisely, from an associated set, $D$, to be introduced presently), which generalizes the excursions of $B$ from its running minimum in the case of $\tau$ being the standard indexation of the local minima of $B$. Though we shall not find any immediate use for it, this ``It\^o's excursion point of view'' is at our fingertips, and since we also find it interesting in its own right and may prove useful in future explorations of the subject, we provide the details.

For the remainder of this section let then $\tau$ be an honest indexation of a random set $M$ and assume that the process $\tau_{0,\cdot}=(\tau_{0,t})_{t\in (0,\infty)}$ is right-continuous, $(B\vert_{[0,\infty)})^{-1}(\HH)$-measurable, $(0,\infty)$-valued, nondecreasing, majorized by $\mathrm{id}_{(0,\infty)}$ and has $\tau_{0,\tau_{0,t}}=\tau_{0,t}$ for all $t\in (0,\infty)$ (this we may ask for according to Lemma~\ref{lemma:perfect-good} by passing to a suitable version of $\tau$). The right-continuity and  $(B\vert_{[0,\infty)})^{-1}(\HH)$-measurability are important. The other properties we ask for in order to avoid some a.s. qualifiers and ease the sailing below, but are otherwise not consequential (they hold a.s. anyway). Introduce $$D:=\{t\in (0,\infty):t=\tau_{0,t}\}=\{\tau_{0,t}:t\in (0,\infty)\}\in (B\vert_{[0,\infty)})^{-1}(\HH)\otimes \mathcal{B}_{(0,\infty)},$$ which is a closed $\FF^{0,\rightarrow}$-optional subset of $(0,\infty)$ with $0$ as an accumulation point. 

An important property to note at once concerning the relation of $D$ to $M$ is as follows. Set $$R_t:=E_t-t:=\inf\{s\in (t,\infty):s\in D\}-t,\quad t\in (0,\infty),$$ and $$G:=\{t\in(0,\infty):R_{t-}=0, R_t>0\}=\{t\in D:R_t>0\}$$ (the set of the left end-points of the intervals contiguous to $D=\{t\in (0,\infty):R_{t-}=0\}$, i.e. the set of those points of $D$, which are isolated on the right in $D$). Then $$M\cap D=G\text{ a.s.-$\WW$}.$$ For, if $t\in D$ and $t$ is isolated from the right in $D$, then there is a rational $p\in (t,\infty)$, such that $t=\tau_{0,t}=\tau_{0,p}$; conversely, a.s.-$\WW$, if $t\in M\cap D$, then  $p<t=\tau_{p,q}<q$ for some rational $p<q$ and consequently $\tau_{0,q}=\tau_{p,q}=t=\tau_{0,t}$, implying that $t$ is isolated on the right in $D$. 

A second immediate observation that we can make is that $(0,\infty)\backslash D$ is dense in $(0,\infty)$ a.s.-$\WW$: the converse would indeed imply that for some rational $p\in(0,\infty)$ with positive $\WW$-probability $\tau_{0,p}=p$, which cannot be.

The reader may also wonder at this point whether or not the set $D$ has to be unbounded a.s.-$\WW$. The answer is no: just take $\kappa>0$ in Example~\ref{example:index-min}, in which case $D$ is bounded a.s.-$\WW$.

\begin{proposition}\label{proposition:property-of-T}
Let $T$ be an $\FF^{0,\rightarrow}$-stopping time. Then  
\begin{equation}\label{eq:regenration_D}
D\cap (T,\infty)=T+D(\Delta_{T})\text{ a.s.-$\WW$ on $\{T\in D\}$},
\end{equation}
\end{proposition}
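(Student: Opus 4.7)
The plan is as follows. Let $\Omega^{\ast}\subset\Omega_0$ be the $\WW$-almost certain shift-closed set from Lemma~\ref{lemma:perfect-good} on which the perfect honest indexation identity holds with certainty. I recall from the discussion immediately preceding the proposition that $M\cap D=G$ almost surely, where $G$ denotes the set of right-isolated points of $D$. The crucial reduction is this: since $T$ is in particular an $(\FF_{-\infty,t})_{t\in\mathbb{R}}$-stopping time, Theorem~\ref{theorem:stopping-times-no} gives $\WW(T\in M)=0$, so that $\WW$-almost surely $\{T\in D\}=\{T\in D\setminus M\}$. It therefore suffices to establish the stated equality on $\Omega^{\ast}\cap\{T\in D\setminus M\}\cap\{T<\infty\}$, the case $\{T=\infty\}$ being vacuous.

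For the forward inclusion $D\cap(T,\infty)\subset T+D(\Delta_T)$, I would take $s\in D\cap(T,\infty)$, so that $\tau_{0,s}=s>T$, and apply the perfect honest indexation formula of Lemma~\ref{lemma:perfect-good} with shift argument $T(\omega)$ and inner argument $s-T(\omega)>0$. Its applicability hypothesis $\tau_{0,T+(s-T)}>T$ is directly verified, yielding $s=T+\tau_{0,s-T}(\Delta_T)$, hence $s-T\in D(\Delta_T)$.

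For the reverse inclusion $T+D(\Delta_T)\subset D\cap(T,\infty)$, the reduction pays off. On $\{T\in D\setminus M\}$, $T$ is not a right-isolated point of $D$, so $D\cap(T,T+\epsilon)\neq\emptyset$ for every $\epsilon>0$. Consequently, for any $v>0$, $\tau_{0,T+v}=\sup(D\cap(0,T+v])>T$, so the perfect honest indexation formula applies, now unconditionally on $v$, to give $\tau_{0,T+v}=T+\tau_{0,v}(\Delta_T)$. Hence $T+v\in D$ iff $\tau_{0,v}(\Delta_T)=v$, i.e.\ iff $v\in D(\Delta_T)$, as required.

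The main obstacle I anticipate is the subtle point in the reverse inclusion that is bypassed by this reduction: absent the invocation of Theorem~\ref{theorem:stopping-times-no}, one would have to confront the case $T\in M$ directly, in which case a gap $(T,T+\epsilon)$ devoid of points of $D$ would coexist with $D(\Delta_T)$ having $0$ as an accumulation point (by the strong Markov property, $D(\Delta_T)$ is a fresh $\PPP$-distributed copy of $D$); there, the perfect honest indexation formula is silent on $\{\tau_{0,T+v}=T\}$ and the inclusion genuinely fails. Theorem~\ref{theorem:stopping-times-no} is precisely what rules out this pathology and makes the argument go through cleanly.
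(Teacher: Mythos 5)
Your argument follows the paper's own route: use Theorem~\ref{theorem:stopping-times-no} together with the observation $M\cap D=G$ (right-isolated points of $D$) to conclude that $\tau_{0,T+\cdot}-T$ is strictly positive a.s.\ on $\{T\in D\}$, and feed this into the perfect honest indexation property from Lemma~\ref{lemma:perfect-good}. The derivation of both inclusions from the single identity $\tau_{0,T+v}=T+\tau_{0,v}(\Delta_T)$, $v>0$, is correct (your reverse-inclusion argument in fact subsumes the forward one, since $T+v\in D\Leftrightarrow\tau_{0,T+v}=T+v\Leftrightarrow\tau_{0,v}(\Delta_T)=v\Leftrightarrow v\in D(\Delta_T)$ for every $v>0$), and the closing paragraph correctly isolates why Theorem~\ref{theorem:stopping-times-no} is indispensable.

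What is missing is the version-passing step that closes the paper's proof. The process $\tau_{0,\cdot}$ fixed in the standing assumptions of this subsection is required to be right-continuous, $(B\vert_{[0,\infty)})^{-1}(\HH)$-measurable, $(0,\infty)$-valued, nondecreasing, majorized by $\mathrm{id}_{(0,\infty)}$ and to satisfy $\tau_{0,\tau_{0,t}}=\tau_{0,t}$; the perfect honest indexation property of Lemma~\ref{lemma:perfect-good} is \emph{not} on this list, so there need not exist any $\Omega^*$ on which \emph{this} $\tau_{0,\cdot}$ satisfies that identity with certainty. You must pass to a version $\tilde\tau_{0,\cdot}$ which does, prove the claim for the associated $\tilde D$, and then transfer back to $D$. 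That transfer is not automatic: the claim involves $D(\Delta_T)$, and the $\Delta_T$-preimage of a $\WW$-null set need not be $\WW$-null. What rescues the argument is that $\{D\neq\tilde D\}$ is a $\WW$-negligible set belonging to $(B\vert_{[0,\infty)})^{-1}(\HH)$ (both processes are measurable with respect to that $\sigma$-field), and for such a set $A$ the strong Markov property gives $\WW(\Delta_T\in A)=0$ — cf.\ Lemma~\ref{lemma:stopping-time-conditioning}, applicable since an $\FF^{0,\rightarrow}$-stopping time is a fortiori an $\FF^{\rightarrow}$-stopping time. This is exactly the parenthetical at the end of the paper's proof, and your write-up needs it to be complete.
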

We may remark that \eqref{eq:regenration_D} implies, in particular, that $D\cap (E_t,\infty)=E_t+D(\Delta_{E_t})\text{ a.s.-$\WW$}$ and hence that $D\cap (0,E_t]$ is independent  of $[D\cap (E_t,\infty)]-E_t$ on  $\{E_t<\infty\}$, this for each $t\in (0,\infty)$. Thus $D$ is a regenerative set in the sense  of \cite[p.~1]{regenerative-sets} (with a trivial, constant, ``modulating'' process $X$ -- but not with $X=B$!).
%Notice also that the $\sigma(B\vert_{[0,\infty)})$-measurability of $D$ is entering the picture here in a non-trivial way.
\begin{proof}
Theorem~\ref{theorem:stopping-times-no}, together with the observation that $M\cap D$ are a.s.-$\WW$ those points of $D$ that are  isolated on the right, entails that $\tau_{0,T+\cdot}-T$ is strictly positive a.s.-$\WW$ on $\{T\in D\}$. From Lemma~\ref{lemma:perfect-good} we hence get that  $$\tau_{0,T+\cdot}=T+\tau_{0,\cdot}(\Delta_{T})\text{ a.s.-$\WW$ on }\{T\in D\},$$ provided a nice enough version of $\tau_{0,\cdot}$ is chosen in the stipulated equality. The claim follows (because $\WW(\Delta_T\in A)=0$ for $\WW$-negligible $A\in (B\vert_{[0,\infty)})^{-1}(\HH)$, so that once can pass back to the given version).
\end{proof}
We aim next to construct a local time for $D$ with a certain nice regenerative property relative to the L\'evy shifts. We turn to this after preparing
\begin{lemma}\label{projections}
Let $\zeta$ be a $\UU$-stopping time. For a process $X:\Theta_0\times [0,\infty)\to \mathbb{R}$ that is $(\HH\otimes \mathcal{B}_{[0,\infty)})/\mathcal{B}_\mathbb{R}$-measurable put $$\widetilde X:=X^\zeta+\widehat X:=X^\zeta+[(X-X_0)(\Delta_\zeta)]_{\cdot-\zeta}\mathbbm{1}_{\llbracket\zeta,\infty\rrparenthesis}.$$ 
\begin{enumerate}[(i)]
\item\label{projections:i} If $X$ is optional, then so is $\widehat X$ (and, hence, $\widetilde X$).
\item\label{projections:ii} If $X$ is a right-continuous martingale, the running supremum of the absolute value of which is $\PPP$-integrable at finite deterministic times, then $\hat{X}$ (and, hence, $\widetilde X$) has these same properties.
\item\label{projections:iii}  If $X$ is  right-continuous and bounded on bounded time intervals, then $\widehat X$ has the same properties and ${}^o\widehat X=\widehat{{}^oX}$. 
\item\label{projections:iv} If $X$ is nondecreasing, right-continuous and bounded on bounded time intervals then  $\widetilde X$ has these properties also and $(\widetilde X)^p=\widetilde{X^p}$ a.s.-$\PPP$. 
\end{enumerate}
Here ${}^oX$ (resp. $X^p$) denotes the $(\UU,\PPP)$-optional (resp. dual predictable) projection of $X$.
\end{lemma}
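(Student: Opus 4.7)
The plan is to derive all four items from the strong Markov property of $C$ at $\zeta$, the direct analogue for $(\Theta_0,\HH,\PPP)$ of Lemma~\ref{lemma:stopping-time-conditioning}: on $\{\zeta<\infty\}$, the shifted path $\Delta_\zeta C$ is $\PPP$-independent of $\UU_\zeta$ and $\PPP$-distributed as $C$. Writing $\widehat X_t = \mathbbm{1}_{\{t\geq \zeta\}}[(X_{t-\zeta}-X_0)(\Delta_\zeta)]$ makes measurability of $\widehat X$ in $\HH\otimes \mathcal{B}_{[0,\infty)}$ immediate. For the optionality claim in \ref{projections:i} I would run a monotone class argument on generators of the optional $\sigma$-algebra: for $X=\mathbbm{1}_{\llbracket T_1,T_2\rrparenthesis}$ with $\UU$-stopping times $T_1\leq T_2$, one computes $\widehat X=\mathbbm{1}_{\llbracket \zeta+T_1(\Delta_\zeta),\,\zeta+T_2(\Delta_\zeta)\rrparenthesis}$, and $\zeta+T_i(\Delta_\zeta)$ is itself a $\UU$-stopping time since $\{T_i(\Delta_\zeta)\leq s\}$ lies in $\sigma(C_{\zeta+u}-C_\zeta:u\leq s)\vee \PPP^{-1}(\{0,1\})\subset \UU_{\zeta+s}$.

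For \ref{projections:ii}, I would check the martingale identity $\PPP[\widehat X_t;A]=\PPP[\widehat X_s;A]$ for $s\leq t$ and $A\in \UU_s$ by splitting on the location of $\zeta$: on $\{\zeta\leq s\}$ the increment is $(X_{t-\zeta}-X_{s-\zeta})(\Delta_\zeta)$, whose conditional expectation given $\UU_\zeta\vee \sigma(\zeta)$ vanishes by strong Markov and the martingale property of $X$ at the deterministic times $s-\zeta$ and $t-\zeta$; on $\{s<\zeta\leq t\}$ the corresponding term is $(X_{t-\zeta}-X_0)(\Delta_\zeta)$, again with vanishing conditional expectation; on $\{\zeta>t\}$ the increment is zero by construction. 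Integrability of $\sup_{u\leq t}\vert \widehat X_u\vert$ is inherited from $X$ by the same conditioning together with disintegration against the law of $\zeta$. For \ref{projections:iii}, $\widehat{{}^oX}$ is optional by \ref{projections:i}; to identify it with ${}^o\widehat X$ it suffices by the characterization of the optional projection to verify $\PPP[\widehat X_T;A]=\PPP[(\widehat{{}^oX})_T;A]$ for every bounded $\UU$-stopping time $T$ and every $A\in \UU_T$, and by replacing $T$ with $T$ on $A$ and $+\infty$ off it, further to check $\PPP[\widehat X_T]=\PPP[(\widehat{{}^oX})_T]$. Both sides decompose as $\PPP[\mathbbm{1}_{\{T\geq \zeta\}}\{Y_{T-\zeta}(\Delta_\zeta)-Y_0(\Delta_\zeta)\}]$ with $Y=X$ or $Y={}^oX$ respectively, and conditioning on $\UU_\zeta$ via strong Markov reduces each to the defining property of ${}^oX$ applied at the shifted-filtration stopping time $(T-\zeta)\vee 0$; hence the two expressions agree.

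For \ref{projections:iv}, I would first upgrade the monotone class argument of \ref{projections:i} to show that $X\mapsto \widetilde X$ also preserves \emph{predictability} (using that $\zeta+T(\Delta_\zeta)$ is $\UU$-predictable whenever $T$ is), so that $\widetilde{X^p}$ is a predictable, right-continuous, nondecreasing process bounded on bounded intervals. To identify it with $(\widetilde X)^p$, I would verify the defining identity $\PPP[\int H\,\dd \widetilde X]=\PPP[\int H\,\dd \widetilde{X^p}]$ for every bounded predictable $H$ by decomposing $\int H\,\dd \widetilde X=\int_{[0,\zeta]}H\,\dd X^\zeta+\int_{(\zeta,\infty)}H\,\dd \widehat X$: the first summand is handled by the defining property of $X^p$ on $\llbracket 0,\zeta\rrbracket$, and the second, via the change of variable $s=\zeta+u$ and strong Markov, reduces to a $\PPP$-expectation of $\int H_{\zeta+u}(\Delta_\zeta)\,\dd X_u$ to which the defining property of $X^p$ likewise applies (the integrand $u\mapsto H_{\zeta+u}$ being predictable in the shifted filtration on $\{\zeta<\infty\}$). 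The principal difficulty I anticipate is not any single analytic step but the bookkeeping under the shift: ensuring null sets, $\PPP$-completions and stopping-time / predictability properties all transfer correctly through $\Delta_\zeta$ (issues already thematic in Lemma~\ref{lemma:stopping-time-conditioning}), and that the subtracted $X_0(\Delta_\zeta)$ term correctly annihilates boundary contributions at $\zeta$ so that the pieces of $\widetilde X$ glue continuously in the martingale / projection identities.
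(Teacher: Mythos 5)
Your overall strategy is the right one, and part \ref{projections:i} is essentially correct (though the explicit formula $\widehat X=\mathbbm{1}_{\llbracket \zeta+T_1(\Delta_\zeta),\,\zeta+T_2(\Delta_\zeta)\rrparenthesis}$ is off by the subtracted $X_0(\Delta_\zeta)$ contribution when $T_1$ may vanish --- the conclusion survives, but you should restrict to $T_1>0$ or add back the correction term $-\mathbbm{1}_{\llbracket\zeta+T_2(\Delta_\zeta),\infty\rrparenthesis}\mathbbm{1}_{\{T_1(\Delta_\zeta)=0\}}$, still optional). The real gap is in parts \ref{projections:ii} and \ref{projections:iii}. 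When you say, on $\{\zeta\leq s\}$, that the increment $(X_{t-\zeta}-X_{s-\zeta})(\Delta_\zeta)$ has vanishing conditional expectation given $\UU_\zeta\vee\sigma(\zeta)$, that is correct --- but it is not the identity you need. You need to show $\PPP[\widehat X_t-\widehat X_s;A]=0$ for $A\in\UU_s$, and on $\{\zeta\leq s\}$ such an $A$ generally lies strictly outside $\UU_\zeta$ (it can depend on $C$ between $\zeta$ and $s$). The strong Markov property gives independence of $\Delta_\zeta$ from $\UU_\zeta$, not from $\UU_s$, so the naive disintegration does not close the argument; the conditional expectation given $\UU_s$ picks up information about $\Delta_\zeta$ on the window $[0,s-\zeta]$ that you must account for. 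The paper's resolution is to first record the pointwise identity $X(\Delta_\zeta)_u=\PPP[X(\Delta_\zeta)_{u+s_2-s_1}\vert\UU_\zeta\vee(\Delta_\zeta^u)^{-1}(\mathcal{B}_{\Theta_0})]$ and then, to substitute the random $u=s_1-\zeta$, to approximate $\zeta$ from below by countably-valued stopping times $\zeta_k$ (possible because $\UU$ is Brownian, hence predictable), pass the identity through conditioning on $\{\zeta_k=z\}$, and let $k\to\infty$ using right-continuity. That approximation step is precisely the missing idea in your argument, and it is the crux of both \ref{projections:ii} and \ref{projections:iii}.

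Your route for \ref{projections:iv} is genuinely different from the paper's and has some appeal: you propose to verify the defining identity of the dual predictable projection directly, splitting the integral over $\llbracket 0,\zeta\rrbracket$ and $\llparenthesis\zeta,\infty\rrparenthesis$ and applying the strong Markov property to the latter. In principle this can be made to work, but it inherits the same shifted-filtration subtlety as \ref{projections:ii}/\ref{projections:iii}: the defining property of $X^p$ is stated relative to $\UU$-predictable integrands, whereas $u\mapsto H_{\zeta+u}$ is predictable in the shifted filtration, and matching these up requires the same careful disintegration over $\zeta$ via its discrete approximants. The paper avoids re-doing this work by observing that ${}^oX-X^p$ is a right-continuous martingale (Rogers--Williams VI.21.4), applying parts \ref{projections:ii} and \ref{projections:iii} to it, and invoking the characterization of the compensator again; this reduction is shorter and sidesteps the predictability bookkeeping you would otherwise face.
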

\begin{proof}
All notions of the general theory of stochastic process in this proof are relative to the pair $(\UU,\PPP)$. The filtration being Brownian the optional and predictable $\sigma$-field coincide and we shall use this without special mention. We may and do assume $\PPP(\zeta<\infty)>0$ (for otherwise $\widehat X=0$ \& $\widetilde X=X$ a.s.-$\PPP$ and the matter is trivial).

\ref{projections:i}. By a monotone class argument we reduce  to checking it in the case when $X=\mathbbm{1}_{A\times [u,\infty)}$ with $A\in \UU_u$, $u\in (0,\infty)$. In that case $$\widehat X_t=\mathbbm{1}_{\{\zeta+u\leq t,\Delta_\zeta\in A\}},\quad t\in [0,\infty),$$ so that $\widehat X$ is right-continuous and adapted ($\because$ $\zeta+u$ is a stopping time and $(\Delta_\zeta)^{-1}(\UU_u)\subset \UU_{\zeta+u}$), hence optional.

\ref{projections:ii}. We may and do assume $X_0=0$. By \ref{projections:i} $\widehat{X}$ is adapted. Let $s_1\leq s_2$ be from $[0,\infty)$. On the one hand, on $\{s_1\leq \zeta\}$, 
\begin{align*}
\PPP\left[\widehat{X}_{s_2}\vert \UU_{s_1}\right]&=\PPP\left[X(\Delta_\zeta)_{s_2-\zeta}\mathbbm{1}_{\{\zeta\leq s_2\}}\vert \UU_{s_1}\right]=\PPP\left[\PPP\left[X(\Delta_\zeta)_{s_2-\zeta}\mathbbm{1}_{\{\zeta\leq s_2\}}\vert \UU_\zeta\right]\vert\UU_{s_1}\right]\\
&=0=\widehat{X}_{s_1}
\end{align*}
a.s.-$\PPP$, since $\Delta_\zeta$ is independent of $\UU_\zeta$ under $\PP(\cdot\vert \zeta<\infty)$, $\zeta$ is $\UU_\zeta$-measurable, $(\Delta_\zeta)_\star \PPP(\cdot\vert \zeta<\infty)=\PPP$ and $\PPP[X_u]=0$ for all $u\in [0,\infty)$. On the other hand, for all $u\in [0,\infty)$, 
$\PPP[X_{u+s_2-s_1}\vert \UU_{u}]=X_u$ a.s.-$\PPP$, hence, on $\{\zeta<\infty\}$, a.s.-$\PPP$,
\begin{align*}
X(\Delta_\zeta)_u&=\left(({\Delta_\zeta}_\star\PPP(\cdot\vert\zeta<\infty))[X_{u+s_2-s_1}\vert\UU_u]\right)(\Delta_\zeta)=\PPP(\cdot\vert\zeta<\infty)[X(\Delta_\zeta)_{u+s_2-s_1}\vert \Delta_\zeta^{-1}(\UU_u)]\\
&=\PPP(\cdot\vert\zeta<\infty)[X(\Delta_\zeta)_{u+s_2-s_1}\vert({\Delta_\zeta}^u)^{-1}(\mathcal{B}_{\Theta_0})]=\PPP(\cdot\vert\zeta<\infty)[X(\Delta_\zeta)_{u+s_2-s_1}\vert\UU_\zeta\lor ({\Delta_\zeta}^u)^{-1}(\mathcal{B}_{\Theta_0})]\\
&=\PPP[X(\Delta_\zeta)_{u+s_2-s_1}\vert\UU_\zeta\lor ({\Delta_\zeta}^u)^{-1}(\mathcal{B}_{\Theta_0})]
\end{align*}
(using quasi left-continuity and predictability of $\UU$ together with the Blackwell property of $(\Theta_0,\mathcal{B}_{\Theta_0})$ it is possible to see \cite[Corollary 7.20]{jacka} that $\UU_\zeta\lor ({\Delta_\zeta}^u)^{-1}(\mathcal{B}_{\Theta_0})=\UU_{\zeta+u}$ but we do not need it). Therefore
%for all $\epsilon\in (0,s_1]$, on $\{u\geq s_1-\zeta-\epsilon\}$, a.s.-$\PPP$,
%$$\PPP[X(\Delta_\zeta)_u\vert \UU_{s_1-\epsilon}]=\PPP[X(\Delta_\zeta)_{u+s_2-s_1}\vert\UU_{s_1-\epsilon}];$$ on letting $\epsilon\downarrow 0$ (over a sequence) we get by increasing martingale convergence
$$\PPP[X(\Delta_\zeta)_u\vert \UU_{s_1}]=\PPP[X(\Delta_\zeta)_{u+s_2-s_1}\vert\UU_{s_1}]$$
a.s.-$\PPP$ on $\{u\geq s_1-\zeta,\zeta\leq s_1\}$. Take now an approximating sequence of stopping times $(\zeta_k)_{k\in \mathbb{N}}$ for $\zeta$ with the following property: for each $k\in \mathbb{N}$, $\zeta_k$ takes on only countably many values, $\zeta_k\to \zeta$ as $k\to\infty$ and $\zeta_k> \zeta$ for at most finitely many $k\in \mathbb{N}$ a.s.-$\PPP$ (we have seen the method for the construction of such a sequence in the proof of Theorem~\ref{theorem:stopping-times-no}). % Let $(\zeta_k')_{k\in \mathbb{N}}$ be a  sequence of $\UU$-stopping times that is $\uparrow \zeta$ and with $\zeta_k'<\zeta$ for all $k\in \mathbb{N}$ on $\{\zeta>0\}$. For $k\in \mathbb{N}$ put inductively $\zeta_k:=(\lceil 2^{K_k} \zeta_k'\rceil 2^{-K_k})\land \zeta$ for a sufficiently large $K_k\in \mathbb{N}_{\geq K_{k-1}}$ ($K_0:=0$) such that $\PPP(\zeta_k=\zeta,\zeta>0)\leq 2^{-k}$. We get a sequence $(\zeta_k)_{k\in \mathbb{N}}$ of $\UU$-stopping times that is $\uparrow \zeta$ and such that $\zeta_k$ takes countably many values on $\{\zeta_k<\zeta\}$, which event is $\uparrow \{\zeta>0\}$ as $k\to \infty$. 
Then, for each $k\in \mathbb{N}$, on $\{\zeta\leq s_1,\zeta_k\leq \zeta\}$, a.s.-$\PPP$, 
$$\PPP[X(\Delta_\zeta)_{s_1-\zeta_k}\vert \UU_{s_1}]=\PPP[X(\Delta_\zeta)_{s_2-\zeta_k}\vert\UU_{s_1}];$$ on letting $k\to \infty$ we deduce that $$\widehat X_{s_1}=\PPP[X(\Delta_\zeta)_{s_1-\zeta}\vert \UU_{s_1}]=\PPP[X(\Delta_\zeta)_{s_2-\zeta}\vert\UU_{s_1}]=\PPP[\widehat X_{s_2}\vert \UU_{s_1}] $$ a.s.-$\PP$ on $\{\zeta\leq s_1\}$. Thus $\widehat X$ is indeed a martingale and the other properties are immediate.

\ref{projections:iii}. We may and do assume $X_0=0$. By \ref{projections:i} $\widehat{{}^oX}$ is adapted; it is also right-continuous. Both these properties are true of $\hat{X}$ also. With these two preliminary observations out of the way we compute that, for all $u\in [0,\infty)$, on $\{\zeta<\infty\}$, a.s.-$\PPP$, 
\begin{align*}
{}^oX_u(\Delta_\zeta)&=\PPP[X_u\vert \UU_u](\Delta_\zeta)=\left(({\Delta_\zeta}_\star\PPP(\cdot\vert\zeta<\infty))[X_{u}\vert\UU_u]\right)(\Delta_\zeta)=\PPP(\cdot\vert\zeta<\infty)[X_u(\Delta_\zeta)\vert ({\Delta_\zeta}^u)^{-1}(\mathcal{B}_{\Theta_0})]\\
&=\PPP(\cdot\vert\zeta<\infty)[X_u(\Delta_\zeta)\vert\UU_\zeta\lor ({\Delta_\zeta}^u)^{-1}(\mathcal{B}_{\Theta_0})]=\PPP[X_u(\Delta_\zeta)\vert\UU_\zeta\lor ({\Delta_\zeta}^u)^{-1}(\mathcal{B}_{\Theta_0})].
\end{align*}
Therefore, for all $t\in [0,\infty)$, 
%for all $\epsilon\in (0,t]$, on $\{t-\zeta-\epsilon\leq u,\zeta\leq t-\epsilon\}$  a.s.-$\PPP$,
%$$\PPP[{}^oX_u(\Delta_\zeta)\vert \UU_{t-\epsilon}]=\PPP[X_u(\Delta_\zeta)\vert \UU_{t-\epsilon}];$$
%on letting $\epsilon\downarrow 0$ (over a sequence) we get by increasing martingale convergence
$$\PPP[{}^oX_u(\Delta_\zeta)\vert \UU_{t}]=\PPP[X_u(\Delta_\zeta)\vert \UU_{t}]$$ a.s.-$\PPP$ on $\{u\geq t-\zeta,\zeta\leq t\}$. Taking $(\zeta_k)_{k\in \mathbb{N}}$ as in \ref{projections:ii} we get that for each $k\in \mathbb{N}$, on $\{\zeta\leq t,\zeta_k\leq \zeta\}$, a.s.-$\PPP$, 
$$\PPP[{}^oX_{t-\zeta_k}(\Delta_\zeta)\vert \UU_{t}]=\PPP[X_{t-\zeta_k}(\Delta_\zeta)\vert \UU_{t}];$$
on letting $k\to \infty$ we deduce that 
$$\widehat{{}^oX}_t=\PPP[\widehat{{}^oX}_t\vert \UU_{t}]=\PPP[{}^oX_{t-\zeta}(\Delta_\zeta)\vert \UU_{t}]=\PPP[X_{t-\zeta}(\Delta_\zeta)\vert \UU_{t}]=\PPP[\widehat{X}_t\vert\UU_t]={}^o\widehat{X}_t$$ a.s.-$\PPP$ on $\{ \zeta\leq t\}$. This relation being trivial on $\{\zeta<t\}$ the claim follows. 

\ref{projections:iv}. Since the dual predictable projection commutes with stopping and is linear it suffices to establish that $(\widehat X)^p=\widehat{X^p}$ a.s.-$\PPP$. We may and do assume $X_0=0$. By \cite[Theorem~VI.21.4]{rogers2000diffusions} (and deterministic localization) ${}^oX-X^p$ is a right-continuous martingale having the integrability property of \ref{projections:ii}. Therefore $\reallywidehat{{}^oX-X^p}=\widehat{{}^oX}-\widehat{X^p}={}^o\widehat{X}-\widehat{X^p}$ is also a right-continuous martingale having the integrability property of \ref{projections:ii}, where we have used in addition \ref{projections:iii}. By another application of \cite[Theorem~VI.21.4]{rogers2000diffusions} (and deterministic localization) the desired conclusion follows.
\end{proof}

\begin{proposition}\label{proposition:local-time}
There exists a, $\WW$-a.s. unique up to a multiplicative constant from $(0,\infty)$, continuous nondecreasing $\FF^{0,\rightarrow}$-adapted, $(B\vert_{[0,\infty)})^{-1}(\HH)$-measurable real process $L=(L_t)_{t\in [0,\infty)}$ vanishing at zero, such that $\mathrm{supp}(\dd L)\cap (0,\infty)=D$ a.s.-$\WW$ and such that   for any $\FF^{0,\rightarrow}$-stopping time $T$ the L\'evy shift regenerative property
\begin{equation}\label{eq:local-time-regeneration}
L_{T+\cdot}=L_T+L(\Delta_T)\text{ a.s.-$\WW$ on }\{T\in D\}
\end{equation}
holds true.
\end{proposition}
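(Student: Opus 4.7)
The plan is to construct $L$ as a suitably weighted sum of dual $(\FF^{0,\rightarrow},\WW)$-predictable projections of counting processes of ``long'' excursions of $D$, and then transfer the L\'evy-shift regeneration \eqref{eq:local-time-regeneration} from these counting processes to the projections via Lemma~\ref{projections}\ref{projections:iv}; uniqueness will then follow from the subordinator structure of the right-continuous inverse of $L$.

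Concretely, for each $n \in \mathbb{N}$ introduce $J^n_t := \#\{s \in G \cap (0, t - 1/n] : E_{s+} - s \geq 1/n\}$; the delay by $1/n$ in the upper limit makes $J^n$ an $\FF^{0,\rightarrow}$-adapted, nondecreasing, right-continuous, bounded-on-bounded-intervals process whose jumps happen at $\FF^{0,\rightarrow}$-stopping times $(S^n_i)_{i \in \mathbb{N}}$ valued in $G \subset M$. On $\{T \in D\}$, Proposition~\ref{proposition:property-of-T} together with $\WW(T \in G) \leq \WW(T \in M) = 0$ (Theorem~\ref{theorem:stopping-times-no}) yields $J^n = \widetilde{J^n}$ a.s.\ in the notation of Lemma~\ref{projections}; extending $T$ to $T' := T\mathbbm{1}_{\{T \in D\}} + \infty\mathbbm{1}_{\{T\notin D\}}$ upgrades this to $J^n = \widetilde{J^n}$ (w.r.t. $\zeta := T'$) globally, and Lemma~\ref{projections}\ref{projections:iv} delivers that $L^n := (J^n)^p$ satisfies $L^n_{T+\cdot} - L^n_T = L^n(\Delta_T)$ a.s.\ on $\{T \in D\}$. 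Continuity of $L^n$ follows from $\FF^{0,\rightarrow}$ being Brownian (so every local martingale therein is continuous), combined with total inaccessibility of the $S^n_i$: any $\FF^{0,\rightarrow}$-predictable stopping time $S$ coinciding with some $S^n_i$ on a set of positive probability would contradict $\WW(S \in M) = 0$. Setting $L := \sum_{n \geq 1} 2^{-n} (L^n \wedge n)$ produces a continuous, nondecreasing, $\FF^{0,\rightarrow}$-adapted (hence $(B\vert_{[0,\infty)})^{-1}(\HH)$-measurable), vanishing-at-$0$ process that inherits \eqref{eq:local-time-regeneration} from each $L^n$; $\mathrm{supp}(\dd L) \cap (0,\infty) = \bar G \cap (0,\infty) = D$ a.s., using that $G$ is $\WW$-a.s.\ dense in $D$ (every point of $D$ is accumulated from the left by left endpoints of increasingly short contiguous intervals).

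For uniqueness, given another process $L'$ with the stated properties, the right-continuous inverses $\eta_q := \inf\{t : L_t > q\}$ and $\eta'_q := \inf\{t : L'_t > q\}$ take values in $D$, and \eqref{eq:local-time-regeneration} applied at $T = \eta_q$ respectively $T = \eta'_q$ makes them into subordinators satisfying $\eta_{q+r} = \eta_q + \eta_r(\Delta_{\eta_q})$ a.s.\ (and analogously for $\eta'$). Since $\mathrm{supp}(\dd L) = \mathrm{supp}(\dd L') = D$ a.s., the closed ranges of $\eta_\cdot$ and $\eta'_\cdot$ coincide, so $L' = f \circ L$ a.s.\ for a continuous strictly increasing $f:[0,\infty)\to[0,\infty)$ with $f(0) = 0$; comparing the regeneration identities forces Cauchy's functional equation on $f$, whence $L' = cL$ for some $c \in (0,\infty)$. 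The main obstacle is the verification of continuity of each $L^n$: Theorem~\ref{theorem:stopping-times-no} is doing the real work here, ruling out any $\FF^{0,\rightarrow}$-predictable stopping time from charging the jump set of $J^n$ (hosted by $G \subset M$); without this step, $L^n$ could a priori harbor jumps and the whole construction would break down.
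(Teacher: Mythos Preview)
Your construction has a genuine gap at the continuity step, and it is not repairable within the approach you chose.

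First, the jump times of $J^n$ are \emph{not} in $G$. By your own definition $J^n_t=\#\{s\in G\cap(0,t-1/n]:E_s-s\ge 1/n\}$, the process $J^n$ jumps at the times $S^n_i=g_i+1/n$ where $g_i$ runs through $\{g\in G:R_g\ge 1/n\}$; each such time lies in the open contiguous interval $(g_i,E_{g_i})\subset (0,\infty)\setminus D$, so $S^n_i\notin G$ and $S^n_i\notin M$. Your appeal to Theorem~\ref{theorem:stopping-times-no} to rule out a predictable $S$ with $\WW(S=S^n_i)>0$ therefore has no force.

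Second, and more fundamentally, in the Brownian filtration $\FF^{0,\rightarrow}$ the optional and predictable $\sigma$-fields coincide, so \emph{every} stopping time is predictable; in particular each $S^n_i$ is predictable. The martingale $J^n-L^n$ is then continuous (as you note), forcing $\Delta L^n_{S^n_i}=\Delta J^n_{S^n_i}=1$. Thus $L^n$ is \emph{not} continuous, and neither is your $L$. The very device that secures adaptedness of $J^n$ --- the $1/n$ delay --- is what destroys continuity of its compensator. The paper circumvents this by working with a \emph{non-adapted} raw increasing process $\mathcal{L}_t=\mathscr{L}(D\cap(0,t])+\sum_{g\in G\cap(0,t]}(1-e^{-R_g})$ whose jumps sit at the points of $G\subset M$ themselves (with anticipating jump sizes). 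The dual predictable projection of \emph{that} process is continuous precisely because, by Theorem~\ref{theorem:stopping-times-no}, no stopping time meets $G$ and hence the thin set $G_o$ carrying any putative jumps of the projection is $\WW$-a.s.\ empty.

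There is also a secondary error: the truncation $L^n\wedge n$ does not preserve the additive regeneration identity, so $L=\sum_n 2^{-n}(L^n\wedge n)$ would not inherit \eqref{eq:local-time-regeneration} even if each $L^n$ satisfied it. And the support identification is off as well: $\dd J^n$ is carried by $\{g+1/n:g\in G,\,R_g\ge 1/n\}$, not by $G$, so there is no direct route from $\mathrm{supp}(\dd L^n)$ to $\bar G$.
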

In particular it means that $D$ has no isolated points a.s.-$\WW$. Also, with $\zeta$ the right-continuous inverse of $L$,
$$\zeta_t:=\inf\{s\in [0,\infty):L_s>t\},\quad t\in [0,\infty),$$ we have that for all $s\in [0,\infty)$, $$\zeta_{s+t}=\zeta_s+\zeta_t(\Delta_{\zeta_s})\text{ for all $t\in [0,\infty)$ a.s.-$\WW$ on $\{\zeta_s<\infty\}$}.$$ In turn it means that the bivariate process $(\zeta,B_\zeta)$, defined on the temporal interval $[0,L_\infty)$, is a possibly killed L\'evy process under $\WW$ in the filtration $\FF^{0,\rightarrow}_\zeta=(\FF^{0,\rightarrow}_{\zeta_t})_{t\in [0,\infty)}$. Especially, $L_\infty$ has an exponential distribution under $\WW$ (possibly degenerate, equal to $\delta_\infty$, of course) and $\zeta$ is a $(\FF^{0,\rightarrow}_\zeta,\WW)$-subordinator, the closure of the range of which is equal to $D$ a.s.-$\WW$.

\begin{proof} 
For the purposes of the proof we transfer everything relevant from $\WW$ under $\PPP$ in the obvious way, but do not bother to introduce new notation for $D$,  the $R_t$, $t\in (0,\infty)$, and $G$. To placate the reader as to this we note that by \cite[Lemma~1.19]{jacod2013limit} any stopping time of  $\FF^{0,\rightarrow}$ is a.s.-$\WW$ equal to  a stopping time of $((B\vert_{[0,\infty)})^{-1}(\UU_t))_{t\in  [0,\infty)}$ (because the latter filtration is right-continuous, which follows from Lemma~\ref{lemma:commute-decreasing-intersection-trace} and the right-continuity of $\UU$).

As in Lemma~\ref{projections} all notions of the general theory of stochastic process in this proof are relative to the pair $(\UU,\PPP)$.

Existence. The construction is standard; we follow \cite[Chapter~XX, Section~1, \# 11]{dellacherie-sets}.  Then set  
$$\mathcal{L}_t:= \mathscr{L}(D\cap (0,t])+\sum_{g\in G\cap (0,t]}(1-e^{-R_g}),\quad t\in [0,\infty).$$ The process $\mathcal{L}=(\mathcal{L}_t)_{t\in [0,\infty)}$ belongs to $(\HH\otimes \mathcal{B}_{[0,\infty)})/\mathcal{B}_{\mathbb{R}}$ and is nondecreasing, right-continuous, vanishing at zero,  bounded on bounded time intervals, furthermore, thanks to Proposition~\ref{proposition:property-of-T}, for any $\UU$-stopping time $T$, 
\begin{equation}\label{eq:local-time-regeneration-raw}
\mathcal{L}_{T+\cdot}=\mathcal{L}_T+\mathcal{L}_\cdot(\Delta_T)\text{ a.s.-$\PPP$ on }\{T\in D\}.
\end{equation}
 Denote by $L$ the dual predictable projection of $\mathcal{L}$. We get evidently a $\UU$-adapted real nondecresing right-continuous process vanishing at zero (or anyway can choose its version in this way). This process is (may be chosen) continuous because of the following argument: according to \cite[Remark on p.~132 in \# XX.11]{dellacherie-sets} the set of its discontinuities is $\PPP$-a.s. contained in $G_o$, which is a thin set (i.e. exhausted by a countable union of $\UU$-stopping times) \cite[Chapter~20, Section~1, (8.1)]{dellacherie-sets} that is contained in $G$; as discussed above $G=M\cap D$ a.s.-$\PPP$ and we conclude by applying Theorem~\ref{theorem:stopping-times-no} that $G_o=\emptyset$ a.s.-$\PPP$. 
 $L$ satisfies moreover \eqref{eq:local-time-regeneration}, which follows from \eqref{eq:local-time-regeneration-raw} and Lemma~\ref{projections}\ref{projections:iv} on taking $X=\mathcal{L}$ and $\zeta=T\mathbbm{1}_{\{T\in D\}}+\infty\mathbbm{1}_{\{T\notin D\}}$ therein. Finally, we check that $\mathbf{L}:=\mathrm{supp}(\dd L)\cap (0,\infty)=D$ a.s.-$\PPP$. On the one hand, the Lebesgue-Stieltjes measure $\dd \mathcal{L}$ is carried on $(0,\infty)$ by the predictable set $D$, therefore the same is true of $\dd L$, i.e. $D\supset \mathbf{L}$. Conversely, $\mathbf{L}$ is   the closure of the set of points of left increase of $L$, which is a predictable set (use \cite[VI.5, (6.29)]{rogers2000diffusions}) that carries $\dd L$ hence $\dd \mathcal{L}$ on $(0,\infty)$, therefore since $D$ is actually equal to the support of $\dd\mathcal{L}$ on $(0,\infty)$ we get also  $D\subset \mathbf{L}$.
 
 Uniqueness. The argument of \cite[Proposition~IV.5]{bertoin} having to do with the  local time for excursions away from a point of a Markov process applies, mutatis mutandis. We omit the details of the straightforward modification, noting only (i) that the role of the Markov property of \cite{bertoin} is played by the regenerative properties  \eqref{eq:regenration_D}-\eqref{eq:local-time-regeneration} of $D$ and $L$ relative to the L\'evy shifts, while (ii) the fact that the point from which the excursions are being looked at in \cite{bertoin} is regular and instantaneous corresponds to $0$ being an accumulation point of $D$ and to $(0,\infty)\backslash D$ being dense in $(0,\infty)$ a.s.-$\PPP$, respectively.
\end{proof}
With the continuous local time $L$, having the regenerative property, of Proposition~\ref{proposition:local-time}  in hand, the excursion structure of $B$ from the set $D$ follows easily. Simply define
\begin{equation*}
\epsilon_t:=
\begin{cases}
(\Delta_{\zeta_{t-}}\vert_{[0,\infty)})^{\zeta_t-\zeta_{t-}},&\text{ if }\zeta_t>\zeta_{t-}\\
\partial,&\text{ otherwise}
\end{cases},\quad t\in (0,\infty),
\end{equation*}
where $\partial$ is a cemetery (coffin) state. Then, under $\WW$, $\epsilon=(\epsilon_t)_{t\in (0,\infty)}$ is a Poisson point process in the filtration $\FF^{0,\rightarrow}_\zeta$ \cite[Theorem~3.1]{ito}.

 We leave it here at that, except for noting that the splitting result of Theorem~\ref{theorem:splitting} with $M'=M$ or $M'=\emptyset$ is not really surprising in view of (and could perhaps be  got from) the excursion-theoretic structure presented just now -- see e.g. \cite{greenwood-pitman} for a pleasant exposition of the Wiener-Hopf factorization based on excursion theory in the case when $\tau$ is the standard indexation of the local maxima.  Nevertheless, we certainly cannot  through this lense alone explain away the extra information of the splitting coming from an $M'$ whose relation to $M$ is not a priori trivial. It is precisely this extra information that we shall find immediate use for in the next section.
%With the local time $L$ in hand we can give another perspective on Theorem~\ref{theorem:splitting}. Let $\zeta$ be the right-continuous inverse of $L$. We get the bivariate possibly killed $\FF^{0,\rightarrow}_\zeta$-L\'evy process $(\zeta_s,B_{\zeta_s})_{s\in [0,\infty)}$ vanishing at zero a.s.-$\WW$. The stationary independent increments property  comes from the observation that for all $s\in [0,\infty)$, a.s.-$\WW$, $$\tau_{0,\zeta_{s}+\cdot}=\zeta_s+\tau_{0,\cdot}(\Delta_{\zeta_s})\text{ on }\{\zeta_s<\infty\},$$ which can be gleaned from Lemma~\ref{lemma:perfect-good} provided we choose (as we do for our purposes here) a version of $\tau_{0,\cdot}$ that is also $\sigma(B\vert_{[0,\infty)})$-measurable.  The latter identity yields indeed $$D\cap (\zeta_s,\infty)=\zeta_s+D(\Delta_{\zeta_s})\text{ a.s.-$\WW$ on $\{\zeta_s<\infty\}$},$$ which in turn renders $$\zeta_{s+t}=\zeta_s+\zeta_t(\Delta_{\zeta_s})\text{ for all $t\in [0,\infty)$ a.s.-$\WW$ on $\{\zeta_s<\infty\}$}.$$
\section{Minimality}\label{section:minimality}
Here we wish to examine the question of ``minimality'' of a local stationary random countable set. 

\begin{proposition}
If $M_1$ and $M_2$ are two stationary local random countable sets then either $M_1\subset M_2$ a.s.-$\WW$  or with $\WW$-probability zero; in the former case either $M_1=M_2$ a.s.-$\WW$ or $M_2\backslash M_1$ is a stationary local dense random countable set.
\end{proposition}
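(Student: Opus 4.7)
My plan is to separate the two assertions and treat each by exploiting the stability of the relevant properties under the L\'evy shifts combined with the zero-one principles already established.

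For the dichotomy $M_1\subset M_2$ a.s.\ or with probability zero, first fix measurable enumerations $S^1=(S^1_i)_{i\in\mathbb{N}}$ and $S^2=(S^2_j)_{j\in\mathbb{N}}$ of $M_1$ and $M_2$. Then
\[
\{M_1\subset M_2\}=\bigcap_{i\in\mathbb{N}}\Bigl(\{S^1_i=\dagger\}\cup\bigcup_{j\in\mathbb{N}}\{S^1_i=S^2_j\}\Bigr)\quad\text{a.s.-}\WW,
\]
which lies in $\GG$. Stationarity of both sets gives, for each $u\in\mathbb{R}$, that a.s.-$\WW$ one has $M_1=u+M_1(\Delta_u)$ and $M_2=u+M_2(\Delta_u)$, whence $\Delta_u^{-1}(\{M_1\subset M_2\})=\{M_1\subset M_2\}$ a.s.-$\WW$. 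By the zero-one law for shift-invariant events in Remark~\ref{rmk:zero-one}, $\WW(M_1\subset M_2)\in\{0,1\}$.

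Now assume $\WW(M_1\subset M_2)=1$ and set $N:=M_2\setminus M_1$. I would first check that $N$ is a local random countable set: for extended real $s<t$, take $\FF_{s,t}$-measurable enumerations $T^1$ of $M_1\cap(s,t)$ and $T^2$ of $M_2\cap(s,t)$ provided by locality, and form $T_j:=T^2_j$ if $T^2_j\in\mathbb{R}\setminus[T^1]$ and $T_j:=\dagger$ otherwise; this $T$ is $\FF_{s,t}$-measurable by the same countable union argument as above and enumerates $N\cap(s,t)$ a.s.-$\WW$. Stationarity transfers verbatim: a.s.-$\WW$,
\[
u+N(\Delta_u)=(u+M_2(\Delta_u))\setminus(u+M_1(\Delta_u))=M_2\setminus M_1=N,\qquad u\in\mathbb{R}.
\]

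To close the argument, I would invoke the dichotomy of Proposition~\ref{proposition:dense-or-empty}: $N$ is stationary, hence stationary-in-law, hence hit-or-miss stationary by Proposition~\ref{proposition:stationary-in-law-gives-kendall}, and it is local, so it is either a.s.\ empty or a.s.\ dense. In the first case $M_1=M_2$ a.s.-$\WW$; in the second case $N$ is a dense stationary local random countable set, as required. There is no serious obstacle here; the only point deserving some care is ensuring that the countable set-theoretic manipulations producing enumerations of $\{M_1\subset M_2\}$ and of $N\cap(s,t)$ stay inside the correct $\sigma$-fields, which is handled by working with the given measurable enumerations.
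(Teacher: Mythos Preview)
Your proof is correct and follows essentially the same route as the paper's: establish a.s.\ shift-invariance of $\{M_1\subset M_2\}$ and apply the zero-one law of Remark~\ref{rmk:zero-one}, then observe that $M_2\setminus M_1$ is again stationary and local and invoke the dense-or-empty dichotomy of Proposition~\ref{proposition:dense-or-empty}. The paper's proof is terser, asserting without detail that $M_2\setminus M_1$ is a stationary local random countable set, whereas you spell out the enumeration constructions and the chain stationary $\Rightarrow$ stationary-in-law $\Rightarrow$ hit-or-miss stationary explicitly; these elaborations are all correct.
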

\begin{proof}
For all $v\in \mathbb{R}$, $\Delta_v^{-1}(\{M_1\subset M_2\})=\{M_1\subset M_2\}$ a.s.-$\WW$, therefore $\WW(M_1\subset M_2)\in \{0,1\}$ (recall Remark~\ref{rmk:zero-one}). Clearly $M_2\backslash M_1$ is a stationary local random countable set, therefore is empty $\WW$-a.s. or dense $\WW$-a.s. by Proposition~\ref{proposition:dense-or-empty}.
\end{proof}
\begin{definition}
A stationary local dense random countable set that admits no proper  dense stationary local random countable subset with positive $\WW$-probability shall be called minimal. 
\end{definition}
By the preceding it is equivalent to ask that it does not decompose into the disjoint union of two dense local stationary random countable sets. The local extrema of $B$ are not minimal (they decompose into the local maxima and minima). Are the local maxima  (minima) of $B$ minimal? If not, then the answer to Tsirelson's question would have been to the affirmative at once. However, it could not have been  so easy, as we shall see.

\begin{theorem}\label{theorem:minimal}
Let $M$ be a dense stationary local random countable set admitting an honest indexation $\tau=(\tau_{s,t})_{(s,t)\in \mathbb{R}^2,s< t}$. If $(\dagger)$ $\WW(\tau_{0,t}\in M')$ is $\{0,1\}$-valued for arbitrarily small $t\in (0,\infty)$ for all stationary local random countable sets $M'$ contained in $M$ a.s.-$\WW$, then $M$ is minimal (the converse is also true, but trivial).% The condition $(\dagger)$ is met if $\Delta_{\tau_{0,t}}^{-1}(\WW^{-1}(\{0\}))\subset 0_\WW$ for arbitrarily small $t\in (0,\infty)$.
\end{theorem}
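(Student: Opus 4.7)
The plan is to argue by contradiction: assume that $M$ decomposes (up to $\WW$-null sets) into two non-void dense stationary local random countable subsets $M'$ and $M'':=M\setminus M'$, and use $(\dagger)$ together with the honest indexation structure to force one of them to be empty after all. By the Proposition preceding the definition of minimality, the assumption that $M$ is non-minimal delivers exactly such a decomposition: $M'$ is stationary local dense, and $M''$ is also stationary local dense. Applying $(\dagger)$ to $M'$ yields a set $T_1\subset(0,\infty)$ accumulating at $0$ on which $\WW(\tau_{0,t}\in M')\in\{0,1\}$; since $\tau_{0,t}\in M=M'\sqcup M''$ a.s.-$\WW$, one automatically has $\WW(\tau_{0,t}\in M'')=1-\WW(\tau_{0,t}\in M')\in\{0,1\}$ on the same set $T_1$ (so $(\dagger)$ for $M''$ is redundant here). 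Pass to a sequence $(t_n)_{n\in\mathbb{N}}\subset T_1$ with $t_n\downarrow 0$ along which, by pigeonholing and symmetry between $M'$ and $M''$, one may assume $\WW(\tau_{0,t_n}\in M')=1$ for every $n\in\mathbb{N}$.

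Next I propagate this to all honest indexators. For any $s\in\mathbb{R}$, stationarity of $\tau$ and of $M'$ give a.s.-$\WW$
\begin{equation*}
\tau_{s,s+t_n}=s+\tau_{0,t_n}(\Delta_s)\quad\text{and}\quad M'-s=M'(\Delta_s);
\end{equation*}
since $\Delta_s$ preserves $\WW$, $\WW(\tau_{0,t_n}(\Delta_s)\in M'(\Delta_s))=\WW(\tau_{0,t_n}\in M')=1$, whence
\begin{equation*}
\WW(\tau_{s,s+t_n}\in M')=1\quad\text{for every }s\in\mathbb{R}\text{ and every }n\in\mathbb{N}.
\end{equation*}

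Finally, I fill in all rational indexators by nestedness. Fix rationals $p<q$ and choose $n$ with $t_n<q-p$. Partition $[p,q]$ by $p=r_0<r_1<\cdots<r_K=q$ with $r_{i+1}-r_i\leq t_n$ for each $i$; by the previous step $\WW(\tau_{r_i,r_{i+1}}\in M')=1$ for every $i\in\{0,\dots,K-1\}$. Since $\tau_{p,q}\in (p,q)$ a.s.-$\WW$ and $\WW(\tau_{p,q}=r_i)\leq\WW(r_i\in M)=0$ by the stationary part of Proposition~\ref{SIL:probability-zero}, there is a.s.-$\WW$ a (random) index $i$ with $\tau_{p,q}\in(r_i,r_{i+1})$; nestedness of $\tau$ then yields $\tau_{p,q}=\tau_{r_i,r_{i+1}}\in M'$ a.s.-$\WW$. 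Intersecting over the countable family of rational pairs and invoking the exhaustion property $M=\{\tau_{p,q}:(p,q)\in\mathbb{Q}^2,p<q\}$ a.s.-$\WW$, we conclude $M\subset M'$, i.e.\ $M''=\emptyset$ a.s.-$\WW$, contradicting the density (hence non-emptiness a.s.) of $M''$. The symmetric case where instead $\WW(\tau_{0,t_n}\in M'')=1$ along a subsequence forces $M'=\emptyset$ a.s.-$\WW$ by exactly the same argument with the roles of $M'$ and $M''$ swapped, again a contradiction.

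The only delicate point is the extraction of the $t_n$: we must notice that $(\dagger)$, applied to $M'$, automatically yields the $\{0,1\}$-dichotomy for $M''$ on the same $t$'s (because $\{\tau_{0,t}\in M'\}$ and $\{\tau_{0,t}\in M''\}$ partition the full-measure event $\{\tau_{0,t}\in M\}$), so we need not worry about matching two independently obtained sequences. Everything else is a standard propagation via the stationarity, locality and nestedness axioms of an honest indexation, combined with the fact that deterministic times are a.s.-$\WW$ missed by $M$.
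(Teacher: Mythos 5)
Your route tracks the paper's in broad outline but replaces its key technical lemma with a different device. The paper first proves that $f(t):=\WW(\tau_{0,t}\in M')$ is supermultiplicative — $f(s+t)\geq f(s)f(t)$ — using \emph{locality} of $\tau$ and $M'$ to get independence of $\{\tau_{0,s}\in M'\}$ and $\{\tau_{s,s+t}\in M'\}$; supermultiplicativity is then exactly what lets it push a zero of $f$ at $t$ down to $t2^{-n}$ and conclude $M'=\emptyset$ in that case, while the $f\equiv 1$ case is handled via the recast exhaustion $M=\{\tau_{p,p+t}:(p,t)\in\mathbb{Q}\times D\}$ a.s.-$\WW$. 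You avoid supermultiplicativity entirely by passing to the complement $M''=M\setminus M'$ and observing that, thanks to the inclusion $\tau_{0,t}\in M$ a.s.-$\WW$, the value $\WW(\tau_{0,t}\in M')=0$ is the same as $\WW(\tau_{0,t}\in M'')=1$, so both branches reduce to the ``$=1$'' case by symmetry. That is a genuine simplification, and it is correct once the point below is repaired.

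The gap is in the ``fill-in by nestedness'' step. Your propagation only establishes $\WW(\tau_{s,s+t_n}\in M')=1$ for intervals of width \emph{exactly} $t_n$, but a partition $p=r_0<\cdots<r_K=q$ with mesh $\leq t_n$ will, for generic $q-p$, contain at least one $r_{i+1}-r_i<t_n$, for which $\WW(\tau_{r_i,r_{i+1}}\in M')=1$ has not been shown. Note also that $\WW(\tau_{0,t}\in M')=1$ does not on its own yield $\WW(\tau_{0,s}\in M')=1$ for $s<t$ (on $\{\tau_{0,t}\in(s,t)\}$, nestedness gives $\tau_{0,t}=\tau_{s,t}$ and says nothing about $\tau_{0,s}$), so one cannot simply ``shrink'' the width. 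The fix is easy: instead of a partition, cover $(p,q)$ by finitely many (possibly overlapping) subintervals of $[p,q]$ of width exactly $t_n$ — e.g.\ $(p+kt_n,p+(k+1)t_n)$ for $0\leq k\leq\lfloor(q-p)/t_n\rfloor-1$ together with $(q-t_n,q)$; their union is $(p,q)$ minus finitely many deterministic points, which $\tau_{p,q}$ misses a.s.-$\WW$ by Proposition~\ref{SIL:probability-zero}, and nestedness then gives $\tau_{p,q}=\tau_{a,a+t_n}\in M'$ a.s.-$\WW$ for a suitable $a$. This covering observation is what the paper packages, in disguise, as the restatement $M=\{\tau_{p,p+t}:(p,t)\in\mathbb{Q}\times D\}$ a.s.-$\WW$.
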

\begin{proof}
Let $M'$ be a stationary local random countable set contained in $M$ a.s.-$\WW$. 
For $\{s,t\}\subset (0,\infty)$, we argue as follows: 
 \begin{itemize}
% \item by inclusion of $\tau$, $\tau_{0,s+t}\in M$ a.s.-$\WW$, so $\WW(\tau_{0,s+t}=s)\leq \WW(s\in M)=0$;
 \item by nestedness and locality of $\tau$, a.s.-$\WW$, either $\tau_{0,s+t}\in (0,s)$ and then $\tau_{0,s+t}=\tau_{0,s}$ or $\tau_{0,s+t}\in (s,s+t)$ and then $\tau_{0,s+t}=\tau_{s,s+t}$ (recall that $\WW(\tau_{0,s+t}=s)=0$), hence $$\{\tau_{0,s}\in M'\}\cap \{\tau_{s,s+t}\in M'\}\subset \{\tau_{0,s+t}\in M'\};$$
 \item by locality of $M'$ and since $\tau_{0,s}$ is $\FF_{0,s}$-measurable, while $\tau_{s,t}$ is $\FF_{s,t}$-measurable, the two events $\{\tau_{0,s}\in M'\}\mathrel{\stackrel{\makebox[0pt]{\mbox{\normalfont{\fontsize{2.5}{4}\selectfont a.s.-$\WW$}}}}{=}}\{\tau_{0,s}\in M'\cap (0,s)\}$ and $\{\tau_{s,s+t}\in M'\}\mathrel{\stackrel{\makebox[0pt]{\mbox{\normalfont{\fontsize{2.5}{4}\selectfont a.s.-$\WW$}}}}{=}}\{\tau_{s,s+t}\in M'\cap (s,s+t)\}$ are respectively $\FF_{0,s}$- and $\FF_{s,s+t}$-measurable;
 \item by stationarity of $M'$ and $\tau$, $\{\tau_{s,s+t}\in M'\}=\{s+\tau_{0,t}(\Delta_s)\in s+M'(\Delta_s)\}=\Delta_s^{-1}(\{\tau_{0,t}\in M'\})$ [which fact is true for $s\in \mathbb{R}$, not just $s\in(0,\infty)$];
 \item therefore, using the basic properties of $\WW$,
$$ \WW(\tau_{0,s+t}\in M')\geq \WW(\tau_{0,s}\in M',\tau_{s,s+t}\in M')=\WW(\tau_{0,s}\in M')\WW(\tau_{s,s+t}\in M')
=\WW(\tau_{0,s}\in M')\WW(\tau_{0,t}\in M').$$
 \end{itemize}
In  words, the map $f:=((0,\infty)\ni t\mapsto \WW(\tau_{0,t}\in M'))$ is supermultiplicative. Let $D$ be  a  countable subset of $(0,\infty)$ with $0$ as an accumulation point on which $f$ is $\{0,1\}$-valued; it exists by assumption $(\dagger)$.  If (*) $f(t)=0$ for some $t\in D$, then $f(t2^{-n})=0$ for all $n\in \mathbb{N}_0$ by supermultiplicativity of $f$. But %by inclusion, exhaustion, nestedness of $\tau$ (and the first bullet point above)
 \begin{equation*}M=\{\tau_{tm2^{-n},t(m+1)2^{-n}}:(m,n)\in \mathbb{Z}\times \mathbb{N}_0\}\text{ a.s.-$\WW$}.\end{equation*} On using also the third bullet point above, (*) thus leads to $M'$ being empty. On the other hand, if (**) $f(t)=1$ for all $t\in D$, then since also  \begin{equation*}M=\{\tau_{p,p+t}:(p,t)\in \mathbb{Q}\times D\}\text{ a.s.-$\WW$},\end{equation*}
we get by the third bullet point again, that (**) leads to $M'=M$ a.s.-$\WW$. We conclude that $M$ is indeed minimal.
%The final assertion concerning $(\dagger)$ follows from Proposition~\ref{proposition:locally-determined}. 
\end{proof}

\begin{corollary}\label{corollary:minimal}
Suppose the dense stationary local random countable set $M$ admits an honest indexation $\tau$. Then $M$ is minimal.
\end{corollary}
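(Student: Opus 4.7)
The plan is to verify the hypothesis $(\dagger)$ of Theorem~\ref{theorem:minimal} by leveraging the splitting of Theorem~\ref{theorem:splitting}. Fix a stationary local random countable set $M'$ contained in $M$ a.s.-$\WW$, and set $f(t):=\WW(\tau_{0,t}\in M')$ for $t\in (0,\infty)$; we aim to show that $f$ takes values in $\{0,1\}$ at arbitrarily small $t$.

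For each $\lambda\in (0,\infty)$, apply Theorem~\ref{theorem:splitting}\ref{splitting:ii} to the honestly indexed $M$ and the subordinate set $M'$, with $\tilde\WW=\WW\times\mathrm{Exp}(\lambda)$ and $\mathsf{e}$ the exponential coordinate. This gives $\tilde\WW(\tau_\mathsf{e}\in M')\in\{0,1\}$. By Tonelli and independence of $\mathsf{e}$ and $B$,
\[
g(\lambda):=\tilde\WW(\tau_\mathsf{e}\in M')=\int_0^\infty \lambda e^{-\lambda t}f(t)\,\dd t,\quad \lambda\in (0,\infty),
\]
after the joint measurability of $(t,\omega)\mapsto \tau_{0,t}(\omega)$ (guaranteed by the version of Lemma~\ref{lemma:perfect-good}) ensures $f$ is $\mathcal{B}_{(0,\infty)}$-measurable.

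The crux is that $g$ is continuous on $(0,\infty)$: since $0\leq f\leq 1$, dominated convergence yields the continuity of $\lambda\mapsto \int_0^\infty e^{-\lambda t}f(t)\,\dd t$, and hence of $g(\lambda)=\lambda\int_0^\infty e^{-\lambda t}f(t)\,\dd t$, on $(0,\infty)$. A continuous map into the two-point set $\{0,1\}$ is constant, so either $g\equiv 0$ or $g\equiv 1$.

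If $g\equiv 0$, then the Laplace transform of $f$ vanishes identically, so $f=0$ $\mathfrak{l}$-a.e.\ on $(0,\infty)$. If $g\equiv 1$, then $\int_0^\infty \lambda e^{-\lambda t}(1-f(t))\,\dd t=0$ for all $\lambda>0$, and since $1-f\geq 0$ we obtain $f=1$ $\mathfrak{l}$-a.e.\ on $(0,\infty)$. In either case $f$ is $\{0,1\}$-valued at $\mathfrak{l}$-almost every $t\in (0,\infty)$, hence at arbitrarily small $t$. Thus condition $(\dagger)$ of Theorem~\ref{theorem:minimal} is met and $M$ is minimal.

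The only non-routine step is the continuity argument that collapses $g$ to a constant from $\{0,1\}$; everything else is an immediate translation between the exponentially randomised splitting statement and a deterministic statement at small times.
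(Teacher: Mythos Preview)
Your proof is correct and follows essentially the same route as the paper: Theorem~\ref{theorem:splitting}\ref{splitting:ii} plus Tonelli to verify $(\dagger)$ of Theorem~\ref{theorem:minimal}. The continuity argument collapsing $g$ to a constant is an unnecessary detour, however---a single $\lambda$ (the paper takes $\lambda=1$) already suffices, since $g(\lambda)=0$ with $f\geq 0$ forces $f=0$ $\mathfrak{l}$-a.e., and $g(\lambda)=1$ with $f\leq 1$ forces $f=1$ $\mathfrak{l}$-a.e.
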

\begin{proof}
Let $M'$ be a stationary local random countable set contained in $M$ a.s.-$\WW$. By Theorem~\ref{theorem:splitting} (with an arbitrary $\lambda$, $\lambda=1$ say) and in the notation thereof $ \{\tau_{\mathsf{e}}\in M'\}$ is $\tilde\WW$-trivial; by Tonelli and Theorem~\ref{theorem:minimal} it is enough.
\end{proof}

Immediately it begs

\begin{question}
If a stationary local random countable set $M$ is minimal, must it admit an honest indexation?
\end{question}

\begin{corollary}\label{corollary:minimal-Md}
For $d\in (0,2)$  the set $M^{(d)}$ is minimal. In particular the local minima (maxima) are minimal.
\end{corollary}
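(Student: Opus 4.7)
The plan is to read this off directly from what has already been assembled. For each $d\in (0,2)$, Proposition~\ref{proposition:gamma} applied to $\Gamma=\{Z=0\}$ (where $Z$ solves \eqref{sde}) together with the density argument given just before Example~\ref{example:Md-honest} identifies $M^{(d)}$ as a dense stationary local random countable set, and Example~\ref{example:Md-honest} exhibits $(g^{(d)}_{s,t})_{(s,t)\in\mathbb{R}^2,\, s<t}$ as an honest indexation for it. Corollary~\ref{corollary:minimal} then applies verbatim and yields the minimality of $M^{(d)}$.

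For the specialization to the local minima, recall from Example~\ref{example:reflected-loc-min} that for $d=1$ one has $Z=(B\vert_{[0,\infty)}-\underline{B\vert_{[0,\infty)}})^2$ a.s.-$\WW$, so $M^{(1)}$ coincides a.s.-$\WW$ with the set of local minima of $B$; the minimality of the latter is therefore the case $d=1$ of what has just been shown. For the local maxima we invoke the measure-preserving involution $B\mapsto -B$ of $\WW$, which  maps local maxima to local minima and  preserves, in a pointwise manner, each of the properties entering the definition of minimality (stationarity, locality, density, set-theoretic inclusion of two random countable sets); the minimality of the local maxima is thus transported from that of the local minima. Alternatively, one may just observe that the standard indexation of the local maxima (recall Example~\ref{example:index-min} with $\kappa=0$) is an honest indexation and apply Corollary~\ref{corollary:minimal} directly.

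There is no real obstacle: the work has been done in Section~\ref{section:new-family-of-examples} (constructing the $M^{(d)}$ and verifying that they admit the $g^{(d)}_{s,t}$ as an honest indexation) and in Corollary~\ref{corollary:minimal} (the general minimality principle for honestly indexed sets). The only slight caveat is cosmetic, namely to be explicit that one does not need any additional hypothesis on $d\in (0,2)$ beyond those already used to ensure density of $M^{(d)}$ (regularity of $0$ for $Z$ and absence of an atom at $0$ for $Z_t$, $t>0$), both of which are available for $d\in(0,2)$ as cited in the text. Hence the proof reduces to a single line: \emph{apply Corollary~\ref{corollary:minimal} with $\tau=g^{(d)}$}, followed by the identification $M^{(1)}=\{\text{local minima}\}$ a.s.-$\WW$ and the $B\mapsto -B$ symmetry.
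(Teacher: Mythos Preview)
Your proof is correct and follows essentially the same approach as the paper: apply Corollary~\ref{corollary:minimal} via the honest indexation of Example~\ref{example:Md-honest}, identify $M^{(1)}$ with the local minima through Example~\ref{example:reflected-loc-min}, and handle the local maxima by the $B\mapsto -B$ symmetry. The paper's proof is just a terser version of what you wrote.
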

\begin{proof}
Apply Corollary~\ref{corollary:minimal} recalling Example~\ref{example:Md-honest}.  For $d=1$ we get the local minima; for the local maxima one slides along the map which sends $B$ to $-B$.
\end{proof}

\begin{corollary}
The local extrema do not admit an honest indexation.
\end{corollary}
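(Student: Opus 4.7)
The plan is to deduce this immediately from Corollary~\ref{corollary:minimal} via contrapositive: if the local extrema $E$ admitted an honest indexation, they would be minimal, and I will show that they are not.

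First I would observe that, writing $M^{\min}$ and $M^{\max}$ for the local minima and maxima of $B$ respectively, one has $E = M^{\min} \cup M^{\max}$ a.s.-$\WW$, and that $M^{\min} \cap M^{\max} = \emptyset$ a.s.-$\WW$ (since between any two local minima lies at least one local maximum, which cannot coincide with a minimum except on a $\WW$-negligible set). Both $M^{\min}$ and $M^{\max}$ are dense stationary local random countable sets, so $M^{\min}$ is a stationary local random countable subset of $E$, and thanks to the a.s. disjointness from $M^{\max}$ together with $M^{\max}\ne\emptyset$ a.s., the inclusion $M^{\min}\subset E$ is a.s. proper (indeed $E\setminus M^{\min} = M^{\max}$ a.s., a dense stationary local random countable set). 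Therefore $E$ fails the minimality property of Definition of minimal.

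Now by Corollary~\ref{corollary:minimal}, any dense stationary local random countable set admitting an honest indexation is minimal. Since $E$ is not minimal, it cannot admit an honest indexation.

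There is no genuine obstacle here; the only thing to double-check is the a.s. disjointness of the local minima and local maxima, which is standard (e.g.\ by ``rational exhaustion'' and the fact that on any rational interval $[p,q]$, $\WW$-a.s. $B$ does not attain both its minimum and its maximum at the same interior point). The conclusion then follows in one line from Corollary~\ref{corollary:minimal}.
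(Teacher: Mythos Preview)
Your proof is correct and follows exactly the same approach as the paper: the local extrema are not minimal (they decompose as $M^{\min}\cup M^{\max}$, a fact the paper had already noted at the start of Section~\ref{section:minimality}), so the contrapositive of Corollary~\ref{corollary:minimal} applies.
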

\begin{proof}
Indeed they are not minimal so the contrapositive of Corollary~\ref{corollary:minimal} applies.
\end{proof}

Left open, but natural is 

\begin{question}
Does every local stationary random countable set decompose into a countable union of such sets which are minimal?
\end{question}
By a ``straightforward'' Zorn lemma type argument (cf. the proof of Proposition~\ref{propo:every-enumeration-is-stabilising} below) it is equivalent to asking whether or not every local stationary random countable set that is not empty (hence is dense) admits a minimal subset of its kind. 

\section{Noises out of random sets}\label{section:noises-out-of-random-sets}
In this section we connect the stationary local random countable sets that we have studied above  to some nonclassical extensions of the Wiener noise. All Hilbert spaces considered below are complex.

\subsection{Construction}\label{subsection:construction}
Preparation. Let $M$ be a dense perfectly stationary local random countable set, countable with certainty (recall Proposition~\ref{proposition:version-perfect-stationary} concerning the existence of such a version for a given local stationary $M$). Take a shift-invariant  $\WW$-almost certain set $\Omega_1$ on which $M$ is dense (for instance, $\Omega_1=\{M\text{ is dense}\}$ will do /by perfect stationarity/).
% Then fix $\Omega_1$, a $\WW$-almost certain and shift-invariant set, on which $M$ is dense, countable and $M=u+M(\Delta_u)$ for all $u\in \mathbb{R}$ with certainty -- for a given $M$ we know that such a set $\Omega_1$ exists: there is a shift-invariant set of full $\WW$-probability on which $M'$ satisfies $M'=u+M'(\Delta_u)$ for all $u\in \mathbb{R}$; the subset of this set on which $M'$ is also dense and countable is shift-invariant and $\WW$-almost certain, hence may serve as $\Omega_1$. 
Fix also a measurable enumeration $S$ for $M$, which we (may and do) assume is injective, $\mathbb{R}^\mathbb{N}$-valued and satisfies $M=[S]$, all of this on $\Omega_1$. 

We shall work, fore the most part, on $\Omega_1$, letting $\mathbb{W}$ be the restriction of Wiener measure $\WW$ to $\Omega_1$. The spruced  up properties of $M$ and $S$ on $\Omega_1$, as above, will save us from a number of ``a.s., mod-$0$, up to a negligible set'' qualifiers in what follows. Though,  there seems to be a certain  ``partial law of preservation of nuissance'' in mathematics, and here too we are not  able to escape it. For instance, $\Omega_0$ is canonically identified with $ \Omega_0\vert_{(-\infty,0]} \times \Omega_0\vert_{[0,\infty)}$ via the bijection $\omega\mapsto (\omega\vert_{(-\infty,0]},\omega\vert_{[0,\infty)})$ which is measure-preserving between $\WW$ and the product of the Wiener measures on the two factors of  $\Omega_0\vert_{(-\infty,0]} \times \Omega_0\vert_{[0,\infty)}$ (i.e. it pushes forward the first onto the second), and there is no a priori reason why $\Omega_1$ should also have such a canonical product structure. (Although, it is true ``up to  negligible sets'': the same map, restricted to $\Omega_1$, is an injective map that pushes $\mathbb{W}$ forward onto the product of the Wiener measures on the two factors of $\Omega_1\vert_{(-\infty,0]} \times \Omega_1\vert_{[0,\infty)}$. Hence it is actually a mod-$0$ isomorphism between the two probabilities (since they are standard, see \cite[p.~22, Section~2.5, Theorem on isomorphisms]{rohlin}).) For this reason the reader will notice that sometimes we  prefer to use $\Omega_0$ where $\Omega_1$ would (prima facie) be more natural, e.g. in \eqref{eq:decomposition}-\eqref{equation:conditional-spectral-measure}. We have anyway strived to keep the  overhang of ``negligibility'' at a minimum.

The objects we shall introduce below will depend on $\Omega_1$, some of them on $S$ and basically all of them on $M$. To keep it manageable we will make explicit only  $M$ in the notation as it  seems enough to preclude confusion (when later on we shall deal with more than one $M$ at a time). % but we shall avoid making this explicit in the notation because even when dealing with more than  one $M$ we will be able to choose $\Omega_1$ the same for all such $M$ of interest by discarding some negligible sets (this will be without material consequence). 
\begin{remark}\label{remark:same-omega1}
In fact, the same (a universal) $\Omega_1$ can be chosen for all stationary local random countable sets on passing to suitable versions thereof (these choices/changes do not really matter -- see the forthcoming Remark~\ref{rmk-iso-noise}). Let indeed $M'$ be another perfectly stationary local random countable set, countable with certainty. Attach to it $\Omega_1'$ as $\Omega_1$ was attached to some fixed chosen $M$ (say, the local minima). Then change $M'$ to $M$ on $\Omega_1\backslash \Omega_1'$. We get a version of $M'$ for which we may take $\Omega_1'=\Omega_1$.  (However, we \emph{cannot ever} take $\Omega_1=\Omega_0$ -- for instance it contains the constant function zero, and there can be no countable dense set $m$ attached to this function, which would satisfy $m=u+m$ for all $u\in \mathbb{R}$.)
\end{remark} 
%The dependence on $\Omega_1$ will  not really be of any substantial consequence (see the forthcoming Remarks~\ref{rmk:omega-1-does-not-matter} and~\ref{rmk-iso-noise} for the corrresponding precise statement).

With these preliminaries out of the way, put $\Omega^M:=\{(\omega_1,\eta):\omega_1\in \Omega_1,\eta\in \{-1,1\}^{M(\omega_1)}\}$, which will be the sample space for Brownian paths enhanced by independent equiprobable random signs attached to the points of the random subset $M$. We stress again that $\Omega^M$ depends on both $\Omega_1$ and $M$, but only reference to $M$ has been made in the notation. So as to formalize the corresponding probability measure, define the bijective map $\Theta^M:\Omega^M\to \Omega_1\times \{-1,1\}^\mathbb{N}$ by putting \begin{equation*}\Theta^M(\omega_1,\eta):=(\omega_1,\eta\circ S(\omega_1))=(\omega_1,(\eta(S_k(\omega_1)))_{k\in \mathbb{N}}),\quad (\omega_1,\eta)\in \Omega;\end{equation*} besides $M$ this map depends also on $\Omega_1$ and, more importantly, $S$, but still only $M$ has been noted explicitly. On $\Omega_1\times \{-1,1\}^\mathbb{N}$ we consider the probability $\mathbb{Q}:=\mathbb{W}\times (\frac{1}{2}\delta_{-1}+\frac{1}{2}\delta_{1})^{\times \mathbb{N}}$ (product of Wiener measure on $\Omega_1$ and of the infinite product of equiprobable random signs; no completion \emph{of the product} is needed or made) and pull  it back, including the measurable structure, via $\Theta^M$ to a probability on $\Omega^M$, which we shall denote $\PP^M$ and its domain by $\BB^M$. For two different choices $S^1$ and $S^2$ of $S$ the transition map (in the obvious notation) $\zeta:=\Theta^{M2}\circ(\Theta^{M1})^{-1}:\Omega_1\times \{-1,1\}^\mathbb{N}\to \Omega_1\times \{-1,1\}^\mathbb{N}$ is given by \begin{equation*}\zeta(\omega_1,p)=(\omega_1,p\circ S^1(\omega_1)^{-1}\circ S^2(\omega_1)),\quad (\omega_1,p)\in \Omega_1\times \{-1,1\}^\mathbb{N},\end{equation*} and is bimeasurable, bijective and measure-preserving for $\mathbb{Q}$, the latter because the actions of the permutations of $\mathbb{N}$ are measure-preserving for $(\frac{1}{2}\delta_{-1}+\frac{1}{2}\delta_{1})^{\times \mathbb{N}}$. Therefore the probability $\PP^M$, including its sigma-field $\BB^M$, actually does not depend on the choice of $S$, even though $\Theta^M$ does. At this point we complete $\PP^M$ and its $\sigma$-field $\BB^M$, but keep on using the same symbols for the completions by an abuse of notation.

%Fundamental is 
%\begin{remark}\label{rmk:omega-1-does-not-matter}
%Suppose we have another $\tilde\Omega_1$ A different choice of $\Omega_1$ leads to a probability in lieu of $\PP^M$, which, albeit different, is mod-$0$ isomorphic to  $\PP^M$, moreover the mod-$0$ isomorphism involved is just the identity off a negligible set.
%\end{remark}

Let us denote next by $(W,P)$ the canonical projections on $\Omega_1\times \{-1,1\}^\mathbb{N}$, which is just to say that the pair $(W,P)$ is the identity on this space. It is plain that the law of $W\circ \Theta^M$  under $\PP^M$ is $\mathbb{W}$. 

An important structural property (of the $\L2$ space) of $\PP^M$ is revealed by
\begin{proposition}\label{lemma:general-f}
For $f\in\L2(\PP^M)$ there exist a.s.-$\WW$ unique maps $f_K\in \L2(\WW)$, $K\in (2^\mathbb{N})_{\mathrm{fin}}$, satisfying $\sum_{K\in (2^\mathbb{N})_{\mathrm{fin}}}\WW[\vert f_K\vert^2]<\infty$ and
\begin{equation}\label{eq:decomposition}f((\omega_1,\eta))=\sum_{K\in (2^\mathbb{N})_{\mathrm{fin}}} f_K(\omega_1)\prod_{k\in K}\eta(S_k(\omega_1))\text{ for $\PP^M$-a.e. }(\omega_1,\eta),\end{equation} i.e. $f=\sum_{K\in (2^\mathbb{N})_{\mathrm{fin}}}f_K(W\circ \Theta^M)\prod_{k\in K}P_k\circ \Theta^M$ a.s.-$\PP^M$; moreover, \eqref{eq:decomposition} is an $\L2(\PP^M)$-orthogonal sum decomposition.
\end{proposition}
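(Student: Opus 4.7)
The plan is to transport the problem via the measure-preserving bijection $\Theta^M$ to the product space $(\Omega_1\times\{-1,1\}^\mathbb{N},\mathbb{Q})$, where the decomposition reduces to a standard Rademacher--Walsh expansion coupled with Fubini.

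First, I would recall that the Walsh functions $\chi_K:\{-1,1\}^\mathbb{N}\to\{-1,1\}$, $\chi_K(p):=\prod_{k\in K}p_k$ for $K\in(2^\mathbb{N})_{\mathrm{fin}}$ (with $\chi_\emptyset\equiv 1$), form a complete orthonormal system in $\L2((\tfrac{1}{2}\delta_{-1}+\tfrac{1}{2}\delta_1)^{\times \mathbb{N}})$. Orthonormality is an immediate computation against the Bernoulli product measure; completeness follows, for example, from the fact that the finite products $\chi_K$, $K\in(2^\mathbb{N})_{\mathrm{fin}}$, separate points of $\{-1,1\}^\mathbb{N}$ and their linear span is an algebra containing the constants, hence is dense in the $\L2$ of the Bernoulli product by a Stone--Weierstrass/monotone class argument.

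Next, by Fubini, $\L2(\mathbb{Q})=\L2(\mathbb{W})\widehat{\otimes}\L2((\tfrac{1}{2}\delta_{-1}+\tfrac{1}{2}\delta_{1})^{\times \mathbb{N}})$, so every $g\in \L2(\mathbb{Q})$ admits a unique $\L2(\mathbb{Q})$-orthogonal decomposition
\begin{equation*}
g(\omega_1,p)=\sum_{K\in(2^\mathbb{N})_{\mathrm{fin}}}g_K(\omega_1)\chi_K(p)\quad\text{for $\mathbb{Q}$-a.e. $(\omega_1,p)$},
\end{equation*}
with $g_K\in\L2(\mathbb{W})$ and $\sum_K\mathbb{W}[|g_K|^2]=\|g\|_{\L2(\mathbb{Q})}^2<\infty$; here $g_K(\omega_1)=\int g(\omega_1,p)\chi_K(p)(\tfrac{1}{2}\delta_{-1}+\tfrac{1}{2}\delta_1)^{\times \mathbb{N}}(\dd p)$ (defined for $\mathbb{W}$-a.e. $\omega_1$).

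Finally, I would apply this to $g:=f\circ(\Theta^M)^{-1}\in \L2(\mathbb{Q})$ and pull back by $\Theta^M$. Since $\Theta^M(\omega_1,\eta)=(\omega_1,\eta\circ S(\omega_1))$, one has $\chi_K(\eta\circ S(\omega_1))=\prod_{k\in K}\eta(S_k(\omega_1))$, and setting $f_K:=g_K$ gives
\begin{equation*}
f(\omega_1,\eta)=\sum_{K\in(2^\mathbb{N})_{\mathrm{fin}}}f_K(\omega_1)\prod_{k\in K}\eta(S_k(\omega_1))\quad\text{for $\PP^M$-a.e. $(\omega_1,\eta)$},
\end{equation*}
the sum being $\L2(\PP^M)$-orthogonal with $\sum_K\WW[|f_K|^2]=\|f\|_{\L2(\PP^M)}^2$, because $\Theta^M$ is measure-preserving between $\PP^M$ and $\mathbb{Q}$ and $\mathbb{W}=\WW\vert_{\Omega_1}$ assigns zero mass to $\Omega_0\setminus\Omega_1$. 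Uniqueness of the $f_K$ a.s.-$\WW$ is then inherited directly from the uniqueness in the Walsh--Fubini decomposition of $g$.

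I do not anticipate a genuine obstacle; the only point that deserves some care is a bookkeeping one, namely that the decomposition is intrinsic to $M$ (independent of the particular choice of enumeration $S$). This is already guaranteed by the construction of $\PP^M$ in the excerpt, which shows that the transition map between two enumerations is $\mathbb{Q}$-preserving and acts on the second coordinate by a permutation-of-$\mathbb{N}$, under which the family $\{\chi_K:K\in(2^\mathbb{N})_{\mathrm{fin}}\}$ is permuted among itself; thus although the labeling of the $f_K$'s by subsets of $\mathbb{N}$ depends on $S$, the decomposition \eqref{eq:decomposition} as a statement about products indexed by finite subsets of $M(\omega_1)$ does not.
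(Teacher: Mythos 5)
Your proof is correct and follows essentially the same route as the paper: transport along the unitary induced by $\Theta^M$, use the Hilbert-space tensor decomposition $\L2(\mathbb{Q})\cong\L2(\mathbb{W})\otimes\L2((\tfrac12\delta_{-1}+\tfrac12\delta_1)^{\times\mathbb{N}})$, and expand in the Walsh/Rademacher orthonormal basis. One small remark: the paper explicitly notes after the proposition that the coefficients $f_K$ \emph{do} depend on the choice of $S$ (only the probability $\PP^M$ is $S$-independent), so your closing bookkeeping paragraph is accurate as a statement about the unlabeled decomposition but is not needed for the a.s.-$\WW$ uniqueness claimed, which is uniqueness for the \emph{fixed} enumeration $S$.
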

The $(f_K)_{K\in (2^\mathbb{N})_{\mathrm{fin}}}$ of \eqref{eq:decomposition} depend on $M$, $\Omega_1$ and $S$, but we do not reference these, not even $M$, since we will only need it  for the given $(M;\Omega_1,S)$.
\begin{proof}
The map $\Theta^M$ induces the unitary isomorphism $\L2(\mathbb{Q})\ni g\mapsto g\circ \Theta^M\in\L2(\mathbb{P}^M)$.
We have also the natural unitary isomorphism between $\L2(\mathbb{Q})=\L2(\mathbb{W}\times (\frac{1}{2}\delta_{-1}+\frac{1}{2}\delta_{1})^{\times \mathbb{N}})$ and $\L2(\mathbb{W})\otimes \L2((\frac{1}{2}\delta_{-1}+\frac{1}{2}\delta_{1})^{\times \mathbb{N}})$. 
Finally, the orthonormal basis $\left(\prod_{k\in K}P_k\right)_{K\in(2^\mathbb{N})_{\mathrm{fin}}}$ of  $ \L2((\frac{1}{2}\delta_{-1}+\frac{1}{2}\delta_{1})^{\times \mathbb{N}})$ induces in the known manner a unitary isomorphism of $\L2(\mathbb{W})\otimes \L2((\frac{1}{2}\delta_{-1}+\frac{1}{2}\delta_{1})^{\times \mathbb{N}})$ onto $\oplus_{K\in (2^\mathbb{N})_{\mathrm{fin}}}\L2(\mathbb{W})$. Decomposing a general element of  $\oplus_{K\in (2^\mathbb{N})_{\mathrm{fin}}}\L2(\mathbb{W})$ according to the orthogonal sum and sliding back and forth along these isomorphisms gives the posited representation up to the trivial canonical identification of $\L2(\mathbb{W})$ with $\L2(\WW)$.
\end{proof}

The probability $\PP^M$ having been specified and some of its basic properties unearthed, we now define a one-dimensional  factorization of sigma-fields associated to $M$ as follows. For extended-real $s<t$ we set $N_{s,t}^M$ to be the $\PP^M$-complete (referred to as the property of ``completeness'' below) sigma-field generated by the increments of the Brownian motion $W\circ \Theta^M$ on the interval $(s,t)$   \emph{and} by the random variables \begin{equation*}R_k:=\left(\Omega^M\ni(\omega_1,\eta)\mapsto \eta(S_{s,t}(k)(\omega_1))\right),\quad k\in \mathbb{N},\end{equation*} where $S_{s,t}$ is an $\FF_{s,t}$-measurable  enumeration of $M\cap (s,t)$ that is injective, $(s,t)^\mathbb{N}$-valued and satisfies $M\cap (s,t)=[S_{s,t}]$, all of this on $\Omega_1$. Such an enumeration we may ask for by locality of $M$ and by the density of $M$ on $\Omega_1$. Notice that  $N_{s,t}^M$ does not depend on the choice of the enumeration $S_{s,t}$, just because $S_{s,t}$ is $\FF_{s,t}$-measurable and $N_{s,t}^M$ includes the information generated by the increments of $W\circ\Theta^M$ on $(s,t)$.  Informally we think of $N_{s,t}^M$ as being generated by the movement of $B$ on $(s,t)$ and by the random signs attached to the points of $M$ which belong to $(s,t)$ together with the trivial sets. 

To see the so-called ``factorizability'' property of $N^M$ under $\PP^M$, namely that the $\sigma$-fields $N_{t_0,t_1}^M,\ldots,N_{t_{n-1},t_n}^M$ are $\PP^M$-independent for extended-real $-\infty=t_0<\cdots <t_n=\infty$, $n\in \mathbb{N}$, and together generate $N_{-\infty,\infty}^M=$ the whole of the $\sigma$-field of $\PP^M$, take $S$ such that $S(n(m-1)+l)=S_{t_{l-1},t_l}(m)$ for $m\in \mathbb{N}$ and $l\in [n]$ a.s.-$\mathsf{W}$ (we can do it because $\WW(\{t_1,\ldots,t_{n-1}\}\cap M\ne \emptyset)=0$), then use the $\mathbb{Q}$-independence of the  $\sigma$-fields generated by the increments of $W$ on $(t_{l-1},t_l)$ and by the random signs $(P(n(m-1)+l))_{m\in \mathbb{N}}$ as $l$ ranges over $[n]$.

 Thus we have unambiguously introduced a continuous factorization of $\sigma$-fields (a.k.a. a continuous product of probability spaces) \cite[Definition~3c1]{tsirelson-nonclassical} $N^M=(N_{s,t}^M)_{(s,t)\in [-\infty,\infty]^2,s<t}$ associated to the random set $M$ on $\Omega_1$. Note that a slightly different definition of this notion appears in \cite[Definition~3.16]{picard2004lectures}, namely one asks in addition for ``upward continuity'' in the sense that $N^M_{s,t}$ is generated by $\cup_{\epsilon\in (0,\infty) }N^M_{s+\epsilon,t-\epsilon}$ for all extended-real $s<t$, where we interpret $-\infty+\epsilon:=-1/\epsilon$ and $\infty-\epsilon:=1/\epsilon$. This property will actually be automatic once we have established below that $N^M$ corresponds to a one-dimensional noise  \cite[Proposition~3d3]{tsirelson-nonclassical}. 
% But this condition too is clearly met for $N^M$. 

The preceding construction is basically that of \cite[p.~566]{tsirelson-arveson}, where it is done for $M=\{\text{local minima of $B$}\}$. 

The family $N^M$ is enhanced to a one-dimensional noise (a.k.a. a homogeneous continuous product of probability spaces) \cite[Definition~3d1]{tsirelson-nonclassical} by the introduction of the  group $T^M=(T_h^M)_{h\in \mathbb{R}}$ of $\BB^M$-bimeasurable bijections of $\Omega^M$, measure-preserving for $\PP^M$, defined as follows: 
\begin{equation*}T_h^M((\omega_1,\eta))=(\Delta_h(\omega_1),\eta(\cdot+h)),\quad (\omega_1,\eta)\in \Omega^M,\, h\in \mathbb{R}.\end{equation*} Each $T_h^M$ induces indeed an automorphism of the probability $\PP^M$ that sends  $N_{s,t}^M$ to $(T_h^M)^{-1}(N_{s,t}^M)=N_{s+h,t+h}^M$, this for all extended-real $s<t$ and all $h\in \mathbb{R}$ (the property of ``homogeneity'').

In partial summary, given a triplet $(M;\Omega_1,S)$, we have
$$\text{ the canonical process } (W,P)\text{ on $\Omega_1\times \{-1,1\}^\mathbb{N}$ under }\QQ={\Theta^M}_\star \PP^M\text{ and  the noise }(N^M;\PP^M,T^M).$$
\begin{remark}\label{rmk-iso-noise}
If $\Omega^{M'}$ etc. are constructed from $(M';\Omega_1',S')$ in lieu of $(M;\Omega_1,S)$ and if $M'$ is a version of $M$, then $\Omega^M\cap\Omega^{M'}=\{(\omega_1,\eta):\omega_1\in \{M=M'\}\cap \Omega_1\cap \Omega_1',\eta\in \{-1,1\}^{M(\omega_1)}\}$,  $\PP^{M}(\Omega^M\cap\Omega^{M'})=1=\PP^{M'}(\Omega^M\cap\Omega^{M'})$; furthermore, $N^M=N^{M'}$ and $T^M=T^{M'}$ on $\Omega^M\cap \Omega^{M'}$.
\end{remark}
%In this precise sense then the choice of  $\Omega_1$ and of the version of $M$ does not really entail any significant arbitrariness at all.

 \subsection{A continuity condition}
Actually, the system $(N^M;\PP^M,T^M)$ is not as yet evidently fully in line with  \cite[Definition~3d1]{tsirelson-nonclassical} for we are still missing  measurability (equivalently, continuity) of the group of automorphisms, viewed as a map from $\mathbb{R}$ into the Polish group of all automorphisms of $\PP^M$ (for the strong topology of the associated unitary operators on $\L2(\PP^M)$). Before immersing ourselves  into establishing this technical fact it seems appropriate here to quote Tsirelson (yet again): ``Unfortunately, the latter assumption (continuity of the group action) is missing in my former publications, which opens the door for pathologies.'' \cite[p.~42]{picard2004lectures}. Let us preclude these pathologies in the case of the noises at hand!

\begin{proposition}\label{proposition:ct-partial-result}
The following are equivalent. 
\begin{enumerate}[(i)]
\item\label{continuity:i} For each $f\in \L2(\PP^M)$, the map $\mathbb{R}\ni h\mapsto f(T_h^M)\in \L2(\PP^M)$ is continuous.
\item\label{continuity:ii} $\lim_{h\to 0}\WW(S_k=h+S_k(\Delta_h))=1$ for all $k\in \mathbb{N}$.
\end{enumerate}
\end{proposition}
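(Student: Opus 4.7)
The plan is to establish $(\ref{continuity:i}) \Rightarrow (\ref{continuity:ii})$ by probing (i) against a single basic sign function, and $(\ref{continuity:ii}) \Rightarrow (\ref{continuity:i})$ via the orthogonal expansion of Proposition~\ref{lemma:general-f} combined with a density argument.

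For the easy direction $(\ref{continuity:i}) \Rightarrow (\ref{continuity:ii})$, I fix $k\in\mathbb{N}$ and test (i) against $f:=P_k\circ\Theta^M$, i.e.\ $f(\omega_1,\eta)=\eta(S_k(\omega_1))$. From $T_h^M(\omega_1,\eta)=(\Delta_h\omega_1,\eta(\cdot+h))$ and the perfect stationarity of $M$ (which guarantees that on $\Omega_1$ both $S_k(\omega_1)$ and $h+S_k(\Delta_h\omega_1)$ lie in $M(\omega_1)$), I condition on $W\circ\Theta^M=\omega_1$. On $A_k^h:=\{S_k=h+S_k(\Delta_h)\}$ the squared difference $(f\circ T_h^M-f)^2$ vanishes; off $A_k^h$ the two signs $\eta(S_k(\omega_1))$ and $\eta(h+S_k(\Delta_h\omega_1))$ are attached to \emph{distinct} points of $M(\omega_1)$, so under $\PP^M$ they are conditionally i.i.d.\ uniform on $\{-1,1\}$, whence $(\eta(h+S_k(\Delta_h\omega_1))-\eta(S_k(\omega_1)))^2$ has conditional mean $2$. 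Integrating out yields the clean identity
\[
\|f\circ T_h^M-f\|_{\L2(\PP^M)}^2 \;=\; 2\,\WW(\Omega_1\setminus A_k^h),
\]
and (i) forces this to tend to $0$ as $h\to 0$, delivering (ii).

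For $(\ref{continuity:ii})\Rightarrow(\ref{continuity:i})$, since every $T_h^M$ is $\PP^M$-preserving the set
\[
\mathcal{C}:=\{f\in\L2(\PP^M):h\mapsto f\circ T_h^M\text{ is }\L2\text{-continuous at }0\}
\]
is a closed linear subspace of $\L2(\PP^M)$, so as soon as it contains a total set it equals the whole space; and continuity at $0$ for every $f$ promotes to continuity at an arbitrary $h_0$ via $f\circ T_{h_0+h}^M=(f\circ T_{h_0}^M)\circ T_h^M$ applied to $f\circ T_{h_0}^M\in\L2(\PP^M)$. By Proposition~\ref{lemma:general-f} a convenient total family is
\[
f_{g,K}:= g(W\circ\Theta^M)\prod_{k\in K}P_k\circ\Theta^M,\qquad K\in(2^\mathbb{N})_{\mathrm{fin}},\ g\in\L2(\WW)\cap L^\infty(\WW),
\]
so it is enough to show each $f_{g,K}\in\mathcal{C}$. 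Setting $A_K^h:=\bigcap_{k\in K}\{S_k=h+S_k(\Delta_h)\}$, on $A_K^h$ the product of signs is unchanged by the shift, giving
\[
f_{g,K}\circ T_h^M-f_{g,K}\;=\;(g\circ\Delta_h-g)(W\circ\Theta^M)\prod_{k\in K}P_k\circ\Theta^M\qquad\text{on }A_K^h,
\]
whose contribution to the squared $\L2(\PP^M)$-norm is at most $\|g\circ\Delta_h-g\|_{\L2(\WW)}^2$ and therefore tends to $0$ by the standard $\L2(\WW)$-continuity of the L\'evy shift (verified on continuous bounded cylindrical functions via continuity of Brownian paths and bounded convergence, then extended by density). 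On $\Omega_1\setminus A_K^h$ the crude bound $|f_{g,K}\circ T_h^M-f_{g,K}|\leq 2\|g\|_\infty$ contributes at most $4\|g\|_\infty^2\,\WW(\Omega_1\setminus A_K^h)$, which vanishes by (ii) and a union bound over the finite set $K$. This shows $f_{g,K}\in\mathcal{C}$, completing the argument.

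The main technical subtlety is the decoupling between the fixed enumeration $S$ and the shifted enumeration $h+S\circ\Delta_h$: both enumerate $M$ on $\Omega_1$ but they differ by an $(h,\omega_1)$-dependent permutation of $\mathbb{N}$. Condition (ii) is precisely what forces this permutation to fix each given coordinate with probability tending to one, which is exactly what is needed to convert $\L2$-continuity of the L\'evy shift into $\L2$-continuity of the full group action $T^M$.
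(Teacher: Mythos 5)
Your proof is correct and hits the same central computation as the paper's, namely the exact identity
\[
\bigl\|P_k\circ\Theta^M\circ T_h^M-P_k\circ\Theta^M\bigr\|_{\L2(\PP^M)}^2=2\,\WW\bigl(S_k\neq h+S_k(\Delta_h)\bigr),
\]
which the paper also writes down and from which it extracts \emph{both} implications at once. Where you differ is in the organization of $(\ref{continuity:ii})\Rightarrow(\ref{continuity:i})$: the paper first restricts to the stable part via a monotone-class argument, then uses the orthogonal decomposition of Proposition~\ref{lemma:general-f} to reduce to a single block, and then telescopes the finite product of signs down to the single-factor case $f=P_k\circ\Theta^M$, where the exact identity finishes. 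You instead observe that the set of $f$ for which continuity at $0$ holds is a closed linear subspace (using that the $T_h^M$ are isometries of $\L2(\PP^M)$), so it suffices to verify membership for the total family $f_{g,K}=g(W\circ\Theta^M)\prod_{k\in K}P_k\circ\Theta^M$ with $g$ bounded; then you split $\Omega_1$ into $A_K^h$ and its complement, getting the L\'evy-shift continuity of $g$ on one piece and a union bound governed by $(\ref{continuity:ii})$ on the other. This avoids the telescoping entirely and is, if anything, marginally more self-contained. The promotion of continuity at $0$ to continuity everywhere via $f\circ T_{h_0+h}^M=(f\circ T_{h_0}^M)\circ T_h^M$, and the use of perfect stationarity on the shift-invariant $\Omega_1$ to ensure $h+S_k(\Delta_h\omega_1)\in M(\omega_1)$ so that off $A_k^h$ the two sign variables are attached to genuinely distinct points, are exactly the points where care is needed, and you have them right.
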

\begin{proof}
Assume \ref{continuity:ii}. Using the fact that, by functional monotone class,  products of the form $\prod_{t\in T}f_t(W_{t}\circ \Theta^M)$, with $f_t:\mathbb{R}\to \mathbb{C}$ continuous bounded  for each $t\in T$,  $T$ a finite subset of $\mathbb{R}$, are total in $\L2(\PP^M\vert_{\overline{\sigma}(W\circ\Theta^M)})$, we deduce easily by approximation the requisite continuity of \ref{continuity:i} in case $f\in \L2(\PP^M\vert_{\overline{\sigma}(W\circ\Theta^M)})$ ($\overline{\sigma}$ signifies completion w.r.t. $\PP^M$). (It is just the continuity of the L\'evy shifts for the Wiener noise and it does not use \ref{continuity:ii}.) From Proposition~\ref{lemma:general-f} we reduce further at once to the case when $f$ has $f_K=0$ for all but one $K\in (2^\mathbb{N})_{\mathrm{fin}}$ and, moreover, by telescoping, to the case when $f_K=0$ except for a single singleton $K=\{k\}$, $k\in \mathbb{N}$, for which $f_{\{k\}}=1$. Then $f=P_k\circ \Theta^M$ and we compute
\begin{align*}
\PP^M\left[\vert f-f\circ T_h^M\vert^2 \right ]&=\QQ\left[( P_k-P_k\circ \Theta^M\circ T_h^M\circ (\Theta^M)^{-1})^2\right]=\QQ\left[\left(P_k-\sum_{l\in \mathbb{N}}P_l\mathbbm{1}_{\{S_l(W)=h+S_k(\Delta_hW)\}}\right)^2\right]\\
&=\WW(S_k\ne h+S_k(\Delta_h))+\sum_{l\in \mathbb{N}\backslash \{k\}}\WW(S_l=h+S_k(\Delta_hW))=2\WW(S_k\ne h+S_k(\Delta_h)).
\end{align*}
This computation being valid whether or not \ref{continuity:ii} holds true we see not only that \ref{continuity:ii} implies \ref{continuity:i} but also the reverse implication.
\end{proof}
%
%\begin{lemma}
%\leavevmode 
%\begin{enumerate}[(i)]
%\item For each $f\in \L2(\WW)$, the map $\mathbb{R}\ni h\mapsto f(\Delta_h)\in \L2(\WW)$ is continuous.
%\item For all $k\in \mathbb{N}$, $\lim_{h\downarrow 0} \WW(S_k=h+S_k(\Delta_h))=1$.
%\end{enumerate}
%\end{lemma}
%\begin{proof}
%\end{proof}

%The resulting group of automorphisms is continuous as a map from $\mathbb{R}$ into the Polish group of all automorphisms of $\PP^M$ (for the strong topology of the associated unitary operators of $\L2(\PP^M)$) because this is the case for the classical Wiener noise and because the maps $T_h^M$, $h\in \mathbb{R}$, does not actually affect the random signs after they have been enumerated through $S$, i.e. on passing via the map  $\Theta^M$ to the space $\Omega_1\times \{-1,1\}^\mathbb{N}$. This gives the technical ``measurability'' of the group action, which is a part of \cite[Definition~3d1]{tsirelson-nonclassical}.
\begin{definition}
A random variable under $\WW$ with values in $\mathbb{R}^\dagger$ is called shift-stabilising when $\lim_{h\to 0}\WW(S=h+S(\Delta_h))=1$. A measurable enumeration of a random countable set is said to be shift-stabilising if each member thereof is so.
\end{definition}

For the next two results we suspend temporarily the overarching setting that we have built in this section hitherto (namely, in Subsection~\ref{subsection:construction}). 

\begin{lemma}\label{lemma:non-empty-stabilising}
Let $M$ be a stationary random countable set that is not empty. Then there is a stationary random countable set $M'$ that is contained in $M$ a.s.-$\WW$, that is not empty and that admits a shift-stabilising enumeration.
\end{lemma}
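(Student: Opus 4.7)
Take a measurable enumeration $(S_k)_{k\in\mathbb{N}}$ of $M$ and, passing via Proposition~\ref{proposition:version-perfect-stationary} to a perfectly stationary version of $M$ (countable with certainty), ask that each $S_k$ is adapted to this version and that $M=[S]$ with certainty. Because $M$ is non-empty a.s., $\bigcup_k\{S_k\in\mathbb{R}\}$ is $\WW$-almost certain, so at least one $k$ has $\WW(S_k\in\mathbb{R})>0$; by relabelling take $k=1$. Introduce
$$H(\omega):=\{h\in\mathbb{R}: h+S_1(\Delta_h\omega)=S_1(\omega)\},$$
where the identity fails whenever one side is $\dagger$ and the other is real; the key observation (from perfect stationarity) is that $H(\Delta_u\omega)=H(\omega)-u$ pointwise.

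The core construction is
$$T(\omega):=S_1(\omega)\mathbbm{1}_{\{S_1(\omega)\in\mathbb{R},\ 0\in\mathrm{int}(H(\omega))\}}+\dagger\mathbbm{1}_{\{\text{otherwise}\}}.$$
For the shift-stabilising property of $T$: if $T(\omega)\in\mathbb{R}$ and $(-\epsilon,\epsilon)\subset H(\omega)$, then for $|h|<\epsilon/2$ the inclusion $(-\epsilon/2,\epsilon/2)\subset H(\omega)-h=H(\Delta_h\omega)$ forces $T(\Delta_h\omega)=S_1(\Delta_h\omega)=S_1(\omega)-h=T(\omega)-h$, i.e. $T(\omega)=h+T(\Delta_h\omega)$ for all $h$ in a neighbourhood of $0$ depending on $\omega$. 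Dominated convergence on $\{T\in\mathbb{R}\}$, together with the symmetric argument on $\{T=\dagger\}$ exploiting that $\WW(T=\dagger,T(\Delta_h)\in\mathbb{R})=\WW(T\in\mathbb{R})-\WW(T(\Delta_h)\in\mathbb{R},T\in\mathbb{R})\to 0$ (by measure-preservation of $\Delta_h$ and the same pointwise analysis), delivers $\WW(T=h+T(\Delta_h))\to 1$ as $h\to 0$.

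The main obstacle is verifying $\WW(T\in\mathbb{R})>0$, since the interior-of-$H$ condition is not obviously met on a positive probability set for an arbitrary $S_1$. The plan is to rule out, by contradiction, the possibility that \emph{every} measurable selection $S$ of an element of $M$ has $0\notin\mathrm{int}(H_S)$ almost surely on $\{S\in\mathbb{R}\}$. Concretely, replace $S_1$ if necessary by a locally averaged / density-regularised selection: for each $\omega$ with $S_1(\omega)\in\mathbb{R}$ the set $H(\omega)$ contains $0$, and by a Fubini computation the $\WW$-averaged Lebesgue measure $\WW[|H\cap(-\epsilon,\epsilon)|;S_1\in\mathbb{R}]$ is positive for all small $\epsilon$. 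Applying the Lebesgue density theorem $\omega$-wise and reindexing along the shift (using the identity $H_{S_1(\Delta_h\omega)}(\Delta_h\omega)=H(\omega)-h$) promotes ``density point at $0$'' to ``interior point of $0$'' on a subset of positive $\WW$-measure, possibly at the cost of shrinking $\{T\in\mathbb{R}\}$ further to a measurable sub-event. This is the hard part of the argument, and I anticipate it will require careful bookkeeping of null sets and perhaps iterating over $k\in\mathbb{N}$.

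Once $T$ is in hand, define $M'(\omega):=\{q+T(\Delta_q\omega):q\in\mathbb{Q},\,T(\Delta_q\omega)\in\mathbb{R}\}$. Perfect stationarity of $M$ gives $q+T(\Delta_q\omega)\in M(\omega)$ certainly, so $M'\subset M$. $\mathbb{Q}$-stationarity is direct from the definition; upgrading to $\mathbb{R}$-stationarity uses the shift-stabilising property of $T$ at every rational shift, combined with density of $\mathbb{Q}+u$ in $\mathbb{R}$, to identify $u+M'(\Delta_u)$ with $M'$ $\WW$-a.s. The set $M'$ is non-empty because $\WW(T\in\mathbb{R})>0$ together with the zero-one law for shift-invariant events (Remark~\ref{rmk:zero-one}) applied to $\{M'\neq\emptyset\}$ forces $\WW(M'\neq\emptyset)=1$. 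Finally, the enumeration $(T_q)_{q\in\mathbb{Q}}$ defined by $T_q:=q+T(\Delta_q)$ when $T(\Delta_q)\in\mathbb{R}$ and $T_q:=\dagger$ otherwise exhausts $M'$, and each $T_q$ is shift-stabilising because, by measure-preservation of $\Delta_q$, $\WW(T_q=h+T_q(\Delta_h))=\WW(T=h+T(\Delta_h))\to 1$.
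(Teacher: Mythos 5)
Your construction of $T$ and the verification of the shift-stabilising property from the hypothesis $0\in\mathrm{int}(H(\omega))$ are fine as far as they go (modulo a small inaccuracy: the identity $H(\Delta_u\omega)=H(\omega)-u$ does not hold \emph{pointwise} in $u$, only for $u\in H(\omega)$ — fortunately the only place you use it is with $u=h\in H(\omega)$, so no harm done). The problem is the step you yourself flag as ``the hard part'': showing $\WW(T\in\mathbb{R})>0$, equivalently producing a selection $S_1$ for which $0$ is an interior point of $H(\omega)$ with positive probability. This is not a bookkeeping issue; it is the entire content of the lemma, and your sketch does not resolve it. The Lebesgue density theorem cannot promote a density point to an interior point — a measurable set of positive measure can have empty interior — and ``reindexing along the shift'' does not change this. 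Moreover, you have no a priori lower bound on $\WW[\mathfrak{l}(H\cap(-\epsilon,\epsilon))]$: for a badly behaved enumeration $S_1$ it could be that $H(\omega)=\{0\}$ for a.e.\ $\omega$, and then the Fubini quantity is $0$. (Indeed, that every enumeration is shift-stabilising is Proposition~\ref{propo:every-enumeration-is-stabilising}, which is proved \emph{using} this lemma, so you cannot presuppose it.) Even if one weakened the condition in the definition of $T$ from ``$0\in\mathrm{int}(H(\omega))$'' to ``$0$ is a density point of $H(\omega)$'', the conclusion you would get is an $\mathfrak{l}$-averaged estimate on $\{h:T=h+T(\Delta_h)\}$, not the required convergence of $\WW(T=h+T(\Delta_h))$ along every deterministic sequence $h\to 0$.

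The paper sidesteps this by a topological device that your proposal is missing. Represent $M=M^A$ with $A=\{0\in M\}\in\mathcal{B}_{\Omega_0}$ (possible after passing to the perfect, $\mathcal{B}_{\Omega_0}\otimes\mathcal{B}_\mathbb{R}$-measurable version); form the subprobability $\mathbb{A}:=\sum_k 2^{-k}(\Delta_{S_k})_\star\WW\vert_{\{S_k\in\mathbb{R}\}}$ carried by $A$; and use inner regularity to find closed sets $C_n\uparrow C\subset A$ of full $\mathbb{A}$-measure. Because each $C_n$ is closed in $\Omega_0$ and $t\mapsto\Delta_t(\omega)$ is continuous, $M^{C_n}(\omega)$ is a \emph{closed} subset of $\mathbb{R}$ for every $\omega$. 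Then $S_p^n:=\inf(M^{C_n}\cap[p,\infty))$, $p\in\mathbb{Q}$, is shift-stabilising for free: since $\WW(p\in M)=0$ and $M^{C_n}(\omega)$ is closed, there is a.s.\ a gap of $M^{C_n}(\omega)$ around $p$, and perfect stationarity makes $S_p^n$ literally constant under small shifts once $p+h$ stays inside that gap. No density argument is needed; the closedness is what secures ``interior'' where your approach only gets ``positive measure''. For some $n$ the set $M_n:=\{S_p^n:p\in\mathbb{Q}\}\cap\mathbb{R}$ is non-empty (since $M=\cup_n M^{C_n}$ a.s.), and it is the sought $M'$.
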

\begin{proof}
By Proposition~\ref{proposition:version-perfect-stationary} we may and do assume $M$ is perfectly stationary, countable with certainty and belongs to $\mathcal{B}_{\Omega_0}\otimes \mathcal{B}_\mathbb{R}$. Then set $A:=\{0\in M\}\in \mathcal{B}_{\Omega_0}$ and check  that $M=M^A$ (the proof is verbatim the same as in Theorem~\ref{theorem:construction-through-zero-time-section}). 

Let next $S$ be a measurable perfect enumeration of $M$ and set $\mathbb{A}:=\sum_{k\in \mathbb{N}}2^{-k}(\Delta_{S_k})_\star \WW\vert_{\{S_k\in \mathbb{R}\}}$, which is a subprobability measure on $\mathcal{B}_{\Omega_0}$ carried by $A$. Using  \cite[Theorem~1.1]{billingsley1968convergence} (which is to say, using  the fact that on metric spaces measurable sets can be approximated from within by closed sets relative to any finite measure) we see that there exists a nondecreasing sequence of closed sets $(C_n)_{n\in \mathbb{N}}$ of $\Omega_0$ such that $A\supset C:=\cup_{n\in \mathbb{N}}C_n$ has full $\mathbb{A}$-measure. Then $M^C\in \mathcal{B}_{\Omega_0}\otimes \mathcal{B}_\mathbb{R}$ (again the proof is verbatim the same as in Theorem~\ref{theorem:construction-through-zero-time-section}) and  $M^C\subset M^A=M$. Moreover, $M^C=M$ a.s.-$\WW$. For suppose per absurdum that it were not so. Then, for some $n\in \mathbb{N}$, we would have that $S_n\notin M^C$, i.e. $\Delta_{S_n}\notin C$, with positive $\WW$-probability on $\{S_n\in\mathbb{R}\}$, contradicting the fact that $C$ has full $\mathbb{A}$-measure. Besides, for each $n\in \mathbb{N}$ and $\omega\in \Omega_0$,  $M^{C_n}(\omega)$ is a closed subset of $\mathbb{R}$ since it is the preimage of $C_n$ under the continuous map $\mathbb{R}\ni t\mapsto \Delta_t(\omega)\in \Omega_0$ (the continuity comes from the fact that the continuous path $\omega$ is uniformly continuous on compacta).

For $p\in \mathbb{R}$ and $n\in \mathbb{N}$ define 
\begin{equation*}
S_p^n:=\inf (M^{C_n}\cap [p,\infty)) \qquad (\inf \emptyset:=\dagger).
\end{equation*}
%and then 
%\begin{align*}
%S_p&:=S^n_p\text{ on }\{S_p^1=\infty,\ldots, S_p^{n-1}=\infty,S_p^n<\infty\},\quad n\in \mathbb{N};\\
%S_p&=\dagger\text{ on }\{S^n_p=\infty\text{ for all }n\in \mathbb{N}\}.
%\end{align*}
Because  $M^{C_n}$ is closed we have $S_p^n\in M^{C_n}$ on $\{S_p^n\in \mathbb{R}\}$. Since by stationarity of $M$, $\WW(p\in M)=0$, and again because  $M^{C_n}$ is closed we have that a.s.-$\WW$ there is $\epsilon>0$ such that $M^{C_n}\cap (p-\epsilon,p+\epsilon)=\emptyset$. Hence $\lim_{h\downarrow 0}\WW(S_p^n=h+S_p^n(\Delta_h))=1$, indeed $\WW$-a.s. $S_p^n=h+S_p^n(\Delta_h)$ eventually as $h\downarrow 0$. Let us show that the set $M_n:=\{S_p^n:p\in \mathbb{Q}\}\cap \mathbb{R}$ is stationary. Pick an arbitrary $h\in \mathbb{R}$. It will suffice to establish that $h+M_n(\Delta_h)\subset M_n$ a.s.-$\WW$. But for $p\in \mathbb{Q}$, $S_p^n(\Delta_h)>p$ a.s.-$\WW$ on $\{S_p^n(\Delta_h)\in \mathbb{R}\}$, hence $\WW$-a.s.  $h+S_p^n(\Delta_h)=S_q^n$ for some sufficiently small rational $q\in [p+h,\infty)$. 

Finally, note that since $M=\cup_{n\in \mathbb{N}}M^{C^n}$ a.s.-$\WW$ and since $M$ is not empty, then, for a large enough $n\in \mathbb{N}$, $M^{C_n}$ is not a.s.-$\WW$  empty and therefore neither is $M_n$ empty. Take $M':=M_n$.
\end{proof}

\begin{proposition}\label{propo:every-enumeration-is-stabilising}
A measurable  enumeration of a   stationary random countable set $M$ is shift-stabilising. 
\end{proposition}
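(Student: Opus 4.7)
The plan is to first construct a shift-stabilising measurable enumeration $T=(T_l)_{l\in\mathbb{N}}$ of $M$ itself, and then, for any given measurable enumeration $S=(S_k)_{k\in\mathbb{N}}$, to show that each $S_k$ inherits shift-stabilisation from $T$ via a partition-and-truncation argument. The empty case being trivial, I assume throughout that $M$ is non-empty.

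For the first stage, I consider the family $\mathcal{F}$ of stationary random countable subsets of $M$ (up to $\WW$-a.s.\ equality) admitting a shift-stabilising measurable enumeration. By Lemma~\ref{lemma:non-empty-stabilising} $\mathcal{F}$ is non-empty, and it is closed under countable $\WW$-a.s.\ unions (interleave the enumerations). I would then extract a maximal $M^*\in\mathcal{F}$ via a greedy exhaustion: iteratively pick $N_n\in\mathcal{F}$ disjoint from $M^*_{n-1}:=\cup_{k<n}N_k$ with $\Phi(N_n)$ close to the residual supremum $\beta_n:=\sup\{\Phi(N):N\in\mathcal{F},\,N\cap M^*_{n-1}=\emptyset\text{ a.s.}\}$, for a bounded non-negative functional $\Phi$ that is strictly monotone under disjoint enlargement and whose contributions from disjoint pieces sum to at most a finite constant. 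Setting $M^*:=\cup_n N_n\in\mathcal{F}$ one concludes $\beta_\infty=\lim\beta_n=0$; the shift-invariant zero-one dichotomy of Remark~\ref{rmk:zero-one} then forces $M\setminus M^*$ to be either $\WW$-a.s.\ empty or $\WW$-a.s.\ non-empty, and the latter would, by Lemma~\ref{lemma:non-empty-stabilising} applied to $M\setminus M^*$, exhibit a non-empty element of $\mathcal{F}$ disjoint from $M^*$, contradicting $\beta_\infty=0$. Hence $M^*=M$ $\WW$-a.s.\ and a countable interleaving of the constituent shift-stabilising enumerations delivers the desired $T$.

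For the second stage, fix $k\in\mathbb{N}$. Since $[S]=[T]=M$ $\WW$-a.s., after passing to versions of $S$ and $T$ ensuring injectivity, I define the random index $L:\Omega_0\to\mathbb{N}^\dagger$ by declaring $L(\omega)$ to be the a.s.-unique $l$ with $S_k(\omega)=T_l(\omega)$, with $L=\dagger$ iff $S_k=\dagger$. On $\{L=L(\Delta_h)\}\cap\{T_L=h+T_L(\Delta_h)\}$ one directly checks $S_k=h+S_k(\Delta_h)$, so it suffices to show both
\begin{equation*}
\lim_{h\to 0}\WW(L\ne L(\Delta_h))=0\quad\text{and}\quad \lim_{h\to 0}\WW(T_L\ne h+T_L(\Delta_h))=0.
\end{equation*}
The first follows because $L$ is a countably-valued measurable function and the L\'evy shifts act strongly continuously on $\L2(\WW)$, yielding $\WW(\{L=l\}\triangle\Delta_h^{-1}\{L=l\})\to 0$ for each $l\in\mathbb{N}^\dagger$; a finite-truncation argument, keeping indices $l$ in some finite $L_\epsilon$ with $\WW(L\notin L_\epsilon)<\epsilon$, converts pointwise convergence into uniform smallness. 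The same truncation together with the shift-stabilising property of each individual $T_l$ gives the second. Combining, $\WW(S_k\ne h+S_k(\Delta_h))\to 0$, as required.

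The main obstacle is the first stage, specifically the choice of the functional $\Phi$ ensuring that $\beta_n\to 0$. Since only stationarity (not locality) is assumed, $|M\cap(0,1)|$ could a.s.\ be infinite, which spoils naive choices like $\Phi(N):=\WW[|N\cap(0,1)|]$ or $\Phi(N):=\WW[1-\exp(-|N\cap(0,1)|)]$ (respectively infinite, respectively saturated). A viable candidate is an intensity-style functional against a finite measure on $\Omega_0$ associated with a fixed shift-stabilising enumeration already in hand, for instance $\Phi(N):=\sum_{j}2^{-j}\WW[T_j^{(0)}\in N]$ for some baseline shift-stabilising enumeration $(T_j^{(0)})_j$ provided by an initial invocation of Lemma~\ref{lemma:non-empty-stabilising}, which is manifestly bounded, additive across disjoint subsets, and strictly positive on any non-empty $N\in\mathcal{F}$ meeting the range of $T^{(0)}$; iterating and replacing $T^{(0)}$ by the current $M^*$'s own enumeration at each step handles the general case.
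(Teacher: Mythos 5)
Your two-stage architecture — first manufacture a shift-stabilising enumeration of all of $M$ from the local pieces provided by Lemma~\ref{lemma:non-empty-stabilising}, then compare an arbitrary enumeration to it index by index — is exactly the paper's, and your second stage is essentially identical to the paper's (the paper too decomposes $\{\tilde S_n\ne h+\tilde S_n(\Delta_h)\}$ over the ``minimal matching index'' and concludes by the strong continuity of the Wiener shifts plus dominated convergence; your finite-truncation phrasing is the same argument). Two minor remarks on the second stage: you need not pass to an injective version of $T$ (which is itself delicate to reconcile with shift-stabilisation) since declaring $L$ to be the \emph{smallest} matching index already makes the argument go through; and the comparison should be restricted to $h\downarrow 0$, but that is cosmetic.

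The genuine gap is in your first stage. The paper avoids any optimization functional altogether: it applies Zorn's lemma to the poset of families of pairwise a.s.-disjoint members of $\mathfrak{M}$, shows the maximal such family is automatically \emph{countable} (because for each $n$ a fixed enumeration $S$ of $M$ can hit only countably many members of the family with positive probability, yet every member is hit by some $S_n$), and then closes with Lemma~\ref{lemma:non-empty-stabilising} exactly as you do. Your greedy exhaustion can also be made to work, but the specific $\Phi$ you propose fails: since $T^{(0)}$ from Lemma~\ref{lemma:non-empty-stabilising} only enumerates a \emph{subset} of $M$, your $\Phi(N)=\sum_j 2^{-j}\WW(T^{(0)}_j\in N)$ vanishes on any non-empty $N\in\mathcal{F}$ disjoint from $[T^{(0)}]$, and $\beta_\infty=0$ then gives no contradiction. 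The ``iterate and replace $T^{(0)}$ by the current $M^*$'s own enumeration'' patch makes things worse, not better: if $\Phi_n$ is built from an enumeration of $M^*_n$, then $\Phi_n(N)=0$ for every $N$ disjoint from $M^*_n$, so $\beta_{n+1}=0$ trivially and the algorithm stalls at step one; and if instead $\Phi_n$ is rebuilt from the residual $M\setminus M^*_n$, you lose the single-constant bound $\sum_n\Phi(N_n)=\Phi(M^*)<\infty$ that drives $\beta_n\to 0$. The repair is simpler than what you propose: take any \emph{fixed} measurable enumeration $S$ of all of $M$ (no shift-stabilisation is needed for the baseline, only measurability, and this exists by hypothesis) and set $\Phi(N):=\sum_j 2^{-j}\WW(S_j\in N)$; this is bounded, countably additive over disjoint pieces, and strictly positive on every non-empty stationary $N\subset M$ because such an $N$ is a.s. non-empty and every one of its points is some $S_j$. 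With that $\Phi$ your greedy argument closes correctly; alternatively, adopt the paper's Zorn argument, which sidesteps the functional entirely.
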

We may recall from Remark~\ref{rmk:no-stationary} that, by contrast, no measurable enumeration of a random countable set $M$ that is not empty can be stationary.
\begin{proof}
First we show that $M$ admits a shift-stabilising measurable enumeration. 

Let $\mathfrak{M}$ be the collection of  stationary random countable sets that are contained in $M$ a.s.-$\WW$, that are not empty  and that admit an enumeration that is shift-stabilising, these sets being quotiented out by $\WW$-a.s. equality. Let $\mathfrak{F}\subset 2^\mathfrak{M}$ be the collection of those subsets $S$ of $\mathfrak{M}$ whose elements are a.s.-$\WW$ pairwise disjoint (so, informally, the collection of (possibly inexhaustive) partitions of $M$, each member of which admits a shift-stabilising enumeration). Partially order $\mathfrak{F}$ by inclusion. $\emptyset\in \mathfrak{F}$, therefore $\mathfrak{F}$ is not empty. Each chain in $\mathfrak{F}$ has an upper bound, namely its union. By Zorn's lemma $\mathfrak{F}$ admits a maximal element, say $\mathsf{F}$. The set $\mathsf{F}$ is countable, since, for a(ny) given measurable enumeration $S$ of $M$ and then for each $n\in \mathbb{N}$, $\WW(S_n\in D)>0$ for at most countably many $D\in \mathsf{F}$ (just because the events $\{S_n\in D\}$, $D\in \mathsf{F}$, are pairwise disjoint a.s.-$\WW$), while for each $D\in \mathsf{F}$, $\WW(S_n\in D)>0$ for some $n\in \mathbb{N}$. The union of the members of $\mathsf{F}$ (defined up to a.s.-$\WW$ equality), call it $G$, is a stationary random countable set contained in $M$ a.s.-$\WW$ that admits a shift-stabilising measurable enumeration (by a diagonalization of the shift-stabilising enumerations of the individual members of $G$). Finally, $G$ must be a.s.-$\WW$ equal to $M$, since otherwise $M\backslash G$ would be a stationary random countable set, not empty, and Lemma~\ref{lemma:non-empty-stabilising} would yield a contradiction with the maximality of $\mathsf{F}$.

Now let $  S$ be a shift-stabilising measurable enumeration for $M$ and let $\tilde S$ be an arbitrary measurable enumeration of $M$. For each $n\in \mathbb{N}$ we have $\WW$-a.s.
\begin{align*}
\{\tilde S_n\ne h+\tilde S_n(\Delta_h)\}&\subset \{\tilde S_n=\dagger,\tilde S_n(\Delta_h)\ne \dagger\}\cup \Big(\cup_{k\in \mathbb{N}}\{\tilde  S_n\ne \dagger, \tilde S_n\ne S_1,\ldots, \tilde S_n\ne S_{k-1},\tilde S_n= S_k\}\\
&\qquad \qquad \cap [\{\tilde S_n= S_k,\tilde S_n(\Delta_h)\ne S_k(\Delta_h)\}\cup \{S_k\ne h+ S_k(\Delta_h)\} ]\Big).
\end{align*}
In the proof of Proposition~\ref{proposition:ct-partial-result} we established \ref{continuity:i} thereof for $f\in \L2(\PP^M\vert_{\overline{\sigma}(W\circ\Theta^M)})$. This means that  $\lim_{h\downarrow 0}\WW(\tilde S_n=\dagger,\tilde S_n(\Delta_h)\ne \dagger)=0$ and $\lim_{h\downarrow 0}\WW(\tilde S_n= S_k,\tilde S_n(\Delta_h)\ne S_k(\Delta_h))=0$ for all $k\in \mathbb{N}$. Combined with the shift-stabilisation property of $S$ we conclude at once via countable subaddivity and dominated convergence.
\end{proof}

Return to the standing setting of this section as delineated in Subsection~\ref{subsection:construction}.
\begin{corollary}
Item~\ref{continuity:i} of Proposition~\ref{proposition:ct-partial-result} holds true.\qed 
\end{corollary}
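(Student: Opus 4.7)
The plan is to simply chain together the two results just established. The corollary asserts continuity of the group action $T^M$ at the $\L2(\PP^M)$ level, i.e. item \ref{continuity:i} of Proposition~\ref{proposition:ct-partial-result}. By that proposition, it is equivalent to verify item \ref{continuity:ii}, namely that the fixed measurable enumeration $S$ of $M$ chosen in Subsection~\ref{subsection:construction} is shift-stabilising.

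First I would recall the standing setup: $M$ is a (perfectly) stationary local random countable set over the Wiener noise, and $S=(S_k)_{k\in \mathbb{N}}$ is a measurable enumeration of $M$ (injective, $\mathbb{R}^\mathbb{N}$-valued on $\Omega_1$, but these extra features are immaterial for what is needed here). Stationarity of $M$ is the only ingredient that matters. Proposition~\ref{propo:every-enumeration-is-stabilising} asserts that \emph{every} measurable enumeration of a stationary random countable set is shift-stabilising. Applying this to our $S$ yields $\lim_{h\to 0}\WW(S_k=h+S_k(\Delta_h))=1$ for each $k\in \mathbb{N}$, which is precisely condition \ref{continuity:ii} of Proposition~\ref{proposition:ct-partial-result}.

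Then, invoking the equivalence \ref{continuity:i}~$\Leftrightarrow$~\ref{continuity:ii} from Proposition~\ref{proposition:ct-partial-result}, we conclude that for each $f\in \L2(\PP^M)$ the map $\mathbb{R}\ni h\mapsto f\circ T_h^M\in \L2(\PP^M)$ is continuous, completing the proof. There is no real obstacle here, since both the equivalence and the shift-stabilisation of arbitrary measurable enumerations were the substantive work done just above; the corollary is an immediate combination of them.
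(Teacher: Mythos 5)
Your proposal is correct and is exactly the intended argument: the corollary is an immediate consequence of Proposition~\ref{propo:every-enumeration-is-stabilising} (which gives condition~\ref{continuity:ii} for the enumeration $S$) combined with the equivalence in Proposition~\ref{proposition:ct-partial-result}. The paper leaves the proof implicit via the \qed for precisely this reason.
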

Thus the ``continuity of the group action'' has been established. Combined with $\PP^M$ being standard, as well as the completeness, factorizability and homogeneity properties noted already in the previous subsection, the system $(N^M;\PP^M,T^M)$ is indeed a ``fully-fledged'' noise. Another is $(\FF;\WW,\Delta)$  -- the  Wiener noise, this qualification, used hitherto only informally, being now understood in the formal sense of  the properties just listed.

\subsection{The stable part and the first superchaos}
We denote by  $H_{\mathrm{stb}}^M$ and 
$H_{\mathrm{sens},1}^M$    the stable part \cite[p.~67]{picard2004lectures} and the first superchaos \cite[p.~71]{picard2004lectures} of the noise $(N^M;\PP^M,T^M)$, respectively. They are closed linear subspaces of $\L2(\PP^M)$.  For extended-real $s<t$ set also
\begin{equation*}P_{s,t}(k):=R_k\circ (\Theta^M)^{-1}=P(S(W)^{-1}(S_{s,t}(W)_k)),\quad k\in \mathbb{N}.
\end{equation*} $P_{s,t}:=(P_{s,t}(k))_{k\in \mathbb{N}}$ is the sequence of random signs  that is ``associated'' with $(s,t)$  on $\Omega_1\times\{-1,1\}^\mathbb{N}$; it depends on $S$, $S_{s,t}$, $M$ and $\Omega_1$, none of which we reference, since we will only need it for the given $S$, $S_{s,t}$, $M$ and $\Omega_1$.

\begin{proposition}\label{lemma:identifications-stable-super}
We have the identifications $H_{\mathrm{stb}}^M=\{f\in \L2(\PP^M):f_K=0\text{ for all }K\in (2^\mathbb{N})_{\mathrm{fin}}\backslash\{\emptyset\}\}=\L2(\PP^M\vert_{\overline{\sigma}(W\circ\Theta^M)})$ (as usual, $\overline{\sigma}$ indicates completion w.r.t. the relevant measure, $\PP^M$ in this case) and $H_{\mathrm{sens},1}^M=\{f\in \L2(\PP^M):f_K=0\text{ for all }K\in (2^\mathbb{N})_{\mathrm{fin}}\text{ of size }\ne 1\}$. 
\end{proposition}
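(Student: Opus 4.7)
Proof proposal. The equality
\begin{equation*}
\{f \in \L2(\PP^M) : f_K = 0 \text{ for all } K \neq \emptyset\} = \L2\bigl(\PP^M\vert_{\overline{\sigma}(W\circ\Theta^M)}\bigr)
\end{equation*}
is immediate from the orthogonal decomposition of Proposition~\ref{lemma:general-f}, since an element of $\L2(\PP^M)$ is $\overline{\sigma}(W\circ\Theta^M)$-measurable precisely when its only nonvanishing summand in \eqref{eq:decomposition} is the one corresponding to $K=\emptyset$.

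For the two substantive identifications, the plan is to show that the decomposition of Proposition~\ref{lemma:general-f} refines the sensitivity grading of the noise $(N^M;\PP^M,T^M)$: for each finite $K \subset \mathbb{N}$, the closed subspace $V_K := \overline{\{f_K(W\circ\Theta^M)\textstyle\prod_{k\in K}P_k\circ\Theta^M : f_K \in \L2(\WW)\}}^{\L2(\PP^M)}$ lies in the $|K|$-th superchaos $H_{\mathrm{sens},|K|}^M$, under the convention $H_{\mathrm{sens},0}^M := H_{\mathrm{stb}}^M$. Given this, since the $H_{\mathrm{sens},n}^M$ are pairwise $\L2$-orthogonal and the $V_K$'s together exhaust $\L2(\PP^M)$ with no redundancy, no mass can be lost across the grading, and both identifications in the proposition follow.

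I would carry this out in two steps. First, $V_\emptyset \subseteq H_{\mathrm{stb}}^M$: the map $W\circ\Theta^M$ realizes the classical Wiener noise $(\FF;\WW,\Delta)$ as a subnoise of $(N^M;\PP^M,T^M)$, and every $\L2$ element of a classical subnoise is stable. Second, for nonempty $K = \{k_1, \ldots, k_n\}$ I would compute the spectral measure of $\prod_{k\in K} P_k\circ\Theta^M$ along refining dyadic partitions of $\mathbb{R}$. For a partition of mesh $2^{-m}$, the event $A_m$ that $S_{k_1},\ldots,S_{k_n}$ lie in $n$ distinct intervals holds a.s.\ for all sufficiently large $m$ (by density of $M$ together with $\WW(S_k=S_{k'})=0$ for $k\neq k'$, which follows from Proposition~\ref{SIL:probability-zero}), and on $A_m$ the sign product factors as a product of $n$ single-interval-measurable terms. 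This forces the spectral measure of $\prod_{k\in K}P_k\circ\Theta^M$ to be supported on $n$-element subsets of $\mathbb{R}$. Tensoring with a Wiener-measurable factor $f_K(W\circ\Theta^M)$ --- whose spectral measure sits at $\emptyset$ since the Wiener noise is classical --- and exploiting the $\PP^M$-conditional independence of the sign and Brownian components within each interval, one concludes $f_K(W\circ\Theta^M)\prod_{k\in K}P_k\circ\Theta^M \in H_{\mathrm{sens},n}^M$.

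The main technical obstacle is the explicit spectral-measure computation along the refining partitions: one needs to bound cleanly the contribution of the exceptional events on which two of the $S_{k_i}$'s share an interval, and to verify that multiplication by the Wiener factor neither lowers nor raises the sensitivity order of the sign product. With this in place, the pairing $V_K \leftrightarrow H_{\mathrm{sens},|K|}^M$ combined with the $\L2$-orthogonality of both the Proposition~\ref{lemma:general-f} decomposition and the sensitivity grading forces both identifications, the second (for $H_{\mathrm{sens},1}^M$) being just the restriction of the first (for the grading) to the single-element $K$'s.
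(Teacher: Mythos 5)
Your opening observation and the reduction via Proposition~\ref{lemma:general-f} match the paper's. But your route for the two substantive inclusions --- grading the $V_K$'s into superchaoses via spectral-measure cardinality --- rests on a misconception that makes the argument collapse.

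The decisive gap is the claim that the spectral measure of $\prod_{k\in K}P_k\circ\Theta^M$ is ``supported on $n$-element subsets of $\mathbb{R}$'', and the companion claim that the Wiener factor's spectral measure ``sits at $\emptyset$''. Both are false. The spectral measure of a Wiener functional $f_K(W\circ\Theta^M)$ is spread across finite subsets of every cardinality (this is just the Wiener chaos decomposition), not concentrated at $\emptyset$. More seriously, $\prod_{k\in K}P_k\circ\Theta^M$ lies in the \emph{sensitive} part $H_{\mathrm{sens}}^M$, and a basic fact of the theory (Tsirelson, Section 9 of the cited survey) is that the spectral measure of any nonzero sensitive element is carried on \emph{uncountable} (perfect) sets --- finite-support spectral mass is precisely what characterizes the stable part. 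The superchaoses are therefore not a filtration by spectral-measure cardinality; that is the characterization of the classical \emph{chaoses} $H_n$, which form a very different grading. So the proposed inclusion $V_K\subseteq H^M_{\mathrm{sens},|K|}$ cannot be obtained by bookkeeping spectral-set sizes, and the ``main technical obstacle'' you flag --- that tensoring with a Wiener factor should preserve the superchaos index --- is exactly the content of the proposition and is not addressed by the partition/cardinality heuristic.

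For comparison, the paper avoids spectral measures entirely here. The identity $\mathbb{Q}[\prod_{k\in K}P_k\,\vert\,W,P_{s,t}]=\mathbbm{1}_{\cap_{k\in K}\{S_k(W)\in(s,t)\}}\prod_{k\in K}P_k$ is computed directly, and then the first superchaos is determined by evaluating the defining limiting projection $\lim_n\sum_i\PP^M[\,\cdot\,\vert\,N^M_{t^n_{i-1},t^n_i}\lor\sigma(W\circ\Theta^M)]$ from \cite[Theorem~6.8]{picard2004lectures} on the monomials $f_K(W\circ\Theta^M)\prod_{k\in K}P_k$: it fixes them when $|K|=1$ and annihilates them when $|K|\geq 2$. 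The stable-part identification is the genuinely delicate point: one shows that the first chaos of $N^M$ coincides with that of the Wiener subfactorization, via the contradiction argument that a hypothetical extra first-chaos vector $G=\sum_k g_k(W\circ\Theta^M)(P_k\circ\Theta^M)$, being an additive integral, would force each $t\mapsto g_k\mathbbm{1}_{\{S_k<t\}}+\WW[g_k;S_k>t]$ to be a discontinuous $\L2$-bounded martingale in a Brownian filtration, which is impossible. This last step has no counterpart in your sketch, and it is precisely the part that rules out the sign variables from sneaking into the stable part.

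Your ``exhaustion by the $V_K$'s'' framing would be sound \emph{if} the inclusions $V_K\subseteq H^M_{\mathrm{sens},|K|}$ were established --- the orthogonality and totality of the $V_K$ would then force the graded equalities --- but the proposed mechanism for the inclusions does not work.
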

A noise is called classical when the stable part is the whole of the $\L2$ space. Thus $(N^M;\PP^M,T^M)$ is not classical. At the other side of the spectrum it is called black when the stable part is $\{0\}$ but the sensitive part is not $\{0\}$. The noise $(N^M;\PP^M,T^m)$ is not black either.  Since $\sigma(H^M_{\mathrm{stb}},H^M_{\mathrm{sens},1})=N^M_{-\infty,\infty}$, $(N^M;\PP^M,T^m)$ has only superchaoses of finite order (no non-zero ``super-superchaoses'') \cite[p.~71, penultimate paragraph]{picard2004lectures}. 

\begin{proof}
Suppose we have established that $H_{\mathrm{stb}}^M=\L2(\PP^M\vert_{\overline{\sigma}(W\circ\Theta^M)})$. Knowing this, the determination of the first superchaos is relatively straightforward, we just project the sensitive subspace $H_{\mathrm{sens}}^M=\L2(\PP^M)\ominus H_{\mathrm{stb}}^M=\L2(\PP^M)\ominus \L2(\PP^M\vert_{\overline{\sigma}(W\circ\Theta^M)})$ onto the first superchaos using \cite[Theorem 6.8]{picard2004lectures}. Let then $K\in (2^\mathbb{N})_{\mathrm{fin}}\backslash \{\emptyset\}$ and $f\in \L2(\WW)$ be bounded. We show that if $\vert K\vert=1$ then the projection onto the first superchaos leaves  $F:=f(W\circ \Theta^M)\prod_{k\in K}P_k\circ \Theta^M$ invariant, while if $\vert K\vert>1$ then it sends this vector to zero; by Proposition~\ref{lemma:general-f} and the fact that bounded  maps are dense in $\L2(\WW)$ it will be enough. 

As preparation for the main computation we first observe, for extended-real $s<t$, as follows. On the one hand, on $\cap_{k\in K}\{S(W)_k\in (s,t)\}\in \overline{\sigma}(W)$, 
\begin{equation*}\prod_{k\in K}P_k=\sum_{l\in \mathbb{N}^K}\mathbbm{1}_{\cap_{k\in K} \{S_{s,t}(W)_{l_k}=S(W)_k\}}\prod_{k\in K}P_k=\sum_{l\in \mathbb{N}^K}\mathbbm{1}_{\cap_{k\in K} \{S_{s,t}(W)_{l_k}=S(W)_k\}}\prod_{k\in K}P_{s,t}(l_k)\in \overline{\sigma}(W,P_{s,t});\end{equation*}
therefore 
\begin{equation*}
\mathbb{Q}\left[\prod_{k\in K}P_k\Big\vert W,P_{s,t}\right]=\prod_{k\in K}P_k\text{ a.s.-$\QQ$ on }\cap_{k\in K}\{S(W)_k\in (s,t)\}.
\end{equation*} On the other hand, for disjoint  $L\subset K$ and $R\subset K$, $L\cup R\ne \emptyset$,  $L$ (resp. $R$) empty if $s=-\infty$ (resp. $t=\infty$), we see similarly that, on   $(\cap_{k\in L}\{S(W)_k\in (-\infty,s)\})\cap (\cap_{k\in K\backslash (L\cup R)}\{S(W)_k\in (s,t)\})\cap (\cap_{k\in R}\{S(W)_k\in (t,\infty)\})$, 
\begin{align*}
\prod_{k\in K}P_k=&\sum_{l\in \mathbb{N}^K}\mathbbm{1}_{(\cap_{k\in L} \{S_{-\infty,s}(W)_{l_k}=S(W)_k\})\cap (\cap_{k\in K\backslash (L\cup R)} \{S_{s,t}(W)_{l_k}=S(W)_k\})\cap (\cap_{k\in R} \{S_{t,\infty}(W)_{l_k}=S(W)_k\})}\\
&\left(\prod_{k\in L}P_{-\infty,s}(l_k)\prod_{k\in K}P_{s,t}(l_k)\prod_{k\in R}P_{t,\infty}(l_k)\right).
\end{align*}
Further, \emph{given} $W$ we have as follows: the sequences of random signs $P_{-\infty,s}$, $P_{t,\infty}$ (being empty respectively as $s=-\infty$, $t=\infty$) are $\QQ$-independent, jointly  $\QQ$-independent of  $P_{s,t}$ and have entries of zero mean. Therefore, since also $\QQ(S_k(W)\in \{s,t\}\text{ for some }k\in K)=0$,
\begin{equation*}\mathbb{Q}\left[\cap_{k\in K}P_k\Big\vert W,P_{s,t}\right]=0\text{ a.s.-$\QQ$ off $\cap_{k\in K}\{S(W)_k\in (s,t)\}$}.\end{equation*}
Altogether we conclude that 
$$\mathbb{Q}\left[\cap_{k\in K}P_k\Big\vert W,P_{s,t}\right]=\mathbbm{1}_{\cap_{k\in K}\{S_k(W)\in (s,t)\}}\prod_{k\in K}P_k \text{ a.s.-$\QQ$}.$$

With this in hand we return to projecting $F$ onto the first superchaos. To this end let $-\infty=t_0^n<\cdots<t_{k_n}^n=\infty$, $k_n\in \mathbb{N}$, with $A_n:=\{t_l^n:l\in [k_n-1]\}\uparrow$ to a dense subset of $\mathbb{R}$ as $n\in\mathbb{N}$ increases to $\infty$, and we compute, for each $n\in \mathbb{N}$,
\begin{align*}
\sum_{i\in [k_n]}\PP^M\left[F\Big\vert N_{t^n_{i-1},t^n_i}^M \lor \sigma(W\circ \Theta^M)\right]&=\sum_{i\in [k_n]}\PP^M\left[f(W\circ\Theta^M)\prod_{k\in K}(P_k\circ \Theta^M)\Big\vert N_{t^n_{i-1},t^n_i}^M \lor \sigma(W\circ \Theta^M)\right]\\
&=f(W\circ \Theta^M)\left(\sum_{i\in [k_n]}\QQ\left[\prod_{k\in K}P_k\Big\vert P_{t^n_{i-1},t^n_i},W\right]\right)\circ\Theta^M\\
&=f(W\circ \Theta^M)\sum_{i\in [k_n]}\mathbbm{1}_{\cap_{k\in K}\{S_k(W)\circ \Theta^M\in (t^n_{i-1},t^n_i)\}}\prod_{k\in K}(P_k\circ\Theta^M)\\
&=F\sum_{i\in [k_n]}\mathbbm{1}_{\cap_{k\in K}\{S_k(W)\circ \Theta^M\in (t^n_{i-1},t^n_i)\}}
\end{align*}
a.s.-$\PP^M$. Now if $\vert K\vert=1$, then since $\PP^M(S_k(W\circ \Theta)\in A_n\text{ for some }k\in K)=0$, we get that the expression in the preceding display is equal to $F$ a.s.-$\PP^M$; thus the projection onto the first superchaos does indeed leave $F$ invariant. On the other hand, if $\vert K\vert>1$ the second mean of the expression in the preceding display is 
\begin{equation*}\leq \Vert f\Vert_\infty \sum_{i\in [k_n]}\WW(S_k\in (t_{i-1}^n,t_i^n)\text{ for all $k\in K$})\downarrow \Vert f\Vert_\infty\WW(\text{the $S_k$, $k\in K$, are all equal})=0\text{ as $n\to\infty$};\end{equation*}
whence we conclude that the projection of $F$ onto the first superchaos is zero in this case.

Returning now to the identification of the stable part, it is easy to check that the first chaos of the factorization associated to the Wiener process $W\circ\Theta^M$ --- denote this subfactorization of $N^M$ by $N^{M}\vert_W$ --- is included in the first chaos of $N^M$. It is then further straightforward to establish that the higher classical chaoses of $N^M\vert_W$ are  included in the respective higher classical chaoses of $N^M$. Suppose per absurdum that the first chaos of $N^M$ did not coincide with the first chaos of $N^M\vert_W$. Since the classical chaoses are orthogonal, by the preceding it would mean that there is a non-zero element $G$ of the first chaos of $N^M$, which is orthogonal to $\L2(\PP^M\vert_{\sigma(W\circ\Theta^M)})$. Let  $K\in (2^\mathbb{N})_{\mathrm{fin}}$, $\vert K \vert\geq 2$ and $f\in \L2(\WW)$ be bounded. Let further $-\infty=t_0^n<\cdots<t_{k_n}^n=\infty$, $k_n\in \mathbb{N}$, with $\{t_l^n:l\in [k_n-1]\}\uparrow$ to a dense subset of $\mathbb{R}$ as $n\in\mathbb{N}$ increases to $\infty$. Then the second mean of the projection of $F:=f(W\circ\Theta^M)\prod_{k\in K}(P_k\circ \Theta^M)$ onto the first chaos is \cite[Theorem~6.3]{picard2004lectures}
\begin{equation*}\lim_{n\to\infty}\sum_{i\in [k_n]}\PP^M\left[\left\vert\PP^M\left[F\Big\vert N_{t_{i-1}^n,t_i^n}^M\right]\right\vert^2\right]\leq \lim_{n\to\infty}\sum_{i\in [k_n]}\PP^M\left[\left\vert\PP^M\left[F\Big\vert N_{t_{i-1}^n,t_i^n}^M\lor\sigma(W\circ\Theta^M)\right]\right\vert^2\right]=0,\end{equation*} where the inequality is because conditional expectations are $\L2$-contractions and the equality follows directly from the superchaos computation above. By a density argument the first chaos of $N^M$, therefore $G$, is orthogonal to all $f\in \L2(\PP^M)$ for which $f_K$ is non-zero only for $\vert K\vert\geq 2$, $K\in (2^\mathbb{N})_{\mathrm{fin}}$. Using Proposition~\ref{lemma:general-f}  it follows that $G=\sum_{k\in \mathbb{N}}g_k(W\circ \Theta^M)(P_k\circ\Theta^M)$ for some $g_k\in \L2(\WW)$, $k\in \mathbb{N}$, satisfying $\sum_{k\in \mathbb{N}}\WW[\vert g_k\vert^2]<\infty$. $G$ being an additive integral of $N^M$, for any $t\in \mathbb{R}$ we must have, a.s.-$\PP^M$,
%$$\sum_{k\in \mathbb{N}}g_k(W\circ \Theta^MP_k(W\circ\Theta^M
\begin{align*}
&\sum_{k\in \mathbb{N}}g_k(W\circ \Theta^M)(P_k\circ\Theta^M)\mathbbm{1}_{\{S_k(W\circ \Theta^M)\in (-\infty,t)\}}\\
&\quad +\sum_{k\in \mathbb{N}}g_k(W\circ \Theta^M)(P_k\circ\Theta^M)\mathbbm{1}_{\{S_k(W\circ \Theta^M)\in (t,\infty)\}}\\
&=\sum_{k\in \mathbb{N}}g_k(W\circ \Theta^M)(P_k\circ\Theta^M)=G=\PP^M[G\vert N^M_{-\infty,t}]+\PP^M[G\vert N^M_{t,\infty}]\\
&=\PP^M\left[\sum_{k\in \mathbb{N}}g_k(W\circ \Theta^M)(P_k\circ\Theta^M)\mathbbm{1}_{\{S_k(W\circ \Theta^M)\in (-\infty,t)\}}\vert N^M_{-\infty,t}\right]\\
&\quad+\PP^M\left[\sum_{k\in \mathbb{N}}g_k(W\circ \Theta^M)(P_k\circ\Theta^M)\mathbbm{1}_{\{S_k(W\circ \Theta^M)\in (t,\infty)\}}\vert N^M_{t,\infty}\right].
\end{align*} 
Taking the second mean of this equality and using the fact that conditional expectations are $\L2$-contractions shows that $X_t:=\sum_{k\in \mathbb{N}}g_k(W\circ \Theta^M)(P_k\circ\Theta^M)\mathbbm{1}_{\{S_k(W\circ \Theta^M)\in (-\infty,t)\}}$ must in fact be $N^M_{-\infty,t}$-measurable and $\sum_{k\in \mathbb{N}}g_k(W\circ \Theta^M)(P_k\circ\Theta^M)\mathbbm{1}_{\{S_k(W\circ \Theta^M)\in (t,\infty)\}}$ must in fact be $N^M_{t,\infty}$-measurable. We infer that 
\begin{equation*}X_t=\PP^M[X_t\vert N^M_{-\infty,t}\lor \sigma(P\circ \Theta^M)]=\sum_{k\in \mathbb{N}}(P_k\circ\Theta^M)\PP^M\left[g_k(W\circ \Theta^M)\mathbbm{1}_{\{S_k(W\circ \Theta^M)\in (-\infty,t)\}}\vert (N^M\vert_W)_{-\infty,t}\right]\end{equation*}
a.s.-$\PP^M$. Comparing this expression for $X_t$ with the one defining $X_t$ it follows by orthogonality that for all $k\in \mathbb{N}$, $g_k\mathbbm{1}_{\{S_k\in (-\infty,t)\}}$ is $\FF_{-\infty,t}$-measurable. Similarly we deduce that $g_k\mathbbm{1}_{\{S_k\in (t,\infty)\}}$ is $\FF_{t,\infty}$-measurable. Therefore 
\begin{equation*}\WW[g_k\vert\FF_{-\infty,t}]=g_k\mathbbm{1}_{\{S_k\in (-\infty,t)\}}+\WW[g_k;S_k>t]\end{equation*} a.s.-$\WW$, on using $\WW(S_k=t)=0$. We are therefore witnessing in $(g_k\mathbbm{1}_{\{S_k\in (-\infty,t)\}}+\WW[g_k;S_k>t])_{t\in \mathbb{R}}$ a discontinuous $\L2$-bounded right-continuous martingale in the Brownian filtration $(\FF_{-\infty,t})_{t\in \mathbb{R}}$, which is  a contradiction, unless $g_l=0$ a.s.-$\WW$ for all $l\in \mathbb{N}$, but the latter yields $G=0$ a.s.-$\PP^M$, which in itself is in contradiction with what we have assumed. (If the reader feels we have been a little cavalier about calling the two-sided filtration $(\FF_{-\infty,t})_{t\in \mathbb{R}}$ Brownian, he/she would be correct. But anyway, for all $a\in \mathbb{R}$, the filtration $(\FF_{a+t})_{t\in [0,\infty)}$ is generated by the Brownian motion $\Delta_a\vert_{[0,\infty)}$ and the independent $\sigma$-field $\FF_{-\infty,a}$ (initial enlargement), so the argument can be done ``locally'' on $[a,\infty)$ for an $a$ such that $\WW(S_k>a)>0$ (and such $a$ exists) to get $g_k=0$ a.s.-$\WW$, no matter what the choice of $k\in \mathbb{N}$.)
\end{proof}
Recall Proposition~\ref{lemma:general-f}. For $f\in H_{\mathrm{sens},1}^M$ write $f_k:=f_{\{k\}}$, $k\in \mathbb{N}$, and introduce the finite  measure $\mu_f^M$ on $\mathbb{R}\times \Omega_0$,
\begin{equation}\label{equation:conditional-spectral-measure}
\mu_f^M(E):=\sum_{k\in \mathbb{N}}\WW[\vert f_k\vert^2\mathbbm{1}_E(S_k,B)],\quad E\in \mathcal{B}_{\mathbb{R}}\otimes \GG.\end{equation}  Notice that taking the $f_k$, $k\in \mathbb{N}$, such that they are all a.s.-$\WW$ non-zero (a choice that can be made) gives a measure $\mu_f^M$ that has, besides being carried by $\llbracket M\rrbracket $ (which is true always), the following property: $\mu_f^M(\llbracket M\rrbracket\backslash \llbracket N\rrbracket)>0$ for any random countable set for which $M\backslash N$ is not empty --- we will say that $\mu_f^M$ has full support. (Here $\llbracket N\rrbracket:=\{(t,\omega)\in \mathbb{R}\times \Omega_0:t\in  N(\omega)\}$ is the graph of a random countable set $N$ on $\mathbb{R}\times\Omega_0$. The reader will forgive us the notational shenanigan vis-\`a-vis the same notation of Remark~\ref{remark:ranomd-countable-set} albeit with the order of time and of the sample space in the product reversed.) %it has total mass $\PP^M[f^2]=\sum_{k\in \mathbb{N}}\WW[f_k^2]$, which is non-zero if $f$ is. % that $t\in M(\omega_1)$ for $\mu_f$-a.e. $(t,\omega_1)$; and that the law of the second coordinate of $\mu_f$ is absolutely continuous w.r.t. $\WW$.
The following result will be used up in the next subsection.

\begin{proposition}\label{lemma:identify-spectral}
Let $f\in H_{\mathrm{sens},1}^M$. For $A\in\GG$ and extended-real $s<t$ we identify
\begin{equation}\label{eq:conditional-spectral-measure}
\mu_f^M((s,t)\times A)=\PP^M[\vert \PP^M[f\vert N_{s,t}^M\lor \sigma(W\circ \Theta^M)]\vert^2;W\circ \Theta^M\in A].
\end{equation}
In particular $\mu_f^M$ does not depend on the choice of the enumeration $S$.
\end{proposition}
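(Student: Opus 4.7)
The plan is to expand $f$ in the sign basis, identify the conditional expectation component-wise, and then compute the squared norm using the orthogonality of distinct sign products under the product measure $\QQ$.

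First, I would apply Proposition~\ref{lemma:general-f} and the identification of $H^M_{\mathrm{sens},1}$ in Proposition~\ref{lemma:identifications-stable-super}: since $f\in H^M_{\mathrm{sens},1}$, there are $f_k:=f_{\{k\}}\in \L2(\WW)$, $k\in\mathbb{N}$, with $\sum_k\WW[\vert f_k\vert^2]<\infty$, such that $f=\sum_k f_k(W\circ\Theta^M)(P_k\circ\Theta^M)$ in $\L2(\PP^M)$. Pulled back via $\Theta^M$, the $\sigma$-field $N^M_{s,t}\lor \sigma(W\circ\Theta^M)$ corresponds to the $\QQ$-complete $\sigma$-field on $\Omega_1\times\{-1,1\}^\mathbb{N}$ generated by $W$ together with the sign subsequence $P_{s,t}$.

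Second, I would invoke the conditional-sign identity already produced in the proof of Proposition~\ref{lemma:identifications-stable-super} (read off in the singleton case $K=\{k\}$, whose verification is identical and requires only that $S$ is injective, $\QQ(S_k(W)\in\{s,t\})=0$, and that given $W$ the entries of $P$ not selected by $P_{s,t}$ are independent zero-mean signs independent of $(W,P_{s,t})$):
\begin{equation*}
\QQ[P_k\vert W,P_{s,t}]=P_k\,\mathbbm{1}_{\{S_k(W)\in(s,t)\}}\quad \text{a.s.-$\QQ$, for every }k\in\mathbb{N}.
\end{equation*}
Since conditional expectation is a bounded operator on $\L2(\PP^M)$, the $\L2$-convergent expansion for $f$ may be interchanged with $\PP^M[\,\cdot\,\vert N^M_{s,t}\lor \sigma(W\circ\Theta^M)]$, yielding
\begin{equation*}
\PP^M[f\vert N^M_{s,t}\lor \sigma(W\circ\Theta^M)]=\sum_{k\in\mathbb{N}} f_k(W\circ\Theta^M)(P_k\circ\Theta^M)\mathbbm{1}_{\{S_k(W\circ\Theta^M)\in(s,t)\}}\quad\text{in }\L2(\PP^M).
\end{equation*}

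Third, I would take the squared modulus, restrict to $\{W\circ\Theta^M\in A\}$ and integrate against $\PP^M$. Transferring to $\QQ$: conditional on $W$, the family $(P_k)_{k\in\mathbb{N}}$ consists of i.i.d. mean-zero signs, so $\QQ[P_kP_l\vert W]=\delta_{kl}$; consequently only the diagonal terms in the expansion of $\vert\cdot\vert^2$ survive, giving
\begin{equation*}
\PP^M\bigl[\vert\PP^M[f\vert N^M_{s,t}\lor\sigma(W\circ\Theta^M)]\vert^2;W\circ\Theta^M\in A\bigr]=\sum_{k\in\mathbb{N}}\WW\bigl[\vert f_k\vert^2\mathbbm{1}_A\mathbbm{1}_{\{S_k\in(s,t)\}}\bigr],
\end{equation*}
which by definition \eqref{equation:conditional-spectral-measure} equals $\mu^M_f((s,t)\times A)$. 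The final assertion that $\mu^M_f$ does not depend on the choice of enumeration $S$ is then automatic, since the right-hand side of \eqref{eq:conditional-spectral-measure} makes no mention of $S$; an approximation by finite linear combinations of rectangles $(s,t)\times A$ with extended-real $s<t$ and $A\in\GG$ then extends this independence to all of $\mathcal{B}_\mathbb{R}\otimes\GG$.

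The main obstacle here is negligible: the conditional-expectation identity for a single $P_k$ is essentially the $\vert K\vert=1$ instance of a computation already carried out in the proof of Proposition~\ref{lemma:identifications-stable-super}, and the remainder is routine bookkeeping involving $\L2$-continuity and the orthogonality of the sign basis.
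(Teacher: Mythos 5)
Your proof is correct and follows essentially the same route as the paper's: you invoke the same conditional-sign identity $\QQ[P_k\vert W,P_{s,t}]=P_k\mathbbm{1}_{\{S_k(W)\in(s,t)\}}$ established in the proof of Proposition~\ref{lemma:identifications-stable-super}, expand $f$ via Proposition~\ref{lemma:general-f}, and use orthogonality of the signs given $W$ to kill the off-diagonal terms, then handle the enumeration-independence by a $\pi$-system argument (which the paper calls Dynkin's lemma). The only cosmetic difference is that you state the formula for $\PP^M[f\vert N^M_{s,t}\lor\sigma(W\circ\Theta^M)]$ as a separate intermediate step before squaring, whereas the paper computes the second moment in one chain of equalities.
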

We may thus think of $\mu_f^M$ as a kind of spectral measure of $f$ \emph{conditioned} on the Brownian motion $W\circ \Theta^M$.
\begin{proof}
 We have seen in the proof of Proposition~\ref{lemma:identifications-stable-super} 
% On $\{S_k(W)\in (s,t)\}\in \sigma(W)$, $$P_k=\sum_{l\in \mathbb{N}}\mathbbm{1}_{\{S_{s,t}(W)_l=S(W)_k\}}P_k=\sum_{l\in \mathbb{N}}\mathbbm{1}_{\{S_{s,t}(W)_l=S(W)_k\}}P_{s,t}(l)\in \sigma(W,P_{s,t}(l):l\in \mathbb{N});$$
%therefore 
%$$\mathbb{Q}\left[P_k\Big\vert\sigma(W,P_{s,t}(l):l\in \mathbb{N})\right]=P_k\text{ a.s.-$\QQ$ on }\{S_k(W)\in (s,t)\}.$$
%On  $\{S_k(W)\in (-\infty,s)\}$, similarly,
%$$P_k=\sum_{l\in \mathbb{N}}\mathbbm{1}_{\{S_{-\infty,s}(W)_l=S(W)_k\}}P_{-\infty,s}(l)$$
%and each $P_{-\infty,s}(l)$, $l\in \mathbb{N}$, is $\QQ$-independent of  $\sigma(P_{s,t}(l):l\in \mathbb{N})$ and has zero mean \emph{given} $W$;
%therefore 
%$$\mathbb{Q}\left[P_k\Big\vert\sigma(W,P_{s,t}(l):l\in \mathbb{N})\right]=0\text{ a.s.-$\QQ$ on }\{S_k(W)\in (-\infty,s)\}.$$
%By almost exactly the same token
%$$\mathbb{Q}\left[P_k\Big\vert\sigma(W,P_{s,t}(l):l\in \mathbb{N})\right]=0\text{ a.s.-$\QQ$ on }\{S_k(W)\in (t,\infty)\}.$$
%Since $\QQ(S_k(W)\in \{s,t\})=0$ altogether we conclude that 
that 
\begin{equation*}
\mathbb{Q}\left[P_k\Big\vert W,P_{s,t}\right]=P_k\mathbbm{1}_{\{S_k(W)\in (s,t)\}}\text{ a.s.-$\QQ$},\quad k\in \mathbb{N}.
\end{equation*}
Then we can compute, using Proposition~\ref{lemma:identifications-stable-super} in the first equality:
\begin{align*}
&\PP^M[\vert\PP^M[f\vert N_{s,t}^M\lor \sigma(W\circ \Theta^M)]\vert^2;W\circ \Theta^M\in A]\\
&=\PP^M\left[\left\vert\PP^M\left[\sum_{k\in \mathbb{N}}f_k(W\circ \Theta^M)(P_k\circ \Theta^M)\Big\vert N_{s,t}^M \lor \sigma(W\circ \Theta^M)\right]\right\vert^2;W\circ\Theta^M\in A\right]\\
&=\mathbb{Q}\left[\left\vert\mathbb{Q}\left[\sum_{k\in \mathbb{N}}f_k(W)P_k\Big\vert W,P_{s,t}\right]\right\vert^2;W\in A\right]\\
&=\mathbb{Q}\left[\left\vert\sum_{k\in \mathbb{N}}f_k(W)\mathbbm{1}_A(W)\mathbb{Q}\left[P_k\Big\vert W,P_{s,t}\right]\right\vert^2\right]\\
&=\mathbb{Q}\left[\left\vert\sum_{k\in \mathbb{N}}f_k(W)\mathbbm{1}_A(W)P_k\mathbbm{1}_{(s,t)}(S_k(W))\right\vert^2\right]\\
&=\sum_{k\in \mathbb{N}}\mathbb{Q}\left[\vert f_k(W)\vert^2\mathbbm{1}_A(W)\mathbbm{1}_{(s,t)}(S_k(W))P_k^2\right]\quad \text{(by orthogonality)}\\
%&=\sum_{k\in \mathbb{N}}\mathbb{Q}\left[f_k(W)^2\mathbbm{1}_A(W)\mathbbm{1}_{(s,t)}(S_k(W))\right]\QQ[P_k^2]\\
&=\sum_{k\in \mathbb{N}}\WW\left[\vert f_k\vert^2\mathbbm{1}_{(s,t)\times A}(S_k,B)\right]\\
&=\mu_f^M((s,t)\times A).
\end{align*}
This establishes \eqref{eq:conditional-spectral-measure}. The final claim follows by Dynkin's lemma.
\end{proof}

\subsection{Non-isomorphic noises}
Recall that an isomorphism of two noises  $(N^1;\PP^1,T^1)$ and $(N^2;\PP^2,T^2)$   is a mod-$0$ isomorphism $\psi$ between the probabilities $\PP^1$ and $\PP^2$ which sends $N^1_{s,t}$ onto $N^2_{s,t}$ for all extended-real $s<t$ and intertwines the shifts in the sense that $\psi\circ T_h^1=T_h^2\circ \psi$ a.s.-$\PP^1$ for all $h\in \mathbb{R}$ \cite[Definitions~4a1 and~4a3]{tsirelson-nonclassical}. Let $(M^1;\Omega^1_1,S^1_1)$ and $(M^2;\Omega_1^2,S^2_2)$ be two triplets such as $(M;\Omega_1,S)$ --- two stationary local random countable sets, enumerated and spruced according to Subsection~\ref{subsection:construction} --- and put $(N^i; \PP^i,T^i):=(N^{M_i};\PP^{M_i},T^{M_i})$, $i\in \{1,2\}$, and similarly for the other pieces of notation (for instance, $\Omega^i=\Omega^{M_i}$, $i\in \{1,2\}$).  We set also \begin{equation*}
\mathsf{s}^M:=(\Omega^M\ni (\omega_1,\eta)\mapsto (-\omega_1,\eta)\in \Omega^{M(-B)})\end{equation*} whenever we have the same $\Omega_1$ for $M(-B)$ in introducing $\Omega^{M(-B)}$ as we do for $M$ (implicitly below we shall take this to be the case, referring to Remarks~\ref{remark:same-omega1} and~\ref{rmk-iso-noise} as necessary) -- the sign change of the Brownian component.

\begin{theorem}\label{thm:not-iso-noises} 
The following are equivalent.
\begin{enumerate}[(A)]
\item\label{thm:not-iso-noises:i}  $M^1\triangle M^2$ and $M^1\triangle M^2(-B)$ are not empty.
\item\label{thm:not-iso-noises:ii} The noises $(N^1;\PP^1,T^1)$ and $(N^2;\PP^2,T^2)$ are not isomorphic.
\end{enumerate}
\end{theorem}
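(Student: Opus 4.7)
The plan is to use three invariants of the noise — the stable part $H_{\mathrm{stb}}^M$, the first superchaos $H_{\mathrm{sens},1}^M$ (Proposition~\ref{lemma:identifications-stable-super}), and the conditional spectral measure $\mu_f^M$ (Proposition~\ref{lemma:identify-spectral}) — to recover the random set $M$ from the abstract structure of the noise $(N^M;\PP^M,T^M)$.

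For the easy direction $\lnot\ref{thm:not-iso-noises:i}\Rightarrow\lnot\ref{thm:not-iso-noises:ii}$: if $M^1=M^2$ a.s.-$\WW$, I will invoke Remarks~\ref{remark:same-omega1} and~\ref{rmk-iso-noise} to match the two constructions on a common $\Omega_1$, making the identity map a noise isomorphism. If instead $M^1=M^2(-B)$ a.s.-$\WW$, equivalently $M^2=M^1(-B)$ a.s.-$\WW$, I will verify that the Brownian sign-change $\mathsf{s}^{M^1}\colon\Omega^{M^1}\to\Omega^{M^1(-B)}=\Omega^{M^2}$ is a noise isomorphism: it is measure-preserving because $(-B)_\star\WW=\WW$ and the random signs are symmetric; the shift intertwining $T_h^{M^2}\circ\mathsf{s}^{M^1}=\mathsf{s}^{M^1}\circ T_h^{M^1}$ follows from $\Delta_h(-\omega)=-\Delta_h(\omega)$; and $N_{s,t}^{M^1}$ is carried onto $N_{s,t}^{M^2}$ because the $\sigma$-field generated by the random signs on $M^1(\omega_1)\cap(s,t)=M^2(-\omega_1)\cap(s,t)$ is independent of the enumeration used.

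For the hard direction $\lnot\ref{thm:not-iso-noises:ii}\Rightarrow\lnot\ref{thm:not-iso-noises:i}$: let $\psi$ be a noise isomorphism and $\psi_*\colon\L2(\PP^1)\to\L2(\PP^2)$ the induced unitary. Since $H_{\mathrm{stb}}^{\cdot}$ and $H_{\mathrm{sens},1}^{\cdot}$ are intrinsic to $(N^\cdot,\PP^\cdot,T^\cdot)$, $\psi_*$ restricts to isomorphisms between them. Proposition~\ref{lemma:identifications-stable-super} identifies each stable part with $\L2(\WW)$ in a way that intertwines $N_{s,t}^\cdot$ with $\FF_{s,t}$ and $T_h^\cdot$ with $\Delta_h$; hence the restriction of $\psi_*$ to the stable parts arises from a mod-$0$ automorphism $\phi$ of the Wiener noise $(\FF;\WW,\Delta)$. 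The decisive auxiliary input, to be argued separately, is the classical dichotomy that any such $\phi$ equals, mod-$0$, either $\mathrm{id}_{\Omega_0}$ or $B\mapsto -B$: indeed, $\phi$ acts on the first Wiener chaos as a translation-commuting isometry of $\L2(\mathfrak{l})$ preserving each $\L2(\mathfrak{l}\vert_{(s,t)})$, which must be $\pm\mathrm{id}$, and this in turn lifts to $\phi=\pm\mathrm{id}$ on $(\Omega_0,\WW)$.

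With $\phi$ identified, I will combine the $\psi$-invariance of $N_{s,t}^\cdot\lor\overline{\sigma}(W\circ\Theta^{\cdot})$ with Proposition~\ref{lemma:identify-spectral} to derive the transformation rule
\[
\mu_f^{M^1}((s,t)\times A)\;=\;\mu_{\psi_*(f)}^{M^2}((s,t)\times \phi(A)),\quad f\in H_{\mathrm{sens},1}^1,\; A\in\GG,\; s<t,
\]
orientation being irrelevant as $\phi$ is an involution in both admissible cases. Choosing $f$ so that $\mu_f^{M^1}$ has full support on $\llbracket M^1\rrbracket$ — each $f_k\ne 0$ a.s.-$\WW$, as observed just before Proposition~\ref{lemma:identify-spectral} — and using that $\mu_{\psi_*(f)}^{M^2}$ is carried by $\llbracket M^2\rrbracket$, the displayed rule forces the graph $\llbracket M^1\rrbracket$ to lie, up to $\WW$-negligible sample paths, inside $\{(t,\omega):t\in M^2(\phi(\omega))\}$; applying the argument with $\psi^{-1}$ in place of $\psi$ gives the reverse inclusion. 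The cases $\phi=\mathrm{id}$ and $\phi(\omega)=-\omega$ then yield $M^1=M^2$ and $M^1=M^2(-B)$ a.s.-$\WW$, respectively, in both instances contradicting \ref{thm:not-iso-noises:i}. The principal obstacle I foresee is the clean proof of the Wiener-noise automorphism dichotomy $\phi=\pm\mathrm{id}$, together with the careful bookkeeping of the mod-$0$ identifications needed to realize the $\L2$-level unitary on stable parts as a point map on $(\Omega_0,\WW)$.
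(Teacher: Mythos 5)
Your proposal follows essentially the same route as the paper's proof: reduce to the stable parts, invoke the $\pm\mathrm{id}$ dichotomy for automorphisms of the Wiener noise (which the paper simply cites from Tsirelson), then use Proposition~\ref{lemma:identify-spectral} together with a full-support conditional spectral measure $\mu_f^M$ to derive a contradiction from an isomorphism. The only cosmetic difference is that you fold the sign-change case into a single transformation rule involving $\phi$, whereas the paper handles the $(-)$ case by replacing $\psi$ with $\mathsf{s}^2\circ\psi$ so as to reduce to the $(+)$ case.
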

\begin{proof}
By Remarks~\ref{remark:same-omega1} and~\ref{rmk-iso-noise} we may and do assume $\Omega_1^1=\Omega_1^2$ and write it as just $\Omega_1$ (but $\Omega^1\ne\Omega^2$ unless $M^1=M^2$ on $\Omega_1$).

If $(N^+;\PP^+,T^+)$ is associated to $M$ and $(N^-;\PP^-,T^-)$ to $M(-B)$, then these two noises are isomorphic, the isomorphism carrying the first onto the second being $\mathsf{s}^M$. Thus we see easily that \ref{thm:not-iso-noises:ii} implies \ref{thm:not-iso-noises:i}. %By contradiction. If $M^1=M^2$ is empty, then clearly $(N^1;\PP^1,T^1)$ and $(N^2;\PP^2,T^2)$ are isomorphic. If instead  $M^1\triangle M^2(-B)$ is empty then 

For the converse implication assume \ref{thm:not-iso-noises:i} and suppose per absurdum $\psi$ is an isomorphisms between $(N^1;\PP^1,T^1)$ and $(N^2;\PP^2,T^2)$. It carries the stable part of $N^1$ onto the stable part of $N^2$ so restricts to an isomorphism of the white noises associated to $W\circ \Theta^1$ and $W\circ \Theta^2$ under $(\PP^1,T^1)$ and $(\PP^2,T^2)$, respectively. It is well-known \cite[p.~184]{tsirelson-nonclassical} that there are only two such isomorphisms; the identity, and the sign change. In other words, either ($+$) $W\circ \Theta^2\circ \psi=W\circ\Theta^1$ a.s.-$\PP^1$  or ($-$) $W\circ \Theta^2\circ \psi=-W\circ\Theta^1$ a.s.-$\PP^1$. 

Suppose ($+$) and $M^2\backslash M^1$ is not empty in the first instance.  Take any $f\in H^2_{\mathrm{sens},1}$ such that $\mu_f^2$ has full support.  We get $f\circ \psi\in H^1_{\mathrm{sens},1}$. Also, from Proposition~\ref{lemma:identify-spectral}, for all $A\in\GG$ and extended-real $s<t$,
\begin{align*}
\mu^2_f((s,t)\times A)&=\PP^2[\vert\PP^2[f\vert N_{s,t}^2\lor \sigma(W\circ \Theta^2)]\vert^2;W\circ \Theta^2\in A]\\
&=\PP^1[\vert\PP^1[f\circ\psi\vert N_{s,t}^1\lor \sigma(W\circ \Theta^2\circ\psi)]\vert^2;W\circ \Theta^2\circ \psi\in A],\\
&=\PP^1[\vert\PP^1[f\circ\psi\vert N_{s,t}^1\lor \sigma(W\circ \Theta^1)]\vert^2;W\circ \Theta^1\in A]\\
&=\mu^1_{f\circ \psi}((s,t)\times A).
\end{align*}
By Dynkin's lemma we infer that $\mu_f:=\mu^2_f=\mu^1_{f\circ \psi}$. Then  $\mu_f(\llbracket M^2\rrbracket\backslash \llbracket M^1\rrbracket)=\mu_f^2(\llbracket M^2\rrbracket\backslash \llbracket M^1\rrbracket)>0$, but also $\mu_f(\llbracket M^2\rrbracket\backslash \llbracket M^1\rrbracket)=\mu_{f\circ\psi}^1(\llbracket M^2\rrbracket\backslash \llbracket M^1\rrbracket)=0$, a contradiction. 

If ($+$) still holds but instead $M^1\backslash M^2$ is not empty, apply the argument with the roles of $M^1$ and $M^2$ interchanged, $\psi^{-1}$ playing the role of $\psi$.

In case of ($-$) apply the preceding to $M^2(-B)$ in lieu of $M^2$, noticing that the noises associated to these two random sets are isomorphic, the isomorphism being the sign change $\mathsf{s}^2$  (thus the role of $\psi$ is then played by $\mathsf{s}^2\circ \psi$).
% But $t\in M^1(\omega_1)$ for $\mu^1_{f\circ \psi}$-a.e. $(t,\omega_1)$ and also $t\in M^2(\omega_1)$ for $\mu^2_{f}$-a.e. $(t,\omega_1)$. Therefore $t\in  M^1(\omega_1)\cap M^2(\omega_1)$, in particular $M^1(\omega_1)\cap M^2(\omega_1)\ne \emptyset$ for $\mu_f$-a.e. $(t,\omega_1)$; or else $t\in  M^1(\omega_1)\cap M^2(-\omega_1)$, in particular $M^1(\omega_1)\cap M^2(-\omega_1)\ne \emptyset$ for $\mu_f$-a.e. $(t,\omega_1)$. Thus $M^1\cap (M^2\cup M^2(-B))\ne \emptyset$ a.e. w.r.t. the law of the second coordinate of $\mu_f$. This law is non-zero and absolutely continuous w.r.t. $\WW$, which is a contradiction.
\end{proof}
\begin{remark}\label{rmk:non-iso-extensions}
The same proof shows that if merely $M^1\triangle M^2$ is not empty, then $(N^1;\PP^1,T^1)$ and $(N^2;\PP^2,T^2)$ are non-isomorphic extensions of the Wiener noise, i.e. not isomorphic as noises where in addition we insist that $W\circ\Theta^1$ is sent to $W\circ \Theta^2$ by the isomorphism.  
\end{remark}
%\begin{remark}
%It was noted in the proof that  if $(N^+;\PP^+,T^+)$ is associated to $M$ and $(N^-;\PP^-,T^-)$ to $M(-B)$, then these two noises are isomorphic, the isomorphism carrying the first onto the second being $\mathsf{s}^M$.
%\end{remark}
\begin{example}
Let $(N^{\mathrm{loc}\min};\PP^{\mathrm{loc}\min},T^{\mathrm{loc}\min})$ be associated to $M=\{\text{local minima of $B$}\}$ and $(N^{\mathrm{loc}\max};\PP^{\mathrm{loc}\max},T^{\mathrm{loc}\max})$ be associated to  $M=\{\text{local maxima of $B$}\}$. These two noises are isomorphic. %Then the sign-change of $W\circ \Theta$ is an isomomorphism of these two noises. 
%Thus we could not have asked for only $M^1\triangle M^2$ to not be empty in Theorem~\ref{thm:not-iso-noises}.%s More generally, if $(N^+,\PP^+)$ is associated to $M$ and $(N^-,\PP^-)$ to $M(-B)$, then these two noises are isomorphic (as was noted in the proof).
\end{example}
\begin{example}
Let $(N^{\mathrm{loc}\,\mathrm{ext}};\PP^{\mathrm{loc}\,\mathrm{ext}},T^{\mathrm{loc}\,\mathrm{ext}})$ be associated to $M=\{\text{local extrema of $B$}\}$. Theorem~\ref{thm:not-iso-noises} implies that $(N^{\mathrm{loc}\min};\PP^{\mathrm{loc}\min},T^{\mathrm{loc}\min})$  and $(N^{\mathrm{loc}\, \mathrm{ext}};\PP^{\mathrm{loc}\,\mathrm{ext}},T^{\mathrm{loc}\,\mathrm{ext}})$ are not isomorphic noises. 
\end{example}

%Given a countable random set $M$ and and a collection $(M_\alpha)_{\alpha\in \mathfrak{A}}$ of countable random sets with pairwise empty intersection, then $M\cap M_\alpha$ is empty for all except countably many $\alpha\in \mathfrak{A}$. For let $S$ be a measurable enumeration for $M$. If the observation failed, then for some $n\in \mathbb{N}$ and then for uncountably many $\alpha\in \mathfrak{A}$ we would have $\WW(S_n\in M_\alpha)>0$; therefore for some $\delta>0$, for infinitely many $\alpha\in \mathfrak{A}$, $\WW(S_n\in M_\alpha)\geq \delta$. However, it is impossible, since the events $(\{S_n\in M_\alpha\})_{\alpha\in \mathfrak{A}}$ are pairwise $\WW$-a.s. disjoint. As a consequence of Proposition~\ref{proposition:a.s.distinct} we therefore see in a very ``soft'' way that for any given $d_1\in (0,2)$ there can be at most countably many $d_2\in (0,2)$ for which  $M^{(d_1)}\cap (M^{(d_2)}\cup M^{(d_2)}(-B))$ is non-empty. Consequently in the family $((N^{(d)},\PP^{(d)}))_{d\in (0,2)}$ there are a continuum many of isomorphism classes of noises, every equivalence class of which contains at most countably many noises.

%At least for dimensions from $(1,2)$ this intersection is in fact empty:
%
Returning to the random countable sets got from the squared Bessel processes, at least for dimensions from $[1,2)$ we have
\begin{proposition}\label{proposition:disjoint-negative}
Let $\{d_1,d_2\}\subset [1,2)$. Then  $M^{(d_1)}\cap M^{(d_2)}(-B)$ is empty. 
\end{proposition}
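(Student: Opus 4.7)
The plan is to establish the following pinning-down property: for $d\in[1,2)$, $\WW$-a.s.\ every point of $M^{(d)}$ coincides with an infimum of $B$ over some rational left-neighbourhood. The analogue applied with $-B$ in place of $B$ forces every point of $M^{(d_2)}(-B)$ to be a supremum of $B$ over some rational left-neighbourhood; combining the two with the nowhere-constancy of Brownian paths on non-degenerate intervals yields the claim.

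The key ingredient is the SDE comparison theorem \cite[Theorem~IX.3.7]{revuz}: because $(B\vert_{[0,\infty)}-\underline{B\vert_{[0,\infty)}})^2$ solves the squared Bessel SDE of dimension $1$ driven by $B$ (Example~\ref{example:reflected-loc-min}) and the drift in~\eqref{sde} is monotone in $d$, for any $d\geq 1$ one has
\begin{equation*}
Z^{(d)}(t)\geq \left(B_t-\inf_{r\in[0,t]}B_r\right)^2\quad\text{for all }t\in[0,\infty)\text{ a.s.-}\WW,
\end{equation*}
and the analogous inequality, after replacing $B$ by $\Delta_s B$, holds $\WW$-a.s.\ for each rational $s$ (since $\Delta_s$ preserves $\WW$). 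Combined with Lemma~\ref{lemma:gamma}\ref{lemma:gamma:i} — that the supremum in the definition of $g^{(d)}_{s,u}$ is attained, in particular $Z^{(d)}(\Delta_s)(g^{(d)}_{s,u}-s)=0$ — this yields on $\{g^{(d)}_{s,u}\in (s,u)\}$ that $B_{g^{(d)}_{s,u}}=\inf_{v\in[s,g^{(d)}_{s,u}]}B_v$ a.s.-$\WW$. Intersecting the corresponding exceptional sets over the countably many rational pairs $s<u$ and appealing to the a.s.\ identification
\begin{equation*}
M^{(d_1)}=\{g^{(d_1)}_{s,u}:(s,u)\in \mathbb{Q}^2,\,s<u,\,g^{(d_1)}_{s,u}\in(s,u)\}
\end{equation*}
from Proposition~\ref{proposition:gamma}, one obtains a $\WW$-a.s.\ event on which every $t\in M^{(d_1)}$ admits a rational $s<t$ with $B_t=\inf_{v\in[s,t]}B_v$.

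Applying exactly the same reasoning with $-B$ in place of $B$ (which is again a two-sided Brownian motion under $\WW$), one obtains a $\WW$-a.s.\ event on which every $t\in M^{(d_2)}(-B)$ admits a rational $s'<t$ with $B_t=\sup_{v\in[s',t]}B_v$. On the intersection of these two events with the $\WW$-a.s.\ event that $B$ is nowhere constant on a nondegenerate interval, any hypothetical $t\in M^{(d_1)}\cap M^{(d_2)}(-B)$ would, on taking the larger of $s$ and $s'$, force $B$ to be simultaneously maximised and minimised on a nondegenerate interval ending at $t$, hence constant there — a contradiction. Therefore $M^{(d_1)}\cap M^{(d_2)}(-B)$ is empty a.s.-$\WW$.

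No significant obstacle is anticipated; the argument is a routine assemblage of SDE comparison, the coalescent construction of $M^{(d)}$ from Section~\ref{section:new-family-of-examples}, and pathwise non-constancy of Brownian motion. It should be stressed that the restriction to $d_1,d_2\in[1,2)$ is essential: the comparison $Z^{(d)}\geq(B-\underline{B})^2$ fails for $d<1$, so the pinning-down property that drives the whole argument is not available outside the range $[1,2)$.
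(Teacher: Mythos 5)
Your proof is correct and takes a genuinely different route from the paper's. The paper invokes \cite[Exercise~XI.1.26]{revuz} to write, for $d\in(1,2)$, $\sqrt{Z^{(d)}_t}=B_t+\frac{d-1}{2}\int_0^t \dd s/\sqrt{Z^{(d)}_s}$ (with a local-time correction when $d=1$), adds the two representations for $Z^{(d_1)}$ driven by $B$ and $Z^{(d_2)}(-B)$ driven by $-B$, and observes that the $B_t$-terms cancel, leaving the sum strictly positive for all $t\in(0,\infty)$; hence the two processes cannot simultaneously vanish, and disjointness follows via Lemma~\ref{lemma:gamma}\ref{lemma:gamma:iii}. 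You instead apply the comparison theorem \cite[Theorem~IX.3.7]{revuz} (which the paper already uses in the proof of Proposition~\ref{proposition:a.s.distinct}) to bound $Z^{(d)}\geq Z^{(1)}=(B-\underline{B})^2$ for $d\geq1$, convert vanishing of $Z^{(d)}$ into a pinning statement (a zero is a running infimum of the driving path over the preceding interval), run the same for $-B$, and conclude via the a.s.\ nowhere-constancy of Brownian paths. Both arguments hinge on the Skorokhod/Bessel structure available exactly for $d\geq1$, and both are of comparable length; yours trades the explicit square-root SDE representation for the comparison theorem plus a clean pathwise contradiction, which is arguably slightly more elementary since the paper needs that comparison theorem elsewhere anyway. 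One small presentational remark: the paper's version works directly at the level of the pair $(g^{(d_1)}_{s,t},g^{(d_2)}_{s,t}(-B))$ and shows they are a.s.\ distinct for each fixed rational pair, whereas your route passes through a pathwise property that each $M^{(d)}$ point is a one-sided extremum over a rational left-neighbourhood — this second characterisation may be independently useful, so it is worth keeping explicit.
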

\begin{proof}
Assume $\{d_1,d_2\}\subset (1,2)$ at first. According to \cite[Exercise~XI.1.26]{revuz} if $Z=Z^{(d_i)}$  solves \eqref{sde} for $d=d_i$, $i\in \{1,2\}$, then $\WW$-a.s.
\begin{equation*}\sqrt{Z_t^{(d_i)}}=B_t+\frac{d_i-1}{2}\int_0^t\frac{\dd s}{\sqrt{Z_s^{(d_i)}}},\quad t\in [0,\infty).\end{equation*}
Taking the sum yields
\begin{equation*}\sqrt{Z_t^{(d_1)}}+\sqrt{Z^{(d_2)}(-B)_t}=\frac{d_1-1}{2}\int_0^t\frac{\dd s}{\sqrt{Z_s^{(d_1)}}}+\frac{d_2-1}{2}\int_0^t\frac{\dd s}{\sqrt{Z^{(d_2)}(-B)_s}},\quad t\in [0,\infty),\end{equation*}
a.s.-$\WW$, so that $\left(\sqrt{Z_t^{(d_1)}}+\sqrt{Z^{(d_2)}(-B)_t}\right)_{t\in (0,\infty)}$ is a.s.-$\WW$ strictly positive, hence at most one of $Z^{(d_1)}$ and $Z^{(d_2)}(-B)$ is zero at any given point in time from   $(0,\infty)$. We deduce that $g^{(d_1)}_{s,t}\ne g^{(d_2)}_{s,t}(-B)$ with $\WW$-probability one for all real $s< t$. Due to Lemma~\ref{lemma:gamma}\ref{lemma:gamma:iii} it renders  $M^{(d_1)}\cap M^{(d_2)}(-B)=\emptyset$ a.s.-$\WW$, as required.

If, say, $d_1=1$, then  \cite[Exercise~XI.1.26]{revuz} 
\begin{equation*}\sqrt{Z_t^{(d_1)}}=B_t+\frac{1}{2}L^0_t,\quad t\in [0,\infty),\end{equation*} where $L^0_t$ is the local time of $\sqrt{Z^{(d_1)}}$ at zero; similarly for $d_2=1$. The remainder of the argument to handle the case when possibly $d_1=1$ or $d_2=1$ is essentially verbatim the same.
\end{proof}
For $d\in (0,2)$ associate $(N^{(d)};\PP^{(d)},T^{(d)})$ to $M^{(d)}$. 
\begin{corollary}
The noises  $(N^{(d)};\PP^{(d)},T^{(d)})$, $d\in [1,2)$, are pairwise non-isomorphic.
\end{corollary}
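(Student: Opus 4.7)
The plan is to combine the disjointness results of Propositions~\ref{proposition:a.s.distinct} and~\ref{proposition:disjoint-negative} with the isomorphism criterion of Theorem~\ref{thm:not-iso-noises}. The statement of the latter says that $(N^{(d_1)};\PP^{(d_1)},T^{(d_1)})$ and $(N^{(d_2)};\PP^{(d_2)},T^{(d_2)})$ are non-isomorphic precisely when both $M^{(d_1)}\triangle M^{(d_2)}$ and $M^{(d_1)}\triangle M^{(d_2)}(-B)$ fail to be empty, so the proof reduces to verifying these two non-emptiness properties whenever $d_1\ne d_2$ are in $[1,2)$.

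Fix distinct $d_1,d_2\in[1,2)$. First I would recall that each $M^{(d_i)}$ is a dense stationary local random countable set (Proposition~\ref{proposition:gamma} together with the discussion following it, which shows denseness for $d\in (0,2)$). Since $d_1\ne d_2$, Proposition~\ref{proposition:a.s.distinct} tells us that $M^{(d_1)}\cap M^{(d_2)}=\emptyset$ a.s.-$\WW$, so
\[
M^{(d_1)}\triangle M^{(d_2)}=M^{(d_1)}\cup M^{(d_2)}\quad\text{a.s.-$\WW$,}
\]
which is dense, hence certainly not empty. Likewise, Proposition~\ref{proposition:disjoint-negative} gives $M^{(d_1)}\cap M^{(d_2)}(-B)=\emptyset$ a.s.-$\WW$; since $M^{(d_2)}(-B)$ is also a.s.-$\WW$ dense (being a pathwise push-forward of a dense set under the $\WW$-preserving sign change $B\mapsto -B$), we obtain
\[
M^{(d_1)}\triangle M^{(d_2)}(-B)=M^{(d_1)}\cup M^{(d_2)}(-B)\quad\text{a.s.-$\WW$,}
\]
which is again not empty. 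Condition \ref{thm:not-iso-noises:i} of Theorem~\ref{thm:not-iso-noises} is thus satisfied, and the theorem yields the non-isomorphism of the two noises.

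There is no real obstacle here: Proposition~\ref{proposition:a.s.distinct} is genuinely used (and is the whole reason the family $(M^{(d)})_{d\in (0,2)}$ was constructed), while Proposition~\ref{proposition:disjoint-negative} is the ingredient that prevents the ``sign-change'' loophole through which the local minima and the local maxima escaped to produce isomorphic noises. The only thing one has to be slightly careful about is the observation that symmetric difference of two a.s.-disjoint dense sets is a.s.-non-empty, which is immediate.
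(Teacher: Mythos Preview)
Your proof is correct and follows exactly the paper's approach: combine Propositions~\ref{proposition:a.s.distinct} and~\ref{proposition:disjoint-negative} with Theorem~\ref{thm:not-iso-noises}. You have simply spelled out in more detail why the two symmetric differences are non-empty (via disjointness plus density), which the paper leaves implicit.
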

\begin{proof}
Combine Propositions~\ref{proposition:a.s.distinct} and~\ref{proposition:disjoint-negative} with Theorem~\ref{thm:not-iso-noises}.
\end{proof}
We do not attempt here the decomposition into isomorphism classes of the entire family $(N^{(d)};\PP^{(d)},T^{(d)})$, $d\in (0,2)$. Though, the latter are certainly  pairwise non-isomorphic extensions of the Wiener noise in the sense of (and as a consequence of) Remark~\ref{rmk:non-iso-extensions}.

\subsection{Subnoises}
Under a subnoise of a noise $(N;\PP,T)$ we shall understand a family $N'=(N'_{s,t})_{(s,t)\in [-\infty,\infty]^2,s<t}$ of $\PP$-complete sub-$\sigma$-fields of $N_{-\infty,\infty}$ having the factorizability property relative to $\PP\vert_{N'_{-\infty,\infty}}$ such that $N'_{s,t}=N_{s,t}\cap N_{-\infty,\infty}'$ for all extended-real $s<t$ and such that $T_h\in N'_{-\infty,\infty}/N'_{-\infty,\infty}$ for all $h\in \mathbb{R}$ (hence, actually,  $T_h\in N_{s+h,t+h}'/N_{s,t}'$ for all extended-real $s<t$ and all $h\in \mathbb{R}$). Passing, in the obvious way, to the associated standard quotient probability $\PP'/N_{-\infty,\infty}'$ (note: the probability $\PP'\vert_{N_{-\infty,\infty}'}$ is not standard unless $N_{-\infty,\infty}'=N_{-\infty,\infty}$), the associated quotient sub-$\sigma$-fields $N'/N_{-\infty,\infty}'$ and the associated  group of mod-$0$ isomorphisms $T/N_{-\infty,\infty}'$  we get indeed a (one-dimensional) noise, which justifies the nomenclature.  It is important to note that for a subnoise $N'$, as a consequence of the factorizability property, the $\sigma$-field $N'_{-\infty,\infty}$ (which, incidentally, wholly determines $N'$) commutes with all of the $\sigma$-fields of $N$ in the sense that $\PP[\PP[\cdot\vert N'_{-\infty,\infty}]\vert N_{s,t}]=\PP[\PP[\cdot\vert  N_{s,t}]\vert N'_{-\infty,\infty}]$ (and hence $=\PP[\cdot\vert N'_{s,t}]$) on $\L2(\PP)$ for all extended-real $s<t$: by a monotone class argument it suffices indeed to check that $\PP[ff'\vert N'_{-\infty,\infty}\vert N_{s,t}]=\PP[ff'\vert  N_{s,t}\vert N'_{-\infty,\infty}]$ for $f\in \L2(\PP\vert_{N_{s,t}})$ and $f'\in \L2(\PP\vert_{N_{-\infty,s}\lor N_{t,\infty}})$, in which case it follows from the following computation, in which we write $y:=N'_{-\infty,\infty}$, $x:=N_{s,t}$ and $x':=N_{-\infty,s}\lor N_{t,\infty}$ for short,
\begin{align*}
\PP[\PP[ff'\vert x]\vert y]&=\PP[f\vert y]\PP[f']=\PP[f\vert (y\cap x)\lor (y\cap x')]\PP[f']=\PP[f\vert y\cap x]\PP[f']\\
&=\PP[\PP[f\vert y\cap x]\PP[f'\vert y\cap x']\vert x]=\PP[\PP[ff'\vert (y\cap x)\lor (y\cap x')]\vert x]=\PP[\PP[ff'\vert y]\vert x].
\end{align*} We recall also that any noise $(N;\PP,T)$ admits a largest classical subnoise $N^{\mathrm{lin}}$, called the linear or classical part of $(N;\PP,T)$, whose underlying $\sigma$-field $N^{\mathrm{lin}}_{-\infty,\infty}$ is connected to the stable part $H_{\mathrm{stb}}$ of $\L2(\PP)$ by the relation $H_{\mathrm{stb}}=\L2(\PP\vert_{N^{\mathrm{lin}}_{-\infty,\infty}})$. We will use these facts below without special mention.

A stationary local random countable set $M'$ that is contained in $M$ a.s.-$\WW$ leads to a subnoise of $(N^M;\PP^M,T^M)$ as follows. For extended-real $s<t$ let $S_{s,t}'$ be a measurable enumeration of $M'\cap (s,t)$ and define $N^{M'}_{s,t}$ as the $\PP^M$-complete $\sigma$-field generated by the increments of $W\circ \Theta^M$ on $(s,t)$ and by the random signs 
\begin{equation*}
\left((W\circ \Theta^M)^{-1}(\{S_{s,t}'(k)\in M\})\ni (\omega_1,\eta)\mapsto \eta(S_{s,t}'(k)(\omega_1))\right),\quad k\in \mathbb{N}.\end{equation*} Then $N^{M'}:=(N^{M'}_{s,t})_{(s,t)\in \mathbb{R}^2,s<t}$ is a subnoise of $(N^M;\PP^M,T^M)$  (there is no dependence of $N^{M'}$ on the actual enumerations used).  The notation $N^{M'}$ is in conflict with $N^{M'}$ under $\PP^{M'}$, but we will suffer it, since we will only work with $(N^M;\PP^M,T^M)$ in this subsection, no $(N^{M'};\PP^{M'},T^{M'})$ shall appear. Two special cases are worth pointing out: when $M'$ is empty, then $N^{M'}$ is generated just by $W\circ \Theta^M$ and we get the Wiener subnoise, which is also the classical part of $(N^M;\PP^M,T^M)$ by Proposition~\ref{lemma:identifications-stable-super};  when $M\backslash M'$ is empty, then $N^{M'}=N^M$ and we have the ``full'' subnoise.

%Let indeed $M'$ be a stationary local random countable set contained in $M$. 

%
%We associate to it the subnoise $N^{M'}$ as follows. (The notation $N^{M'}$ is in conflict with $N^{M'}$ under $\PP^{M'}$, but we will suffer it, since we shall only work with $(N^M;\PP^M,T^M)$ in this subsection, no $(N^{M'};\PP^{M'},T^{M'})$ shall appear.)
%\begin{enumerate}[(1)]
%\item If $M'$ is empty, then, for extended-real $s<t$, we let $N^{M'}_{s,t}$ be the $\PP^M$-complete $\sigma$-field generated by the increments of $W\circ\Theta^M$ on $(s,t)$  and $N^{M'}:=(N^{M'}_{s,t})_{(s,t)\in \mathbb{R}^2,s<t}$ is the Wiener subnoise of $(N^M;\PP^M,T^M)$. 
%\item If $M\backslash M'$ is empty, then we set $N^{M'}=N^M$ and get the full subnoise. 
%\item The preceding two  cases left aside, letting for extended-real $s<t$, $S_{s,t}'$ be an enumeration of $M'\cap (s,t)$ %(For instance, $K=\emptyset$ or $K=2\mathbb{N}$ or $K=\mathbb{N}$ according as to whether $M'$ is empty, neither $M\backslash M'$ nor $M'$ is empty or $M\backslash M'$ is empty.) If for extended-real $s<t$ 
%we define $N^{M'}_{s,t}$ as the $\PP^M$-complete $\sigma$-field generated by the increments of $W\circ \Theta^M$ on $(s,t)$ and by the random signs $$(\Omega^M\ni (\omega_1,\eta)\mapsto \eta(S_{s,t}'(k)(\omega_1))),\quad k\in \mathbb{N},$$ and get the non-trivial (proper non-void) subnoise $N^{M'}:=(N^{M'}_{s,t})_{(s,t)\in \mathbb{R}^2,s<t}$ (there is no dependence of the latter on the actual enumerations used.)
%\end{enumerate}

Actually all the non-void subnoises of $(N^M;\PP^M,T^M)$ are got in this way. (A subnoise is called void when all its $\sigma$-fields are $\PP$-trivial.)

\begin{theorem}\label{thm:subnoises}
Let $N'$ be a non-void subnoise of $(N^M;\PP^M,T^M)$. Then $N'$ is equal to $N^{M'}$ for some stationary local random countable set $M'$  that is contained in $M$.
\end{theorem}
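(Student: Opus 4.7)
The strategy is to show that any non-void subnoise $N'$ must contain the classical (Wiener) subnoise $\sigma(W\circ\Theta^M)$, and then to read off $M'$ as the set of points of $M$ whose attached sign is $N'_{-\infty,\infty}$-measurable. The bulk of the work lies in the containment.

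For the containment, the classical part of $N'$ is a classical subnoise of $N^M$, hence by Proposition~\ref{lemma:identifications-stable-super} of the Wiener noise attached to $W\circ\Theta^M$. The one-dimensional Wiener noise admits only the trivial and the full shift-invariant factorizable subnoises: such a subnoise corresponds to a closed subspace of its first chaos $\mathrm{L}^2(\mathbb{R})$ that is both shift-invariant (hence a Fourier multiplier) and stable under time-localization $\mathbbm{1}_{(s,t)}\cdot$, leaving only $\{0\}$ and the whole space. So $\mathcal{W}':=N'_{-\infty,\infty}\cap\sigma(W\circ\Theta^M)$ is either $\PP^M$-trivial or equal to $\sigma(W\circ\Theta^M)$. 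The decisive step is to rule out the trivial alternative. The commutativity of $\PP^M[\cdot\vert N'_{-\infty,\infty}]$ with $\PP^M[\cdot\vert N^M_{s,t}]$ (a general consequence of the subnoise structure already recorded above) forces $\PP^M[\cdot\vert W\circ\Theta^M]$ to map $\mathrm{L}^2(N'_{-\infty,\infty})$ into $\mathrm{L}^2(\mathcal{W}')$; were $\mathcal{W}'$ trivial, every non-zero first-chaos element $f$ of $N'$ would then lie in $H^M_{\mathrm{sens},1}$ and expand as $f=\sum_k f_k(W\circ\Theta^M)(P_k\circ\Theta^M)$ by Proposition~\ref{lemma:general-f}. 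Applying the commutativity to $\PP^M[f\vert N^M_{s,t}]=\PP^M[f\vert N'_{s,t}]=\sum_{k\colon S_k\in(s,t)}f_k(W\circ\Theta^M)(P_k\circ\Theta^M)$ and taking the $\sigma(W\circ\Theta^M)$-conditional variance forces $\sum_{k\colon S_k\in(s,t)}\vert f_k\vert^2$ to be $\WW$-a.s.\ deterministic, for every rational $s<t$. By Proposition~\ref{lemma:identify-spectral} this identifies the spectral measure $\mu_f^M$ on $\mathbb{R}\times\Omega_0$ as a product $\mathrm{d}\lambda\otimes\WW$ for a nondecreasing function $\lambda$; but $\mu_f^M$ is carried by the graph $\llbracket M\rrbracket$, which has zero $\mathrm{d}\lambda\otimes\WW$-measure because each deterministic time-section $\{r\in M\}$ is $\WW$-negligible (Proposition~\ref{SIL:probability-zero}). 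Hence $\lambda$ is constant, forcing $f=0$ and contradicting $f\ne 0$.

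With $\sigma(W\circ\Theta^M)\subset N'_{-\infty,\infty}$ in hand, conditionally on $W\circ\Theta^M$ the signs $(P_k\circ\Theta^M)_{k\in\mathbb{N}}$ are i.i.d.\ equiprobable, and $N'_{-\infty,\infty}$ reduces (modulo $\sigma(W\circ\Theta^M)$) to a sub-$\sigma$-field of $\sigma(P_k\circ\Theta^M:k\in\mathbb{N})$. The factorization of $N'$ over disjoint time intervals, transported through the enumerations $S_{s,t}$, is a factorization of this conditional $\sigma$-field across disjoint coordinate subsets; refining partitions until each point of $M$ sits alone in its own interval, an elementary check (the only sub-$\sigma$-field of $\sigma(P_k)$ is trivial or $\sigma(P_k)$) identifies the conditional $\sigma$-field as $\sigma(P_k\circ\Theta^M:k\in K)$ for some random $K\subset\mathbb{N}$. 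Measurably, $K$ is pinned down by $\vert\PP^M[P_k\circ\Theta^M\vert N'_{-\infty,\infty}]\vert=\mathbbm{1}_{\{k\in K\}}$, and by the same conditional description this indicator is $\sigma(W\circ\Theta^M)$-measurable. Put $M':=\{S_k:k\in K\}\subset M$; equivalently, $s\in M'$ iff the sign at $s$ is $N'_{-\infty,\infty}$-measurable (an enumeration-free characterisation).

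Locality of $M'$ then follows from $N'_{s,t}=N^M_{s,t}\cap N'_{-\infty,\infty}$ together with the resulting explicit description $N'_{s,t}/\sigma(W\circ\Theta^M)=\sigma(P_k\circ\Theta^M:S_k\in(s,t),\,k\in K)$, which delivers an $\FF_{s,t}$-measurable enumeration of $M'\cap(s,t)$. Stationarity follows from the shift-invariance of $N'$ under $T^M$, transported through the action of $T_h^M$ on the $S_k$ and on the signs (using the shift-stabilisation of Proposition~\ref{propo:every-enumeration-is-stabilising}). Finally, matching the preceding description of $N'_{s,t}$ with the defining generators of $N^{M'}_{s,t}$ (increments of $W\circ\Theta^M$ on $(s,t)$ together with the signs on $M'\cap(s,t)$) yields $N'=N^{M'}$. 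The main obstacle throughout is the first step: excluding a non-void ``black'' subnoise inside $N^M$; the remainder is bookkeeping around the conditional product-Bernoulli structure.
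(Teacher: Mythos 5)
Your proposal takes a genuinely different route from the paper (which builds a commutative von Neumann algebra $\AA^M_{\mathrm{sens},1}$ acting on $H^M_{\mathrm{sens},1}$, constructs a spectral resolution onto $\L2(\mu_f^M)$, and identifies the invariant subspace $H'_{\mathrm{sens},1}$ as $\L2(\mathbbm{1}_Q\cdot\mu_f^M)$ via the general fact that a multiplication-stable closed subspace of an $\L2$ space is cut out by an indicator). Your first step — ruling out a void classical part — is also argued differently: the paper invokes the absence of super-superchaoses in $N^M$, while you attempt a direct argument via the spectral measure $\mu_f^M$ being carried by $\llbracket M\rrbracket$, which has a pleasant concrete flavour. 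However, that step already contains an unjustified jump: you assert that a non-zero first-chaos element of $N'$ ``would then lie in $H^M_{\mathrm{sens},1}$,'' but the first chaos of a subnoise is not a priori contained in the first superchaos of the ambient noise (the relation $\PP^M[f\vert N'_{s,t}]=\PP^M[\PP^M[f\vert N^M_{s,t}]\vert N'_{-\infty,\infty}]$ only projects, it does not identify), and $H^M_{\mathrm{sens}}\supsetneq H^M_{\mathrm{sens},1}$. This would need a separate argument.

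The more serious gap is in the identification of $M'$. You write that one can ``refin[e] partitions until each point of $M$ sits alone in its own interval'' and then appeal to ``the only sub-$\sigma$-field of $\sigma(P_k)$ is trivial or $\sigma(P_k)$.'' But $M$ is a.s.\ dense, so for every finite partition of $\mathbb{R}$ each cell contains infinitely many points of $M$, and the single-coordinate dichotomy is never reached; nor is it explained how the (necessarily decreasing) conditional $\sigma$-fields stabilise in the limit, or why the limiting object is of the coordinate-subset form $\sigma(P_k:k\in K)$. This is precisely the content of the paper's spectral-resolution argument: the factorizability of $N'$ together with its invariance under the multiplication operators $\mathsf{M}_A$ and the conditional-expectation operators $\mathsf{P}_{s,t}$ makes $\Psi(H'_{\mathrm{sens},1})$ a multiplication-stable subspace of $\L2(\mu_f^M)$, hence of the form $\L2(\mathbbm{1}_Q\cdot\mu_f^M)$, from which $M'$ is read off. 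No substitute for this Hilbert-space identification is supplied in your sketch. Consequently, the subsequent claims — that $K$ is $\sigma(W\circ\Theta^M)$-measurable, that $M'$ is local and stationary, and that $N'=N^{M'}$ — all rest on the gap; in the paper each of these requires its own non-trivial work (uniqueness of spectral resolutions up to equivalence for locality and stationarity, followed by a superchaos comparison for the final equality), whereas you dispatch them in a few lines as ``bookkeeping.''
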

\begin{proof}
%We may and do assume $N'$ is not void (otherwise take $M'=\emptyset$). 
The spectral measure type  \cite[bottom of p.~274]{tsirelson-nonclassical} of $N'$ is absolutely continuous w.r.t. that of $N^M$, indeed for each $f\in \L2(\PP\vert_{N'_{-\infty,\infty}})$ the  $N'$-spectral measure of $f$ \cite[p.~272]{tsirelson-nonclassical} is equal to the $N$-spectral measure of $f$. % (as $N'_{-\infty,\infty}$ is commuting with the $\sigma$-fields of $N^M$, see \cite[Item~$\mathbf{(p)}$ in Subsection~2.2]{vidmar-noise} for the argument).
 The stable part of $N'$ is contained in the stable part of $N^M$. Since  $N^M$ has only superchaoses of finite order, the same is true of $N'$. If the stable part of $N'$ is void, then the first superchaos of $N'$ is void too, whence $N'$ is rendered void, which is a contradiction with the assumption. Therefore the stable part of $N'$ is non-void and since the only non-void subnoise of the one-dimensional Wiener noise is the Wiener noise  itself (Lemma~\ref{lemma:subnoise-of-wiener-noise} below) 
%(such a noise must be classical, hence generated by its additive integrals, but all additive integrals are integrals against B
%(this may be gleaned for instance from the one-dimensionality of the Hilbert spaces in the spectral resolution of the Wiener noise, i.e. in the spectral resolution of the commutative von Neumann algebra generated by the conditional expectation operators acting on $\L^2$ and corresponding to the $\sigma$-fields of the Wiener noise, see e.g. \cite[]{vidmar-noise}) 
it follows that $N'$ and $N^M$ have the same stable part, the same maximal classical subnoise, which is the subnoise generated by $W\circ \Theta^M$. %If $N'$ is classical then we can take $M'=\emptyset$ and we are done. Suppose then that $N'$ is not classical.
In symbols, $(N^M)^{\mathrm{lin}}_{-\infty,\infty}=(N')^{\mathrm{lin}}_{-\infty,\infty}=\overline{\sigma}(W\circ \Theta^M)$.

Now consider the commutative von Neumann algebra $\AA^M_{\mathrm{sens},1}$ of operators on the Hilbert space $H^M_{\mathrm{sens},1}$ generated by the  (commuting) family of the multiplications $\mathsf{M}_A$ with the indicators of $\{W\circ \Theta^M\in A\}$ for $A\in \GG$ (notice they leave $H^M_{\mathrm{sens},1}$ invariant) and of the conditional expectations $\mathsf{P}_{s,t}$ with respect to the sub-$\sigma$-fields $\sigma(W\circ\Theta^M)\lor N^M_{s,t}$ for extended-real $s< t$ (they also leave $H^M_{\mathrm{sens},1}$ invariant). Pick an $f\in H^M_{\mathrm{sens},1}$ such that $f_k\ne 0$ a.s.-$\WW$ for all $k\in \mathbb{N}$ (so that $\mu_f^M$ has full support) and set $\mu:=\mu_f^M$ (recall \eqref{equation:conditional-spectral-measure} for the notation). For $g\in H^M_{\mathrm{sens},1}$ define $\Psi(g)\in (\mathcal{B}_{\mathbb{R}}\otimes\GG)/\mathcal{B}_\mathbb{C}$ by putting
\begin{equation*}
\Psi(g)((t,\omega)):=\sum_{k\in \mathbbm{N}}\mathbbm{1}_{\{S_k(\omega)\}}(t)\frac{g_k(\omega)}{f_k(\omega)}\mathbbm{1}_{\mathbb{C}\backslash \{0\}}(f_k(\omega)),\quad (t,\omega)\in \mathbb{R}\times \Omega_0.
\end{equation*}
We compute
\begin{equation*}\mu[ \overline{\Psi(g)}\Psi(h)]=\sum_{k\in \mathbb{N}}\WW\left[\vert f_k\vert^2\frac{\overline{g_k}h_k}{\vert f_k\vert^2}; f_k\ne 0\right]=\sum_{k\in \mathbb{N}}\WW\left[\overline{g_k}h_k\right]=\PP^M[\overline{g}h],\quad \{g,h\}\subset H^M_{\mathrm{sens},1};
\end{equation*} therefore $\Psi:H^M_{\mathrm{sens},1}\to \L2(\mu)$ is a linear isometry. Since it is also surjective it is indeed a unitary isomorphism. Furthermore, (*) for $A\in \GG$ and extended-real $s<t$ the product $\mathsf{M}_A\mathsf{P}_{s,t}$ corresponds, via said unitary isomorphism, to multiplication with $(s,t)\times A$ (as we have seen in the proof of Proposition~\ref{lemma:identifications-stable-super}). We deduce that $\Psi$ is a spectral resolution of  $\AA^M_{\mathrm{sens},1}$ satisfying (*), which will be a property (variants of) which we shall refer to later on.

Restricting to $N'$ we have the commutative von Neumann algebra $\AA'_{\mathrm{sens},1}$ of operators on the first superchaos $H'_{\mathrm{sens},1}$ of $N'$ introduced in the same way as  $\AA^M_{\mathrm{sens},1}$ above except that $N'$ replaces $N^M$; one checks that $\AA'_{\mathrm{sens},1}=\{A\vert_{H'_{\mathrm{sens},1}}:A\in \AA^M_{\mathrm{sens},1}\}$. Because $H'_{\mathrm{sens},1}\subset H_{\mathrm{sens},1}^M$ is invariant under the action of the operators from $\AA^M_{\mathrm{sens},1}$ %(being invariant under the action of the operators from $\AA'_{\mathrm{sens},1}$) 
we may infer\footnote{In general, if, for a $\sigma$-finite measure $\nu$, $H$ is a closed linear subspace of $\L2(\nu)$  that is closed under the action of the multiplication maps with  elements of $\mathrm{L}^\infty(\nu)$ (or, equivalently, just with indicators of measurable sets belonging to a generating $\pi$-system), then $H=\L2(\mathbbm{1}_E\cdot \nu)$ for some measurable set $E$, up to the canonical inclusion of $\L2(\mathbbm{1}_E\cdot \nu)$ into $\L2(\nu)$. To see it, consider the orthogonal projection $\pr_H$ onto $H$ and note that it commutes with the multiplication operators corresponding to elements of $\mathrm{L}^\infty(\nu)$. It is well-known that this renders $\mathrm{pr}_H$ itself a multiplication operator by an element of $\mathrm{L}^\infty(\nu)$; being a projection, it must be the operator of multiplication with some measurable $E$.} that $\Psi(H'_{\mathrm{sens},1})=\L2(\mathbbm{1}_Q\cdot \mu)$ for some $Q\in  \mathcal{B}_{\mathbb{R}}\otimes\GG$, $Q\subset \llbracket M\rrbracket$. Then  $\Psi\vert_{H'_{\mathrm{sens},1}}:H'_{\mathrm{sens},1}\to \L2(\mathbbm{1}_Q\cdot\mu)$ is a spectral resolution of $\AA'_{\mathrm{sens},1}$ satisfying (*'), i.e. (*) with --- in the obvious notation ---  $\mathsf{M}_A'\mathsf{P}_{s,t}'=\mathsf{M}_A\vert_{H'_{\mathrm{sens},1}}\mathsf{P}_{s,t}\vert_{H'_{\mathrm{sens},1}}$ in lieu of  $\mathsf{M}_A\mathsf{P}_{s,t}$.

Now we argue that $Q$ is actually the graph of the stationary local random countable set $M'$ given by 
\begin{equation*}
M'(\omega):=\{t\in \mathbb{R}:(t,\omega)\in Q\},\quad \omega\in\Omega_0.
\end{equation*}

First, for $k\in \mathbb{N}$ and $\omega\in \Omega_0$, define $S'_k(\omega)$ as $S_k(\omega)$ or $\dagger$ according as $(S_k(\omega),\omega)\in Q$ or not. We get a measurable enumeration $(S'_k)_{k\in \mathbb{N}}$ of $M'$.  Thus $M'$ is a random countable set. 

Second, we show that $M'$ is local. For ease of notation focus on establishing that $M'\cap (-\infty,0)$ admits an $\FF_{-\infty,0}$-measurable enumeration (and, in passing,  $M'\cap (0,\infty)$ an $\FF_{0,\infty}$-measurable enumeration), the general case is not fundamentally more difficult. Denote $\Omega_{0,-}:=\Omega_0\vert_{(-\infty,0]}$ with Wiener measure $\PPP_-$ defined on the $\sigma$-field  $\GG_-$; analogously introduce $\Omega_{0,+}$ and $\PPP_+$ defined on $\GG_+$ (which are then just $\Theta$ and $\PPP$ on $\HH$, respectively). % (it corresponds to taking the quotients of $\Omega_1$ w.r.t. $\FF_{-\infty,0}\vert_{\Omega_1}$ and $\FF_{0,\infty}\vert_{\Omega_1}$, respectively).  %Notice that a different choice of the enumeration $S$ and function $f$ leads a.s.-$\WW$ to the same set $M'$. 
%We   choose an $\FF_{-\infty,0}$-measurable enumeration $(S_{-}(k))_{k\in \mathbb{N}}$ of $M\cap (-\infty,0)$ and  an $\FF_{0,\infty}$-measurable enumeration $(S_{+}(k))_{k\in \mathbb{N}}$ of $M\cap (0,\infty)$, injective etc. on $\Omega_1$ (as in  the first paragraph of Subsection~\ref{subsection:construction}). %and the enumeration $S$ so that  $S_{2(k-1)+1}=S_{-}(k)$ and $S_{2k}=S_{+}(k)$ a.s.-$\WW$ for all $k\in \mathbb{N}$. 
Repeating the story above with the commutative von Neumann algebras --- which, note, does not require the intervention of the measure-preserving group, only the factorization! --- but corresponding to the  ``restrictions'' of the factorizations $N^M$ and $N'$ to the temporal interval $(-\infty,0)$, we get  $Q_{-}\in  \mathcal{B}_{(-\infty,0)}\otimes \GG_-$, an $\FF_{-\infty,0}$-measurable enumeration $(S_{-}'(l))_{l\in \mathbb{N}}$ of the random countable set $M'_{-}$,
 \begin{equation*}M_-'(\omega):=\{t\in \mathbb{R}:(t,\omega\vert_{(-\infty,0]})\in Q_-\},\quad \omega\in\Omega_0,\end{equation*}
and a measure $\mu_{-}$ on $(-\infty,0)\times \Omega_{0-}$ together with spectral resolutions $\Psi_{-}:H_{\mathrm{sens},1-}^M\to \L2(\mu_{-})$ of $\AA_{\mathrm{sens},1-}^M$ and  $\Psi_{-}\vert_{H'_{\mathrm{sens},1-}}:H'_{\mathrm{sens},1-}\to \L2(\mathbbm{1}_{Q_{-}}\cdot\mu_{-})$ of $\AA'_{\mathrm{sens},1-}$; likewise with $+$ in lieu of $-$. Moreover, up to a canonical unitary isomorphism $H_{\mathrm{sens},1}^M=\left(H_{\mathrm{sens},1-}^M\otimes \L2(\PPP_+)\right)\oplus \left(\L2(\PPP_-)\otimes H_{\mathrm{sens},1+}^M\right)$. On the other hand we have the obvious identification   $\iota$ of  $(((-\infty,0)\times \Omega_{0-})\times \Omega_{0+})\sqcup (\Omega_{0-}\times ((0,\infty)\times \Omega_{0+}))$ with $\mathbbm{R}\times \Omega_0$  and thus we have also  the identification of $(\mu_-\times \PPP_+)\oplus (\PPP_-\times \mu_+)$ with the $\iota$-push-forward of this measure to the measure $\tilde \mu:=\iota_\star\mu$ on $\mathbb{R}\times \Omega_0$; the same for $((\mathbbm{1}_{Q_-}\cdot \mu_-)\times \PPP_+)\oplus (\PPP_-\times (\mathbbm{1}_{Q_+}\cdot \mu_+))$, which sliding along $\iota$ gives the measure $\mathbbm{1}_{\tilde Q}\cdot \tilde \mu$ with $\tilde Q:=\{(t,\omega)\in \mathbb{R}\times\Omega_0:(t,\omega\vert_{(-\infty,0]})\in Q_-\text{ or }(t,\omega\vert_{[0,\infty)})\in Q_+\}$. Thus, up to these natural identifications, $\tilde\Psi:=\left(\Psi_-\otimes \mathrm{id}_{\L2(\PPP_+)}\right)\oplus \left( \mathrm{id}_{\L2(\PPP_-)}\otimes \Psi_+\right):H_{\mathrm{sens},1}^M\to \L2(\tilde\mu)$ becomes a spectral resolution of  $\AA^M_{\mathrm{sens},1}$ satisfying (*.), which restricts to the spectral resolution $\tilde\Psi\vert_{H_{\mathrm{sens},1}'}:H_{\mathrm{sens},1}'\to \L2(\mathbbm{1}_{\tilde Q}\cdot \tilde \mu)$ of $\AA'_{\mathrm{sens},1}$ satisfying (*'.), the dots in  (*.), (*'.) signifying  that perhaps (*), (*') hold only for $A$ of the form $A_-\cap A_+$ with $(A_-,A_+)\in \FF_{-\infty,0}\times \FF_{0,\infty}$. %, the identification of  $H_{\mathrm{sens},1}^M$ with $\left(H_{\mathrm{sens},1-}^M\otimes \L2(\mathbb{\mathbb{W}}\vert_{\Omega_{1+}})\right)\oplus \left(\L2(\mathbb{\mathbb{W}}\vert_{\Omega_{1-}})\otimes H_{\mathrm{sens},1+}^M\right)$ corresponding to the obvious identification
But spectral measures of spectral resolutions of commutative von Neumann algebras with a fixed spectral space and fixed spectral sets on a generating multiplicative system of projections (viz. properties (*.), (*'.)) are unique up to equivalence. This means that $\mu\sim\tilde\mu$ and  $\mathbbm{1}_{\tilde Q}\cdot \tilde \mu\sim \mathbbm{1}_{Q}\cdot \mu$. Therefore $M_-'\cup M_+'=M'$ a.s.-$\WW$, especially $M_-'=M'\cap (-\infty,0)$ a.s.-$\WW$ (and likewise for $M_+'$). The proof of locality is complete.

Third, we proceed to stationarity. Let $h\in \mathbb{R}$. The automorphism $T_h^M$ of the measure space $(\Omega^M,\BB^M,\PP^M)$ induces the unitary automorphism $\psi_h:=(f\mapsto f\circ T_h^M)$ of the Hilbert space $\L2(\PP^M)$, which leaves $H^M_{\mathrm{sens},1}$ invariant. In turn,  $\delta_h:=\psi_h\vert_{H^M_{\mathrm{sens},1}}$  induces the automorphism $\mathfrak{A}_h:=(X\mapsto \delta_h^{-1}X\delta_h)$ of the commutative von Neumann algebra $\AA^M_{\mathrm{sens},1}$. We have also the unitary isomorphism $\Psi\circ \delta_h:H^M_{\mathrm{sens},1}\to \L2(\mu)$, which amounts to a spectral resolution of $\AA^M_{\mathrm{sens},1}$ for which the spectral set associated to $\mathfrak{A}_h(\mathsf{M}_A\mathsf{P}_{s,t})=M_{\Delta_h(A)}\mathsf{P}_{s-h,t-h}$ is $(s,t)\times A$, this for all extended-real $s<t$ and $A\in \GG$. Define 
\begin{equation*}
\rho_h((t,\omega)):=(t-h,\Delta_h(\omega)),\quad (t,\omega)\in \mathbb{R}\times \Omega_0;
\end{equation*}
 then $\rho_h:\mathbb{R}\times \Omega_0\to \mathbb{R}\times \Omega_0$ is a $(\mathcal{B}_\mathbb{R}\otimes\GG)$-bi-measurable bijection,  put 
\begin{equation*}
\tilde \mu:=(\rho_h)_\star\mu
\end{equation*}
 and denote by $\theta_h:\L2(\mu)\to \L2(\tilde\mu)$ the unitary isomorphism $(g\mapsto g\circ \rho_h^{-1})$. We see that $\theta_h\circ \Psi\circ\delta_h:H^M_{\mathrm{sens},1}\to \L2(\tilde \mu)$ is a spectral resolution of  $\AA^M_{\mathrm{sens},1}$  satisfying (*) and deduce that $\tilde\mu\sim \mu$. Repeating the same exercise with the subnoise we get $\widetilde{\mathbbm{1}_Q\cdot \mu}\sim \mathbbm{1}_Q\cdot \mu$. It follows that $\rho_h(Q)=Q$ a.e.-$\mu$, i.e. $M'=-h+M'(\Delta_{-h})$ a.s.-$\WW$. This being true for all $h\in \mathbb{R}$ the  random countable set $M'$ is indeed stationary.

If $M'$ is empty or equal to $M$ a.s.-$\WW$, then clearly $N'=N^{\emptyset}$ or $N'=N^M$ accordingly, simply because in this case the first superchaos of $N'$ is null or coincides with that of $N^M$, respectively (there are no super-superchaoses, so the stable part and the first superchaos generate everything \cite[p.~71, penultimate paragraph]{picard2004lectures}). Therefore we may and do assume neither $M'$ nor $M'':=M\backslash M'$ is empty. We choose an a.s.-$\WW$ injective measurable enumeration $S'=(S'_k)_{k\in \mathbb{N}}$ of $M'$, likewise $S''$ for $M''$. Finally, we insist that the enumeration $S$ of $M$ has been got in such a way that $S_{2k}=S'_k$ and $S_{2(k-1)+1}=S''_{k}$ a.s.-$\WW$ for all $k\in \mathbb{N}$ to begin with (which too we may do). Looking at the first superchaos of $N'$  we now see that it is equal to $\{f\in \L2(\PP^M):f_K=0\text{ for all }K\in (2^\mathbb{N})_{\mathrm{fin}}\text{ of size }\ne 1,\, f_{2(k-1)+1}=0\text{ for all }k\in \mathbb{N}\}$, which is also the first superchaos of $N^{M'}$. Thus the stable part and the first superchaos of $N'$ and $N^{M'}$ are seen to coincide. But then again these two  subnoises are the same, which completes the proof.
\end{proof}
We owe the reader the next result, which must be folklore, but, it appears, is not so easy to pin  down in literature.
\begin{lemma}\label{lemma:subnoise-of-wiener-noise}
%For extended-real $s<t$ let $N_{s,t}$ be the $\WW$-complete $\sigma$-field generated by the increments of $B$ on the interval $(s,t)$. The factorization $N:=(N_{s,t})_{(s,t)\in \mathbb{R}^2,s<t}$ under $\WW$ together with the family of L\'evy shifts $(\Delta_h)_{h\in \mathbb{R}}$ is the (for emphasis, one-dimensional) Wiener noise. It 
The Wiener noise $(\FF;\WW,\Delta)$ has only trivial subnoises: the void subnoise and itself.
\end{lemma}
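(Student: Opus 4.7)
The plan is to analyse the first chaos $H'_1$ of the putative non-void subnoise $\FF'$. I would first observe that $H'_1$ embeds naturally into the first chaos of the Wiener noise: an additive integral with respect to the $\FF'$-factorization is \emph{a fortiori} an additive integral with respect to the $\FF$-factorization (since $\FF'_{s,t}\subset\FF_{s,t}$ for all extended-real $s<t$), so via the Wiener--It\^o isometry $I_1$ I may regard $H'_1$ as a closed linear subspace of $\L2(\lll)$.

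Next I would show that this subspace is closed under pointwise multiplication by indicators of Borel sets, whence $H'_1=\L2(\mathbbm{1}_E\cdot\lll)$ for some $E\in\mathcal{B}_\mathbb{R}$. Given $f\in H'_1$ and $s\in\mathbb{R}$, the factorizability of $\FF'$ for the partition $\{(-\infty,s),(s,\infty)\}$ decomposes $I_1(f)=Y^-+Y^+$ with $Y^\pm$ centered and $\FF'_{-\infty,s}$-, resp.\ $\FF'_{s,\infty}$-measurable; on the other hand $I_1(f)=I_1(f\mathbbm{1}_{(-\infty,s)})+I_1(f\mathbbm{1}_{(s,\infty)})$ with summands centered and $\FF_{-\infty,s}$-, resp.\ $\FF_{s,\infty}$-measurable, so by independence of the two halves of the Wiener factorization and uniqueness of the centered additive decomposition the two representations must agree, placing $f\mathbbm{1}_{(-\infty,s)}$ and $f\mathbbm{1}_{(s,\infty)}$ in $H'_1$. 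Iterating over split points and approximating an arbitrary Borel indicator by indicators of finite unions of intervals (using closedness of $H'_1$ and dominated convergence) extends closedness to all Borel indicators. The orthogonal projection onto $H'_1$ then commutes with all of $\mathrm{L}^\infty(\lll)$ acting by multiplication on $\L2(\lll)$ and, lying in its maximal abelian commutant, is itself a multiplication operator, necessarily by some indicator $\mathbbm{1}_E$.

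The shift-invariance intrinsic to $\FF'$ makes $H'_1$ invariant under the unitary translations $f\mapsto f(\cdot-h)$ on $\L2(\lll)$, $h\in\mathbb{R}$, so $E+h=E$ modulo $\lll$-null for every $h$, and the standard ergodicity of Lebesgue measure under translations then compels $E$ to be $\lll$-null or $\lll$-conull.

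Finally I would dispatch the two cases. In the conull case each $B_t=I_1(\mathbbm{1}_{[0,t]})$ is $\FF'_{-\infty,\infty}$-measurable, whence $\FF'_{-\infty,\infty}\supset\sigma(B)$ and so $\FF'_{-\infty,\infty}=\FF_{-\infty,\infty}$ after completion; intersecting gives $\FF'_{s,t}=\FF_{s,t}\cap\FF'_{-\infty,\infty}=\FF_{s,t}$ for all extended-real $s<t$, so $\FF'=\FF$. In the null case $H'_1=\{0\}$; since the first superchaos of $\FF'$ also injects into the first superchaos of the ambient noise --- which is $\{0\}$, the Wiener noise being classical --- the subnoise $\FF'$ is itself classical, meaning $\L2(\WW\vert_{\FF'_{-\infty,\infty}})$ equals its own stable part, which for a classical noise is generated (up to $\WW$-trivial sets) by the first chaos; $H'_1=\{0\}$ therefore forces $\FF'$ to be void, contradicting our non-voidness assumption. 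The hard part of this plan is rigorously verifying the embedding of (super)chaoses of a subnoise into those of the ambient noise and the fact that a classical noise is $\sigma$-field-generated by its first chaos; these are essentially structural but need careful handling of the two distinct factorizations at play.
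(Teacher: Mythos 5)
Your proof is correct and follows the same conceptual route as the paper's: reduce to the first chaos $H'_1\subset\L2(\lll)$, use temporal homogeneity of the subnoise to conclude it is a translation-invariant piece of $\L2(\lll)$, and invoke ergodicity/uniqueness of Lebesgue measure to dichotomize into the null and conull cases. The difference is one of packaging. The paper appeals directly to Tsirelson's spectral-measure-type machinery: the spectral measure type of $\FF'$ restricted to the first-chaos part of the spectral space (canonically $\mathbb{R}$) is translation invariant, hence $0$ or $\sim\mathscr{L}$ by a Bourbaki reference; and in the latter case one picks an $f=g\cdot B$ with $g\neq 0$ a.e.\ whose conditional expectations $(g\mathbbm{1}_{(s,t)})\cdot B$ already generate all of $\GG$. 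You instead build the relevant spectral structure by hand: you show $H'_1$ is an $\mathrm{L}^\infty(\lll)$-module by exploiting uniqueness of the centered additive decomposition across a split point $s$ (which, note, silently uses the commutation identity $\WW[\,\cdot\,\vert\FF'_{-\infty,s}]=\WW[\WW[\,\cdot\,\vert\FF_{-\infty,s}]\vert\FF'_{-\infty,\infty}]$ to see that $\FF'$-additive implies $\FF$-additive — the paper records this identity just before the lemma, so it is available to you), and then identify $H'_1=\L2(\mathbbm{1}_E\cdot\lll)$ via the projection-as-multiplication argument. This is more elementary and self-contained, at the cost of re-deriving some of what the spectral machinery gives for free; your concern at the end about "rigorously verifying the embedding of (super)chaoses" and the fact that a classical noise is generated by its first chaos is legitimate but both are indeed the standard structural facts the paper takes as known, and your commutation-based justification of the embedding is the right one.
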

\begin{proof}
Let $\FF'$ be a subnoise of the Wiener noise $(\FF;\WW,\Delta)$. Like the Wiener noise it is classical (a subnoise of a classical noise is classical). The spectral measure type  of $\FF'$ restricted to the part of the spectral space corresponding to the first chaos --- that we identify canonically with $\mathbb{R}$ --- call it $\mu$, is translation invariant, up to equivalence, by the temporal homogeneity of $\FF'$. We deduce \cite[Proposition~VII.1.11]{bourbaki2004integration} that $\mu=0$ or $\mu\sim \mathscr{L}$. (By the same token it follows that all non-void classical noises have the same spectral measure class \cite[Example~9b9]{tsirelson-nonclassical}, but we do not need it.) In the former case $\FF'$ is void, for a classical noise is generated by its first chaos. In the latter case there is an element $f$ of the first chaos of $\FF'$, whose $\FF'$-spectral measure is equivalent to Lebesgue measure on $\mathbb{R}\equiv {\mathbb{R}\choose 1}$. Such $f$, being also a member of the first chaos of $\FF$,  is of the form $g\cdot B$ (the stochastic integral in the Wiener sense is meant) for a $g\in \L2(\mathscr{L})$. This $g$ must necessarily  be non-zero a.e.-$\mathscr{L}$, for its spectral measure is equal to $\vert g\vert^2\cdot\mathscr{L}$ on $\mathbb{R}\equiv {\mathbb{R}\choose 1}$. Hence the random variables $\WW[f\vert \FF'_{s,t}]=\WW[f\vert \FF_{s,t}]=(g\mathbbm{1}_{(s,t)})\cdot B$, for $s<t$ real, which all belong to the first chaos of $\FF'$, generate the whole of the $\sigma$-field of $\WW$ up to trivial sets. But that can only be if $\FF'_{-\infty,\infty}=\GG=\FF_{-\infty,\infty}$, i.e. $\FF'=\FF$.
\end{proof}

\begin{corollary}
The following statements are equivalent.
\begin{enumerate}[(A)]
\item\label{minimal-and-subnoise:A} $M$ is minimal.
\item\label{minimal-and-subnoise:B} The only non-void proper subnoise of $(N^M;\PP^M,T^M)$ is the classical noise associated with $W\circ \Theta^M$. 
\end{enumerate}
\end{corollary}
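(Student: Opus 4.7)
The plan is to reduce the equivalence to the classification of subnoises provided by Theorem~\ref{thm:subnoises}, combined with the dichotomy of Proposition~\ref{proposition:dense-or-empty}. By Proposition~\ref{lemma:identifications-stable-super} the stable part of $(N^M;\PP^M,T^M)$ equals $\L2(\PP^M\vert_{\overline{\sigma}(W\circ\Theta^M)})$, so its classical subnoise is precisely the Wiener subnoise $N^\emptyset$ (i.e., the subnoise one gets from Theorem~\ref{thm:subnoises} in the ``edge case'' $M'=\emptyset$); at the other extreme, $M'=M$ yields the full, hence non-proper, subnoise $N^M$. Thus the non-void proper subnoises distinct from $N^\emptyset$ are exactly those of the form $N^{M'}$ with $M'\subset M$ a non-empty stationary local random countable set satisfying $M'\neq M$ a.s.-$\WW$, which by Proposition~\ref{proposition:dense-or-empty} are precisely the proper dense stationary local random countable subsets of $M$. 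Once this reformulation is in place the equivalence becomes essentially tautological.

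For (A) $\Rightarrow$ (B), I would take a non-void proper subnoise $N'$ and, via Theorem~\ref{thm:subnoises}, write $N'=N^{M'}$ for some stationary local random countable $M'\subset M$ a.s.-$\WW$. Since $N'\neq N^M$ we have $M'\neq M$ a.s., and minimality of $M$ rules out $M'$ being dense; Proposition~\ref{proposition:dense-or-empty} then forces $M'=\emptyset$ a.s., whence $N'=N^\emptyset$ is the classical subnoise.

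For (B) $\Rightarrow$ (A), I would argue contrapositively: suppose $M$ is not minimal and choose a proper dense stationary local random countable subset $M'\subsetneq M$. Then $N^{M'}$ contains the Wiener subnoise and is therefore non-void; it is proper because, for any extended-real $s<t$, the random sign attached to a point of $(M\setminus M')\cap(s,t)$ lies in $N^M_{s,t}$ yet is independent of $N^{M'}_{s,t}$ (the random signs at distinct points of $M$ being conditionally independent given $W\circ\Theta^M$); and it fails to be classical because, by the same independence, a random sign at a point of $M'$ lies in $N^{M'}$ but is independent of $\overline\sigma(W\circ\Theta^M)=N^\emptyset_{-\infty,\infty}$. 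This non-void proper non-classical subnoise contradicts (B).

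The ``hard part'' is already subsumed in the machinery of Theorem~\ref{thm:subnoises}; what remains is bookkeeping. The only mildly delicate step is confirming that the map $M'\mapsto N^{M'}$ separates the endpoints $\emptyset$ and $M$ from intermediate choices --- and, more precisely, that $N^{M'}\ne N^\emptyset$ as soon as $M'$ is non-empty --- but this follows immediately from the Bernoulli independence of the random signs attached to distinct points of $M$, which is baked into the construction of Subsection~\ref{subsection:construction} and recorded structurally by Proposition~\ref{lemma:general-f}.
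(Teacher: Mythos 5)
Your proof is correct and follows essentially the same route as the paper's: invoke Theorem~\ref{thm:subnoises} to classify all non-void subnoises as $N^{M'}$ for $M'\subset M$ a stationary local random countable set, use Proposition~\ref{proposition:dense-or-empty} to force the dichotomy $M'=\emptyset$ or $M'$ dense, and for the converse build a non-void proper non-classical subnoise from a proper dense subset. The paper's own proof is a two-sentence version of exactly this; your write-up fills in the bookkeeping (properness and non-classicality of $N^{M'}$ from the conditional Bernoulli independence of the signs, and the identification of $N^\emptyset$ as the classical part via Proposition~\ref{lemma:identifications-stable-super}), all of which is sound.
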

\begin{proof}
By Theorem~\ref{thm:subnoises} condition \ref{minimal-and-subnoise:A} implies \ref{minimal-and-subnoise:B}. On the other hand,  if $M$ is not minimal, then we can easily construct a proper non-void non-classical subnoise of $(N^M;\PP^M,T^M)$ out of any proper dense local stationary subset of $M$ following the construction of the second paragraph of this subsection.
\end{proof}

\begin{example}
Thanks to Corollary~\ref{corollary:minimal-Md} $(N^{\mathrm{loc}\min};\PP^{\mathrm{loc}\min},T^{\mathrm{loc}\min})$ and $(N^{\mathrm{loc}\max};\PP^{\mathrm{loc}\max},T^{\mathrm{loc}\max})$ have as their only proper non-void subnoises the Wiener noise (and the same is true for each $(N^{(d)};\PP^{(d)},T^{(d)})$, $d\in (0,2)$). On the other hand these two noises are also both isomorphic to proper non-void subnoises of $(N^{\mathrm{loc}\,\mathrm{ext}};\PP^{\mathrm{loc}\,\mathrm{ext}},T^{\mathrm{loc}\,\mathrm{ext}})$.
\end{example}

%\begin{remark}
%The observation that $(N^{\mathrm{loc}\min};\PP^{\mathrm{loc}\min},T^{\mathrm{loc}\min})$ has only two non-void subnoises - itself and the Wiener noise appears to be follow from \cite[Theorem~2.3a)]{}
%\end{remark}

%\section{Trials}

%\begin{proposition}
%{lemma:perfect-good}
%Let $M$ be a  random countable set admitting an honest indexation $\tau$

\bibliographystyle{plain}
\bibliography{Biblio_noise}

\begin{thebibliography}{10}

\bibitem{monique}
T.~Aksamit, A.~Choulli and M.~Jeanblanc.
\newblock {Thin times and random times’ decomposition}.
\newblock {\em Electronic Journal of Probability}, 26:1--22, 2021.

\bibitem{arnold}
L.~Arnold.
\newblock {\em Random Dynamical Systems}.
\newblock Springer Monographs in Mathematics. Springer Berlin Heidelberg, 2013.

\bibitem{slow-points}
R.~F. Bass and K.~Burdzy.
\newblock A critical case for {B}rownian slow points.
\newblock {\em Probability Theory and Related Fields}, 105(1):85--108, 1996.

\bibitem{bertoin}
J.~Bertoin.
\newblock {\em {L\'e}vy Processes}.
\newblock Cambridge Tracts in Mathematics. Cambridge University Press,
  Cambridge, 1996.

\bibitem{billingsley1968convergence}
P.~Billingsley.
\newblock {\em Convergence of Probability Measures}.
\newblock Wiley Series in Probability and Mathematical Statistics. Wiley, 1968.

\bibitem{bourbaki2004integration}
N.~Bourbaki and S.~K. Berberian.
\newblock {\em Integration II: Chapters 7-9}.
\newblock Actualit{\'e}s scientifiques et industrielles. Springer Berlin
  Heidelberg, 2004.

\bibitem{yor1}
P.~Carmona, F.~Petit, and M.~Yor.
\newblock Beta-gamma random variable and intertwining relations between certain
  {M}arkov processes.
\newblock {\em Revista Matem{\'a}tica Iberoamericana}, 14(2):311--367, 1998.

\bibitem{dellacherie-sets}
C.~Dellacherie, B.~Maisonneuve, and P.-A. Meyer.
\newblock {\em Probabilit{\'e}s et potentiel: Chapitres XVII {\`a} XXIV.
  Processus de Markov (fin). Compl{\'e}ments de calcul stochastique}.
\newblock Actualit{\'e}s scientifiques et industrielles. Hermann, 1975.

\bibitem{dellacherie1975probabilites}
C.~Dellacherie and P.~A. Meyer.
\newblock {\em Probabilit{\'e}s et potentiel, v. 1}.
\newblock Actualit{\'e}s scientifiques et industrielles. Heinemann Educational
  Books, 1975.

\bibitem{nonincrease}
A.~Dvoretzky, P.~Erd\"os, and S.~Kakutani.
\newblock Nonincrease everywhere of the {B}rownian motion process.
\newblock In J.~Neyman, editor, {\em Proceedings of the Fourth Berkeley
  Symposium on Mathematical Statistics and Probability, Volume 2: Contributions
  to Probability Theory}, pages 103--116. 1961.

\bibitem{greenwood-pitman}
P.~Greenwood and J.~Pitman.
\newblock Fluctuation identities for {L\'e}vy processes and splitting at the
  maximum.
\newblock {\em Advances in Applied Probability}, 12(4):893--902, 1980.

\bibitem{herriger}
P.~Herriger.
\newblock {Countable Random Sets: Uniqueness in Law and Constructiveness}.
\newblock {\em Journal of Theoretical Probability}, 26(3):781--802, 2013.

\bibitem{ito}
K.~It{\^o}.
\newblock Poisson point processes attached to {M}arkov processes.
\newblock In L.~M. Le~Cam, J.~Neyman, and E.~L. Scott, editors, {\em
  Proceedings of the Sixth Berkeley Symposium on Mathematical Statistics and
  Probability, Volume 3: Probability Theory}, pages 225--240. University of
  California Press, Berkeley, 1972.

\bibitem{jacka}
S.~Jacka and M.~Vidmar.
\newblock On the informational structure in optimal dynamic stochastic control.
\newblock 2015.
\newblock arXiv:1503.02375v3.

\bibitem{jacod2013limit}
J.~Jacod and A.~N. Shiryaev.
\newblock {\em Limit Theorems for Stochastic Processes}.
\newblock Grundlehren der mathematischen Wissenschaften. Springer Berlin
  Heidelberg, 2013.

\bibitem{regenerative-sets}
H.~Kaspi and B.~Maisonneuve.
\newblock {Regenerative Systems on the Real Line}.
\newblock {\em The Annals of Probability}, 16(3):1306--1332, 1988.

\bibitem{kendall_2000}
W.~S. Kendall.
\newblock Stationary countable dense random sets.
\newblock {\em Advances in Applied Probability}, 32(1):86–100, 2000.

\bibitem{Maisonneuve}
B.~Maisonneuve.
\newblock {Exit Systems}.
\newblock {\em The Annals of Probability}, 3(3):399--411, 1975.

\bibitem{meyer}
P.~A. Meyer.
\newblock {\em Probability and Potentials}.
\newblock Actualit{\'e}s scientifiques et industrielles. Blaisdell Publishing
  Company, 1966.

\bibitem{morters}
P.~M{\"o}rters and Y.~Peres.
\newblock {\em Brownian Motion}.
\newblock Cambridge Series in Statistical and Probabilistic Mathematics.
  Cambridge University Press, 2010.

\bibitem{slow-perkins}
E.~Perkins.
\newblock On the {H}ausdorff dimension of the {B}rownian slow points.
\newblock {\em Zeitschrift f{\"u}r Wahrscheinlichkeitstheorie und Verwandte
  Gebiete}, 64(3):369--399, 1983.

\bibitem{revuz}
D.~Revuz and M.~Yor.
\newblock {\em Continuous Martingales and {B}rownian Motion}.
\newblock Grundlehren der mathematischen Wissenschaften. Springer, 2005.

\bibitem{rogers2000diffusions}
L.~C.~G. Rogers and D.~Williams.
\newblock {\em Diffusions, {M}arkov Processes and Martingales: {V}olume 2,
  {I}t{\^o} Calculus}.
\newblock Cambridge Mathematical Library. Cambridge University Press, 2000.

\bibitem{rohlin}
V.~A. Rokhlin.
\newblock {\em On the Fundamental Ideas of Measure Theory}.
\newblock American Mathematical Society translations. American Mathematical
  Society, 1952.

\bibitem{shiga}
T.~Shiga and S.~Watanabe.
\newblock Bessel diffusions as a one-parameter family of diffusion processes.
\newblock {\em Zeitschrift f{\"u}r {W}ahrscheinlichkeitstheorie und {V}erwandte
  {G}ebiete}, 27:37–46, 1973.

\bibitem{fast-points}
O.~Steven and T.~S. James.
\newblock How often on a {B}rownian path does the law of iterated logarithm
  fail?
\newblock {\em Proceedings of the London Mathematical Society},
  s3-28(1):174--192, 1974.

\bibitem{tsirelson-nonclassical}
B.~Tsirelson.
\newblock Nonclassical stochastic flows and continuous products.
\newblock {\em Probability Surveys}, 1:173--298, 2004.

\bibitem{picard2004lectures}
B.~Tsirelson.
\newblock Scaling limit, noise, stability.
\newblock In J.~Picard, editor, {\em Lectures on Probability Theory and
  Statistics: Ecole {d'Et\'e} de Probabilit{\'e}s de Saint-Flour XXXII - 2002},
  Lecture Notes in Mathematics, pages 1--106. Springer Berlin Heidelberg, 2004.

\bibitem{tsirelson-rc}
B.~Tsirelson.
\newblock Brownian local minima, random dense countable sets and random
  equivalence classes.
\newblock {\em Electronic Journal of Probability}, 11:162--198, 2006.

\bibitem{tsirelson-arveson}
B.~Tsirelson.
\newblock On automorphisms of type {II} {A}rveson systems (probabilistic
  approach).
\newblock {\em New York Journal of Mathematics}, 14:539--576, 2008.

\bibitem{warren-splitting}
J.~Warren.
\newblock Splitting: Tanaka's {SDE} revisited.
\newblock 1999.
\newblock arXiv:math/9911115.

\bibitem{yor2}
M.~Yor and M.~Zani.
\newblock Large deviations for the {B}essel clock.
\newblock {\em Bernoulli}, 7(2):352--362, 2001.

\end{thebibliography}

\end{document}